\newtheorem{theorem}{Theorem}[section]
\newtheorem{lemma}[theorem]{Lemma}
\newtheorem{definition}[theorem]{Definition}
\newtheorem{proposition}[theorem]{Proposition}
\newtheorem{corollary}[theorem]{Corollary}
\newtheorem{remark}[theorem]{Remark}
\newtheorem{notation}[theorem]{Notation}
\numberwithin{equation}{section}
\newcommand{\R}{\mathbb{R}}
\newcommand{\N}{\mathbb{N}}
\newcommand\JUMP[1]{\mathchoice
                  {\big[\hspace*{-.3em}\big[#1\big]\hspace*{-.3em}\big]}
                   {[\hspace*{-.15em}[#1]\hspace*{-.15em}]}
                   {[\![#1]\!]}
                   {[\![#1]\!]}}
\newcommand{\GC}{\Gamma_{\!\scriptscriptstyle{\rm C}}}
\newcommand{\GD}{\Gamma_{\!\scriptscriptstyle{\rm D}}}
\newcommand{\GN}{\Gamma_{\!\scriptscriptstyle{\rm N}}}
\newcommand{\barGC}{\overline{\Gamma}_{\!\scriptscriptstyle{\rm C}}}
\newcommand{\barGD}{\overline{\Gamma}_{\!\scriptscriptstyle{\rm D}}}
\newcommand\calE{\mathcal E}
\newcommand\calR{\mathcal R}
\newcommand\calG{\mathcal G}
\newcommand\calK{\mathcal K}
\newcommand\calX{\mathcal X}
\newcommand\calL{\mathcal L}
\newcommand\calH{\mathcal H}
\newcommand\calV{\mathcal V}
\newcommand\calJ{\mathcal J}
\newcommand{\rmD}{\mathrm{D}}
\newcommand\pl\partial
\newcommand{\Var}{\mathrm{Var}}
\newcommand\bbC{\mathbb C}
\newcommand\bbD{\mathbb D}
\newcommand{\supp}{\mathop{\mathrm{supp}}}
\newcommand{\SBV}{\mathrm{SBV}}
\newcommand{\Omegaone}{\Omega_+}
\newcommand{\Omegatwo}{\Omega_-}
\newcommand{\barOmegaone}{\overline{\Omega}_+}
\newcommand{\barOmegatwo}{\overline{\Omega}_-}
\newcounter{myfigure}
\newenvironment{my-picture}[3]{\refstepcounter{myfigure}\label{#3}\setlength{\unitlength}{\textwidth}\begin{picture}(#1,#2)}{\end{picture}}
\newcommand{\Surf}{\mathcal{H}^{d-1}}
\newcommand{\Spu}{\mathbf{V}}
\newcommand{\Spv}{\mathbf{V}}
\newcommand{\Spw}{\mathbf{W}}
\newcommand{\Spz}{\mathbf{Z}}
\newcommand{\Spx}{\mathbf{X}}
\newcommand{\ingrsys}{(\Spu, \Spw, \Spz,\calV,\calK,\calR,\calE)}
\newcommand{\ingrsysk}{(\Spu,\Spw,\Spz,\calV,\calK,\calR_k,\calE_k)}
\newcommand{\ingrsysinf}{(\Spu,\Spw,\Spz,\calV,\calK,\calR_\infty,\calE_\infty)}
\newcommand{\spyn}{\mathbf{Y}_n^{s}}
\newcommand{\spyns}{\widetilde{\mathbf{Y}}_n^{s}}
\newcommand{\eps}{\varepsilon}
\newcommand{\pairing}[4]{ \sideset{_{ #1 }}{_{ #2 }}  {\mathop{\langle #3 , #4
\rangle}}}
\newcommand{\inner}[3]{ \sideset{_{}}{_{ #1 }}  {\mathop{( #2 , #3
)}}}
\newcommand{\foraa}{\text{for a.a. }}
\newcommand{\dissu}{\calV}
\newcommand{\dissr}{\calR}
\newcommand{\dd}{\, \mathrm{d}}
\newcommand{\norm}[2]{\| #1\|_{#2}}
\newcommand{\ene}[3]{\calE(#1,#2,#3)}
\newcommand{\VZ}{\Spu_z}
\newcommand{\BV}{\mathrm{BV}}
\newcommand{\weakto}{\rightharpoonup}
\newcommand{\weaksto}{\overset{*}{\rightharpoonup}}
\definecolor{ddcyan}{rgb}{0,0.1,0.9}
\definecolor{ddmagenta}{rgb}{0.8,0,0.8}
\definecolor{orange}{rgb}{0.6,0.2,0}
\definecolor{vgreen}{rgb}{0.1,0.5,0.2}
\definecolor{dred}{rgb}{.8,0,0}
\definecolor{Turk}{rgb}{0,0.7,0.4}
\newcommand{\EEE}{\color{black}}
\newcommand{\semi}{semistable }
\begin{document}


\title{From Adhesive to Brittle Delamination in Visco-Elastodynamics}

\author{Riccarda Rossi
\thanks{
DIMI (Department of Mechanical and Industrial Engineering), Universit\`a degli studi di
  Brescia, via Branze 38, I--25133 Brescia, Italy.
Email: {\ttfamily riccarda.rossi\,@\,unibs.it}
}
\and 
Marita Thomas
\thanks{ Weierstrass Institute for Applied Analysis and
Stochastics, Mohrenstr.~39, 10117 Berlin, Germany. 
Email: {\ttfamily marita.thomas@wias-berlin.de}
}
}

%
%
%

\date{\today}
\maketitle
\begin{abstract}
In this paper we analyze a  system for \emph{brittle delamination} between two visco-elastic bodies, 
also subject  to inertia, which can be interpreted as a model for dynamic fracture.  
The rate-independent flow 
rule for the delamination parameter is coupled with the momentum balance for the displacement, including inertia. 
This model features a nonsmooth constraint ensuring the continuity of the displacements outside the 
crack set, 
which is marked by the support of the delamination parameter.    
A weak solvability concept, generalizing the notion of energetic solution for rate-independent 
systems to the present mixed 
rate-dependent/rate-independent frame, is proposed. Via refined variational convergence techniques, 
existence of solutions is proved 
by passing to the limit in approximating systems which regularize the nonsmooth constraint by conditions 
for adhesive contact. The presence of the inertial term requires the design of suitable recovery spaces 
small enough to provide compactness but large enough to recover the information on the crack set in the limit.   
\end{abstract}
%
\noindent
\textbf{2010 Mathematics Subject Classification:} 49J53, 49J45, 74H20, 74C05, 74C10, 74M15, 74R10. 

\noindent
\textbf{Key words and phrases:} Adhesive contact,
brittle delamination, Kelvin-Voigt visco-elasticity, inertia, non-smooth brittle constraint, 
coupled rate-dependent/rate-independent evolution, energetic solutions.

%
\section{Introduction}
%
Over the last two decades, crack propagation has 
been intensively studied from a mathematical viewpoint, 
starting from the seminal paper \cite{FraMar98}. This article proposed a variational 
scheme for the dissipative, rate-independent evolution of fracture, coupled with the 
`static' momentum balance for the \emph{purely elastic} displacement variable. 
Several papers, cf.\ e.g.\ \cite{DT02, Cham03, FraLar03, DFT05, DMLaz10}  
(see  also the survey \cite{BFM08repr}), have ever since  consolidated 
 the existence theory, and the study of the fine properties, for the notion 
of \emph{quasistatic evolution} of fracture due to \textsc{G.\ Dal Maso} and coworkers. 
Also  alternative solution notions  have been advanced \cite{Lars10}. 
In this realm, great generality  as far as the modeling of the crack set 
 has been achieved thanks to the toolbox of Geometric Measure Theory.
 \par
 The study of \emph{dynamic fracture}, with the displacement variable subject to viscosity 
and inertia within Kelvin-Voigt rheology, is at a less refined stage. Indeed, 
 phase-field models for (rate-independent) fracture, coupled with elasto-visco-dynamics, 
have been extensively studied in   \cite{BouLarRic,LarOrtSul}, 
where the evolution of  a volume,  damage-like variable approximating the fracture 
is  governed by the so-called Ambrosio-Tortorelli functional \cite{Ambrosio-Tortorelli} 
of Mumford-Shah type. 
However,  the convergence  of the solutions to this regularized system, to solutions 
of a model for brittle 
fracture, has been proved only in the case of \emph{purely} rate-independent evolution 
(i.e., with the static momentum balance), see  \cite{Giac05ATAQ}. 
While the asymptotic analysis to the 
Mumford-Shah fracture regime has also been carried out  for the \emph{gradient flow} 
of the Ambrosio-Tortorelli functional \cite{BabMil}, the passage to the limit in the  
case where the displacements are subject to the equation of  visco-elastodynamics 
remains open. So is, in fact, the study of dynamic fracture without strong geometric 
assumptions on the cracks, essentially due to the challenges posed by the coupling between 
the rate-independent propagation of the fracture, and the rate-dependent evolution of the 
displacement variable. 
 \par
 The basics for the study of the  dynamic case with arbitrary cracks have been  
established in \cite{DMLar11EWED}, focusing on the analysis of the equations of elastodynamics
for the displacement out of the (arbitrarily growing) crack set, whose evolution 
is assumed to be \emph{given}. 
The existence and uniqueness results from  \cite{DMLar11EWED}  
have been recently extended to the case of mixed 
Dirichlet/Neumann boundary conditions  in \cite{DMLuc}. 
Let us stress that in \cite{DMLar11EWED, DMLuc} the crack evolution is  preassigned.  
To our knowledge, existence results for models on dynamic fracture without  
this restriction 
have only recently been obtained in \cite{DMLarToa,DMLazNar}, and these results are strongly 
based on the $1$ or $2$-dimensional geometry of the problem. 
The work \cite{DMLarToa}  
tackles a $2$D-model for dynamic fracture with prescribed, sufficiently smooth,  connected 
crack path, 
but evolving with unknown speed. In this setting, the evolution of the crack is fully described 
by that of the crack-tip. Restricting the problem to a class of sufficiently smooth 
crack-tip evolutions, 
the evolution criterion for the crack is given by a maximal dissipation condition, 
selecting, within this class, 
the crack-tip evolution that runs as fast as possible consistently with the energy balance, 
and thus preventing stationary cracks 
from always being solutions. 
In \cite{DMLazNar} an existence result for a dynamic $1$D-model without pre-assigned crack evolution  
has been proved in the case of  a \emph{dynamic peeling test} for a thin film, 
initially attached to a planar 
rigid substrate.  The authors provide an existence result for a formulation 
of the model consisting of the wave 
equation on a time-dependent domain. The evolution of the debonding front is given    
by the   Griffith criterion  in terms 
 of a suitable notion of  dynamic energy release rate.  . 
Again, their argument strongly relies on the special, one-dimensional geometry of the problem.
\par
\begin{figure}[ht]
\psfrag{O-}{$\Omega_-$}
\psfrag{O+}{$\Omega_+$}
\psfrag{G}{$\GC$}
\psfrag{n}{$\mathbf{n}$}
\centerline{\includegraphics[width=7cm]{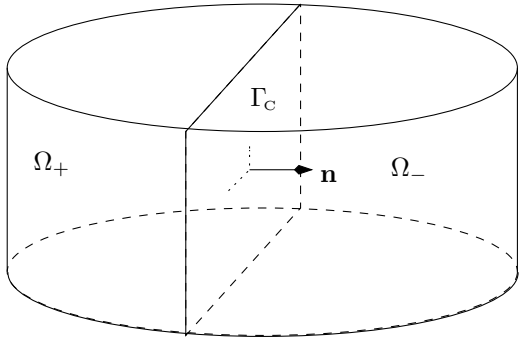}}
\label{Fig:setting}
\caption{A feasible domain $\Omega$ with convex interface $\GC$.}
\end{figure}
In this paper we aim to contribute to the investigation of 
dynamic fracture 
from a yet different perspective. 
We will consider a model describing the  evolution, during a finite time interval $(0,T)$,  
of brittle delamination between two  elastic bodies $\Omega_+$ and $\Omega_-$, 
subject to viscosity and inertia, along a prescribed contact surface $\GC$. 
 Within $\GC$ the crack evolution is not prescribed, but falls into the class of rate-independent evolutions, 
as it is  governed by a unidirectional, positively $1$-homogeneous dissipation potential, 
cf.\ \eqref{diss-en-intro}, 
and a semistability condition, 
cf.\ \eqref{semistab-z-intro}. In our setup the crack set as a subset of $\GC$ need not
be connected and it may even jump with respect to time. Moreover, it should be pointed out that 
our results will be obtained in general space dimension $d\geq 1$. 
We refer to \eqref{ass-domain}  ahead for the precise  statement of our 
conditions on $\Omega $ and $\GC$, cf.\ also Remark \ref{rmk:Gammac}. 
A prototype of feasible domain is the one depicted  in Fig.\ \ref{Fig:setting}.  
\par
Following the approach by \textsc{M.\ Fr\'emond} \cite{FreNed96DGDP,Fre02}, 
within the theory of generalized standard materials \cite{HalNgu75MSG}, 
delamination is described in terms of an internal variable $z: (0,T) \times \GC \to [0,1]$, 
which has in fact the meaning of a damage variable as it describes the fraction of fully 
effective molecular links in the bonding. Namely,
 \begin{equation}
\label{meaning-z}
z(t,x) = \begin{cases}  1 \\ 0  \end{cases}  \text{ means that the bonding is } \  \begin{cases} \text{fully intact} \\ \text{completely broken}   \end{cases} 
\end{equation} 
at the time $t \in (0,T)$, at the material point $x \in \GC$. 
 The rate-independent flow rule for  the delamination parameter 
$z$ is coupled to the \emph{dynamic momentum balance} 
for the displacement field $u: (0,T) \times (\Omega_- {\cup} \Omega_+) \to \R^d$. 
Our model enforces the 
 \begin{equation}
\label{brittleC}
\text{brittle constraint:}\qquad
 \JUMP{u(t)}=0
\quad\text{a.e.\ on } (0,T)\times\supp z(t)
\end{equation}
where $\JUMP{u} = u^+|_{\GC} -   u^-|_{\GC} -$ is the jump of $u$ across $\GC$, 
$ u^{\pm}|_{\GC} $ denoting the traces on $\GC$ 
of the restrictions $u^{\pm}$ of $u$ to $\Omega_\pm$.  
Moreover, $\supp z$ denotes the support of $z\in L^\infty(\GC)$. 
The brittle constraint \eqref{brittleC} ensures the continuity of the displacements ($\JUMP{u(t,x)}=0$) 
in the (closure of the) set of points where (a portion of) the bonding is still active ($z(t,x)>0$), 
and it allows for displacement
jumps only in points $x\in\GC$ where the bonding is completely broken ($z(t,x)=0$). 
In other words, 
\eqref{brittleC} distinguishes between the crack set $\GC\backslash\supp z(t)$, 
where the displacements may jump, and
the complementary set with active bonding, where it imposes a transmission 
condition on the displacements. 
\par
 That is why, the brittle delamination system  can be understood 
as a model for \emph{dynamic fracture}, albeit in a special setting: 
the crack occurs along a prescribed surface, but with \emph{unknown evolution}. 
The main result of this paper states the existence of \emph{energetic}-type solutions, 
obtained by approximation via a model for adhesive contact.  
\par
 Let us now have a closer look at the adhesive contact and brittle delamination systems, 
discuss the analytical difficulties attached to the adhesive-to-brittle limit, 
and illustrate our arguments and results.
 \subsection*{The adhesive contact system}
 The \emph{classical formulation} of the adhesive contact model  
we will at first consider consists of the momentum equation, with viscosity and inertia,   
for the displacement $u$  in the bulk domain, namely 
\begin{subequations}
\label{adh-con-intro}
\begin{equation}
\label{mom-balance-intro}
\varrho \ddot{u} - \mathrm{div} \left(\bbD \dot e + \bbC e  \right)  
= F \quad \text{ in } (0,T) \times ( \Omega_+ {\cup} \Omega_-),
\end{equation}
with  $\varrho>0 $ the (assumed constant, for simplicity) mass density of the body,  $e  = e(u):=\frac{1}{2}(\nabla u+\nabla u^\top)$ the linearized strain 
tensor (throughout the paper, we shall often write $\dot e$ as a short-hand for $e(\dot u)$), 
and $F$  a  time-dependent applied volume force.  
Equation \eqref{mom-balance-intro} is  supplemented with homogeneous Dirichlet 
boundary conditions on the Dirichlet part  $\GD$  of the boundary $\partial\Omega$, where 
$\Omega := \Omega_+ \cup \GC \cup \Omega_-$,  
and subject to an applied traction $f$ on the Neumann part $\GN = \partial\Omega \setminus \GD$, i.e. 
\begin{equation}
\label{bc-intro}
u=0 \quad \text{  on } (0,T) \times \GD,  \qquad \qquad  
\left(\bbD \dot e + \bbC e  \right)|_{\GN} \nu = f  \quad \text{  on } (0,T) \times \GN,
\end{equation} 
with $\nu$  the outward unit normal to $\partial\Omega$.  The evolutions of $u$ and of the delamination 
parameter $z$ from \eqref{meaning-z} are coupled through the following boundary 
condition on the contact surface $\GC$ 
\begin{equation}
\label{cont-surf-intro}
  \left(\bbD \dot e + \bbC e  \right)|_{\GC}  \mathbf{n} + k z \JUMP{u} =0 
\quad \text{ on } (0,T) \times \GC,
\end{equation}
with $\mathbf{n}$ the unit normal to $\GC$ oriented from $\Omega_+$ 
to $\Omega_-$ and $k$  a positive constant: The adhesive-to-brittle limit passage results from letting $k\to\infty$.  In the adhesive contact model, 
the flow rule for $z$ reads
\begin{equation}
\label{adh-flow-rule-intro}
\partial I_{(\infty, 0]}(\dot z ) + \partial\calG(z) -a_0^k - a_1^k \ni - \tfrac{1}2  k \left| \JUMP{u} \right|^2 \quad \text{ on } (0,T) \times \GC. 
\end{equation}
\end{subequations}
In \eqref{adh-flow-rule-intro}, $I_{(\infty, 0]}$ denotes the indicator function of 
the half-line $(-\infty,0]$, by means of which the unidirectionality $\dot z \leq 0$ of 
the debonding phenomenon is imposed, and $\partial I_{(\infty, 0]} $ is its subdifferential 
in the sense of convex analysis. The positive coefficients $a_0^k$ and $a_1^k$,  
which we shall consider depending on the parameter $k$ in view of a discussion of 
different scalings in the adhesive-to-brittle limit $k\to\infty$, are the 
phenomenological specific energies per area stored and, respectively, 
dissipated by disintegrating the adhesive. Finally,
 $\partial\calG$ is the (formally written) subdifferential of the gradient term
\begin{equation}
\label{gradient-regulariz-intro}
\calG_k(z): = \begin{cases}
\mathrm{b}_k |\rmD z |(\GC)  &  \text{ if } z \in \SBV (\GC; \{0,1\}), \\
\infty  & \text{ otherwise,} 
\end{cases}
\end{equation}
where $\mathrm{b}_k>0$ again depends on $k$,   $\SBV (\GC; \{0,1\})$ is the space of the special 
bounded variation functions on $\GC$, taking values in $\{0,1\}$, and $|\rmD z |(\GC) $ denotes the 
variation on $\GC$ of the Radon measure $\rmD z$. Indeed, we are imposing that $z$ only takes the 
values $0$ and $1$, so  that it can be  identified with the characteristic function  of a  set 
$Z\subset\GC$  with finite perimeter $P(Z; \GC) = |\rmD z |(\GC)$.    Thus our  adhesive contact model 
(and the limiting brittle delamination system) accounts for just two states of the bonding 
between $\Omega_+$ and $\Omega_-$, i.e.\ the fully effective and the completely ineffective ones. 
While postponing to  the following lines some comments on the analytical advantages of the 
gradient regularization from \eqref{gradient-regulariz-intro}, let us mention here that, 
the constraint $z \in \{0,1\}$ makes ours akin to a model
for crack propagation (along a prescribed $(d{-}1)$-dimensional interface). 
\par
Due to the expected poor time regularity of the delamination variable $z$, 
the adhesive contact system \eqref{adh-con-intro} has to be weakly formulated in  a 
suitable way, reflecting its  mixed rate-independent/rate-dependent  character. 
For this, we shall  resort to an \emph{energetic}-type  solvability concept, 
generalizing the notion of (global) energetic solution to a purely rate-independent system,  
cf.\ \cite{MieRouBOOK}.  Our notion  was first introduced   in \cite{Roub08}  and 
has been recently analyzed from a  more abstract viewpoint in \cite{RosTho15CEx}.  
We shall recall this solution  concept  in a general and abstract setting in the 
upcoming Definition \ref{def:energetic-sol}; in the specific context of the adhesive contact system, 
we call  a  pair $(u,z)$ with suitable 
 temporal and spatial regularity (cf.\ Def.\ \ref{def:energetic-sol})  
a \emph{\semi energetic solution} to system   \eqref{adh-con-intro}  if it fulfills 
 the weak formulation of the momentum balance 
\begin{equation} \label{weak-mom-intro}
\int_\Omega \varrho \ddot{u}(t) v \dd x 
+ \int_{\Omega\backslash\GC} \left( \mathbb{D}e(\dot{u}(t)) \colon e(v) 
{+} \mathbb{C}e(u(t)):e(v) \right) \,\mathrm{d}x+\int_{\GC} kz\JUMP{u}\JUMP{v}\,\mathrm{d}\Surf  
= \langle\mathbf{f}(t),v\rangle_{H^1(\Omega;\R^d)}
\end{equation}
for almost all $t\in (0,T)$ and for every $v \in H^1(\Omega;\R^d)$ with 
$ v=0$ on $(0,T) \times \GD$ (with $\Surf $ the $(d{-}1)$-dimensional
Hausdorff measure, and  the function $\mathbf{f}:(0,T) \to H^1(\Omega;\R^d)^*$ 
subsuming the bulk force $F$  and the applied traction $f$),  and the weak formulation  
of the flow rule \eqref{adh-flow-rule-intro}. 
The latter is akin to the (global) energetic formulation for rate-independent systems, 
in that it features
\begin{compactitem}
\item[-]
 an  \emph{energy-dissipation  (in)equality}, involving the stored energy 
of the adhesive contact system 
\begin{equation}
\label{stored-energy-adh-intro}
\calE_k(t,u,z):=\int_{\Omega\backslash\GC}\tfrac{1}{2}\mathbb{C}e(u):e(u)
 -   \pairing{}{H^1(\Omega;\R^d)}{\mathbf{f}(t)}{u} 
+\int_{\GC} \left(\tfrac{k}{2}z|\JUMP{u}|^2-a^0_kz \right)
\,\mathrm{d}\Surf+ \calG_k(z)
\end{equation}
and the dissipated energy 
\begin{equation}
\label{diss-en-intro}
\calR_k (\dot z) : = \begin{cases}
\int_{\GC} a_k^1 | \dot z | \dd x & \text{ if } \dot z \leq 0 \text{ a.e.\ in } \GC,
\\
\infty & \text{ otherwise},
\end{cases}
\end{equation}
\item[-] coupled with the \emph{semistability} condition
\begin{equation}
\label{semistab-z-intro} 
\calE_k(t,u(t),z(t)) \leq \calE_k(t,u(t),\tilde z) +
\calR_k(\tilde z {-}z(t)) \qquad \text{for all } \tilde z \in L^1(\GC) \ \ 
\text{for all } t \in [0,T].
 \end{equation} \end{compactitem}
In fact,  \eqref{semistab-z-intro}  reflects  the mixed character of the evolution, 
in that stability is only tested for $z$, while the rate-dependent variable $u$ is kept 
fixed as a solution of \eqref{weak-mom-intro}. 
\par 
The first result of this paper, \underline{\textbf{Theorem \ref{th:exist-mech}}}, 
states the existence of \semi energetic solutions to (the Cauchy problem for) 
the adhesive contact system \eqref{adh-con-intro}, in fact satisfying the 
energy-dissipation \emph{balance} along any interval $[s,t]\subset [0,T]$. 
We shall derive Thm.\   \ref{th:exist-mech} from a general existence result 
for  damped inertial  systems with a mixed rate-dependent/rate-independent character, 
which was proved in \cite{RosTho15CEx}. With Thm.\  \ref{th:exist-mech} we will also provide a 
series of a priori estimates on families of \semi energetic solutions $(u_k,z_k)_k$, uniform with respect 
to the parameter $k\in\N$ and preliminary to the limit passage $k\to\infty$. 
%
\subsection*{The adhesive-to-brittle limit: analytical challenges and our results}
%
The asymptotic analysis as $k\to\infty$ for the \emph{purely rate-independent} 
adhesive contact system, coupling  the flow rule \eqref{adh-flow-rule-intro}  
(with no regularizing gradient term), with the \emph{static} momentum balance, 
was carried out in \cite{RoScZa09QDP} by resorting to the  evolutionary $\Gamma$-convergence 
results for rate-independent processes from \cite{MRS06}. Loosely speaking, 
the main observation is that the adhesive contact contribution  $\int_{\GC}\tfrac{k}{2}z|\JUMP{u}|^2
\,\mathrm{d}\Surf  $ to $\calE_k$ \eqref{stored-energy-adh-intro} 
penalizes displacement jumps in points with positive $z$, and leads 
as $k\to\infty$ to the brittle constraint  
 $z|\JUMP{u}|=0$ a.e.\ in $\GC$, incorporated in the  
$\Gamma$-limit of the  energy functionals $(\calE_k)_k$ (cf.\ \eqref{brittle-energy-brittle-intro}   
below).   
\par
The adhesive-to-brittle asymptotics is remarkably more complicated in the case of  mixed 
rate-in\-de\-pen\-dent/rate-dependent evolution, 
where one  has to pass to the limit separately in the momentum balance \eqref{weak-mom-intro}  
featuring the  semistable delamination variables, 
and in the  semistability inequality \eqref{semistab-z-intro} featuring the solution \
of the momentum balance.   
This problem was tackled in \cite{RosTho12ABDM} for a system also encompassing the 
temperature equation, but without inertia in the momentum balance. Analogous arguments 
were used   in  the \emph{purely rate-independent} case in \cite{RoThPa15SDLS}, 
to address  the adhesive-to-brittle limit 
combined with time discretization and leading to \emph{local solutions} 
(in the sense  of \cite[Sec.\ 3]{RouACVB13}), of the brittle delamination system.  
\par
In what follows, we will   illustrate these analytical difficulties and hint 
at our methods, which could in fact be adapted to handle the coupling  with the temperature 
equation, as well. We have however chosen to confine our analysis to the isothermal case, 
in order to highlight the techniques specifically developed  in the present paper 
to deal with inertia in the momentum balance.
\par
The very first problem  is due to the 
\begin{enumerate}
\item[(1)]
blow-up of the bounds on the adhesive contact term $ kz\JUMP{u}$ 
in \eqref{weak-mom-intro}  as $k\to\infty$.
\end{enumerate}
This reflects the fact that, for the limiting brittle system the momentum balance has 
to be tested with  test functions encompassing the brittle constraint \eqref{brittleC}, 
which will be  satisfied by the limiting displacement $u$. We will in fact prove  
that any pair $(u,z)$, arising in the limit as $k\to\infty$ of a sequence of \semi 
energetic solutions $(u_k,z_k)_k$ of the adhesive contact system \eqref{adh-con-intro}, 
$k\in\N,$ complies with 
\begin{equation}
\label{weak-mom-brittle-intro}
\begin{aligned}
\int_\Omega \varrho \ddot{u}(t) v \dd x   + \int_{\Omega\backslash\GC}  
& \left( \mathbb{D}e(\dot{u}(t)) \colon e(v) {+} \mathbb{C}e(u(t)):e(v) \right) \,\mathrm{d}x   
= \langle\mathbf{f}(t),v\rangle_{H^1(\Omega;\R^d)}
\\ &
\text{for all } v \in  H^1(\Omega;\R^d) \text{ with } v=0 \text{ on } \GD  
\text{ and } \JUMP{v}=0 \text{ on } \supp z(t)\subset \GC
\end{aligned}
\end{equation}
and for  almost all $t \in (0,T)$. 
In order to obtain  \eqref{weak-mom-brittle-intro}, 
we shall resort to the arguments from \cite{RosTho12ABDM} and provide for every 
admissible test function $v$ for \eqref{weak-mom-brittle-intro} a \emph{recovery sequence}  
$(v_k)_k$, suitably converging to $v$, 
fulfilling the brittle constraint \eqref{brittleC} already at level $k\in\N,$ i.e.
\begin{equation}
\label{recovery-constraint}
 \JUMP{v_k} =0 \quad \text{ a.e.\ on }\supp z_k(t)\,, 
\end{equation}
 with $t \in (0,T)$ fixed out of a negligible set. This will allow us to bypass problem (1).  
 The key ingredient in the construction of the sequence $(v_k)_k$, 
starting from a  test function $v$ such that $\JUMP{v}=0$ on $\supp z(t)$, is a relation 
between the supports of the approximate, semistable delamination variables $z_k$, and the support of 
the semistable limit $z$. This is provided by the property 
 of  \emph{support convergence} \begin{equation}
\label{suppconv-intro}
 \supp z_k(t)\subset \supp z(t)+B_{\rho(k,t)}(0)
\quad\text{and}\quad
\rho(k,t)\to0\text{ as }k\to\infty,
\end{equation} 
that was proved in  \cite{RosTho12ABDM} via arguments from geometric measure theory. 
In turn, these arguments  heavily rely on the fact that the delamination 
variables $z_k $ take value in  $\{0,1\}$, and on the regularizing perimeter 
term from \eqref{gradient-regulariz-intro} contributing to the energy functional 
\eqref{stored-energy-adh-intro} driving the adhesive contact system. 
\par
In \cite{RosTho12ABDM}, addressing the case without inertia, the above arguments were 
sufficient to pass to the limit in the momentum balance \eqref{weak-mom-intro},  
tested with the recovery test functions $v_k$ complying with \eqref{recovery-constraint}. 
In the present case, we have to face  an  additional difficulty, 
clearly related to problem (1), 
 namely  the 
\begin{enumerate}
\item[(2)] blow-up  as $k\to\infty$ of the estimates (by comparison) 
on the inertial terms $\ddot{u}_k$ in \eqref{weak-mom-intro}.
\end{enumerate}
We will overcome this by a careful refinement of the method from  \cite{RosTho12ABDM}. 
This will lead  us to construct a sequence of \emph{recovery spaces} for the space of test functions in the weak momentum balance \eqref{weak-mom-brittle-intro} for the brittle system. The crucial point will then be to observe that the terms $(\ddot{u}_k)_k$ are in fact suitably estimated in these spaces, which will allow for compactness arguments and, ultimately, the limit passage in \eqref{weak-mom-intro}. The limit passage in the energy-dissipation 
inequality for the adhesive contact system will  essentially follow from lower semicontinuity, 
while for the semistability condition we will make use of the by now standard 
\emph{mutual recovery sequence} 
argument from \cite{MRS06}.
\par
 In this way we will obtain  the  \textbf{main result of our paper},  
\underline{\textbf{Theorem \ref{thm:main}}}, 
stating the convergence of \semi energetic solutions to the adhesive contact 
systems to a \semi energetic solution of the brittle one, fulfilling 
\begin{compactitem}
\item[-] the weak momentum balance \eqref{weak-mom-brittle-intro},
\item[-] the energy-dissipation (in)equality, 
\item[-] the semistability condition.
 \end{compactitem}
 The latter  two relations feature the dissipation potential
 $\calR_\infty$  arising in the  limit of the energies 
$(\calR_k)_k$ from  \eqref{diss-en-intro}, and the 
  energy functional  \begin{equation}
\label{brittle-energy-brittle-intro}
\calE_\infty(t,u,z):=\int_{\Omega\backslash\GC}\tfrac{1}{2}\mathbb{C}e(u):e(u)
 -   \pairing{}{H^1(\Omega;\R^d)}{\mathbf{f}(t)}{u} 
+\int_{\GC} \left(J_\infty (\JUMP{u},z) -a^0_kz \right)
\,\mathrm{d}\Surf+ \calG_\infty(z)
\end{equation}
with $ J_\infty (\JUMP{u},z) $ the indicator function of the 
 brittle constraint \eqref{brittleC}, i.e.\ $ J_\infty (\JUMP{u},z)=0 $ 
if \eqref{brittleC} is satisfied and 
$ J_\infty (\JUMP{u},z)=\infty$ otherwise, and  $\calG_\infty$ the $\Gamma$-limit 
as $k\to\infty$ of the perimeter  energies $(\calG_k)_k$ 
from \eqref{gradient-regulariz-intro}.
Let us stress that, adapting some arguments from   \cite{DMLar11EWED},  
we shall prove that along \semi energetic solutions of the brittle system, 
the energy-dissipation inequality  actually holds as a balance, along any arbitrary interval 
$[s,t] \subset [0,T]$ for almost all $ s\leq t \in (0,T)$, and for $s=0$. 
\par
 Let us finally mention that our ansatz for $\calG_k$ 
and $\calR_k,$ cf.\ \eqref{gradient-regulariz-intro} 
and \eqref{diss-en-intro}, 
will allow for different scalings of the parameters $a_k^0$,  $a_k^1$,  
and $\mathrm{b}_k,$ cf.\ \eqref{coeffscale}. 
In this way, we can obtain different fracture models in the brittle limit. 
We will discuss the different options in Section \ref{MainResult}. 
\paragraph{\bf Plan of the paper.}
In Section \ref{s:2} we give our weak solvability notion for damped inertial  systems  with a 
mixed rate-independent/rate-dependent character. 
In particular,  in  Sec.\ \ref{ss:2.1} we  specify  it  in the context of the adhesive 
contact model and then state the existence of \semi energetic solutions to the adhesive system. 
In Sec.\ \ref{ss:2.3} we give the notion of \semi energetic solutions to the brittle system, while in Sec.\ 
\ref{MainResult} we present our main result, Theorem \ref{thm:main}, 
which  
 provides the existence of \semi energetic solutions for the 
brittle model in terms of an approximation result via the adhesive contact  systems. 
We also compare our result with other existing results on dynamic fracture. 
\par
The existence of \semi energetic solutions to the adhesive contact system is proved 
in Section \ref{s:3}, while the proof of Theorem \ref{thm:main} is carried out in Section \ref{s:4}. 
%
\section{Setup, solution concepts for the adhesive and brittle problems,  and preliminary results}
\label{s:2}
%
We start by fixing some general notation that will be used throughout the paper. 
 \begin{notation}\upshape
 We will denote by   $\norm{\cdot}{X}$   the norm of a 
Banach space   $X$,  and by $\pairing{}{X}{\cdot}{\cdot}$
the duality pairing between $X^*$ and $X$. If $X$ is a Hilbert space,
its inner product shall be denoted by $\inner{X}{\cdot}{\cdot}$. The symbols
(1) $\mathrm{B}([0,T]; X)$,  (2) $ \BV[0,T]; X)$, (3)
$ \mathrm{C}^0_{\mathrm{weak}}([0,T]; X)$ shall denote the spaces of functions with values 
in $X$ that are 
 defined at every $t\in [0,T]$ and are (1) bounded and measurable, (2) with bounded variation, (3)
  continuous with respect to the weak topology, respectively. 
\par
Moreover,   we shall often denote by the symbols
$c,\,\tilde{c},\, C,\,\tilde{C}$  various positive constants, whose meaning may vary from line to line,  depending only on
known quantities. 
\end{notation} 
  \paragraph{\bf Setup \& \semi energetic solutions for  damped inertial systems.}
\noindent Let us now specify the concept of  \emph{abstract damped inertial system}, and the 
associated notion of \emph{\semi energetic solution}, that will later apply both to the 
adhesive contact, and to the brittle systems.  We draw the following definitions 
from \cite{RosTho15CEx}, where 
the \semi energetic solution concept,  originally introduced in 
\cite{Roub08} for a class of  mixed rate-dependent/rate-independent systems 
in continuum  mechanics, was generalized to an abstract setting. 
Let us mention that, in  \cite{RosTho15CEx}
 a fairly broad class of  damped inertial  systems was tackled, in particular encompassing 
a dissipation potential $\dissu$ with general superlinear growth at infinity, 
and a non-convex (but still with appropriate properties)  dependence
$u\mapsto \ene tuz$. However, in view of the target adhesive contact and brittle systems, 
it will be sufficient to confine the discussion to a \emph{quadratic} dissipation potential, 
and to the case the  mapping    
$u\mapsto \ene tuz$ is \emph{convex}. 
 \par
In what follows, we will consider an 
abstract  damped inertial system
$\ingrsys$ to be  
given by:
\begin{subequations}
\label{basic-conds}
\begin{compactitem}
\item two Hilbert spaces 
\begin{equation}
\label{spaces-V-W}
\Spu  \text{ and } \Spw, \text{ $\Spw$ identified with its dual $\Spw^*$, such that  } 
\Spu \Subset \Spw 
\text{ compactly and densely},
  \end{equation}
  so that 
  $\Spu \subset \Spw = \Spw^* \subset \Spv^*$ continuously and densely, and $\pairing{}{\Spu}{w}{u} =
(w,u)_{\Spw} $ for all $ u\in \Spu  $  and $ w \in
\Spw$; 
\item a separable  Banach space $\Spz$;
\item a dissipation potential $\calV: \Spu \to [0,\infty)$ of the form 
\begin{equation}
\label{diss-V}
\dissu(v) = \tfrac12 a(v,v) \qquad \text{with } a : \Spu \times \Spu \to \R \text{ a continuous coercive bilinear form};
\end{equation}
\item a dissipation potential  $ \calR:\Spz\to[0,\infty]$, with domain 
$\mathrm{dom}(\calR)$,  lower semicontinuous, convex,  positively $1$-homogeneous and coercive i.e., 
\begin{equation}
\label{diss-R}
\begin{aligned}
&
\calR(\lambda \zeta)=\lambda\calR(\zeta)\quad \text{ for all }\zeta\in\Spz\text{ and }\lambda\geq0,
\\
&
\exists\, C_R>0 \ \ \forall\, \zeta \in \Spz \qquad \dissr(\zeta) \geq C_R \norm{\zeta}{\Spz};
\end{aligned}
\end{equation}
\item 
a \emph{kinetic energy} $\calK:\Spw\to[0,\infty),$ $\calK(v):=\frac{1}{2}\|v\|_\Spw^2,$ 
\item an \emph{energy functional} $\calE : [0,T] \times \Spv \times \Spz \to \R \cup \{\infty\}$,  
with proper domain 
$\mathrm{dom}(\calE)=[0,T]\times\mathbf{D}_u\times\mathbf{D}_z$, such that 
\begin{equation}
\label{basic-ass-E}
\begin{aligned}
&
\text{
$t\mapsto \calE(t,u,z)$ is differentiable} && \text{for all } (u,z)  \in \mathbf{D}_u\times\mathbf{D}_z,
\\
&
\text{$(u,z)\mapsto \ene tuz$ is lower semicontinuous} && \text{for all } t \in [0,T],
\\
& 
\text{$u \mapsto \ene tuz$  is convex} && \text{for all } (t,z)\in [0,T] \times\mathbf{D}_z\,.
\end{aligned}
\end{equation}
\end{compactitem}
In what follows, we shall denote by 
$\partial_u \calE : [0,T] \times \Spu \times \Spz \rightrightarrows \Spu^*
$  the  subdifferential  of the functional $\calE(t, \cdot, z)$ in the sense of convex analysis.  
We  postpone  to Section \ref{s:3} ahead the precise statement of
the  further conditions on $\calE$ required in the existence result from   \cite{RosTho15CEx} 
that we shall apply to deduce the existence of solutions to the adhesive contact system. 
Let us only  mention here that 
the assumptions on $z\mapsto \ene tuz$ (cf.\  the coercivity requirement
\eqref{assDisscoercE} ahead)
 also involve a second space 
$\Spx$ such that 
\begin{equation}
\label{sp-X-Z}
\Spx \text{ is the dual of a separable Banach space  and } \Spx \Subset \Spz \text{ compactly}. 
\end{equation}
\end{subequations}
\par
We are now in the position to state precisely the \semi energetic 
solution concept for  the damped inertial system  $\ingrsys$, which has been developed  in 
 \cite[Def.\ 3.1]{RosTho15CEx}
 based on a time-discrete scheme with \EEE alternating (decoupled) \EEE minimization
w.r.t.\ the variables $u$ and $z$. 
\begin{definition}[Semistable  energetic solution]
\label{def:energetic-sol}
We call a pair $(u,z) : [0,T] \to \Spu \times \Spz$  a \emph{\semi energetic solution} 
to the  damped inertial  system $\ingrsys$ if
\begin{subequations}
\label{regularity}
\begin{align}
&
\label{reg-u}
u \in 
  W^{1,1}(0,T;\Spu)\,, \quad  \dot u \in L^\infty (0,T; \Spw)\EEE\,,
 \quad \ddot u\in L^2(0,T;\Spu^*)\,,
\\
&
\label{reg-z}
z \in  
\mathrm{B}([0,T];\Spx) \cap  \mathrm{BV}([0,T];\Spz)
\end{align}
\end{subequations}
fulfill the
\begin{compactitem}
\item[-] the subdifferential inclusion
\begin{equation}
\label{eq-u}
\ddot u(t) + \partial\calV(\dot u(t)) + \partial_u\calE(t,u(t),z(t)) \ni 0 
\qquad \text{in } \Spu^* \quad \text{ for a.a.\ } t \in (0,T), 
\end{equation} 
\item[-] the  semistability condition 
\begin{equation}
\label{semistab-z} 
\calE(t,u(t),z(t)) \leq \calE(t,u(t),\tilde z) +
\calR(\tilde z {-}z(t)) \qquad \text{for all } \tilde z \in \Spz \
\text{for all } t \in [0,T];
 \end{equation}
\item[-]  the energy-dissipation  inequality 
 \begin{equation}
 \label{enineq}
 \begin{aligned}
\tfrac{1}{2}\|\dot u(t)\|_{\Spw}^2  & 
+ \int_0^t  2\calV(\dot u(s)) 
\,\mathrm{d} s + \Var_{\calR}(z, [0,t])+
 \calE(t,u(t),z(t))\\ 
&  \leq \tfrac{1}{2} \|\dot u(0)\|_{\Spw}^2 
+  \calE(0,u(0),z(0)) + \int_0^t \partial_t\calE(s,u(s),z(s))\,\mathrm{d}s 
\qquad \text{for all } t \in [0,T]\,,
 \end{aligned}
 \end{equation}
 with $\xi(s)$ a selection  in $\partial_u\calE(s,u(s),z(s))  $ 
fulfilling \eqref{eq-u}  for almost all $s\in (0,T)$  and 
 $\Var_{\calR}$ the notion of total variation induced by the dissipation potential $\calR$, i.e. 
\[
\Var_{\dissr}(z; [s,t]) := \sup\left\{ \sum_{j=1}^{N}
\dissr(z(r_j) - z(r_{j-1}))\, : \quad s= r_0<r_1<\ldots<r_{N-1}<r_N=t
\right\} 
\] for a given 
 subinterval $[s,t]\subset [0,T]$.
\end{compactitem}
\end{definition}
\begin{remark}[The energy-dissipation balance]
\upshape
In fact, for the adhesive contact system \eqref{adh-con-intro} 
we will prove in Thm.\ \ref{th:exist-mech} the existence of \semi energetic solutions fulfilling 
the energy-dissipation \emph{balance} along any interval $[s,t]\subset [0,T]$. Also for 
the brittle system, in Thm.\ \ref{thm:main}, 
we will show that any \semi energetic solution in fact complies with the energy-dissipation balance 
in any  interval $[s,t]\subset [0,T]$, \emph{for almost all} $s\leq t \in (0,T)$ 
and for $s=0$. 
\end{remark}
\paragraph{\bf Basic assumptions.}
Before specifying  the above notions in the context of  
the adhesive contact and brittle systems, 
let us establish some  basic conditions on the domains $\Omega$ and $\GC$, and on 
the problem data, in common to the adhesive and brittle models. 
\\
\textbf{Assumptions on the reference domain: } 
We suppose that
\begin{subequations}
\label{ass-domain}
\begin{eqnarray}
\label{ass-domain1}
\hspace*{-3em}
&&    
\text{$\Omega\subset\R^d$, $d \geq 2$, is
bounded, $\Omegatwo,\, \Omegaone,\, \Omega$ are Lipschitz domains,
$\Omegaone \cap \Omegatwo=\emptyset$}\,,
\\
\label{ass-domain2}
\hspace*{-3em}
&&
\partial \Omega = \GD\cup \GN,
\text{ $\GD,\,\GN$ open subsets in $\partial\Omega$,} \
\\
\label{ass-domain2+}
\hspace*{-3em}
&&\GD\cap \GN=\emptyset, \ \barGD\cap\barGC=\emptyset, \ 
\mathcal{H}^{d-1}(\GD\cap\barOmegaone)>0\,,\
\mathcal{H}^{d-1}(\GD\cap\barOmegatwo)>0\,,
\\
\label{flatness}
\hspace*{-3em}
&& \hspace*{-.4em}
\begin{array}{l}
\text{$\GC=\barOmegaone\cap\barOmegatwo\subset\R^{d-1}$ is a 
convex ``flat'' surface, i.e.\ contained in a hyperplane of }\R^d,\\
\hspace{10em}\text{such that, in particular, }\;
\mathcal{H}^{d-1}(\GC) = \mathcal{L}^{d-1}(\GC) >0\,,
\end{array}
\end{eqnarray}
where $\mathcal{H}^{d-1},$ resp.\ $\mathcal{L}^{d-1},$ denotes 
the $(d{-}1)$-dimensional Hausdorff, resp.\ Lebesgue measure.
\end{subequations}
 In what follows, we will use the notation
\[
H^1_\mathrm{D}(\Omega\backslash\GC;\R^d):= \{ v \in H^1(\Omega\backslash\GC;\R^d)\, : \ v=0\text{ a.e.\ on }\GD\}\,.
\] 
\begin{remark}
\label{rmk:Gammac}
\upshape
The condition that $\GC$ is contained in a hyperplane has no substantial 
role for our analysis but to simplify  arguments and  notation. 
Instead, the convexity of $\GC$ is essential for  the proof of the adhesive-to-brittle 
limit passage (whereas it is not needed in the analysis of the adhesive contact system). 
Indeed,  it is at the basis of a \emph{uniform relative isoperimetric inequality}  
from \cite[Thm.\ 3.2]{Tho13UPSI}, which in turn is the basic ingredient for the proof of 
the support convergence \eqref{suppconv-intro}, cf.\ also Sec.\ \ref{FineProps}. 
\end{remark}  
\noindent
\textbf{Assumptions on the given data: }  
For  the tensors $\mathbb{C},\mathbb{D}\in\R^{d\times d\times d\times d}$ 
and the function $\mathbf{f}$ in \eqref{weak-mom-brittle-intro}, we require that  
\begin{subequations}
\label{assdata} 
\begin{equation}
\label{assCD} 
\begin{split}
&\mathbb{C},\mathbb{D}\in\R^{d\times d\times d\times d}
\text{ are symmetric and positive definite, i.e., }\\
&\exists\,C_\mathbb{C}^1,C_\mathbb{C}^2,C_\mathbb{D}^1,C_\mathbb{D}^2>0,\forall\,
\eta\in\R^{d\times d}:\;
C_\mathbb{C}^1|\eta|^2\leq\eta:\mathbb{C}\eta\leq C_\mathbb{C}^2|\eta|^2
\;\text{and}\;
C_\mathbb{D}^1|\eta|^2\leq\eta:\mathbb{D}\eta\leq C_\mathbb{D}^2|\eta|^2\,,
\end{split} 
\end{equation}
\begin{equation}
\label{assRegf}
\mathbf{f}\in \mathrm{C}^1([0,T];\Spu^*)\text{ and }
\sup_{t\in[0,T]}\big(\|\mathbf{f}(t)\|_{\Spu^*}
+\|\dot{\mathbf{f}}(t)\|_{\Spu^*}\big)\leq C_\mathbf{f}\,.
\end{equation}
\end{subequations}
Moreover, to keep notation and arguments simple, we prescribe \emph{homogeneous Dirichlet data} on $\GD$. 
%
\subsection{Semistable  energetic solutions  to the evolutionary adhesive contact\\ system}
\label{ss:2.1}
%
The adhesive contact evolutionary system falls within the class of  damped inertial  systems, with 
the following choices of 
\par
\noindent
\textbf{Function spaces: } \\[-8mm]
\begin{subequations}
\label{function-spaces}
\begin{eqnarray}
\Spu & 
&  H^1_\mathrm{D}(\Omega\backslash\GC;\R^d)\,, 
\\
\Spw&=&L^2(\Omega;\R^d)\;\text{ endowed with the norm }
\|v\|_\Spw:=\Big(\int_\Omega\varrho|v|^2\,\mathrm{d}x\Big)^{1/2}\,,\\
\Spz&=&L^1(\GC)\,,\\
\Spx&=&\SBV(\GC;\{0,1\})\,,
\end{eqnarray}
\end{subequations}
 where the space $\Spx$ is related to the perimeter regularizing term contributing 
to the energy functional $\calE_k$, cf.\ \eqref{defEk} below. 
 Observe that, due to the positivity and boundedness of the mass density $\varrho,$ 
the space $\Spw$ is identified with $L^2(\Omega;\R^d)$. \\
\par
\noindent
\textbf{Dissipation potentials and energy   functionals for the adhesive case, $k\in\N$: }
For each $k\in\N$ the adhesive systems $\ingrsysk$ 
are governed by the functionals corresponding to  the  kinetic energy $\calK,$ 
the viscous dissipation $\calV,$ the rate-independent 
dissipation $\calR_k,$  and the mechanical energy $\calE_k$ 
defined as follows:
\begin{eqnarray}
\label{defEkin}
&&\calK(\dot u):=\tfrac{1}{2}\|\dot u\|^2_\Spw\,,\\
\label{defV}
&&\calV:\Spu\to[0,\infty)\,,\;
\calV(\dot u):=\int_{\Omega\backslash\GC}\tfrac{1}{2}\mathbb{D} e(\dot u): e(\dot u)\,\mathrm{d}x\,,  \\
\label{defRk}
&&\calR_k:\Spz\to[0,\infty]\,,\;
\calR_k(\dot z):=\int_{\GC} R_k(\dot z)\,\mathrm{d}\Surf\,,\;
R_k(\dot z):=\left\{
\begin{array}{ll}
a_k^1|\dot z|&\text{if }\dot z\leq 0\,,\\
\infty&\text{otw.}\,;  
\end{array}
\right.
\\
\nonumber
&&\calE_k:[0,T]\times\Spu\times\Spz\to\R\cup\{\infty\}, \;\text{ defined for all }t\in[0,T]:\\
\label{defEk}
&&\calE_k(t,u,z):=\left\{
\begin{array}{ll}
\widetilde\calE_k(t,u,z) +\calJ_k (u,z) &
\text{if }(u,z)\in\Spv\times\Spx\,,\\
\infty&\text{otw.}
\end{array}
\right.\qquad\text{with }\\
\nonumber
&&\widetilde\calE_k(t,u,z):=\int_{\Omega\backslash\GC}\tfrac{1}{2}\mathbb{C}e(u):e(u) \dd x  
- \pairing{}{\Spv}{\mathbf{f}(t)}{u} 
+\int_{\GC}\left( I_{[0,1]}(z)
{-} a^0_kz \right)  
\,\mathrm{d}\Surf+\mathrm{b}_kP(Z,\GC)\,,  \text{ and }
\\
\label{JKfunc} 
&& 
\calJ_k (u,z)   : = \int_{\GC} \tfrac{k}2 z \left| \JUMP{u}\right|^2 \dd \Surf\,. 
\end{eqnarray}
As already mentioned in the Introduction, hereafter 
$Z$ shall denote a subset of $\GC$ with finite perimeter 
$P(Z,\GC)$ in $\GC$ such that  $z=\chi_Z\in\{0,1\}$ is  its characteristic function. 
\par 
Observe that, for every $k\in \N$ the functional 
$u\mapsto \calE_k(t,u,z)$ is G\^ateaux-differentiable, in addition to being convex. 
Therefore, at every $(t,u,z) \in [0,T] \times \mathbf{D}_u\times\mathbf{D}_z
$ its subdifferential
$\partial_u \calE_k(t,u,z)$ reduces to a singleton, whose unique element is  still denoted 
by $\partial_u \calE_k$ with a slight abuse of notation,  and given  
for all $v \in \Spu$ by  
\begin{equation}
\label{GderEk}
\begin{aligned}
& 
\langle\partial_u\calE_k(t,u,z),v\rangle_\Spu=
\int_{\Omega\backslash\GC}\mathbb{C}e(u):e(v)\,\mathrm{d}x
-\langle\mathbf{f}(t),v\rangle_\Spu  
+  \langle\partial_u\calJ_k(u,z),v\rangle_\Spu,  \ \text{with }   
\\ &  \langle\partial_u\calJ_k(u,z),v\rangle_\Spu=
\int_{\GC} kz\JUMP{u}\JUMP{v}\,\mathrm{d}\Surf  \,.  
\end{aligned}
\end{equation}
Therefore, taking into account of the form \eqref{defV} of the 
dissipation potential $\calV$, the subdifferential inclusion  \eqref{eq-u}  
yields the momentum equation \eqref{weak-mom-intro}. 
\noindent
Moreover, let us point out that our 
analysis will cover the following two cases for the coefficients
\begin{subequations}
\label{coeffscale} 
\begin{eqnarray}
&&
\label{coeffscale-a}
a_k^0=a^0=\mathrm{const.}\,,\; a_k^1=a^1=\mathrm{const.}\,,\; 
\mathrm{b}_k=\mathrm{b}=\mathrm{const.}\,,\quad\text{or}\\
&&
\label{coeffscale-b}
a_k^0=\tfrac{a^0}{k}\,,\; a_k^1=\tfrac{a^1}{k}\,,\; 
\mathrm{b}_k=\tfrac{\mathrm{b}}{k}\,.
\end{eqnarray}
\end{subequations}
 We postpone to Sec.\ \ref{MainResult} 
a  discussion of the scaling \eqref{coeffscale-b}. 
Let us only mention here that, since with the scaling \eqref{coeffscale-b} 
for the coefficients $\mathrm{b}_k$, the constraint $z\in \{0,1\}$ 
is no longer ensured in the brittle limit $k\to\infty$, the term $ I_{[0,1]}$ 
contributing to $\calE_k$ (and to $\calE_\infty,$  cf.\  \eqref{defEinfty} below) 
has the role to enforce, for $k\in \N\cup\{\infty\}$, that $z$ 
takes values in $[0,1]$. This is crucial not only for  
the physical consistency of the model, but also for technical reasons 
related to the construction of the recovery sequence for the limit passage as $k\to\infty$ 
in the semistability condition. 
\par
The existence of \emph{\semi energetic solutions}  $(u_k,z_k)$
to the  evolutionary  adhesive contact systems $\ingrsysk$ 
will be deduced from the abstract 
results of \cite{RosTho15CEx} 
in Section \ref{s:3} ahead, 
where we will also derive estimates \eqref{unifbdsk} \& \eqref{kdepbd} on the functions  
$(u_k,z_k)_k$. Let us  mention in advance that, independently from  
the bound \eqref{unifbdDiss} on the total variation of $z_k$ 
induced by $\calR_k$, we also need to derive an estimate 
for $z_k$ in $\BV ([0,T];L^1(\GC))$, due to the possible 
degeneracy of the coefficients $a_k^1$ when  scaled as in \eqref{coeffscale-b}. 
\begin{theorem}[Existence of \semi energetic solutions for $k\in\N$ fixed, uniform bounds, uniqueness]
\label{th:exist-mech}
Assume \eqref{ass-domain}--\eqref{assdata}. 
For each $k\in\N$   the  damped inertial  system $\ingrsysk$ 
admits a \semi energetic solution  
$(u_k,z_k)$ in the sense of Def.\ \ref{def:energetic-sol} 
starting from initial data  $(u_0,u_1,z_0)\in\Spu  \times\Spw  \times\Spz$
 fulfilling the semistability \eqref{semistab-z} at $t=0$ with $\calE_k$ 
and $\calR_k$, cf.\ \eqref{init-semistab}.
 \par
In addition, for every $k\in\N$ the energy-dissipation inequality even holds as an 
equality along any interval 
$[s,t]\subset[0,T]$:
\begin{equation}
 \label{eneq}
 \begin{aligned}
\tfrac{1}{2}\|\dot u_k(t)\|_{\Spw}^2  & 
+ \int_s^t  2\calV(\dot u_k(\tau)) 
\,\mathrm{d} \tau + \Var_{\calR_k}(z_k, [s,t])+
 \calE_k(t,u_k(t),z_k(t))\\ 
&  = \tfrac{1}{2} \|\dot u_k(s)\|_{\Spw}^2  
+  \calE_k(s,u_k(s),z_k(s)) + \int_s^t \partial_t\calE_k(\tau,u_k(\tau),z_k(\tau))\,\mathrm{d}\tau \,.
 \end{aligned}
 \end{equation}
Furthermore, for a given $z\in L^\infty(0,T;\SBV(\GC;\{0,1\}))$,  if 
 $(u,z)$ and $(\tilde u,z)$ both satisfy the adhesive momentum balance  
with the same initial data $u_0$ and $u_1$, 
then $\tilde u=u$.   
\par 
Finally, there exists a constant $C>0$, only depending on the initial data 
$(u_0,u_1,z_0)$ and on the given data, such that the functions  
 $(u_k,z_k)_k$ satisfy the following 
 bounds,  uniform  in  $k\in\N$: 
\begin{subequations}
\label{unifbdsk}
\begin{eqnarray}
\label{unifbdE}
\sup_{t\in[0,T]}\big(\calE_k(t,u_k(t),z_k(t))+\partial_t\calE_k(t,u_k,z_k)\big)&\leq&C\,,\\
\label{unifbdDiss}
\int_0^T\calV(\dot u_k(s))\,\mathrm{d}s+\Var_{\calR_k}(z_k,[0,T])&\leq&C\,,\\
\label{unifbdu}
 \|u_k\|_{H^1(0,T;\Spu)} + \| \dot{u}_k \|_{L^\infty (0,T;\Spw)} 
&\leq&C\,,\\
\label{unifbdzDiss}
\| z_k\|_{\BV([0,T]; L^1(\GC))} 
 &\leq&C\,,\\
\label{unifbdz}
\sup_{t\in[0,T]}\big(P(Z_k(t),\GC)+\|z_k(t)\|_{L^\infty(\GC)}\big)&\leq&C\,,\\
\label{unifbdcombi}
\|\ddot u_k+ \partial_u\calJ_k(\cdot,u_k,z_k)  \|_{L^2(0,T;\Spu^*)}&\leq& C\,.
\end{eqnarray}
\end{subequations}
Furthermore, $(u_k,z_k)$ satisfy the following $k$-dependent bounds for a.a.\ $t\in(0,T)$: 
\begin{subequations}
\label{kdepbd}
\begin{eqnarray}
\label{kdepsubgr}
\exists\, c,C>0,\,\forall\,k\in\N:\quad
& \| \partial_u \calJ_k (u_k,z_k) \|_{L^2(0,T;\Spu^*)} &  \leq    \sqrt{k}C + c\,,\\
\label{kdepddu}
\exists\, \tilde c,\widetilde C>0,\,\forall\,k\in\N:\quad
&  \|\ddot u_k\|_{L^2(0,T;\Spu^*)}  &\leq\sqrt{k}\widetilde C+\tilde c\,.
\end{eqnarray}
\end{subequations}
\end{theorem}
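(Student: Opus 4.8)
The plan is to invoke the abstract existence result from \cite{RosTho15CEx} for damped inertial systems with a mixed rate-dependent/rate-independent character, after verifying that, for each fixed $k\in\N$, the septuple $\ingrsysk$ built from \eqref{function-spaces}--\eqref{JKfunc} satisfies the structural hypotheses \eqref{basic-conds} together with the further conditions on $\calE_k$ postponed to Section~\ref{s:3}. The compact-dense embedding $\Spu=H^1_\mathrm{D}(\Omega\backslash\GC;\R^d)\Subset\Spw=L^2(\Omega;\R^d)$ follows from Rellich's theorem; the coercivity of $\calV$ rests on Korn's inequality on $\Omega\backslash\GC$ combined with the homogeneous Dirichlet condition on $\GD$ and \eqref{assCD}; the potential $\calR_k$ from \eqref{defRk} is manifestly convex, lower semicontinuous, positively $1$-homogeneous and coercive on $\Spz=L^1(\GC)$ since $a_k^1>0$; and the compact embedding \eqref{sp-X-Z} with $\Spx=\SBV(\GC;\{0,1\})$ is the standard $\BV$-compactness on the bounded Lipschitz set $\GC$. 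The core point is to check that $\calE_k$ is lower semicontinuous on $\Spv\times\Spx$, convex and G\^ateaux-differentiable in $u$ with derivative \eqref{GderEk}, differentiable in $t$ with $\partial_t\calE_k(t,u,z)=-\langle\dot{\mathbf f}(t),u\rangle$ bounded via \eqref{assRegf}, and coercive in the sense required by \cite{RosTho15CEx}; here the $\Gamma$-convergence-type lower semicontinuity of the perimeter term $\mathrm b_k P(Z,\GC)$ under $L^1(\GC)$-convergence of characteristic functions, and the weak lower semicontinuity of $\calJ_k$ (which is convex and nonnegative since $z\ge0$), are the ingredients. Once these are in place, \cite[Thm.~?]{RosTho15CEx} yields a \semi energetic solution $(u_k,z_k)$ in the regularity class \eqref{regularity}, starting from any $(u_0,u_1,z_0)$ with $z_0$ semistable at $t=0$.

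For the \textbf{energy-dissipation balance} \eqref{eneq} I would argue that the inequality "$\le$" is already contained in \eqref{enineq}, while the reverse inequality "$\ge$" follows by testing the momentum inclusion \eqref{eq-u} with $\dot u_k$ — legitimate because $\dot u_k\in L^2(0,T;\Spu)$ and $\ddot u_k\in L^2(0,T;\Spu^*)$, so the chain rule $\langle\ddot u_k,\dot u_k\rangle=\tfrac12\tfrac{\d}{\d t}\|\dot u_k\|_\Spw^2$ holds in the sense of the $W^{1,2}$--$H^1$ duality (this is exactly where the quadratic, convex structure of $\calE_k$ in $u$ and the identification $\Spw=\Spw^*$ are used) — and combining this with the chain rule for $t\mapsto\calE_k(t,u_k(t),z_k(t))$ and the semistability \eqref{semistab-z}, which controls the rate-independent dissipation $\Var_{\calR_k}(z_k,[s,t])$ from above by the jump of the energy; the two one-sided estimates then pinch to equality on every $[s,t]\subset[0,T]$. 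This is the classical energetic-solution bookkeeping, adapted to the inertial term; the only subtlety is justifying the integration by parts in time for $\langle\ddot u_k,\dot u_k\rangle$, which is handled by a density/mollification argument in time exploiting \eqref{reg-u}.

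For \textbf{uniqueness} of $u$ given $z$: if $(u,z)$ and $(\tilde u,z)$ both solve the adhesive momentum balance \eqref{weak-mom-intro} with the same $u_0,u_1$, set $w:=u-\tilde u$; then $w$ solves a linear equation $\varrho\ddot w-\mathrm{div}(\bbD e(\dot w)+\bbC e(w))=0$ in the bulk with the linear boundary term $kz\JUMP{w}$ on $\GC$, zero initial data and zero data; testing with $\dot w$ and using the nonnegativity of the quadratic forms associated with $\bbD$, $\bbC$ and with $\int_{\GC}kz|\JUMP{w}|^2$ (again $z\ge0$) gives $\tfrac12\|\dot w(t)\|_\Spw^2+(\text{nonnegative terms})\le0$, hence $\dot w\equiv0$, so $w\equiv0$. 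Finally, the \textbf{a priori bounds} \eqref{unifbdsk} and \eqref{kdepbd}: writing the energy balance \eqref{eneq} on $[0,t]$ and using Gronwall together with \eqref{assRegf} and the coercivity estimates on $\calE_k$ yields \eqref{unifbdE}--\eqref{unifbdz} with constants independent of $k$ in both scalings \eqref{coeffscale} (the $k$-uniformity of the energy bound uses that $\calJ_k\ge0$ and $a_k^0\le a^0$, while \eqref{unifbdzDiss} in the degenerate scaling \eqref{coeffscale-b} needs the separate $\BV([0,T];L^1(\GC))$ estimate obtained by testing the flow rule, or directly from the monotonicity $\dot z_k\le0$ and the uniform $L^\infty$ bound on $z_k$); the quantity $\ddot u_k+\partial_u\calJ_k(\cdot,u_k,z_k)$ is bounded in $L^2(0,T;\Spu^*)$ by comparison in \eqref{eq-u}, using that $\|\bbC e(u_k)\|$ and $\|\mathbf f\|_{\Spu^*}$ are controlled uniformly, which gives \eqref{unifbdcombi}; and the $k$-dependent bounds \eqref{kdepbd} follow by estimating $\partial_u\calJ_k$ alone — $\|\partial_u\calJ_k(u_k,z_k)\|_{\Spu^*}\lesssim k\|z_k\JUMP{u_k}\|$, and since $z_k|\JUMP{u_k}|^2$ is controlled in $L^\infty(0,T;L^1(\GC))$ by the energy bound one gets $\|z_k\JUMP{u_k}\|_{L^2(\GC)}\lesssim 1/\sqrt k$ using $z_k\in[0,1]$, whence $\|\partial_u\calJ_k\|\lesssim\sqrt k$ — and then \eqref{kdepddu} follows from \eqref{unifbdcombi} and the triangle inequality.

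The \textbf{main obstacle} is the verification that $\calE_k$ meets the precise coercivity and closedness hypotheses demanded in \cite{RosTho15CEx} — in particular the interaction, within those hypotheses, between the $u$-component (coercive in $\Spu$ modulo the jump term) and the $z$-component (coercive only in the weak $\SBV$-sense through the perimeter) — and the $k$-uniformity of all resulting constants, which must be tracked carefully precisely because $\calJ_k$ and, in scaling \eqref{coeffscale-b}, the coefficients $a_k^0,a_k^1,\mathrm b_k$ degenerate as $k\to\infty$; everything else is a by-now-standard application of the energetic-solution machinery.
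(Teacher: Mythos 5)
Your overall strategy matches the paper's: invoke the abstract existence theorem from \cite{RosTho15CEx} after verifying the structural hypotheses (the paper verifies \eqref{basic-conds}, \eqref{assE}--\eqref{assSubgr} one by one), then derive the energy equality, uniqueness, and a priori bounds. However, there is one genuine gap and two places where your argument glosses over a step that actually matters.

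\textbf{The gap: the perimeter bound \eqref{unifbdz}.} You claim \eqref{unifbdE}--\eqref{unifbdz} follow from the energy balance via Gronwall and coercivity. This fails for the perimeter term under the degenerate scaling \eqref{coeffscale-b}, where $\mathrm{b}_k=\mathrm{b}/k\to0$: the uniform energy bound $\calE_k(t,u_k,z_k)\le C$ only gives $\mathrm{b}_kP(Z_k(t),\GC)\le C$, hence $P(Z_k(t),\GC)\lesssim k$, which blows up. The paper instead derives \eqref{unifbdz} directly from the $k$-semistability inequality \eqref{semistab-z}: choosing the competitor $\widetilde Z=\emptyset$ and multiplying through by the appropriate power of $k$ removes the $k$-dependence of the coefficients and gives a uniform perimeter bound. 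This is not cosmetic — without a $k$-uniform perimeter bound the compactness arguments of Section~4 (Helly selection, support convergence) collapse. Your parenthetical remark flags the need for a separate argument only for \eqref{unifbdzDiss}, not for \eqref{unifbdz}.

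\textbf{Uniqueness.} You argue that testing with $\dot w$ produces, from the surface term, a nonnegative quantity $\int_{\GC}kz|\JUMP{w}|^2$. What actually appears after testing is $\int_0^t\int_{\GC}kz\JUMP{w}\cdot\JUMP{\dot w}$, which is \emph{not} a quadratic form and whose sign is only clear if one can justify the chain rule $\tfrac{d}{dt}\int_{\GC}\tfrac{k}{2}z|\JUMP{w}|^2=\int_{\GC}kz\JUMP{w}\JUMP{\dot w}+\tfrac{k}{2}\int_{\GC}\dot z|\JUMP{w}|^2$ — delicate because $z$ is only $\BV$ in time, and applicable only when $z$ is nonincreasing, whereas the theorem's uniqueness statement is for an arbitrary $z\in L^\infty(0,T;\SBV(\GC;\{0,1\}))$. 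The paper sidesteps this entirely: it moves the surface term to the right-hand side, bounds $k\int_0^t\int_{\GC}z|\JUMP{w}||\JUMP{\dot w}|$ by Young's inequality (absorbing one piece into $2\calV(\dot w)$, allowing the other, $k$-dependent, constant into Gronwall), and concludes $w\equiv0$. That argument is both more elementary and valid for arbitrary $z$.

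\textbf{The energy equality.} Your sketch "test with $\dot u_k$, use chain rules, invoke semistability" is in the right spirit, but the one nontrivial step — the chain-rule-type inequality for $\int_0^t\int_{\GC}kz_k\JUMP{u_k}\JUMP{\dot u_k}$ — is not a consequence of standard Gelfand-triple identities, again because $z_k$ is only $\BV$ in time. The paper obtains it via a Riemann-sum argument applied to the semistability condition on a well-chosen partition (following \cite{Roub10TRIP}, \cite{RosRou10TARA}); alternatively one can directly invoke \cite[Prop.\ 3.5, Thm.\ 3.6]{RosTho15CEx}. You should either cite that result or spell out the Riemann-sum step; it is the crux of the "$\ge$" direction.

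The remaining parts — the verification of the abstract hypotheses, the $k$-dependent bound \eqref{kdepsubgr} via $\|z_k\JUMP{u_k}\|_{L^2(\GC)}\lesssim k^{-1/2}$ from the energy bound and $z_k\in\{0,1\}$, and \eqref{kdepddu} from \eqref{unifbdcombi} by the triangle inequality — are essentially as the paper argues.
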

\begin{remark}[The non-penetration condition]
\upshape
\label{rmk:no-nonpen}
Observe that the adhesive contact model so far considered does not include the non-penetration 
constraint ensuring that the two parts of the body, 
$\Omega_-$ and $\Omega_+$, 
 cannot interpenetrate along the contact surface $\GC$, namely
 \begin{equation}
 \label{non-penetration}
 \JUMP{u} \cdot \mathbf{n} \geq 0,
 \end{equation}
 with $\mathbf{n} $  the unit normal to $\Gamma$ oriented from $\Omega_+$ to $\Omega_-$. 
Condition \eqref{non-penetration} would be rendered by 
  an additional  contribution to the energy functional $\calE_k$ \eqref{defEk} of the form 
  $\int_{\GC} I_{K}(\JUMP{u}) \dd \Surf$, with 
   $ I_{K}(\JUMP{u})  = I_{K(x)} ((\JUMP{u(x)}))$ for 
   $\Surf$-a.a.\ $x\in \GC$ and
    and $I_{K(x)} $ the indicator function of the cone 
$K(x): = \{ v\in \R^d\, : \ v \cdot \mathrm{n}(x) \geq 0 \}$, and it 
would give rise to the so-called \emph{Signorini conditions} on the contact surface.
\par
Indeed, even in the realm of adhesive contact,  the  analysis of 
the momentum balance equation with inertia and Signorini conditions poses remarkable challenges: 
in particular,  the existence 
 of  solutions  to the dynamic problem for unilateral contact, possibly complying with an  energy 
balance, seems to be
  an open problem, in the case of bounded domains (whereas for unbounded domains we refer to 
the results in 
  \cite{Pe-Scha1, Pe-Scha2}). 
  \par
  Very recently, 
  a technique based  on duality methods 
in Sobolev-Bochner spaces has emerged in \cite{Sca-Schi}, leading to existence results for    
a suitable weak notion solution (with the energy balance  still missing, though). 
Possibly  relying on this approach,  we intend to address the adhesive-to-brittle 
limit in the dynamic case 
with Signorini conditions 
in a forthcoming study.
\end{remark}
\par 
\begin{remark}[Alternatives to the  non-penetration condition]
\label{rmk:alternative}
\upshape
 The existence result  from Theorem \ref{th:exist-mech} can be extended to the case  where 
suitable boundary conditions are imposed 
on the contact surface $\GC$, \emph{alternative} to the non-penetration constraint
\eqref{non-penetration}. 
Namely, as proposed in 
\cite{RosRou10TARA} and arguing in the very same way as therein, we could 
include in the adhesive contact energy functional $\calE_k$ from \eqref{defEk} the  term 
$\int_{\GC} I_{K(x)}(\JUMP{u(x)}) \dd \Surf$, with  the 
$x$-dependent \emph{linear subspaces} $K(x)$ e.g.\ given by 
\[
\begin{aligned}
K(x) : = \{ v\in \R^d\, : \ v \cdot \mathbf{n}(x) =0 \}.
\end{aligned}
\] 
This would prescribe  a  zero normal jump of the displacement, and thus, 
in this way, we would allow only for tangential slip along 
$\GC$.  
\end{remark}
%
\subsection{Semistable energetic solutions for the brittle system}
\label{ss:2.3}
%
 Let us now specify the functional analytic setting for the brittle system:
\\
\textbf{Function spaces:}
In addition to the spaces $\Spu,\, \Spw,\, \Spz,\, \Spx$ from \eqref{function-spaces}, 
we will work with the following family of time-dependent spaces, defined for  all $t\in [0,T]$:  
\begin{equation}
\label{spu-z}
\begin{gathered}
\Spu_z(t)
 =\{v\in H^1_\mathrm{D}(\Omega\backslash\GC;\R^d)\, : \ 
 \JUMP{v}=0
\text{ a.e.\ on }\supp z(t)\subset\GC\}\,,
\\
  \text{for a given } z \in L^\infty (0,T; \SBV(\GC;\{0,1\}))   \cap \BV ([0,T]; L^1(\GC))     
\text{ such that }
  \\
 z(t_2) \leq z(t_1) \text{ for all } 0\leq t_1 \leq t_2 \leq T\,. 
\end{gathered}
\end{equation} 
These
will be the spaces for the test functions in the weak formulation of the momentum balance for the 
brittle system.
Observe that, with these spaces we are enforcing a constraint slightly stronger than 
$z |\JUMP{v}|=0,$  cf.\ also Remark \ref{rmk:subtle} ahead.
\par
As we will see (cf.\ Proposition \ref{PropsSpuM} later on), 
$\Spu_z(t)$, endowed with the norm induced by $H^1(\Omega\backslash\GC;\R^d) $,  
is a closed subspace of $\Spu$.
Hence, by the Hahn-Banach theorem every $\xi\in  \Spu_z(t)^*$ can be extended 
to  a functional $\tilde \xi \in \Spu^*$ such that 
\begin{equation}
\label{HB}
\pairing{}{\Spu_z(t)}{\xi}{v} = \pairing{}{\Spu}{\tilde \xi}{v} \quad 
\text{for all } v \in \Spu_z(t) \quad \text{and} 
\quad \| \xi \|_{\Spu_z(t)^*}= \|\tilde\xi\|_{\Spu^*} 
\end{equation}
Moreover, $\Spu_z(t)$ is 
 continuously and densely embedded  in $\Spw,$ cf.\ \eqref{function-spaces}.  
Hence, $\Spw$ is continuously and densely embedded in the dual space 
$\Spu_z(t)^*$ and for every $t\in [0,T]$ there holds
\begin{equation}
\label{to-gelfand}
\pairing{}{\Spu_z(t)}{\xi}{v} = \inner{\Spw}{\xi}{w} \qquad \text{for all } v \in \Spu_z(t) 
\text{ and all } \xi \in \Spw.
\end{equation}
Moreover, due to  the monotonicity 
of the function  $z(\cdot,x):[0,T]\to\{0,1\}$ for a.a.\ $x\in\GC$, 
we have that
$\supp(z(t_2))) \subset \supp (z(t_1))$ for every $0 \leq t_1 \leq t_2 \leq T$, and therefore
 \begin{equation}
 \label{monotonicity-Vz-later-used}
 \Spu_z(t_1) \subset \Spu_z(t_2) \qquad \text{for all $0\leq t_1 \leq t_2 \leq T$.}
 \end{equation}
  Accordingly, for every $\xi \in \Spu_z(t_2)^*$ we can consider its restriction to 
$\Spu_z(t_1)$, which gives an element of $\Spu_z(t_1)^*$ defined by 
$\pairing{}{\Spu_z(t_1)}{\xi|_{\Spu_z(t_1)}}{v} = \pairing{}{\Spu_z(t_2)}{\xi}{v}$. 
The restriction map  is continuous and is indeed the adjoint of the embedding  
$\Spu_z(t_1) \subset \Spu_z(t_2)$.
Therefore, there holds
\begin{equation}
\label{agreed?}
\Spu_z(t_2)^* \subset \Spu_z(t_1)^* \text{
continuously for all $0\leq t_1 \leq t_2 \leq T$.}
\end{equation}
\par
 We will also work with the space
\begin{align}
&
\label{L2VZ}
L^2(0,T; \VZ): = \{ v \in L^2(0,T;\Spu)\, : \ v(t) \in \Spu_z(t) \ \foraa\, t \in (0,T)\},
\\
\end{align}
endowed with the norm $\|\cdot\|_{L^2(0,T;\Spu)}$,   and with 
\begin{subequations}
\label{L2VZ-dual}
\begin{equation}
\label{L2VZ-dual-1}
\begin{aligned}
L^2(0,T; \VZ^*): = &  \{\xi \in L^2(0,T; \Spu_z(0)^*) \, : 
\\
& \quad 
\xi(t) \in  \Spu_z(t)^* \ \foraa t \in (0,T),\,
t \mapsto \|\xi(t)\|_{\Spu_z(t)^*} \in L^2(0,T)  \},
\end{aligned}
\end{equation}
endowed with the norm 
\begin{equation}
\label{L2VZ-dual-2}
\| \xi \|_{L^2(0,T;\VZ^*)}:=\sup_{v\in L^2(0,T;\VZ)} 
\left|  \int_0^T \pairing{}{\Spu_z(t)}{ \xi(t)}{v(t)} \dd t \right |.
\end{equation} 
\end{subequations}
 Observe that, underlying definition 
\eqref{L2VZ-dual}
is the fact that $ \Spu_z(t)^* \subset  \Spu_z(0)^*$ for all $t\in [0,T];$ we also refer to 
Prop.\ \ref{PropsSpuM} for more details.    
Finally, let us  also introduce the Sobolev space 
\begin{equation}
\label{Sobolev-VZ}
H_{\#}^2(0,T; \VZ^*): = \{ u \in H^{1}(0,T; \Spu^*)\, : \ddot{u} \in L^2(0,T; \VZ^*)\}, 
\end{equation}
with  $\ddot{u}$  the  (second-order in time) \emph{weak} distributional derivative, to be understood 
at almost all $t\in (0,T)$
as the weak limit of the difference quotients  $\frac{\dot{u}(t+h)-\dot{u}(t)}{h}$ 
in $\Spu_z(t)^*$, namely
\begin{equation}
\label{weak-ddot-u}
 \pairing{}{\Spu_z(t)}{\ddot{u}(t)}{v} = 
\lim_{h\to 0}   \pairing{}{\Spu_z(t)}{\frac{\dot{u}(t+h)-\dot{u}(t)}{h}}{v} 
\qquad \text{for all } v \in \Spu_z(t).
\end{equation}
The basic properties of these spaces are collected in  Proposition \ref{PropsSpuM} ahead.
\par
\noindent
\textbf{Dissipation potentials and energy   functionals for the brittle  case, $k=\infty$: }
The brittle system $\ingrsysinf$ 
is  governed by the  kinetic energy $\calK$ as in \eqref{defEkin}, 
the (quadratic) viscous dissipation $\calV$ as in \eqref{defV}, 
and by the following rate-independent dissipation potential and mechanical energy:
\begin{eqnarray}
\label{defRinfty}
&&\calR_\infty:\Spz\to[0,\infty]\,,\;
\calR_\infty(\dot z):=\int_{\GC}   R_\infty (\dot z)\,\mathrm{d}\Surf\,,\;
R_\infty(\dot z):=\left\{
\begin{array}{ll}
a_\infty^1|\dot z|&\text{if }\dot z\leq 0\,,\\
\infty&\text{otw.}\,,  
\end{array}
\right.
\\
\nonumber
&&\calE_\infty:[0,T]\times\Spu\times\Spz\to\R\cup\{\infty\}\;\text{ defined for all }t\in[0,T]:\\
\label{defEinfty}
&&\calE_\infty(t,u,z):=
\left\{
\begin{array}{cl}
\widetilde\calE_\infty(t,u,z)+\calJ_\infty(u,z)&\text{if }
(u,z)\in\Spu\times\Spx_\infty,\qquad\text{where }\\
\infty&\text{otw.}
\\
\end{array}
\right.\\
\label{brittle-constr-funct}
&&\calJ_\infty(u,z):=\int_{\GC} J_\infty(\JUMP{u},z)\,\mathrm{d}\Surf\;\text{ with }J_\infty
\left(\JUMP{u},z\right)= \left\{
\begin{array}{ll}
0 & \text{if } \JUMP{u}=0\text{ a.e.\ on }\supp z,
\\
+\infty & \text{otw.}
\end{array}
\right.  
\\
\nonumber
&&\widetilde\calE_\infty(t,u,z):=\int_{\Omega\backslash\GC}\tfrac{1}{2}\mathbb{C}e(u):e(u)
 -  \pairing{}{\Spv}{\mathbf{f}(t)}{u} +\int_{\GC}\!\!
\big(I_{[0,1]}(z)- a^0_\infty  z\big)
\,\mathrm{d}\Surf+ \mathrm{b}_\infty  P(Z,\GC).
\end{eqnarray} 
Here, $\mathbb{C}$ and $\mathbf{f}$  are as in \eqref{assCD} and \eqref{assRegf}, 
the support of $z$ is defined in a measure-theoretic sense by 
\begin{equation}
\label{defsupp}
   \supp z :=\bigcap\{A\subset\R^{d-1};\,A\text{ closed },\,\calH^{d-1}(Z{\backslash} A)=0\},
   \end{equation}
   based on the identification of $z$ with the set $Z$ such that $z= \chi_Z$,  and, 
in correspondence with \eqref{coeffscale}, the coefficients and the space $\Spx_\infty$ 
comply with the following 
\begin{subequations}
\label{coefflim} 
\begin{eqnarray}
&&
\hspace*{-2cm}
\Spx_\infty=\Spx=\SBV(\GC;\{0,1\}),\;a_\infty^0=a^0=\mathrm{const.}\,,\; 
a_\infty^1=a^1=\mathrm{const.}\,,\; 
\mathrm{b}_\infty=\mathrm{b}=\mathrm{const.}\,,\quad\text{or}\\
&&
\hspace*{-2cm}
\Spx_\infty=L^\infty(\GC),\hspace*{2.05cm} a_\infty^0=a_\infty^1=\mathrm{b}_\infty=0\,.
\end{eqnarray}
\end{subequations} 
Observe that the functional $\calE_\infty(t,\cdot,z):\Spu\to\R\cup\{\infty\}$ is convex and that 
its proper domain is $\VZ$. Its subdifferential with respect to $u$, 
which appears in the subdifferential inclusion \eqref{eq-u}, 
takes the form $\partial_u \calE_\infty = \partial_u \widetilde{\calE}_\infty + \partial_u \calJ_\infty$ by the sum rule. 
Now, 
$\partial_u  \widetilde{\calE}_\infty$ is the singleton given 
by the G\^ateaux-differential of $u \mapsto \widetilde{\calE}_\infty(t,u,z)$, while 
$ \partial_u \calJ_\infty: \Spu \rightrightarrows \Spu^*$ is a multi-valued operator. 
 But we check that 
 for a.a.\ $t\in(0,T)$: 
\begin{equation}
\label{DuJinfty} 
\forall\,\zeta \in\partial_u\calJ_\infty(u(t),z(t)),\;\forall\,v\in\Spu_z(t):\quad 
\langle\zeta,v\rangle_\Spu=0\,. 
\end{equation}
In fact, this relation can be verified directly from the definition  
of  $\partial_u\calJ_\infty(u(t),z(t))$, 
which reads 
$\langle\zeta(t),v-u(t)\rangle_\Spu\leq0$ for any $v\in\Spu_z(t)$. Using the test 
functions $v=2u(t)\in\Spu_z(t)$ and $v=0\in\Spu_z(t)$ we first deduce 
that $\langle\zeta(t),u(t)\rangle_\Spu=0$. 
Thus, by testing with $v$ and $-v\in\Spu_z(t)$ we also find that 
$\langle\zeta(t),v\rangle_\Spu=0$ for any $v\in\Spu_z(t)$.   
\par
In view of observation \eqref{DuJinfty}, 
the notion of \semi energetic solution for the brittle system  is not stated  
 with the general subdifferential inclusion \eqref{eq-u} in  $\Spu^*,$ 
but with its restriction to the domain 
$\VZ(t)\subset\Spu,$ which in fact increases 
with $t\in[0,T]$ since $z$ monotonically decreases in time. 
This restriction results in the 
momentum balance \eqref{weak-mom-brittle} below. 
\begin{definition}[Semistable  energetic solution for the brittle system]
\label{def:energetic-sol-brittle}
Let $\varrho\geq 0$. Given $(u_0.u_1,z_0) \in \Spu \times \Spw \times \Spz$, 
we call a pair $(u,z) : [0,T] \to \Spu \times \Spz$  a \emph{\semi energetic solution} 
to the evolutionary brittle  system $\ingrsysinf$ if 
\begin{align}
&
\label{reg-u-britt;e}
u \in 
   H^1(0,T;\Spu)\,, \quad  \dot u \in L^\infty (0,T; \Spw)\,, \quad
u \in   H^2_\#(0,T;\VZ^*)\,, 
\end{align}
$z$ fulfills 
\eqref{reg-z}, and the pair $(u,z)$ fulfill the  Cauchy conditions
\begin{equation}
\label{Cauchy}
u(0)=u_0, \ \dot{u}(0) = u_1, \ z(0)= z_0,
\end{equation}
and the 
\begin{compactitem}
\item[-] weak formulation of the mometum balance in the brittle case
\begin{subequations}
\label{weak-mom-brittle}
 \begin{align}
 & 
   \label{u-constraint}
 u(t) \in \VZ(t) \qquad
  \text{ for every } t \in [0,T], 
\\
 & 
\label{weak-momentum-brittle}
 \begin{aligned}
 \int_\Omega \varrho \ddot{u}(t) v \dd x  & + \int_{\Omega\backslash\GC} 
\left( \mathbb{D}e(\dot{u}(t)) \colon e(v) {+} \mathbb{C}e(u(t)):e(v) \right) \,\mathrm{d}x 
= \langle\mathbf{f}(t),v\rangle_\Spu
\\
& \qquad
 \qquad   \text{for every } v \in \VZ(t) \quad \foraa\, t \in (0,T); 
\end{aligned}
\end{align}
\end{subequations}
\item[-] the  semistability condition
\eqref{semistab-z} with   $\calR_\infty$  from \eqref{defRinfty} 
and $\calE_\infty$ from  \eqref{defEinfty};
\item[-]  the energy-dissipation  inequality 
\eqref{enineq}
 with $\calV$ from \eqref{defV},   $\calR_\infty$   \eqref{defRinfty},  
and $\calE_\infty$   \eqref{defEinfty}. 
\end{compactitem}
\end{definition}
\par
We conclude this section by   fixing some properties of  $L^2(0,T;\VZ)$ and  of related spaces.
The following statement is given in terms of a time-dependent set $M(t)$ which  
applies to the  closed  
set
$\supp z(t)$,  but also  to  suitable enlargements of $\supp z(t)$, cf.\ \eqref{recsp}  
and Proposition \ref{Propsrecsp}  later on.
\begin{proposition}
\label{PropsSpuM}
Let $(M(t))_{t\in [0,T]}$ be a family of 
closed subsets of $\GC$  
and set 
\begin{equation}
\label{VMt}
\Spu_M(t):=\{v\in H^1_\mathrm{D}(\Omega\backslash\GC;\R^d)\, : \ \JUMP{v}=0
\text{ a.e.\ on }M(t)\subset\GC\}
=H^1_\mathrm{D}\big((\Omega\backslash\GC)\cup M(t);\R^d\big)\,.
\end{equation}
 Then, $\Spu_M(t)$ is a closed subspace of 
 $\Spu$, and thus it is a 
  reflexive and separable Banach space, 
and so is its dual  $\Spu_M(t)^*,$ which is  isometrically  isomorphic   to the quotient space 
\begin{align}
\nonumber
\Spu/\Spu_M(t)^\bot\quad\text{with }   \Spu_M(t)^\bot:  
&  = \{ \tilde \xi \in \Spu^*\, : \ \pairing{}{\Spu}{\tilde \xi}{v} = 0 
\ \text{for every } v \in M(t)  \} 
\end{align}
the annihilator of $\Spu_M(t)$. 
Furthermore, $\Spu_M(t)$ is  dense in $\Spw$. 
  \par
Also   the space
\begin{equation}
L^2(0,T;\Spu_M):=\big\{v\in L^2(0,T;\Spu):\,v(t) \in \Spu_M(t)\,\foraa  t \in (0,T) \big\}\quad
\text{ with }
\|\cdot\|_{L^2(0,T;\Spu)}
\end{equation}
is reflexive and separable. 
\par
   Finally, suppose that the sets $(M(t))_{t\in [0,T]]} $ are monotonically decreasing, i.e.\ 
  \begin{equation}
  \label{monotonicity-Mt}
  M(t_2) \subset M(t_2) \qquad \text{ for all $0 \leq t_1\leq t_2 \leq T$.}
  \end{equation} 
Then, $L^2(0,T;\Spu_M)^* \ \text{ endowed with the norm } \|\cdot\|_{L^2(0,T;\Spu_M)^*}
:=\sup_{v\in L^2(0,T;\Spu_M)}|\langle\cdot, v \rangle_{L^2(0,T;\Spu)}|,$
 is isometrically isomorphic to 
\begin{equation}
\label{L2VMdual}
\begin{aligned}
L^2(0,T;\Spu_M^*): & =\{ \xi  \!\in\! L^2(0,T; \Spu_M(0)^*):
\\ & \quad 
\,\xi(t)\!\in\! \Spu_M(t)^*\,\foraa t \!\in\! (0,T) 
\text{ and } t \mapsto \|\xi(t)\|_{\Spu_M(t)^*} \!\in\! L^2(0,T)  \},
\end{aligned}
\end{equation}
with the norm 
$
\| f \|_{L^2(0,T;\Spu_M^*)}:=\sup_{v\in L^2(0,T;\Spu_M)} \left|  \int_0^T 
\pairing{}{\Spu_M(t)}{ f(t)}{v(t)} \dd t \right |.$ 
\end{proposition}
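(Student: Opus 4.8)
\emph{Structure of the ``fibre'' spaces.} The plan is to treat the assertions in turn, the first ones being standard Hilbert- and Bochner-space facts. First I would observe that the jump map $v\mapsto\JUMP{v}$ is linear and continuous from $\Spu=H^1_\mathrm{D}(\Omega\backslash\GC;\R^d)$ into $L^2(\GC;\R^d)$, hence so is $v\mapsto\JUMP{v}|_{M(t)}$; therefore $\Spu_M(t)$, being its kernel, is a closed linear subspace of the separable Hilbert space $\Spu$, thus itself separable, reflexive and Hilbert, and by Hahn--Banach $\Spu_M(t)^*$ is isometrically isomorphic to $\Spu^*/\Spu_M(t)^\bot$ with $\Spu_M(t)^\bot=\{\tilde\xi\in\Spu^*:\langle\tilde\xi,v\rangle_\Spu=0\text{ for all }v\in\Spu_M(t)\}$, whence also $\Spu_M(t)^*$ is separable and reflexive. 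For the identity $\Spu_M(t)=H^1_\mathrm{D}\big((\Omega\backslash\GC)\cup M(t);\R^d\big)$ I would write $(\Omega\backslash\GC)\cup M(t)=\Omega\backslash(\GC\backslash M(t))$ and use the elementary characterization of $H^1$-regularity across a piece of a hyperplane: for $v$ piecewise $H^1$ with $\JUMP{v}=0$ $\Surf$-a.e.\ on $M(t)$, integrating by parts on $\Omega_+$ and $\Omega_-$ against any $\varphi\in C_c^\infty(\Omega\backslash(\GC\backslash M(t));\R^d)$ produces interface terms only on $M(t)$ (the support of $\varphi$ avoids $\GC\backslash M(t)$), and these cancel to $\int_{M(t)}\JUMP{v}\,\varphi\,\mathbf{n}\dd\Surf=0$, so that the distributional gradient of $v$ on $\Omega\backslash(\GC\backslash M(t))$ coincides with its $L^2$ piecewise gradient; the opposite inclusion is trivial. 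Density of $\Spu_M(t)$ in $\Spw=L^2(\Omega;\R^d)$ is then immediate, since $\GC$ is $\mathcal{L}^d$-negligible so that $C_c^\infty(\Omega_+\cup\Omega_-;\R^d)$ is dense in $L^2(\Omega;\R^d)$, and every such test function vanishes near $\partial\Omega\cup\GC\supset\GD\cup\GC$, hence lies in $\Spu_M(t)$.

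\emph{The Bochner space $L^2(0,T;\Spu_M)$.} Here I would simply note that it is a closed subspace of the separable reflexive space $L^2(0,T;\Spu)$: if $v_n\to v$ in $L^2(0,T;\Spu)$ with $v_n(t)\in\Spu_M(t)$ for a.a.\ $t$, then along a subsequence $v_n(t)\to v(t)$ in $\Spu$ for a.a.\ $t$, and closedness of $\Spu_M(t)$ forces $v(t)\in\Spu_M(t)$ for a.a.\ $t$. Closed subspaces of separable reflexive spaces are separable and reflexive, which gives the claim.

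\emph{The dual of $L^2(0,T;\Spu_M)$.} This is the substantial part. Assuming the family $M(t)$ monotonically decreasing, so that $\Spu_M(t_1)\subset\Spu_M(t_2)$ and $\Spu_M(t_2)^\bot\subset\Spu_M(t_1)^\bot$ for $t_1\le t_2$, the adjoints of the embeddings $\Spu_M(0)\hookrightarrow\Spu_M(t)$ give the restriction maps $\Spu_M(t)^*\to\Spu_M(0)^*$ that allow \eqref{L2VMdual} to be phrased inside the fixed space $\Spu_M(0)^*$. I would then use that $L^2(0,T;\Spu_M)$ is a closed subspace of $L^2(0,T;\Spu)$, whose dual is $L^2(0,T;\Spu^*)$ ($\Spu$ being separable reflexive), so that $L^2(0,T;\Spu_M)^*$ is isometric to the quotient $L^2(0,T;\Spu^*)/L^2(0,T;\Spu_M)^\bot$. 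The key point is to identify $L^2(0,T;\Spu_M)^\bot=\{f\in L^2(0,T;\Spu^*):f(t)\in\Spu_M(t)^\bot\text{ for a.a.\ }t\}$: the inclusion $\supseteq$ is clear, and for $\subseteq$ one tests $f$ against $t\mapsto P_{\Spu_M(t)}w(t)$ for arbitrary $w\in L^2(0,T;\Spu)$, where $P_{\Spu_M(t)}$ is the $\Spu$-orthogonal projection onto $\Spu_M(t)$; this is legitimate provided $t\mapsto P_{\Spu_M(t)}w(t)$ is measurable, which follows from measurability of the multifunction $t\mapsto\Spu_M(t)$, and it is precisely the monotonicity of $M(t)$ that secures this measurability without extra hypotheses. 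Finally I would pass from the quotient to the fibre description by mapping a class $[f]$ to $t\mapsto f(t)|_{\Spu_M(t)}\in\Spu_M(t)^*$ and, conversely, lifting $\xi\in L^2(0,T;\Spu_M^*)$ to a Bochner-measurable, norm-preserving $\widetilde\xi\in L^2(0,T;\Spu^*)$ by a measurable pointwise Hahn--Banach selection; choosing within each class the representative $f$ with $\|f(t)\|_{\Spu^*}=\|f(t)|_{\Spu_M(t)}\|_{\Spu_M(t)^*}$ for a.a.\ $t$ makes the two maps mutually inverse and isometric, yielding the asserted identification.

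\emph{Main obstacle.} I expect the only genuinely non-routine ingredient to be the measurable-selection machinery in the last paragraph --- measurability of $t\mapsto\Spu_M(t)$ (for which the monotonicity is essential), together with the existence of measurable orthogonal projections and of measurable norm-preserving Hahn--Banach extensions --- which is what lifts the pointwise duality statements to the time-dependent Bochner spaces; everything else reduces to standard functional analysis.
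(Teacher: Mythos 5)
Your argument is correct and, for the routine parts, essentially matches the paper's: closedness of $\Spu_M(t)$ via continuity of the jump operator, the Hahn--Banach quotient identification of $\Spu_M(t)^*$, density in $\Spw$ by taking smooth functions compactly supported away from $\GC\cup\partial\Omega$ (the paper instead observes $H^1_\mathrm{D}(\Omega;\R^d)\subset\Spu_M(t)$, a cosmetic difference), and closedness of $L^2(0,T;\Spu_M)$ by extracting a pointwise-a.e.\ convergent subsequence. The distributional-gradient computation you supply for the identity $\Spu_M(t)=H^1_\mathrm{D}\big((\Omega\backslash\GC)\cup M(t);\R^d\big)$ is a nice check, but the paper treats it as a tautological rephrasing and does not prove it.

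Where you genuinely diverge from the paper is the crucial identification of the annihilator $L^2(0,T;\Spu_M)^\bot$ with $\big\{\xi\in L^2(0,T;\Spu^*):\,\xi(t)\in\Spu_M(t)^\bot\;\foraa t\big\}$. You propose to test $\xi$ against $t\mapsto P_{\Spu_M(t)}w(t)$ for arbitrary $w\in L^2(0,T;\Spu)$, and in the final step to lift elements of $L^2(0,T;\Spu_M^*)$ via a measurable norm-preserving Hahn--Banach selection. Both steps require knowing that $t\mapsto P_{\Spu_M(t)}$ acts measurably, which you correctly single out as the crux and attribute to the monotonicity of $M(t)$. This route is viable (for a monotone family of closed subspaces, $t\mapsto P_{\Spu_M(t)}x$ is a.e.\ continuous, and a simple-function approximation upgrades this to $t\mapsto P_{\Spu_M(t)}w(t)$), but it rests on measurable-selection machinery that you assert rather than establish, and whose careful justification is not short. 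The paper avoids it entirely: it tests $\xi\in L^2(0,T;\Spu_M)^\bot$ against $s\mapsto\tfrac1h\chi_{(t,t+h)}(s)\mathsf{v}$ for a fixed $\mathsf{v}\in\Spu_M(t)$ --- monotonicity is invoked exactly once, to guarantee this step function lies in $L^2(0,T;\Spu_M)$ --- and then lets $h\downarrow0$ at Lebesgue points of the Bochner-integrable $\xi$. That argument trades your measurable-selection toolkit for the Lebesgue differentiation theorem, keeps everything at the level of scalar averages, and is noticeably more elementary and self-contained. If you retain the projection approach, you should prove the measurability lemma in full, since it is the only non-routine ingredient in your proof and is currently only gestured at.
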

\begin{proof}
Let $(v_n)_n \subset \Spu_M(t)$ with $v_n \to v$ in $\Spu$. 
Taking into account  that the jump operator
$\JUMP{\cdot}: \Spu \to H^{1/2}(\GC;\R^d)$ is continuous by the trace theorem, 
one immediately checks that 
$\JUMP{v} =0$ a.e.\ on  the \emph{closed} set $M(t)$. 
Thus $\Spu_M(t)$ is  a closed, linear subspace of  $\Spu$, whence its reflexivity and separability.
Then, its dual  $\Spu_M(t)^*$ is also reflexive and separable. 
Since $\Spu_M(t)$ is a closed subspace of $\Spu$,  
one of the corollaries of the Hahn-Banach theorem applies, 
 yielding $\Spu_M(t)^*$ is isometrically isomorphic to the quotient space 
$\Spu^*/\Spu_M(t)^\bot$ through the operator
 $L : \Spu_M(t)^* \to \Spu^*/\Spu_M(t)^\bot$ which maps an element $\xi$ of 
$ \Spu_M(t)^*$ to $\xi + \Spu_M(t)^\bot$, 
where we denote by the same symbol the extension of $\xi$ to $\Spu$.
\par 
The density of $\Spu_M(t)$ in the space $\Spw$ can be concluded from the fact that 
$H^1_\mathrm{D}(\Omega;\R^d)\subset\Spu_M(t)$ and $H^1_\mathrm{D}(\Omega;\R^d)$ 
is dense in $\Spw$ given that $\Omega$ is a Lipschitz domain.  
\par
Observe that 
$L^2(0,T;\Spu_M)$ is a closed subspace of $L^2(0,T;\Spu)$:  Given a sequence 
$(v_n)_n \subset L^2(0,T;\Spu_M)$ with $v_n \to v$ in $L^2(0,T;\Spu)$, there 
holds (for a not relabeled subsequence) 
$v_n(t) \to v(t)$ in $\Spu,$ whence $v(t) \in \Spu_M(t)$  for a.a.\ $t\in (0,T)$. 
Thus, $L^2(0,T;\Spu_M)$ inherits the reflexivity and separability of $L^2(0,T;\Spu)$. 
Clearly, also its dual $L^2(0,T;\Spu_M)^*$ 
is reflexive and separable. 
\par
Finally, in order to verify the equivalence 
$L^2(0,T;\Spu_M)^*=L^2(0,T;\Spu_M^*)$ stated along with \eqref{L2VMdual}, we first observe that 
 $  L^2(0,T;\Spu)^*  \cong  L^2(0,T;\Spu^*) $.  We will now show that 
the annihilator  of $ L^2(0,T;\Spu_M)$, 
namely 
\[
 L^2(0,T;\Spu_M)^\bot = \Big\{ \xi \in L^2(0,T; \Spu)^*:\, 
\textstyle{\int_0^T}\!\!\! \pairing{}{\Spu}{\xi(t)}{v(t)} \dd t =0\,\text{ for all } v \in L^2(0,T;\Spu_M)\Big\}, 
\]
 is isometrically isomorphic to
 \[
L^2(0,T;\Spu_M^\bot) : = \big\{ \xi \in L^2(0,T; \Spu^*):\, \xi(t)  
\in  \Spu_M(t)^\bot \ \foraa\, t \in (0,T)\big\}.
\]
To this aim, we 
 observe that, due to the monotonicity property \eqref{monotonicity-Mt}, there holds
\begin{equation}
\label{monot-VM}
\Spu_M(t_1) \subset \Spu_M(t_2) \quad \text{and} \quad  \Spu_M(t_2)^\bot 
\subset \Spu_M(t_1)^\bot \quad \text{for all } 0 \leq t_1 \leq t_2 \leq T. 
\end{equation}
Then, we have that 
\begin{equation}
\label{localization-prop}
\xi \in  L^2(0,T;\Spu_M)^\bot \;\Leftrightarrow\;
\Big(\xi(t) \in L^2(0,T; \Spu^*) \text{ \& } 
\forall\, \mathsf{v} \!\in\!  \Spu_M(t):\pairing{}{\Spu}{\xi(t)}{\mathsf{v}} =0\;\,
\foraa t \!\in\! (0,T)\Big).
\end{equation}
The right-to-left implication is obvious. As for the converse one, we point out that, 
for all $\xi\in  L^2(0,T;\Spu_M)^\bot$ there holds 
\begin{equation}
\label{ad-Leb-points}
\frac1h \int_{t}^{t+h}   \pairing{}{\Spu}{\xi(s)}{\mathsf{v}} \dd s =0 \qquad \text{for every $h>0$,  
$\mathsf{v} \in \Spu_M(t) $ and  $t\in [0,T-h]$.}
\end{equation}
This follows from choosing 
 $v(t): =\tfrac1h \chi_{(t,t+h)} \mathsf{v}$, which satisfies 
$ v\in L^2(0,T;\Spu_M)$ thanks to \eqref{monot-VM}, 
in the identity $\int_0^T \pairing{}{\Spu}{\xi(t)}{v(t)} \dd t =0$
 fulfilled  by $\xi\in  L^2(0,T;\Spu_M)^\bot$. Then, letting 
$h\downarrow 0$ in \eqref{ad-Leb-points} yields $\xi(t) \in \Spu_M(t)^\bot$. 
Thus, the left-to-right implication holds true. 
Taking into account the representation of $ L^2(0,T;\Spu_M)^\bot$ and    
the Hahn-Banach theorem we find that 
\begin{equation*}
\begin{split}
L^2(0,T;\Spu_M)^*  \cong  L^2(0,T;\Spu)^*/L^2(0,T;\Spu_M)^\bot   
\cong   L^2(0,T;\Spu^*)/L^2(0,T;\Spu_M^\bot)
  \cong  L^2(0,T;\Spu_M^*)\,,
\end{split}
\end{equation*} 
which concludes the proof. 
\end{proof}
%
\subsection{Main result: Passage from adhesive to brittle}
\label{MainResult}
%
Following 
\cite{Mie-evolGamma}, we refer to the adhesive-to-brittle  convergence stated 
in  Thm.\   \ref{thm:main} below  as an 
\emph{evolutionary $\Gamma$-convergence result}. 
Let us point out in advance that, with this limit passage we obtain 
a \semi energetic 
solution to the brittle system, 
with the enhanced regularity property that $\ddot u\in H^2 (0,T;\Spu(0)^*)$. 
Furthermore,  the second of \eqref{propsuz}  below will allow us to 
test the momentum balance for the brittle system  \eqref{weak-momentum-brittle} by $\dot{u}$. 
This will be the key step 
for obtaining the
 energy-dissipation \emph{identity}. 
Like for the adhesive contact systems, we also obtain
 a uniqueness result for the displacements of the brittle system corresponding to   
a given semistable $z$.  In its proof, a pivotal role is played 
 by two  separate energy balances for the displacement
and for the internal variable, which we prove along with the energy-dissipation 
\emph{balance} on  an arbitrary interval $(s,t)\subset (0,T)$, for almost all $s<t\in (0,T)$. 
\begin{theorem}[Evolutionary $\Gamma$-convergence of the adhesive systems to the brittle limit]
\label{thm:main}
Assume \eqref{ass-domain}--\eqref{assdata}. 
For each $k\in\N$ let $(u_k,z_k)$ be a \semi energetic solution of the adhesive system 
$\ingrsysk$.  
Assume that  $(u_k(0),\dot u_k(0),z_k(0))=(u_0^k,u_1^k,z_0^k)$,  and that 
\begin{equation}                                               
 \label{initial-data-conv}            
   (u_0^k,u_1^k,z_0^k)\to (u_0,u_1,z_0) \quad \text{in } \Spv\times\Spw\times\Spz \quad 
\text{ and } \quad   \calE_k(0,u_0^k,z_0^k) \to \calE_\infty  (0,u_0,z_0),                  
\end{equation} 
such that
the pair $(u_0,z_0)$ complies with 
the stability condition \eqref{semistab-z} at $t=0$, with $\calE_\infty$ and $\calR_\infty$ given by  
\eqref{defRinfty}--\eqref{coefflim}. 
Then there exists a (not relabeled) subsequence $(u_k,z_k)_k$ 
and a pair $(u,z)$ with the following properties: 
\begin{compactenum}
\item {\bf immediate convergences: }
the following convergences hold true 
\begin{subequations}
\label{convs}
\begin{eqnarray}
\label{convsu}
\hspace*{-15mm}
&&u_k\rightharpoonup u\text{ in }   H^1(0,T;\Spu),  
\;\;\text{and}\;\;
u_k(t)\rightharpoonup u(t)\text{ in } \Spu  \text{ for all }t\in[0,T]\,,\\
\label{convsz}
\hspace*{-15mm}
&&
z_k(t)\to z(t)\text{ in }\Spz,\;\;
z_k(t)\to z(t)\text{ in }L^q(\GC),\,q\in[1,\infty),
\text{ for all }t\in[0,T]\,,\quad
\\
\label{convsz-2}
\hspace*{-15mm}
&&
z_k\weaksto z\text{ in }L^\infty(0,T;\SBV(\GC;\{0,1\})),\;\;
z_k(t)\weaksto z(t)\text{ in } \SBV(\GC;\{0,1\}) \cap L^\infty(\GC)
\text{ for   all }t\in[0,T] \,,\qquad\qquad\\
\label{convsddu}
\hspace*{-15mm}
&& \exists\,\lambda\in L^2(0,T;\Spu^*),\quad  \varrho  \ddot u_k + \partial_u \calJ_k 
\rightharpoonup\lambda\text{ in }L^2(0,T;\Spu^*)\,, 
\end{eqnarray}
\end{subequations} 
\item 
{\bf \semi energetic solution \& brittle constraint: }the limit pair $(u,z)$ is a \semi 
energetic solution  of the brittle 
system $\ingrsysinf$ 
in the sense of Def.\ \ref{def:energetic-sol}. It satisfies the initial condition 
\eqref{Cauchy} and the brittle constraint 
\begin{equation}
\label{propsuz}
\begin{aligned}
& 
\JUMP{u(t)}  |_{\supp z(t)}=0   \;\Surf\text{-a.e.\ in }\GC \text{ for every } t \in [0,T],   
\quad\text{and}
\\
& 
\JUMP{\dot{u}(t)}  |_{\supp z(t)}=0   \;\Surf\text{-a.e.\ in }\GC   
\text{ for  almost  all }t\in(0,T)\,,
\end{aligned}
\end{equation}    
Moreover, for $\lambda\in L^2(0,T;\Spu^*)$ obtained in \eqref{convsddu} we have 
\begin{equation}
\label{lambda-id}
\text{for a.a.\ }t\in(0,T):\quad\lambda(t)=\ddot u(t)\;\text{ in }\VZ(t)^*\,, 
\end{equation}
\item 
{\bf regularity of $u$: } in addition to \eqref{reg-u-britt;e}, the limit $u$ fulfills 
\begin{equation}
\label{better-H2}
u \in H^2(0,T; \VZ(0)^*)\,,
\end{equation}  
\item
{\bf weak temporal continuity: }
$\dot{u} \in   \mathrm{C}^0_{\mathrm{weak}}([0,T];\Spw)$, 
\item 
{\bf energy-dissipation balance: }
the pair $(u,z)$ complies with the energy-dissipation inequality 
on $(0,t)$ and on $(s,t)$, for every $t \in (0,T]$ and almost every $s \in (0,t)$,  and 
 as an \emph{identity} 
along  the interval $(s,t)\subset[0,T]$  for almost all $s,t \in (0,T) $ and for $s=0$,  
\begin{equation}
\label{eneqlim-identity} 
 \begin{aligned}
&\tfrac{1}{2}\|\dot u(t)\|_{\Spw}^2   
+ \int_s^t  2\calV(\dot u(\tau))   
\,\mathrm{d} \tau + \Var_{\calR_\infty}(z, [s,t])+
 \calE_\infty(t,u(t),z(t))\\ 
& \quad =\tfrac{1}{2} \|\dot u(s)\|_{\Spw}^2  
+  \calE_\infty(s,u(s),z(s)) + \int_s^t \partial_t\calE_\infty(\tau,u(\tau),z(\tau))\,\mathrm{d}\tau\,.
 \end{aligned}
\end{equation}  
In addition, the displacements, resp.\ the delamination variable, comply with the following 
separate balances of the bulk, resp.\ surface, energy terms 
along the interval $(s,t) \subset (0,T)$ for a.a.\ $s<t \in (0,T)$ and for $s=0$:
\begin{eqnarray}
\label{eneqlim-identity-bulk} 
\hspace*{-6mm}
&&\tfrac{1}{2}\|\dot u(t)\|_{\Spw}^2
+ \int_s^t  2\calV(\dot u(\tau)) 
\,\mathrm{d} \tau +
 \int_{\Omega\backslash\GC}\tfrac{1}{2}\mathbb{C}e(u(t)):e(u(t))\,\mathrm{d}x
-\langle\mathbf{f}(t),u(t)\rangle_\Spu
\\
\nonumber
\hspace*{-6mm}
&&\quad 
=\tfrac{1}{2} \|\dot u(s)\|_{\Spw}^2
+\int_{\Omega\backslash\GC}\tfrac{1}{2}\mathbb{C}e(u(s)):e(u(s))\,\mathrm{d}x
-\langle\mathbf{f}(s),u(s)\rangle_\Spu  
- \int_s^t\langle\dot{\mathbf{f}}(\tau),u(\tau)\rangle_\Spu \,\mathrm{d}\tau\,, 
\\
\hspace*{-6mm}
&&
\label{eneqlim-identity-surf} 
\mathrm{b}_\infty P(Z(t),\GC)+\!\int_{\GC}\hspace*{-3mm}a_\infty^0z(t)\,\mathrm{d}\Surf
+ \Var_{\calR_\infty}(z, [s,t])
=\mathrm{b}_\infty P(Z(s),\GC)+\!\int_{\GC}\hspace*{-3mm}a_\infty^0z(s)\,\mathrm{d}\Surf\,,\qquad 
\end{eqnarray}
\item
{\bf enhanced convergences: }
there hold the enhanced convergences for almost all $t\in(0,T)$:
\begin{subequations}
\label{enh-convs}
\begin{align}
&
\label{enh-convs-1}
\dot{u}_k(t) \to \dot{u}(t)  \text{ in } \Spw, 
\\
&
\int_0^t \dissu(\dot{u}_k(s)) \dd s \to \int_0^t \dissu(\dot{u}(s)) \dd s,
\\
&
\Var_{\calR_k} (z_k, [0,t]) \to \Var_{\calR_\infty} (z, [0,t]),
\\
& 
\label{enh-convs-4}
\calE_k (t,u_k(t),z_k(t)) \to \calE_\infty (t,u(t),z(t)),   
\end{align}
\end{subequations}
\item 
{\bf enhanced initial condition: } 
the Cauchy datum $u_1$ is even attained in the sense of difference quotients
\begin{equation}
\label{enhanced-Cauchy-dotu}
\lim_{h\downarrow 0} \frac1h \int_0^h \| \dot{u}(t) - u_1\|_{\Spw}^2 \dd t =0\,,
\end{equation} 
\item 
{\bf uniqueness of the displacements for given  semistable  $z\in L^\infty(0,T;\SBV(\GC;\{0,1\}))$: }
let $(u,z)$ and $(\tilde u,z)$ be \semi energetic solutions to the brittle system 
$\ingrsysinf$, satisfying the brittle 
momentum balance \eqref{weak-mom-brittle} with the same initial data $u_0$ and $u_1$.  
Then,  $\tilde u = u$.  
\end{compactenum}
\end{theorem}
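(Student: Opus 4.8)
The plan is to follow the adhesive-to-brittle strategy of \cite{RosTho12ABDM}, upgrading it to accommodate the inertial term. First I would invoke the uniform bounds \eqref{unifbdsk} from Theorem \ref{th:exist-mech} and extract a subsequence realising the convergences \eqref{convs}: weak convergence of $u_k$ in $H^1(0,T;\Spu)$ and, through an Aubin--Lions/Arzel\`a--Ascoli argument based on $\dot u_k$ bounded in $L^\infty(0,T;\Spw)$ and $\varrho\ddot u_k+\partial_u\calJ_k$ bounded in $L^2(0,T;\Spu^*)$ (cf.\ \eqref{unifbdsk}), the pointwise weak convergence $u_k(t)\weakto u(t)$ in $\Spu$ for every $t$; for $z$, a Helly-type selection principle applied to the bound in $\BV([0,T];\Spz)\cap L^\infty(0,T;\Spx)$ yields a monotone nonincreasing limit $z$ with $z_k(t)\to z(t)$ in $\Spz$ and $z_k(t)\weaksto z(t)$ in $\Spx\cap L^\infty(\GC)$ for all $t$, so that the spaces $\VZ(t)$ are well defined. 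The crucial geometric input is the \emph{support convergence} \eqref{suppconv-intro}: using the convexity of $\GC$ through the uniform relative isoperimetric inequality of \cite[Thm.\ 3.2]{Tho13UPSI} together with the perimeter bound in \eqref{unifbdz}, one obtains $\supp z_k(t)\subset\supp z(t)+B_{\rho(k,t)}(0)$ with $\rho(k,t)\to0$. Combining this with the bound on $\calJ_k(u_k,z_k)$, which forces $\int_{\GC}z_k\,|\JUMP{u_k}|^2\,\dd\Surf=O(1/k)$, and with the continuity of the jump trace operator, I obtain the brittle constraint $\JUMP{u(t)}=0$ on $\supp z(t)$ for every $t$ (hence $u(t)\in\VZ(t)$), and an analogous argument for $\dot u$ yields the second relation in \eqref{propsuz}.

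The technical core, and the main obstacle, is the limit passage in the momentum balance. Given an admissible test function $v\in\VZ(t)$ for \eqref{weak-momentum-brittle}, one needs a recovery sequence $v_k\to v$ with $\JUMP{v_k}=0$ on $\supp z_k(t)$, so as to bypass the blow-up of $kz_k\JUMP{u_k}$ in \eqref{weak-mom-intro}; but since $\ddot u_k$ is only bounded like $\sqrt k$ in $\Spu^*$ (cf.\ \eqref{convsddu}), one cannot pass to the limit in $\Spu$ directly. I would therefore construct a family of \emph{recovery spaces} $\Spu_M(t)$ of the form \eqref{VMt}, with $M(t)$ a suitable enlargement of $\supp z(t)$ that shrinks onto $\supp z(t)$ while still containing $\supp z_k(t)$ for large $k$: these must be large enough to admit, for every $v\in\VZ(t)$, a recovery sequence $v_k\in\Spu_M(t)$ converging suitably to $v$, yet small enough that $\partial_u\calJ_k(u_k,z_k)$ annihilates $\Spu_M(t)$, so that, by \eqref{unifbdsk}, $\varrho\ddot u_k$ restricted to $\Spu_M(t)$ is \emph{uniformly} bounded in $L^2(0,T;\Spu_M^*)$. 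Reconciling these competing requirements, and controlling the $t$-dependence of $M(t)$ so that Proposition \ref{PropsSpuM} applies, is where the new work lies. Passing to the limit in \eqref{weak-mom-intro} tested with $v_k$, using Proposition \ref{PropsSpuM} to identify the relevant duals and the convergence \eqref{convsddu}, then yields the weak momentum balance \eqref{weak-momentum-brittle}, the identification \eqref{lambda-id} of $\lambda$ with $\ddot u$ in $\VZ(t)^*$, and the enhanced regularity \eqref{better-H2}.

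For the semistability \eqref{semistab-z} with $\calE_\infty,\calR_\infty$ I would use the mutual recovery sequence technique of \cite{MRS06}: from any competitor $\tilde z$ for the limit one builds an admissible $\tilde z_k$ for $z_k$ with
\begin{equation*}
\limsup_{k\to\infty}\bigl[\calE_k(t,u_k(t),\tilde z_k)+\calR_k(\tilde z_k-z_k(t))-\calE_k(t,u_k(t),z_k(t))\bigr]\le\calE_\infty(t,u(t),\tilde z)+\calR_\infty(\tilde z-z(t))-\calE_\infty(t,u(t),z(t)),
\end{equation*}
exploiting the constraint $z\in\{0,1\}$ (or, in the scaling \eqref{coeffscale-b}, the term $I_{[0,1]}$) and the perimeter term. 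The energy-dissipation inequality \eqref{enineq} in the limit then follows from weak lower semicontinuity of the kinetic, viscous and dissipation contributions together with the $\Gamma$-$\liminf$ estimate for the elastic-plus-surface energy — here the perimeter term is essential, and $\calJ_k$ $\Gamma$-converges to $\calJ_\infty$ precisely thanks to the penalisation and the support convergence — the right-hand side converging by \eqref{initial-data-conv} and \eqref{unifbdE}.

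Finally, adapting arguments from \cite{DMLar11EWED}, the second relation in \eqref{propsuz} legitimises testing \eqref{weak-momentum-brittle} with $\dot u$, which produces the bulk balance \eqref{eneqlim-identity-bulk}; subtracting it from the energy-dissipation inequality and running the standard Riemann-sum argument built on semistability gives the reverse inequality, hence the full identity \eqref{eneqlim-identity} and, as its difference with the bulk balance, the surface balance \eqref{eneqlim-identity-surf}. Since every inequality in the lower-semicontinuity chain must then be an equality, one reads off the enhanced convergences \eqref{enh-convs} — in particular the strong convergence $\dot u_k(t)\to\dot u(t)$ in $\Spw$ — and, by an averaging argument near $t=0$ combined with the weak continuity of $\dot u$ with values in $\Spw$, the enhanced initial condition \eqref{enhanced-Cauchy-dotu}. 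Uniqueness of the displacements for a fixed semistable $z$ follows from the linearity of \eqref{weak-momentum-brittle}: the difference $w=u-\tilde u$ satisfies $w(t)\in\VZ(t)$ and is thus an admissible test function, so testing the difference of the two momentum balances by $\dot w$ and invoking a Gronwall estimate forces $w\equiv0$.
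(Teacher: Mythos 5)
Your overall roadmap matches the paper's, but there are two places where the logic, as stated, breaks down.

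First, the order of steps is wrong. You invoke the support convergence \eqref{suppconv-intro} via the isoperimetric inequality of \cite[Thm.\ 3.2]{Tho13UPSI} ``together with the perimeter bound \eqref{unifbdz}'', and only later pass to the limit in the semistability by the mutual-recovery-sequence argument. But the perimeter bound by itself yields neither the lower density estimate \eqref{propa} nor support convergence: both rest on the inequality \eqref{SSZ}, which in Lemma \ref{Conssemi} is derived \emph{from the semistability} of $z_k(t)$ and of the limit $z(t)$. The semistability of $z(t)$ (hence the MRS passage) must therefore be established \emph{before} you can invoke the lower density estimate, and this is needed twice over: once to get the $(d{-}1)$-thickness of $\supp z(t)$ feeding into Hardy's inequality \eqref{Hardy} (on which the recovery operator in Proposition \ref{corconst} relies), and once to get the support convergence that underpins the recovery spaces. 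The paper organises the proof accordingly (Section \ref{FineProps} precedes Sections \ref{MoscoBC}--\ref{RecSp}).

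Second, the recovery space construction you sketch — ``$\Spu_M(t)$ with $M(t)$ a suitable enlargement of $\supp z(t)$ that shrinks onto $\supp z(t)$ while still containing $\supp z_k(t)$'' — does not work as described, and you essentially acknowledge this by saying that ``controlling the $t$-dependence of $M(t)$ so that Proposition \ref{PropsSpuM} applies, is where the new work lies'' without supplying it. The obstruction is concrete: the radius $\rho(k,t)$ in \eqref{defrhok} is in general not monotone in $t$ (the supports of $z_k$ and $z$ need not decrease at the same speed), so $M(t)=\supp z(t)+B_{\rho(k,t)}(0)$ fails the monotonicity \eqref{monotonicity-Mt} that Proposition \ref{PropsSpuM} requires to make sense of $L^2(0,T;\Spu_M^*)$. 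The paper's resolution is different in kind: for each fixed $s\in[0,T)$ and a fixed $\eps_n\downarrow 0$ it freezes the enlargement at the left endpoint, taking $M\equiv\supp z(s)+\overline{B_{\eps_n}(0)}$ \emph{constant in time} on $[s,T]$ — this contains $\supp z_k(t)$ for $t\ge s$ and $k\ge k(s,n)$ by \eqref{monotone-inclusion} — and defines the spaces $\spyn$, $\spyns$ of \eqref{recsp}. The desired compactness and momentum balance are then obtained by a double diagonal argument, first in $n$ (Lemma \ref{Limmombal}, Item 1) and then over a countable dense set of base points $s$ (Item 2), before the density result \eqref{from-D-Lars} from \cite{DMLar11EWED} lets one reach $\VZ(t)^*$ for a.a.\ $t$ (Item 3). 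Without this two-scale device your program cannot be carried through. A smaller point: the second relation in \eqref{propsuz} is not obtained by the ``analogous'' Mosco argument but via a difference-quotient argument exploiting the time-monotonicity of $\supp z(\cdot)$, since $\dot u_k(t)$ converges to $\dot u(t)$ only weakly in $\Spw$, not in $\Spu$.
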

The proof of Theorem \ref{thm:main} will be carried out in detail in Section \ref{s:4}. 
Instead we will now discuss our result and compare it with other existing results, 
in particular focusing on \cite{DMLar11EWED,DMLarToa,DMLazNar}. 
%
\subsubsection*{Discussion of our result: }
%
{\bf Momentum balance: }Integrating the $k$-momentum balance \eqref{weak-mom-intro} over $(0,T),$ 
using test function $\eta v\in L^2(0,T;\Spu)$ with $\eta\in C_0^\infty(0,T)$ and $v\in\Spu,$ 
convergences \eqref{convs} allow us to pass $k\to\infty$ in  \eqref{weak-mom-intro}
using weak-strong convergence arguments. By localization via the fundamental lemma of 
the Calculus of Variations, we obtain the limit equation
\begin{equation}
\label{brittle-alternative}
\hspace*{-2mm}
\langle\lambda(t),v\rangle_\Spu
+\int_{\Omega\backslash\GC}\!\!\!\!\big(\mathbb{C}e(u(t))+\mathbb{D}e(\dot u(t))\big):e(v)\,\mathrm{d}x
=\langle\mathbf{f}(t),u(t)\rangle_\Spu\;\text{for all }v\in\Spu,\text{ for a.a.\ }t\in(0,T)\,.
\end{equation}
But in general we cannot identify $\lambda(t)=  \varrho  \ddot u(t)+\zeta(t)$ with $\zeta(t)\in\partial_u\calJ_\infty(u(t),z(t)),$ cf.\ 
\eqref{defEinfty}. This is mainly due to the fact that $\ddot u_k$ and $\partial_u\calJ_k(u_k,z_k)$ 
only as a sum are uniformly bounded by a constant independent of $k,$ cf.\ \eqref{unifbdcombi}, 
whereas their separate bounds blow up with $k\to\infty$, 
cf.\ \eqref{kdepbd}.  
Therefore, relation \eqref{lambda-id} establishes a link between the balance \eqref{brittle-alternative} 
in $\Spu^*$ and the brittle momentum balance \eqref{weak-mom-brittle} which is restricted to 
the domain $\VZ(t)$ of $\calE_\infty(t,\cdot,z(t))$ that increases with time. 
Here, in these closed subspaces $\VZ(t)$ of $\Spu,$      
we indeed have $\lambda= \varrho  \ddot u(t)+\zeta(t)$ in $\VZ(t)^*$ 
with $\zeta(t)\in\partial_u\calJ_\infty(u(t),z(t)),$ 
since $\partial_u\calJ_\infty(u(t),z(t))\subset\VZ(t)^\perp$ by \eqref{DuJinfty}. 
Spaces akin to $\VZ(t)$ are also used in \cite{DMLar11EWED,DMLarToa}  to formulate the momentum 
balance.  
\par 
\noindent 
{\bf Energy balance: } It has to be pointed out that, in the brittle case, 
 we obtain the energy-dissipation balance \eqref{eneqlim-identity}  
along intervals $(s,t) \subset ( 0,T)$ 
with $s$ and $t$ Lebesgue points for $\dot{u}$, cf.\ the forthcoming 
Lemmata \ref{ChainruleLemma} and \ref{UEDEbrittle}. In particular, this balance splits into 
the bulk balance \eqref{eneqlim-identity-bulk} for the displacements and the surface balance 
\eqref{eneqlim-identity-surf} for the delamination variable. Such a pure bulk balance, 
where terms related to crack growth do not show up, is also obtained in \cite{DMLar11EWED}.   
But, in contrast to \eqref{eneqlim-identity}, \cite{DMLar11EWED,DMLarToa} observe 
their energy-dissipation balance to hold along \emph{all} subintervals $[s,t]\subset[0,T]$. 
This is strongly related  to (indeed,  implies, cf.\  the proof of \cite[Lemma 3.10]{DMLar11EWED})
  the fact that \cite{DMLar11EWED,DMLarToa} find solutions such that the 
map $\dot u:t\mapsto \dot u(t)$ is continuous from time into the space 
$\Spw$ (in the dynamic, damped case of \cite{DMLar11EWED}),  
$\VZ(t)^*$ (in the dynamic, undamped case of \cite{DMLarToa}). 
In our setting we  only manage to prove that 
$\dot{u} \in   \mathrm{C}^0_{\mathrm{weak}}([0,T];\Spw)$.    
\par
Observe that the enhanced energy-dissipation balance and regularity properties 
proved  in   \cite{DMLar11EWED}  do not stem from assuming suitable  
temporal regularity for the \emph{prescribed} crack evolution. 
 Given a (unique) solution $u$ of the momentum balance in $[0,T],$ 
their argument to obtain the enhanced energy-dissipation balance 
is based on choosing an arbitrary time $t_0\in(0,T),$ which is not a Lebesgue point, and to 
solve the momentum balance on $(t_0,T)$ with the new initial data 
$u_0=u(t_0),$ $u_1=\dot u(0)$. Glueing  the solution in $(0,t_0)$ with the new solution 
on $(t_0,T)$ and exploiting the previously proved  uniqueness of the displacement leads 
to the enhancements. 
But since our existence result for the brittle system arises 
from an adhesive contact approximation
relying on the \emph{well-preparedness of the initial data} \eqref{initial-data-conv}, 
we cannot choose an arbitrary time $t_0$ as a new initial time to solve momentum balance of 
the brittle system, without implicitly 
requiring the enhanced convergences \eqref{enh-convs} to hold at the arbitrary time $t_0\in(0,T)$. 
A further reason why we are not able to reproduce the arguments from \cite{DMLar11EWED}
 leading to the energy-dissipation balance at all 
times is the presence in our own balance of the surface energy terms due to delamination, which
may jump at countably many times.  In this context, let us mention that the 
energy balance obtained in \cite{DMLarToa}, akin to our \eqref{eneqlim-identity}, 
also features the dissipation due to crack growth. It is obtained along all subintervals 
of $[0,T]$ in a 2D setting with connected cracks that evolve continuously in time and piecewise 
even more smoothly.       
\par
\noindent 
{\bf Crack propagation criterion: }
Both in the adhesive and in the brittle models, 
crack propagation is governed by the semistability inequality  \eqref{semistab-z}. 
It expresses that the semistable delamination variable $z(t),$ among all possible competitors 
$\tilde z\in\Spz,$ is a minimizer of 
the functional $\calE(t,u(t),\cdot)+\calR(\cdot-z(t))$, 
with $u(t)$ kept fixed as the solution of the momentum balance (with $z(t)$) 
in $\VZ(t)^*$ at time $t$. 
\par 
In the fully rate-independent setting (i.e., no viscosity and inertia for the displacements), 
solutions to rate-independent systems for damage and delamination complying  
with the semistability, in place of the  \emph{global stability} condition
in the standard concept of \semi energetic solutions \cite{MieThe04RIHM,Miel05ERIS}, have been termed 
\emph{local solutions}  in, e.g., \cite{RouACVB13,RoThPa15SDLS}.  This higlights that 
 the semistability, as a minimality property, is  
local in the sense that the displacemements are not modified 
(it has however to be stressed that the concept from \cite{RouACVB13,RoThPa15SDLS} 
differs from the, weaker, 
notion of local solution  defined in \cite[Def.\ 4.5]{Miel08DEMF}, \cite[Sec.\ 1.8]{MieRouBOOK}).  
Let us also mention that, when taking the vanishing-viscosity \& inertia limit in 
the momentum balance, \semi energetic solutions in the sense of Def.\ \ref{def:energetic-sol} 
converge to  local solutions (in the sense of  \cite{RouACVB13,RoThPa15SDLS}) of the 
quasistatic limit system, as shown in the case of damage  in  \cite{LRTT}. 
\par
In contrast, models that govern crack propagation by Griffith's fracture criterion, cf.\ e.g., 
\cite{NicSan07DCP2,LaSaSe08ANTD,LalSan11WCIF,DMLazNar} in the dynamic setting, 
are rather based on \emph{global} minimality, 
and therefore correspond to \emph{(global) energetic} solutions in the fully  
rate-independent context.  The dynamic  evolution criterion, 
rephrased in our notation for easier comparison, 
is in fact a constrained \emph{global} minimization problem, namely
\begin{equation}
\label{Griffith}
\min_{\tilde z(t)\in\Spz}\big(\calE(t,v_{\tilde z}(t),\tilde z(t))
+\calK(\dot v_{\tilde z}(t))+\calR(\tilde z(t))\big)  
\end{equation}
where $v_{\tilde z}$ is the \emph{unique} solution of the momentum balance on $[0,t]$, 
corresponding to the (given) delamination variable $\tilde z$; 
 observe that,  both in the adhesive and in the brittle cases,  
we also have uniqueness of the displacements for given $z \in L^\infty (0,T;\SBV(\GC;\{0,1\}))$. 
The existence and uniqueness of a solution to \eqref{Griffith} is then proved, 
in \cite{DMLazNar}, relying on the very special 1D-geometry of the model. 
\par
Let us once more point out that \semi energetic solutions in the sense of 
Def.\ \ref{def:energetic-sol}, featuring semistability
as the evolution criterion for the internal variable, are obtained from 
 alternating (decoupled)  minimization on the time-discrete level,
cf.\ \cite{RosTho15CEx},
whereas solutions complying with the Griffith criterion \eqref{Griffith}, 
in analogy to the fully rate-independent setting,
are rather based on simultaneous minimization in the two variables.
However, it has been observed that 
energetic solutions to rate-independent systems, based on global, simultaneous minimization, 
tend to jump rather 
early in time.  This has motivated  the design of alternative solution concepts,  
resp.\ crack propagation criteria,  cf.\ e.g., \cite{KnMiZa08ILMC,  Lars10, RoThPa15SDLS} 
in the realm of crack propagation and delamination.  
In this spirit also the concept discussed in \cite{DMLarToa} has to be understood: 
Therein, the crack propagation criterion is a local-in-time reformulation of 
Griffith's fracture criterion, inspired by so-called $\epsilon$-stable solutions from \cite{Lars10}. 
It would be interesting to advance the study of \emph{alternative} solution notions 
to the brittle system, possibly by  combining  the adhesive-to-brittle 
limit with the vanishing-viscosity approximation, in the spirit of  \cite{MRS12,MRS13,MRS14}.  
\par 
\noindent 
{\bf The different scalings \eqref{coeffscale} and the resulting limit models 
\eqref{defRinfty}--\eqref{coefflim}: } In the brittle setting 
it has been noted (cf.\ \cite{RoThPa15SDLS}) 
 that the semistability condition governing crack propagation, reducing to 
 \begin{equation}
 \label{semistab-expl-brittle}
 \begin{aligned} & 
\int_{\GC} J_\infty \left( \JUMP{u(t)}, z(t) \right) \dd \Surf 
+ \mathrm{b}_\infty P(Z(t), \GC)\\ &  \leq \int_{\GC} J_\infty \left( \JUMP{u}, \tilde z \right) 
\dd \Surf +  \mathrm{b}_\infty P(\tilde{Z}, \GC) + (a_\infty^0 {+} a_\infty^1 ) 
\int_{\GC}  (z(t) {-} \tilde z) \dd \Surf
\end{aligned}
 \end{equation}  
 for all $\tilde z \in \SBV(\GC;\{0,1\}) $ and  for all  $t \in [0,T],$
does not feature any term of positive, finite value that depends on the displacements  
and thus forces $z$ to decrease,  as a function of time. In other words, 
crack growth seems to be rather induced by the perimeter regularization, than by the attempt to 
reduce the mechanical stresses. 
However, the solutions to the brittle systems obtained in Theorem \ref{thm:main} are selected 
by approximation with solutions of the adhesive contact models 
$\ingrsysk_k$. 
Since the semistability condition for finite $k\in\N$, i.e.
\begin{equation}
 \label{semistab-expl-adh}
 \begin{aligned} & 
 \int_{\GC} \tfrac k2 z_k(t) \left| \JUMP{u_k(t)} \right|^2 \dd \mathscr{H}^{d-1} 
+ \mathrm{b}_k P(Z_k(t), \GC) \\ & \leq  
\int_{\GC} \tfrac k2 \tilde{z} \left| \JUMP{u_k(t)} \right|^2 \dd \mathscr{H}^{d-1} 
+ \mathrm{b}_k P(\tilde{Z}, \GC) +(a_k^0 {+} a_k^1 ) \int_{\GC}  (z_k(t) {-} \tilde z) \dd \Surf 
 \end{aligned}
 \end{equation} 
  for all $\tilde z \in \SBV(\GC;\{0,1\}) $ and  for all  $t \in [0,T],$ 
 features the displacement-dependent adhesive contact 
term, which can drive crack propagation, the solutions to the brittle system 
obtained by Theorem \ref{thm:main} should  inherit this information.        
\par
On these grounds,  in \cite{RoThPa15SDLS}  (see also \cite[Sec.\ 7]{RosTho12ABDM})  the alternative scaling from \eqref{coeffscale-b}
has been proposed. While referring to the discussion in \cite{RoThPa15SDLS}, \cite[Sec.\ 7]{RosTho12ABDM} for all details, let us only mention here  that, when the coefficients $a_k^0,\, a_k^1,\, \mathrm{b}_k$ are scaled as in \eqref{coeffscale-b}, multiplying the semistability inequality
 \eqref{semistab-expl-adh}  by $k $ leads to 
\[
\begin{aligned}
& 
 \int_{\GC} \tfrac {k^2}2 z_k(t) \left| \JUMP{u_k(t)} \right|^2 \dd \mathscr{H}^{d-1} 
+ \mathrm{b} P(Z_k(t), \GC) \\ & \leq  
\int_{\GC} \tfrac {k^2}2 \tilde{z} \left| \JUMP{u_k(t)} \right|^2 \dd \mathscr{H}^{d-1} 
+ \mathrm{b} P(\tilde{Z}, \GC) + (a^0+a^1) \int_{\GC} (z_k(t) {-} \tilde z) \dd \Surf  
 \end{aligned}
 \]
  for all $\tilde z \in \SBV(\GC;\{0,1\}) $ and  for all  $t \in [0,T],$ 
which accounts, at least formally,  for the magnitude of the stresses. Indeed, from the contact surface boundary condition \eqref{cont-surf-intro} we read, taking into account that $z_k(t) \in \{0,1\}$, that
\[
 \int_{\GC} \tfrac {k^2}2 z_k(t) \left| \JUMP{u_k(t)} \right|^2 \dd 
\mathscr{H}^{d-1} =  \int_{Z_k(t) \cap \{ | \JUMP{u_k(t)}|>0\} } |(\bbD \dot{e}_k(t) 
+ \bbC e_k(t))|^2 \dd \Surf,
\]
provided that the solutions are sufficiently smooth as to ensure that the stress 
term on the  r.h.s.\ makes sense. Therefore, under the assumption of convergence 
and sufficient regularity of the solutions, and taking 
into account that $\Surf( \{| z_k(t)   \JUMP{u_k(t)}| >0 \}) \to 0 $  
as $k\to\infty$, one expects the \emph{rescaled} brittle model obtained from \eqref{coeffscale-b}  
to contain a term of the form
$
 \int_{Z(t) \cap\partial \{ | \JUMP{u(t)}|>0\} } |(\bbD \dot{e}(t) + \bbC e(t))|^2 \dd \Surf.
$
This conveys the information that, also in the brittle limit  a decrease of the 
semistable function $z$ is not only triggered by the perimeter regularization, but  by the mechanical stresses as well. 
\par
The alternative scaling in \eqref{coeffscale-b} also relates our brittle model to the one studied 
in \cite{DMLar11EWED}, despite one obvious, striking difference. In fact, 
while our energy-dissipation balance features the dissipation 
due to crack growth,  this is not the case in \cite{DMLar11EWED}, so that the model discussed there 
can be interpreted in the way that crack growth does not cost any dissipation.  
Indeed, in the energy balance in  \cite{DMLar11EWED} one may retrieve the presence 
of a dissipation potential, which  nonetheless 
 only seems to ensure the unidirectionality of crack growth, as it 
takes the value $0$ if $\Gamma(s)\subset\Gamma(t)$ for any $s<t$. Our brittle model 
mimics this scenario  when choosing the scaling \eqref{coeffscale-b} in the adhesive 
contact approximation. But, in contrast to \cite{DMLar11EWED}, solutions to our limit model 
carry the  additional information that crack growth is rate-independent and that the 
crack set has finite perimeter, 
which they  inherit from the approximating 
adhesive contact models,  thanks to the bounds  \eqref{unifbdzDiss} \& \eqref{unifbdz}. 
%
%
\section{Existence of \semi energetic solutions for adhesive contact ($k$ fixed)}
\label{s:3}
%
 In \cite{RosTho12ABDM},
the existence of \semi energetic solutions to an  adhesive contact system with perimeter regularization was proved in the case of a \emph{quasistatic} 
momentum balance, i.e.\ neglecting the 
inertial term. On the other hand, in 
\cite{RosRou10TARA} the fully dynamic case was considered, but 
the flow rule for the delamination parameter did not feature the perimeter regularization term we consider here.
\par
 That is why,
in this section we will briefly address  
the existence of \semi energetic solutions for the adhesive contact  evolutionary systems $\ingrsysk$,  with $k\in \N$ fixed, 
 by resorting to the abstract existence result for  damped inertial  systems proved 
in  \cite[Thm.\ 4]{RosTho15CEx}. In what follows,  for the 
reader's convenience we shall first  revisit the prerequisites on an abstract  
 damped inertial system $\ingrsys$ underlying the existence result in  \cite{RosTho15CEx}, 
and then verify that the systems $\ingrsysk$ do comply with them, thus deducing  
the existence of \semi energetic solutions 
for the adhesive contact models as a corollary. 
\par 
 Let $\ingrsys$ be a damped inertial system complying with  the basic conditions 
\eqref{basic-conds}. 
In line with the direct method of the calculus of variations and with tools from rate-independent 
and gradient systems,  \cite[Thm.\ 4]{RosTho15CEx} 
puts the  following  additional  requirements on the functionals 
$\calV:\Spu\to[0,\infty),$ 
$\calR:\Spz\to[0,\infty]$, and $\calE: [0,T]\times \Spu \times \Spz \to \R \cup \{\infty\}$: 
\begin{subequations}
\label{assE}
\begin{eqnarray}
\nonumber
\hspace*{-30mm}&&\text{\bf Boundedness from below \& Weak lower semicontinuity:}\\
\label{assEbdd}
\hspace*{-30mm}
&&\hspace*{15mm}\calE\text{ is bounded from below:}\;\exists\,C_0>0,\,\forall\,(t,u,z)\subset\mathrm{dom}(\calE):\;
\calE(t,u,z)\geq C_0\,;\\
\label{assElsc}
\hspace*{-30mm}
&&\hspace*{15mm}\text{for all }t\in[0,T],\;\calE(t,\cdot,\cdot)
\text{ is weakly sequentially lower semicontinuous on }\Spu\times\Spz\,,
\end{eqnarray}
\end{subequations} 
indeed, if $\calE$ is bounded from below, up to a shift we can assume 
that it is bounded by a positive constant. 
\begin{eqnarray}
\nonumber 
\hspace*{-10mm}
&&\hspace*{-8mm}\text{\bf Temporal regularity and power control:}\\
\label{assEpow}
\hspace*{-10mm}
&&\hspace*{-3mm}
\left.
\!\!\!
\begin{array}{l}         
\forall\,(u,z)\in \mathbf{D}_u\times\mathbf{D}_z,
\text{ the map } t\mapsto\calE(t,u,z)\text{ is differentiable with derivative }
\partial_t\calE(t,u,z)\text{ s.t.\ }\\
\exists\,C_1,C_2>0,\forall\,(t,u,z)\in\mathrm{dom}(\calE):\;
|\partial_t\calE(t,u,z)|\leq C_1(\calE(t,u,z)+C_2)\;\text{and fulfilling}\\
\text{for all sequences }t_n\to t,\,u_n\to u\text{ in }\Spu,\,z_n\to z\text{ in }\Spz\text{ with }
\textstyle{\sup_n}\calE(t_n,u_n,z_n)\leq C\text{  that  }\\
\limsup_{n\to\infty}\partial_t\calE(t_n,u_n,z_n)\leq\partial_t\calE(t,u,z)\,.
\end{array}\hspace*{-3mm}
\right\}
\end{eqnarray}
\begin{eqnarray}
\nonumber 
\hspace*{-10mm}
&&\hspace*{-10mm}\text{\bf Coercivity:}\\
\label{assDisscoercE}
\hspace*{-10mm}
&&
\hspace*{-6mm}
\left.
\begin{array}{l}
\text{there exist }\tau_o>0\text{ such that for all }(t,u_o,z_o)\in[0,T]\times\Spu\times\Spz\\
\text{the map }(u,z)\mapsto\calE(t,u,z)+\tau_o\calV\big(\tfrac{u-u_o}{\tau_o}\big)+\calR(z-z_o)
\text{ has sublevels bounded in }\Spv\times\Spx\,.
\end{array}\right\}
\end{eqnarray}
\begin{equation}
\label{assMRSC}
\begin{split}
&\hspace*{-2mm}\text{{\bf Mutual recovery sequence condition} ensuring the closedness of of stable sets: } \\
&\left.
\begin{array}{l}
\text{Let }(t_n,u_n,z_n)_n\subset\mathrm{dom}(\calE)\text{ for every }n\in\N
\text{ satisfy semistability 
condition \eqref{semistab-z},}\\
\text{let }
t_n\to t,(u_n,z_n)\rightharpoonup(u,z)\text{ in }\Spu\times\Spz
\text{ with }\sup_n\calE(t,u_n,z_n)\leq C\text{ for all }t\in[0,T].\\
\text{Then, for every }
\tilde z\in\Spz\text{ there exists }\tilde z_n\rightharpoonup\tilde z
\text{ in }\Spz\text{ such that }\\
\limsup_{n\to\infty}\big(\calE(t_n,u_n,\tilde z_n)+\calR(\tilde z_n-z_n)-\calE(t_n,u_n,z_n)\big)
\leq  \calE(t,u,\tilde z)+\calR(\tilde z-z)-\calE(t,u,z)\,.
\end{array}\right\}
\end{split}
\end{equation}
\par
As previouly mentioned, the existence  results in \cite{RosTho15CEx} allow for 
a  non-smooth and even non-convex (in lower order terms) dependence $u 
\mapsto \ene tuz$. 
However, since the mechanical energies $\calE_k (t,\cdot,z)$ from \eqref{defEk} 
are convex and G\^ateaux-differentiable, we will confine the discussion 
to energies with this property and denote by $\partial_u \calE(t,\cdot,z) $  
the G\^ateaux-differential of the convex functional $\ene t{\cdot}z$. 
Following 
\cite[Thm.\ 4]{RosTho15CEx}, we need to impose 
a suitable   condition on the differentials $\partial_u \calE$ 
in the spirit of Minty's trick: 
\begin{equation}
\label{assMinty}
\begin{split}
&\hspace*{-3mm}\text{\bf Continuity:}\\
&\text{For all sequences }(\mathrm{t}_n)_n,\mathrm{t}_n:[0,T]\to[0,T],
(u_n)_n\subset L^\infty(0,T;\Spv)\cap H^1(0,T;\Spu),\\
&(z_n)_n\subset L^\infty(0,T;\Spx)\cap\mathrm{BV}([0,T];\Spz),
 (\partial_u \ene{\mathrm{t}_n}{u_n}{z_n})_n  \subset L^2(0,T;\Spu^*)\text{ s.t.\ }\\
&\exists\,C>0,\,\forall\,n\in\N,\,\forall\,t\in[0,T]:\;\calE(t,u_n(t),z_n(t))\leq C\,\text{ and }\\
&\left\{
\begin{array}{l}
\mathrm{t}_n\to\mathrm{t}\text{ pointwise a.e.\ in }(0,T)\,,\\
u_n\overset{*}{\rightharpoonup}u\text{ in }L^\infty(0,T;\Spv)\cap H^1(0,T;\Spu)\,,\\
z_n\overset{*}{\rightharpoonup}z\text{ in }L^\infty(0,T;\Spx)\,,\;
z_n(t)
\overset{*}{\rightharpoonup}  z(t)\text{ in }\Spx\text{ for all }t\in[0,T]\,,\\
 \partial_u \ene{\mathrm{t}_n}{u_n}{z_n}  \rightharpoonup\xi\text{ in }L^2(0,T;\Spu^*)\,,\;
\limsup_{n\to\infty}\int_0^T
\langle   \partial_u \ene{\mathrm{t}_n}{u_n}{z_n},  u_n\rangle_\Spu\,\mathrm{d}t
\leq\int_0^T\langle\xi,u\rangle_\Spu\,\mathrm{d}t\,,
\end{array}
\right\}\\
&\text{then there holds }\xi(t)  =  \partial_u\calE(\mathrm{t}(t),u(t),z(t))
\text{ for a.a.\ }t\in(0,T)\,. 
\end{split}
\end{equation}
\par
 Finally, 
to find a bound on the inertial term,  a further requirement of 
 \cite[Thm.\ 4]{RosTho15CEx} 
is the following 
\begin{equation}
\label{assSubgr}
\begin{split}
&\hspace*{-20mm}\text{\bf Subgradient estimate:}\\
&\hspace*{10mm}\text{There exists constants }C_3,C_4,C_5>0\text{ and }\sigma\in[1,\infty)\text{ such that }\\
&\hspace*{10mm}\forall\,(t,u,z)\in\mathrm{dom}(\calE)\, : \quad  
 \|\partial_u\calE(t,u,z) \|_{\Spu^*}^\sigma  \leq C_3\calE(t,u,z)+C_4\|u\|_\Spu +C_5\,.
\end{split}
\end{equation}
\par 
We are now in the position to recall the  existence result from \cite{RosTho15CEx}. 
\begin{theorem}[{\cite[Thm.\ 4]{RosTho15CEx}}]
\label{thm:RT15}
 Let 
  $\ingrsys$   
  fulfill 
  \eqref{basic-conds} \& 
\eqref{assE}--\eqref{assSubgr}. 

Then for every $(u_0,u_1,z_0)\in\Spu  \times\Spw  \times\Spz$
 fulfilling the semistability \eqref{semistab-z} at $t=0$, i.e.
\begin{equation}
\label{init-semistab}
\ene 0{u_0}{z_0} \leq \ene{0}{u_0}{\tilde z} + \dissr(\tilde{z}{-}z_0)
\qquad \text{for all } \tilde{z} \in \Spz
\end{equation}
there exists a \semi energetic solution (in the sense of Definition \ref{def:energetic-sol}) to $\ingrsys$ 
satisfying the Cauchy condition    
$(u(0),\dot u(0),z(0))=(u_0,u_1,z_0)$. 
\end{theorem}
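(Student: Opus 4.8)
The plan is to establish Theorem~\ref{thm:RT15} by the time-incremental \emph{alternating} (decoupled) minimization scheme that underlies the abstract statement, and then to pass to the limit as the time step vanishes. Fix $N\in\N$, set $\tau:=T/N$ and $t_n^\tau:=n\tau$, choose a discrete initialization encoding $u(0)=u_0$, $\dot u(0)=u_1$, $z(0)=z_0$ (regularizing $u_1$ in $\Spw$ if needed so that the first discrete velocity lies in $\Spu$), and define inductively $(z_\tau^n,u_\tau^n)$ by first solving the rate-independent incremental problem
\[
z_\tau^n\in\mathop{\mathrm{argmin}}_{z\in\Spz}\Big(\calE(t_n^\tau,u_\tau^{n-1},z)+\calR(z-z_\tau^{n-1})\Big),
\]
and then the viscous--inertial incremental problem
\[
u_\tau^n\in\mathop{\mathrm{argmin}}_{u\in\Spu}\Big(\tfrac{1}{2\tau^2}\|u-u_\tau^{n-1}\|_\Spw^2-\tfrac1\tau (v_\tau^{n-1},u-u_\tau^{n-1})_\Spw+\tau\,\calV\big(\tfrac{u-u_\tau^{n-1}}{\tau}\big)+\calE(t_n^\tau,u,z_\tau^n)\Big),
\]
with $v_\tau^{n-1}:=(u_\tau^{n-1}-u_\tau^{n-2})/\tau$. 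The Euler--Lagrange inclusion of the second problem is precisely the time-discrete form of \eqref{eq-u}, with discrete acceleration $(u_\tau^n-2u_\tau^{n-1}+u_\tau^{n-2})/\tau^2$; testing the first problem against $\tilde z=z_\tau^{n-1}$ and against an arbitrary competitor yields a discrete semistability inequality. Existence of minimizers at each step follows from the direct method, using the coercivity assumption~\eqref{assDisscoercE} (together with \eqref{diss-R}, \eqref{diss-V}) to bound sublevels in $\Spv\times\Spx$, the weak lower semicontinuity \eqref{assElsc}, the convexity of $\calV$, and the lower semicontinuity of $\calR$.

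Next I would derive the a priori bounds, uniform in $\tau$. Testing the discrete Euler--Lagrange inclusion for $u_\tau^n$ against $v_\tau^n$, using the elementary identity for $(v_\tau^n-v_\tau^{n-1},v_\tau^n)_\Spw$, Euler's relation $\langle\partial\calV(v_\tau^n),v_\tau^n\rangle_\Spu=2\calV(v_\tau^n)$, and the convexity inequality $\langle\partial_u\calE(t_n^\tau,u_\tau^n,z_\tau^n),v_\tau^n\rangle_\Spu\ge\tfrac1\tau(\calE(t_n^\tau,u_\tau^n,z_\tau^n)-\calE(t_n^\tau,u_\tau^{n-1},z_\tau^n))$, and combining with the energy decrease $\calE(t_n^\tau,u_\tau^{n-1},z_\tau^n)+\calR(z_\tau^n-z_\tau^{n-1})\le\calE(t_n^\tau,u_\tau^{n-1},z_\tau^{n-1})$ from the $z$-step, I obtain, after summation and use of $\calE(t_n^\tau,\cdot,\cdot)-\calE(t_{n-1}^\tau,\cdot,\cdot)=\int_{t_{n-1}^\tau}^{t_n^\tau}\partial_t\calE(s,\cdot,\cdot)\,\dd s$, a discrete energy--dissipation inequality. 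The power-control bound $|\partial_t\calE|\le C_1(\calE+C_2)$ in \eqref{assEpow} together with a discrete Gronwall lemma then give $\tau$-uniform bounds on $\sup_n\calE(t_n^\tau,u_\tau^n,z_\tau^n)$, on $\|u_\tau\|_{H^1(0,T;\Spu)}$ and $\|\dot u_\tau\|_{L^\infty(0,T;\Spw)}$, on the $\calR$-variation of $z_\tau$ (hence on $\|z_\tau\|_{\mathrm{BV}([0,T];\Spz)}$ via \eqref{diss-R}), and, using \eqref{assDisscoercE} again, on $\|z_\tau\|_{L^\infty(0,T;\Spx)}$. Finally, the subgradient estimate~\eqref{assSubgr} controls $\|\partial_u\calE(t_n^\tau,u_\tau^n,z_\tau^n)\|_{\Spu^*}$ in $L^2(0,T;\Spu^*)$ (from the bounds on $\calE$ and $\|u_\tau\|_{L^\infty(0,T;\Spu)}$, with an interpolation if $\sigma<2$), whence the discrete inclusion yields a $\tau$-uniform bound on the discrete second-order difference quotient of $u_\tau$ in $L^2(0,T;\Spu^*)$.

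Then comes the passage to the limit. From these bounds and standard compactness --- Banach--Alaoglu, an Aubin--Lions-type argument for $u_\tau$, and Helly's selection principle together with the compact embedding $\Spx\Subset\Spz$ for the BV-in-time variable $z_\tau$ --- I extract a (not relabeled) subsequence and a limit $(u,z)$ with the regularity \eqref{regularity}, such that $u_\tau\weakto u$ in $H^1(0,T;\Spu)$ with $u_\tau(t)\weakto u(t)$ in $\Spu$ and $\dot u_\tau(t)\weakto\dot u(t)$ in $\Spw$ for every $t$, $\ddot u_\tau\weakto\ddot u$ in $L^2(0,T;\Spu^*)$, and $z_\tau(t)\to z(t)$ in $\Spz$, $z_\tau(t)\weaksto z(t)$ in $\Spx$ for every $t$. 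The momentum inclusion~\eqref{eq-u} is obtained by passing to the limit in the discrete Euler--Lagrange inclusion: the weak $L^2(0,T;\Spu^*)$-limit $\xi$ of $\partial_u\calE(t_n^\tau,u_\tau^n,z_\tau^n)$ is identified as $\partial_u\calE(\cdot,u(\cdot),z(\cdot))$ by the Minty-type continuity assumption~\eqref{assMinty}, whose decisive hypothesis $\limsup_\tau\int_0^T\langle\partial_u\calE,u_\tau\rangle_\Spu\,\dd t\le\int_0^T\langle\xi,u\rangle_\Spu\,\dd t$ is extracted from the (asymptotically sharp) discrete energy--dissipation inequality combined with weak lower semicontinuity of the kinetic and viscous contributions, the inertial and viscous terms passing to the limit by linearity. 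The semistability~\eqref{semistab-z} at every $t\in[0,T]$ follows from the discrete semistability inequalities by the mutual-recovery-sequence argument~\eqref{assMRSC}, upgraded from the partition points to arbitrary $t$ via the pointwise-in-time convergence of $z_\tau$ and lower semicontinuity (this is where the $L^\infty$-in-time bound on $z$ in the finer space $\Spx$ is used). The energy--dissipation inequality~\eqref{enineq} is obtained by taking $\liminf$ in the discrete one, using weak lower semicontinuity of $\calK$, $\calV$, $\Var_\calR$ and $\calE$, convergence of the initial energy, and the $\limsup$-control of $\partial_t\calE$ in~\eqref{assEpow} to handle $\int_0^t\partial_t\calE\,\dd s$. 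The Cauchy conditions are inherited from the discrete initialization and the pointwise-in-time convergences.

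I expect the main obstacle to be precisely the limit passage in the momentum inclusion, and within it the $\limsup$ inequality feeding the Minty argument~\eqref{assMinty}: it amounts to showing the discrete energy balance loses no mass in the limit, which couples the only weakly convergent inertial term, the viscous term, and the $z$-dependence of $\calE$ in a nontrivial way. A companion difficulty is securing semistability at \emph{every} time rather than merely a.e., which is exactly why the $L^\infty(0,T;\Spx)$ bound on $z$ and the mutual-recovery-sequence condition~\eqref{assMRSC} are indispensable.
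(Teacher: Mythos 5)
The paper does not actually prove Theorem~\ref{thm:RT15}: it is imported verbatim from \cite[Thm.\ 4]{RosTho15CEx} and applied as a black box in the proof of Theorem~\ref{th:exist-mech}. There is therefore no in-paper proof to compare against; but your reconstruction matches the methodology the paper itself attributes to \cite{RosTho15CEx} --- a time-discrete scheme with alternating (decoupled) minimization in $z$ and then $u$ --- and your allocation of the hypotheses to the individual steps (coercivity~\eqref{assDisscoercE} for sublevel bounds, power control~\eqref{assEpow} for the discrete Gronwall estimate, the subgradient bound~\eqref{assSubgr} for the second-order difference quotient, the mutual-recovery-sequence condition~\eqref{assMRSC} for closing the semistability, and the Minty-type condition~\eqref{assMinty} for identifying the weak limit of $\partial_u\calE$) is consistent with how those hypotheses are verified and used later in Section~\ref{s:3}. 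Your flagging of the $\limsup$ inequality feeding~\eqref{assMinty}, and of the upgrade of semistability from partition points to all $t\in[0,T]$, as the delicate points is also apt; so, up to the technicalities you acknowledge (initialization of the first discrete velocity, a.e.\ vs.\ everywhere semistability, interpolation when $\sigma<2$), the proposal is a faithful reconstruction of the cited argument.
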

\par
 Our existence result for the adhesive contact system $\ingrsysk$, deduced from Thm.\ \ref{thm:RT15}, 
also guarantees the validity of the energy-dissipation inequality as an \emph{equality}, cf.\ \eqref{eneq}. This 
would follow from applying \cite[Prop.\ 3.5, Thm.\ 3.6]{RosTho15CEx}, but, still, we prefer to sketch the 
 proof of \eqref{eneq} for the sake of completeness and also for later 
reference in the proof of Thm.\ \ref{thm:main}. 
Nonetheless,  let us mention that  \cite[Prop.\ 3.5, Thm.\ 3.6]{RosTho15CEx} 
in fact ensure that \emph{any} \semi energetic solution to the adhesive contact system complies with 
the energy-dissipation balance. 
%
\subsubsection*{Proof of Theorem \ref{th:exist-mech}: }
%
{\bf Ad \eqref{basic-conds}:} In view of \eqref{defRk}, \eqref{defV}, and \eqref{assCD} 
conditions \eqref{diss-V} and  \eqref{diss-R}  on $\calR_k$ and $\calV$ are verified.  From the definition of $\calE_k$ 
we see that the functionals have  the  proper domain $\mathrm{dom}(\calE_k)=[0,T]\times\Spv\times\Spx$. 
\par
{\bf Ad \eqref{assE}: }
 To check
\eqref{assEbdd},  
we calculate in view of \eqref{defEk}, using Korn's and Young's inequality: 
\begin{equation}
\label{calc-ek-bded}
\begin{aligned}
\calE_k(t,u,z)
&\geq &&  \tfrac{C_\mathbb{C}^1}{2}\|e(u)\|_{L^2}^2
-\|\mathbf{f}(t)\|_{\Spu^*}\|u\|_\Spu + \int_{\GC} \tfrac k2 z \left| \JUMP{u}\right|^2 \dd \Surf 
\\
& && \qquad \qquad \qquad
+\mathrm{b}_k(P(Z,\GC)+\|z\|_{L^1(\GC)})-(a_k^0+\mathrm{b}_k)\Surf(\GC)
\\
&\geq && \tfrac{C_\mathbb{C}^1 C_K^2}{2}\|u\|_{\Spu}^2
-\tfrac{C_\mathbb{C}^1 C_K^2}{4}\|u\|_{\Spu}^2
-\tfrac{1}{C_\mathbb{C}^1 C_K^2}\|\mathbf{f}(t)\|_{\Spu^*}^2 
+ \int_{\GC} \tfrac k2 z \left| \JUMP{u}\right|^2 \dd \Surf  
\\
& && \qquad \qquad \qquad
+\mathrm{b}_k\|z\|_{\SBV(\GC)}-(a^0+\mathrm{b})\Surf(\GC)\\
&\geq && -c_* C_\mathbf{f}^2 + \int_{\GC} \tfrac k2 z \left| \JUMP{u}\right|^2 \dd \Surf  -(a^0+\mathrm{b})\Surf(\GC)\,,
\end{aligned}
\end{equation}
where we used that $\|\mathbf{f}(t)\|_{\Spu^*}\leq C_\mathbf{f}$,  
as well as that $\|z\|_{L^1(\GC)}\leq \Surf(\GC)$ due to $z\in \{0,1\}$, 
 and set  
$\tfrac{1}{C_\mathbb{C}^1 C_K^2}=c_*$. 
This proves \eqref{assEbdd}. For  $\calE_k(t,u,z)\leq E$  we  then  find that 
\begin{equation}
\label{coercEk}
\|u\|_\Spu^2\leq \tfrac{4}{C_\mathbb{C}^1 C_K^2}(E+c_*C_\mathbf{f}^2 +(a^0+\mathrm{b})\Surf(\GC))
\;\text{ and }\;
\|z\|_{\SBV(\GC)}  \leq  \mathrm{b}_k^{-1}(E+c_* C_\mathbf{f}^2 +(a^0+\mathrm{b})\Surf(\GC))\,.
\end{equation}
 The weak lower semicontinuity property \eqref{assElsc} can be straightforwardly checked. 
\par  
\textbf{Ad 
\eqref{assEpow}:}  
Observe 
that $\partial_t\calE(t,u,z)=-\langle\dot{\mathbf{f}}(t),u\rangle_\Spu$. In view of 
the regularity assumption \eqref{assRegf} we have $\dot{\mathbf{f}}(t_n)\to\dot{\mathbf{f}}(t)$ 
in $\Spu^*$ for $t_n\to t$ in $[0,T]$, 
which immediately gives the upper semicontinuity property of the powers. 
In view of \eqref{coercEk} and Young's inequality we find the 
following  power-control  estimate:  
\begin{eqnarray*}
| \partial_t\calE_k (t,u,z)|
&\leq& C_\mathbf{f}\|u\|_\Spu 
\leq \tfrac{1}{2}C_\mathbf{f}^2+\tfrac{1}{2}\|u\|_\Spu^2  \\
&\leq& \tfrac{1}{2}C_\mathbf{f}^2
+\tfrac{2}{C_\mathbb{C}^1C_K^2}  (\calE_k(t,u,z)+c_* C_\mathbf{f}^2 +(a^0+\mathrm{b})\Surf(\GC))\,.
\\ 
\end{eqnarray*}
\par  
{\bf Ad \eqref{assDisscoercE}: } The coercivity assumption on 
the sum of $\calE_k$ and $\calV$ directly follows from the coercivity of $\calV$ and the 
coercivity estimate \eqref{coercEk} deduced above for $\calE_k$. 
\par 
{\bf Ad \eqref{assMRSC}: }
We refer to  \cite[Sec.\ 5.2]{RosTho12ABDM}  for the construction of a mutual recovery sequence 
that respects the unidirectionality imposed 
by $\calR_k$ and the perimeter regularization. 
\par 
{\bf Ad \eqref{assMinty}: }
Recall that, 
for all $(t,u,z)\in\mathrm{dom}(\calE_k)$ the mapping $u\mapsto\calE_k(t,u,z)$ 
is G\^ateaux-differentiable, 
 with G\^ateaux-derivative  given by \eqref{GderEk}. 
Due to the quadratic nature of   $\calE_k(t,\cdot,z)$, it is easy to verify the continuity condition \eqref{assMinty}. 
\par 
{\bf Ad \eqref{assSubgr}: } 
Using \eqref{assRegf} and H\"older's inequality to estimate the terms in \eqref{GderEk} 
we thus obtain for every 
$(t,u,z)\in\mathrm{dom}(\calE_k)$ with $\calE_k(t,u,z)\leq E$ and for all $v\in \Spu$ 
\begin{equation}
\label{subgrest}
\begin{split}
&
|\langle\partial_u\calE_k(t,u,z),v\rangle_\Spu|
\\
&\leq 
(C_\mathbb{C}^2\|e(u)\|_{L^2}+C_\mathbf{f})\|v\|_\Spu
+k\big(\int_{\GC}z\big|\JUMP{u}\big|^2\,\mathrm{d}\Surf\big)^{1/2}
 \big(\int_{\GC}\big|\JUMP{v}\big|^2\,\mathrm{d}\Surf\big)^{1/2}\\
&\overset{\text{\small (1)}}{\leq}
\|v\|_\Spu\big(C_\mathbf{f}
+2\sqrt{k}\max\{\bar{C},4C_\mathbb{C}^2/C_\mathbb{C}^1\}
\big(1+\tfrac{C_\mathbb{C}^1}{4}\|e(u)\|_{L^2}^2+k\int_{\GC}z\big|\JUMP{u}\big|^2\dd \Surf\big)^{1/2}
\big)\\
&\overset{\text{\small (2)}}{\leq}
\|v\|_\Spu\big(C_\mathbf{f}
+ 4\sqrt{k}\max\{\bar{C},4C_\mathbb{C}^2/C_\mathbb{C}^1\}
\big(1+\calE_k(t,u,z) 
\big)\big)\,.
\end{split}
\end{equation}
 where (1) follows using that $k\geq1$ and the relation $\sqrt{a}+\sqrt{b}\leq 2\sqrt{a+b}$ for 
$a,b\geq0$; there, $\bar C$ is the constant associated with the continuous 
embedding $\Spu \subset L^2(\GC;\R^d)$. Estimate (2) is obtained using the 
fact that $(1+\ldots)^{1/2}\leq(1+\ldots)$ together with 
the fact that $\calE_k(t,u,z)$ bounds both 
$\tfrac{C_\mathbb{C}^1}{4}\|e(u)\|_{L^2}^2$ and $ \int_{\GC}\tfrac k2 z\big|\JUMP{u}\big|^2\dd \Surf$, 
cf.\ \eqref{calc-ek-bded}. 
\par 
{\bf Energy equality \eqref{eneq}: } 
For each $k\in\N,$ we observe that $\dot u_k\in L^2(0,T;\Spu)\cap H^1(0,T;\Spu^*)$ is an admissible test function 
in the $k$-momentum balance. Thus, applying this test and integrating the $k$-momentum balance 
over $[0,t]$ for any $t\in[0,T],$ yields, in view of \eqref{defV}, for the term resulting from the viscous damping that  
\begin{equation}
\label{visc-damping}
\int_0^t
 \int_{\Omega{\setminus}\GC} \mathbb{D} e(\dot u_k(s)) {\colon} e(\dot u_k(s)) \,  
\mathrm{d}s= \int_0^t2\calV(\dot u_k(s))\,\mathrm{d}s\,.
\end{equation}
For the external loading term we find by partial integration
\begin{equation*} 
\int_0^t\langle-\mathbf{f}(s),\dot u_k(s)\rangle_\Spu\,\mathrm{d}s=
\langle-\mathbf{f}(t), u_k(t)  \rangle_\Spu- \langle-\mathbf{f}(0), u_k(0)\rangle_\Spu
-\int_0^t\langle-\dot{\mathbf{f}}(s),u_k(s)\rangle_\Spu\,\mathrm{d}s\,.
\end{equation*}
Moreover, since $(\Spu,\Spw,\Spu^*)$ is a Gelfand triple, the inertial term satisfies 
the following chain rule:  
\begin{equation*}
\int_0^t\langle\varrho\ddot u_k(s),\dot u_k(s)\rangle_\Spu\,\mathrm{d}s
=  \tfrac{1}{2}\|\dot u_k(t)\|_\Spw^2-\tfrac{1}{2}\|\dot u_k(0)\|_\Spw^2\,.   
\end{equation*}
We also observe that the elastic bulk energy satisfies a similar chain rule, i.e., we have 
\begin{equation}
\label{chain-ruleC}
\int_0^t\int_{\Omega\backslash\GC}\hspace*{-2mm}\mathbb{C}e(u_k(s)):e(\dot u_k(s))
\,\mathrm{d}x\,\mathrm{d}s
= \int_{\Omega\backslash\GC}\hspace*{-2mm}\tfrac{1}{2}\mathbb{C}e(u_k(t)):e(u_k(t))\,\mathrm{d}x
-\int_{\Omega\backslash\GC}\hspace*{-2mm}\mathbb{C}e(u_k(0)):e(u_k(0))\,\mathrm{d}x\,.
\end{equation}
In order to treat the term related surface energy 
$\int_0^t\int_{\GC}kz_k\JUMP{u_k}\JUMP{\dot u_k}\,\mathrm{d}\Surf\,\mathrm{d}s$ we proceed 
as in \cite[(4.69)-(4.75)]{Roub10TRIP}, cf.\ also \cite[(8.15)]{RosRou10TARA}: 
Using a well-chosen partition of the 
time interval $[0,t]$ a Riemann sum argument is applied to 
the semistability condition to deduce a chain-rule  type inequality  of the following form 
\begin{equation*}
\begin{split}
&\int_0^t\int_{\GC}kz_k(s)\JUMP{u_k(s)}\JUMP{\dot u_k(s)}\,\mathrm{d}\Surf\,\mathrm{d}s\\
&\leq \int_{\GC}kz_k(t)\big|\JUMP{u_k(t)}\big|^2\,\mathrm{d}\Surf
-\int_{\GC}kz_k(0)\big|\JUMP{u_k(0)}\big|^2\,\mathrm{d}\Surf+\Var_{\calR_k}(z_k,[0,t])\,.
\end{split}
\end{equation*}
 In view of the last estimate, putting 
all the above terms together in the $k$-momentum balance, 
results in the energy-dissipation  inequality opposite to \eqref{enineq} 
\begin{equation*}
 \begin{aligned}
\tfrac{1}{2}\|\dot u_k(t)\|_{\Spw}^2  & 
+ \int_0^t  2\calV(\dot u_k(s)) 
\,\mathrm{d} s + \Var_{\calR_k}(z_k, [0,t])+
 \calE_k(t,u_k(t),z_k(t))\\ 
&  \geq \tfrac{1}{2} \|\dot u_k(0)\|_{\Spw}^2
+  \calE_k(0,u_k(0),z_k(0)) + \int_0^t \partial_t\calE_k(s,u_k(s),z_k(s))\,\mathrm{d}s \,.
 \end{aligned}
 \end{equation*}
Thus, in combination with \eqref{enineq}, we have an equality that holds for all $t\in[0,T]$. 
Subtracting the energy equality given on $[0,s]$ from the one given on $[0,t],$ where $s<t,$ 
results in \eqref{eneq}. 
\par 
{\bf Uniqueness of the displacements for given $z\in L^\infty(0,T;\SBV(\GC;\{0,1\}))$:}  
Suppose that the pairs $(u,z)$ and $(\tilde u,z)$ both satisfy   
the momentum  balance \eqref{weak-mom-intro} and the same Cauchy conditions.   Then, 
it is immediate to check that 
$w:=u-\tilde u$ fulfills   \eqref{weak-mom-intro} for $\mathbf{f}=0$, with 
$w(0)=\dot w(0)=0$.  We  now choose the test function $v=\dot w$  and, 
exploiting the chain rule for each of the terms in \eqref{weak-mom-intro}, 
as well as  the positive definiteness  \eqref{assCD} of 
the viscosity and elasticity tensors $\bbD$ and $\bbC$ and Korn's inequality,
we obtain $\text{for a.a.\ }t\in(0,T)$
\begin{equation}
\begin{aligned}
\tfrac{1}{2}\|\dot w(t)\|_{\Spw}^2
+ C_{\bbD}^1C_K^2   \int_0^t  \| \dot w \|_{\Spu}^2 \dd s 
 +
\frac{ C_{\bbC}^1C_K^2}2 \|w(t) \|_{\Spu}^2 
 & \leq  k  \int_0^t   \int_{\GC} z \left| \JUMP{w}\right|   \left| \JUMP{\dot w}\right|  \dd \mathscr{H}^{d-1} \dd s\\ &  \leq  \frac{C_{\bbD}^1C_K^2}2    \int_0^t  \| \dot w \|_{\Spu}^2 \dd s
 +  C   \int_0^t  \| w \|_{\Spu}^2 \dd s\,,
 \end{aligned}
\end{equation}
where the second estimate follows from the arguments previously used for \eqref{subgrest}, 
and  Young's inequality.  We now  absorb the first term on the r.h.s.\ into the l.h.s., 
and use the Gronwall Lemma to deal with the second one. We thus conclude that $w\equiv0$ on $[0,t]$ for every $t\in (0,T]$, whence the desired uniqueness.  
\par 
{\bf Uniform bounds \eqref{unifbdsk}: }
Following the arguments of \cite[Prop.\ 6.3]{Miel08DEMF} using a Gronwall estimate 
and the boundedness of the given data, 
the power control condition \eqref{assEpow}  
yields that 
\begin{equation*}
\sup_{t\in[0,T]}\big( \tfrac{1}{2} \|\dot u_k(t) \|_{\Spw}^2+ 
\calE_k(t,u_k(t),z_k(t))+\int_0^t\calV(\dot u_k(s))\,\mathrm{d}s+\Var_{\calR_k}(z_k,[0,t])
\big) \leq C\,.
\end{equation*}
Again invoking \eqref{assEpow} thus gives estimates \eqref{unifbdE} \& \eqref{unifbdDiss}. 
From this we deduce \eqref{unifbdu} using \eqref{coercEk}. 
\par 
For \eqref{unifbdzDiss} we argue that \eqref{unifbdE} \& \eqref{unifbdDiss} imply for 
each $k\in\N$ and all $t\in[0,T]$ that 
$0\leq z_k(t)\leq1$ and that $z_k$ is monotonically decreasing in time. Hence, 
for every $t \in [0,T] $ 
\begin{equation*}
\| z_k \|_{\BV ([0,t]; L^1(\GC))}  
= \int_0^T \int_{\GC} (z_k(t) - z_k(0))  \,\mathrm{d}x\,\mathrm{d}t
\leq   \int_0^T\int_{\GC}(1-z_k(0))\,\mathrm{d}x\,\mathrm{d}t \leq C\,. 
\end{equation*} 
Since the coefficients $\mathrm{b}_k$ may tend to zero as 
$k\to\infty$ (cf.\  the scaling \eqref{coeffscale-b}),  we shall not deduce 
the uniform-in-time  estimate \eqref{unifbdz} for the perimeter terms
$P(Z_k(t),\GC)$ from the energy estimate \eqref{unifbdE} (taking into account that $\calE$ estimates the perimeter, cf.\
 \eqref{calc-ek-bded}). Instead, 
 we will resort to the  
 $k$-semistability inequality, implying 
  the following estimate 
\begin{equation*}
\mathrm{b}_kP(Z_k(t),\GC)\leq  \mathrm{b}_kP(\widetilde Z_k(t),\GC)+\calR_k(\tilde z_k-z_k)
+a_k^0\int_{\GC}\left(  z_k(t)-\tilde z_k\right) \,\mathrm{d}\Surf
\end{equation*}
for any finite perimeter set $\widetilde Z_k\subset Z_k(t),$ in the sense that 
its characteristic function $\tilde z_k$ satisfies $\tilde z_k\leq z_k(t)$ $\Surf$-a.e.\ in $\GC$. 
In view of the allowed scalings of the coefficients, cf.\ \eqref{coeffscale}, we may therefore 
cancel out the $k$-dependence of the coefficients by  multiplying  by 
a suitable power of $k$ and thus find
\begin{equation*}
\mathrm{b}P(Z_k(t),\GC)\leq  \mathrm{b}P(\widetilde Z_k(t),\GC)+\calR_1(\tilde z_k-z_k(t))
+a^0\int_{\GC} \left(z_k(t)-\tilde z_k\right)\,\mathrm{d}\Surf\,. 
\end{equation*}
Choosing the particular competitor $\widetilde Z_k=\emptyset,\tilde z_k=0$ a.e.\ in $\GC$ 
yields the uniform bound \eqref{unifbdz}.     
 Since  the delamination variables $z_k $ take values in $\{0,1\}$, 
the second bound in \eqref{unifbdz} is  immediate. 
\par 
 The uniform bound \eqref{unifbdcombi} on $ \varrho \ddot u_k+\partial_u\calJ_k(\cdot,u_k,z_k)$ follows 
by comparison in the $k$-momentum balance using that, for every $v \in \Spu$, 
\begin{equation*}
\begin{aligned}
 \Big| \pairing{}{\Spu}{ \varrho \ddot u_k+\partial_u\calJ_k(\cdot,u_k,z_k)}{v} \Big|  &  = 
\Big|\int_{\Omega\backslash\GC}(\mathbb{C}e(u_k)+\mathbb{D}e(\dot u_k)):e(v)\,\mathrm{d}x
-\langle\mathbf{f},v\rangle_\Spu\Big|
\\ & \leq \|v\|_\Spu\big(C_\mathbb{C}^2\|u_k\|_\Spu+C_\mathbb{D}^2\|\dot u_k\|_\Spu 
+ C_{\mathbf{f}} \big)\,, \end{aligned}
\end{equation*}
and the right-hand side is uniformly bounded by estimates \eqref{unifbdu} and \eqref{assdata}. 
\par      
{\bf $k$-dependent bounds \eqref{kdepbd}: }
The bound  \eqref{kdepsubgr} on the term $\partial_u \calJ_k (u_k,z_k)$ 
follows from the very same calculations developed in   \eqref{subgrest}.  
Then, \eqref{kdepddu} follows combining \eqref{kdepsubgr}  with the 
previously proved \eqref{unifbdcombi}. 
\hfill $\square$
%
\section{Passage from adhesive to brittle: Proof of Main Theorem \ref{thm:main}}
\label{s:4}
%
The proof of main Theorem \ref{thm:main} is carried out along the statements given in 
 Items 1-8. 
In particular, our roadmap for this section is the following:  
\begin{compactitem}
\item[{\bf 1.\ immediate convergences \eqref{convs}:}] 
they are proved in  Lemma \ref{statement1} below.
\item[{\bf Section \ref{FineProps}:}] We discuss the limit passage 
in the {\bf semistability inequality}. 
Subsequently, we explain fine properties of the semistable adhesive and 
brittle delamination variables, 
which are induced by the perimeter regularization in combination with the unidirectional, 
$1$-homogeneous dissipation potential. The most important feature is the so-called \emph{support-convergence} 
of the supports of the semistable adhesive variables $z_k$ to the support of the brittle limit $z$, 
cf.\ Prop.\ \ref{SuppConv} below. 
This property will be  at the basis of  the construction of recovery spaces 
 to pass to the limit in the momentum balance 
\emph{with inertia} of the adhesive systems and, in particular, 
to deduce sufficient compactness results for the inertial terms, cf.\ the forthcoming 
Prop.\ \ref{Propsrecsp}
and Lemma \ref{Limmombal}. 
\item[{\bf Section \ref{MoscoBC}:}] The definition of the  above mentioned 
recovery spaces relies on the construction of recovery sequences for the 
test functions of the momentum balance  of the target brittle system. 
We detail this construction in Prop. \ref{corconst}. From the latter we  deduce 
a \textsc{Mosco}-convergence statement for  the functionals $\calJ_k$ 
to the functional $\calJ_\infty$, which enforces the brittle constraint. 
Hence,  in Lemma \ref{BC} we infer the validity of 
the  {\bf 2.\ brittle constraint \eqref{propsuz}}  for the limit pair $(u,z)$.  
\item[{\bf Section \ref{RecSp}:}] By suitably adapting the construction ideas of 
the recovery sequence,  
we design a  suitable sequence of  \emph{recovery spaces} $\spyn$, cf.\   \eqref{recsp} below, 
and prove their density in the spaces $L^2(s,T;\Spu_z(s))$, with $s\in (0,T]$ arbitrary, 
in Prop.\ \ref{Propsrecsp}.
We also introduce the  spaces
 $\spyns$  tailored 
in such a way as to ensure compactness for the sequence 
$(u_k, \dot{u}_k, \ddot{u}_k)_k$,  hence, in particular for the  
inertial terms  $(\ddot{u}_k)_k$, by comparison arguments 
in the momentum balance. 
The art in the construction  of these recovery spaces thus lies in the fact that 
their definition bypasses
the blow-up of the adhesive contact terms in the momentum  balance, 
without losing the information on the 
crack set of the brittle limit (i.e., the support of the brittle delamination variable).       
\item[{\bf Section \ref{LimitMomBEnDiss}:}]Using the recovery spaces we carry 
out the compactness argument for the 
inertial terms and pass to the {\bf limit in the momentum balance}, cf.\ Lemma \ref{Limmombal}. 
We thus conclude that $(u,z)$ is a {\bf \semi energetic solution of the brittle system}.   
In the line of these arguments we will also obtain the {\bf 3.\ additional regularity \eqref{better-H2}} as well as  
the {\bf 4.\ weak temporal continuity of $\dot u$}, cf.\ Cor.\ \ref{WeakConti}.     
\item[{\bf Section \ref{LEnbal}:}] We verify that the limit pair satisfies the 
{\bf 5.\ energy-dissipation balance}.  
In a first step, the energy-dissipation inequality \eqref{enineq} 
will be deduced via lower semicontinuity arguments. 
To find the reverse inequality requires to prove a chain rule for the 
inertial terms, cf.\ Lemma \ref{ChainruleLemma}. 
The remaining statements   of Theorem \ref{thm:main}, i.e.\ 
the {\bf 6.\ enhanced convergences \eqref{enh-convs}}, 
 the {\bf 7.\ enhanced initial condition \eqref{enhanced-Cauchy-dotu}}, and the 
{\bf 8.\ uniqueness of the displacements 
for given $z$}, are  deduced   
from the energy balance in Lemmata  \ref{EnhancedConvs}, \ref{EnhancedCauchy}\,\&\,\ref{Unique}, 
respectively. 
\end{compactitem}
\medskip

\par
We now resume to verify statement {\bf 1.} of Theorem \ref{thm:main}, i.e., the 
convergence of (a subsequence of) the \semi energetic solutions of the 
adhesive contact systems in the sense of \eqref{convs} to a limit pair $(u,z)$. 
In \underline{all of the forthcoming   results} we will tacitly require that 
\[
\text{the assumptions of Theorem \ref{thm:main} hold true.}
\]
We start by proving convergences  \eqref{convs}, relying on 
compactness arguments based on the bounds \eqref{unifbdsk}. 
\begin{lemma}[Statement 1.\ of Theorem \ref{thm:main}: convergences \eqref{convs}] 
\label{statement1}
There is a subsequence $(u_k,z_k)_k$ of the adhesive contact systems 
${\ingrsysk}_k$ and a limit $(u,z)$ such that {\bf convergences \eqref{convs}} hold true.
\end{lemma}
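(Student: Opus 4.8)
The plan is to deduce all the convergences collected in \eqref{convs} by successive extraction of weakly convergent subsequences from the uniform bounds \eqref{unifbdsk}, performing a diagonal selection wherever countably many time instants are involved, and finishing with one last diagonal step so that a single subsequence realizes \eqref{convsu}--\eqref{convsddu} at once. This is a fairly routine compactness argument once the estimates of Theorem \ref{th:exist-mech} are available, so I will only indicate the mechanisms at play.

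First I would treat the displacements. By \eqref{unifbdu} the sequence $(u_k)_k$ is bounded in $H^1(0,T;\Spu)$, so a subsequence satisfies $u_k\weakto u$ in $H^1(0,T;\Spu)$ for some limit $u$; this is the first half of \eqref{convsu}. For the pointwise-in-time statement $u_k(t)\weakto u(t)$ in $\Spu$ for every $t\in[0,T]$, I would use that $H^1(0,T;\Spu)\hookrightarrow C^0([0,T];\Spu)$, so that $(u_k)_k$ is also bounded in $C^0([0,T];\Spu)$, and that it is equi-Lipschitz as a map from $[0,T]$ into $\Spw$, since $\|u_k(t){-}u_k(s)\|_{\Spw}\leq|t{-}s|\,\|\dot u_k\|_{L^\infty(0,T;\Spw)}$ by \eqref{unifbdu}. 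An Arzel\`a--Ascoli-type argument in the space of weakly continuous $\Spu$-valued functions --- diagonal extraction over a countable dense set of times, using the reflexivity and separability of $\Spu$, followed by an extension to every $t\in[0,T]$ via the $\Spw$-equicontinuity --- then yields, along a further subsequence, $u_k(t)\weakto u(t)$ in $\Spu$ for all $t\in[0,T]$, with the limit coinciding with the one above by uniqueness of weak limits.

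Next I would handle the delamination variables by combining two compactness effects. On the one hand, by \eqref{unifbdz} the set $\{z_k(t)\}_k$ is, for every fixed $t\in[0,T]$, bounded in $\BV(\GC)$ (here one uses $z_k(t)\in\{0,1\}$ together with $P(Z_k(t),\GC)\leq C$), hence relatively compact in $L^1(\GC)$. On the other hand, by \eqref{unifbdzDiss} the sequence $(z_k)_k$ is bounded in $\BV([0,T];L^1(\GC))$. The generalized Helly selection principle (see, e.g., \cite{MieRouBOOK}) then provides a subsequence and a limit $z\in\BV([0,T];L^1(\GC))$ with $z_k(t)\to z(t)$ in $L^1(\GC)$ for every $t\in[0,T]$; moreover $z$ inherits the time-monotonicity of the $z_k$, and, being a pointwise $L^1$-limit of characteristic functions, $z(t)$ takes values in $\{0,1\}$ a.e. Since $0\leq z_k\leq1$, dominated convergence upgrades this to $z_k(t)\to z(t)$ in $L^q(\GC)$ for every $q\in[1,\infty)$, which gives \eqref{convsz}; and the uniform perimeter bound \eqref{unifbdz}, together with lower semicontinuity of the total variation, yields $z(t)\in\SBV(\GC;\{0,1\})$ with $z_k(t)\weaksto z(t)$ in $\SBV(\GC;\{0,1\})\cap L^\infty(\GC)$ for all $t$, and, combined with the pointwise-in-time convergences, $z_k\weaksto z$ in $L^\infty(0,T;\SBV(\GC;\{0,1\}))$, i.e.\ \eqref{convsz-2}.

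Finally, \eqref{unifbdcombi} asserts that $(\varrho\ddot u_k+\partial_u\calJ_k(\cdot,u_k,z_k))_k$ is bounded in $L^2(0,T;\Spu^*)$, so a last subsequence converges weakly in that space to some $\lambda\in L^2(0,T;\Spu^*)$, which is exactly \eqref{convsddu}; a concluding diagonal extraction then packages all the previous selections into a single subsequence. I do not expect a genuine obstacle here: the only point requiring some care is the \emph{simultaneous} pointwise-in-time selection, in which the Arzel\`a--Ascoli argument for $(u_k)_k$ and the Helly argument for $(z_k)_k$ must be run along nested subsequences, the latter relying on \emph{both} the $\BV$-in-time bound \eqref{unifbdzDiss} and the fixed-time $\BV(\GC)$-compactness furnished by the perimeter bound \eqref{unifbdz}. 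The substantial difficulties of Theorem \ref{thm:main} lie not in this lemma but in the later steps (support convergence, construction of the recovery spaces, and the limit passage in the momentum balance with inertia).
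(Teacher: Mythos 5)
Your proposal is correct and runs along essentially the same compactness program as the paper: extract weakly convergent subsequences from the uniform bounds \eqref{unifbdsk}, apply a Helly-type selection for the time-dependent $z_k$, and diagonalize so that a single subsequence realizes all of \eqref{convs}. The one place where your route differs from the paper's is the Helly step: the paper first applies a weak$^*$-Helly principle in $\mathrm{M}(\GC)$ (using only the $\BV([0,T];L^1(\GC))$ bound and the embedding $L^1(\GC)\hookrightarrow\mathrm{M}(\GC)$) and then upgrades to strong $L^1(\GC)$-convergence pointwise in time via the uniform $\SBV(\GC;\{0,1\})$ bound and the compact embedding $\SBV(\GC;\{0,1\})\Subset L^1(\GC)$, whereas you invoke a strong Helly principle directly, combining the $\BV$-in-time bound \eqref{unifbdzDiss} with the fixed-time relative compactness in $L^1(\GC)$ furnished by the perimeter bound \eqref{unifbdz}. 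Both versions of the Helly selection principle are standard, and the conclusion is identical. A small remark on the displacements: the pointwise-in-time convergence $u_k(t)\rightharpoonup u(t)$ in $\Spu$ actually requires neither your Arzel\`a--Ascoli argument nor the Aubin--Lions embedding into $\mathrm{C}^0_{\mathrm{weak}}([0,T];\Spu)$ that the paper cites, since the evaluation map $v\mapsto v(t)$ is a bounded linear operator from $H^1(0,T;\Spu)$ to $\Spu$ and hence automatically weak-to-weak continuous, so the second half of \eqref{convsu} follows directly from the first; both proofs are thus slightly more elaborate than necessary at this point, but of course correct.
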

\begin{proof}
The uniform bound \eqref{unifbdDiss} 
allows us to find a subsequence $(u_k)_k$ and a limit $u$ such that 
$u_k\rightharpoonup u$ in   $H^1(0,T;\Spu)$.
Since $H^1(0,T;\Spu) \Subset \mathrm{C}^0_{\mathrm{weak}}([0,T];\Spu)$ 
(the latter being the space of weakly continuous functions with values in $\Spu$) 
by  Aubin-Lions type arguments, cf.\ \cite{simon86},    we also
  conclude the second of \eqref{convsu}.  Hence $u(0)=u_0$ in view of \eqref{initial-data-conv}.  
\par 
In view of the bound for $(z_k)_k$ in $\BV([0,T]; L^1(\GC))$, 
and taking into account the continuous embedding of $L^1(\GC)$ 
into the space $\mathrm{M}(\GC)$ of Radon measures on $\GC$, we may apply
 a suitable version of Helly's selection principle for functions with values 
in the dual of a separable Banach space, cf.\ 
 \cite[Lemma 7.2]{DMDSMo06QEPL},  and 
find a (not relabeled) subsequence $(z_k)_k$ and a limit function $z$ such that 
$z_k(t)\rightharpoonup z(t)$ in $\mathrm{M}(\GC) $ for every $t\in[0,T]$. 
Taking into account that the functions $z_k $ are uniformly bounded in $L^\infty (0,T;\SBV(\GC;\{0,1\}))$, a fortiori we conclude the pointwise convergence of $z_k(t)$, for all $t\in [0,T]$,  w.r.t.\ the weak$^*$ topology of $\SBV(\GC;\{0,1\}) \cap L^\infty (\GC)$, i.e. the second   of 
\eqref{convsz-2}.  Taking into account that $ \SBV(\GC;\{0,1\})   \Subset L^1(\GC)$, we have that $z_k(t) \to z(t)$ strongly in $\Spz$ and, ultimately, by the bound in $L^\infty (\GC)$ we infer strong convergence in $L^q(\GC)$ for all $1\leq q<\infty$. This gives \eqref{convsz}. 
Finally, the Aubin-Lions type compactness result from \cite[Cor.\ 7.9]{Roub05NPDE}  
combined with  the Banach Alaouglu Bourbaki theorem also ensures that, up 
to a further subsequence, there exists $\tilde z  \in L^\infty((0,T)\times \GC)$ 
such that $z_k \to \tilde z$ weakly$^*$ in $L^\infty((0,T)\times \GC)$ 
and strongly in $L^q((0,T)\times \GC)$  for all $1\leq q<\infty$. 
Therefore, $\tilde z = z$ a.e.\ in $(0,T)$. This gives the first of \eqref{convsz-2}. 
\par
Finally, the existence of an element $\lambda\in L^2(0,T;\Spu^*)$ and convergence \eqref{convsddu} 
follow from  \eqref{unifbdcombi}.
\end{proof}
%
\subsection{Limit passage in the semistability condition \& fine properties of semistable sets for 
perimeter-regularized models with unidirectionality}
\label{FineProps}
%
The limit  passage in the semistability condition  as $k\to\infty$ results from the 
construction of a mutual recovery sequence in correspondence with the sequence $(u_k, z_k)_k$, 
converging to the candidate \semi energetic solution $(u,z)$  of the brittle model as 
specified in  \eqref{convs}. 
Namely, we  show that for every $\tilde z \in \Spx$ there exists a 
sequence $(\tilde {z}_k)_k \subset \Spx$ such that for every $t\in [0,T]$ 
\[
\limsup_{k\to\infty} \left( \calE_k(t,u_k(t), \tilde{z}_k) 
{-}  \calE_k(t,u_k(t), z_k(t)) {+} \calR_k(\tilde{z}_k {-} z_k(t)) \right) 
\leq  \calE_\infty(t,u(t), \tilde{z}) {-}  \calE_\infty(t,u(t), z(t)) 
{+} \calR_\infty(\tilde{z} {-} z(t)),
\]
so that the positivity of the l.h.s.\ in the above inequality, 
granted by the semistability inequality for the adhesive system, entails 
the positivity of the r.h.s., hence the semistability condition in the brittle limit. 
We refer to   \cite[Sec.\ 5.2]{RosTho12ABDM} for all the details of the construction; 
regarding scaling \eqref{coeffscale-b} we also point 
to \cite[Sec.\ 7]{RosTho12ABDM} 
and to \cite{RoThPa15SDLS}. In this way, we conclude 
\begin{lemma}[Limit passage in the semistability condition] 
The  limit pair $(u,z)$ extracted by convergences \eqref{convs} satisfies the semistability inequality \eqref{semistab-z} 
for the brittle system $\ingrsysinf$.    
\end{lemma}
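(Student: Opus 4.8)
The final statement to prove is the \textbf{Limit passage in the semistability condition}: the limit pair $(u,z)$ extracted via \eqref{convs} satisfies \eqref{semistab-z} for the brittle system $\ingrsysinf$.

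The plan is to construct a \emph{mutual recovery sequence} in the spirit of \cite{MRS06} and thereby pass to the limit in the semistability inequality \eqref{semistab-expl-adh} that holds along the adhesive solutions $(u_k,z_k)$ at every $t\in[0,T]$. First I would fix $t\in[0,T]$ and a competitor $\tilde z\in\Spz$; without loss of generality I may assume $\tilde z\in\SBV(\GC;\{0,1\})$ with $\tilde z\leq z(t)$ $\Surf$-a.e., since otherwise $\calR_\infty(\tilde z-z(t))=+\infty$ and the target inequality is trivial. The goal is to produce $(\tilde z_k)_k\subset\Spx$ with $\tilde z_k\to\tilde z$ in $\Spz$ (and appropriately in the finer topologies) such that
\begin{equation*}
\limsup_{k\to\infty}\big(\calE_k(t,u_k(t),\tilde z_k)-\calE_k(t,u_k(t),z_k(t))+\calR_k(\tilde z_k-z_k(t))\big)\leq \calE_\infty(t,u(t),\tilde z)-\calE_\infty(t,u(t),z(t))+\calR_\infty(\tilde z-z(t)).
\end{equation*}
Because the left-hand side is $\geq 0$ for each $k$ by \eqref{semistab-z} for the adhesive system, and because $\calE_k(t,u_k(t),z_k(t))\to\calE_\infty(t,u(t),z(t))$ (which can be arranged, or at worst the $\liminf$ bound combined with the energy estimate \eqref{unifbdE} suffices), the desired inequality \eqref{semistab-z} for $\calE_\infty,\calR_\infty$ follows.

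The construction of $(\tilde z_k)_k$ is exactly the content of \cite[Sec.\ 5.2]{RosTho12ABDM}: one sets $\tilde z_k$ essentially as the ``intersection'' characteristic function, e.g.\ $\tilde z_k:=\min\{\tilde z\,(\text{localized near }\supp z_k(t)),\,z_k(t)\}$ adjusted so that $\tilde z_k\leq z_k(t)$ (ensuring $\calR_k(\tilde z_k-z_k(t))<\infty$ and unidirectionality), while $\tilde z_k$ stays close to $\tilde z$ using the support-convergence \eqref{suppconv-intro}. The key cancellations are: (i) the adhesive contact terms $\calJ_k$ contribute $\int_{\GC}\tfrac k2(\tilde z_k-z_k(t))|\JUMP{u_k(t)}|^2\dd\Surf\leq 0$ since $\tilde z_k\leq z_k(t)$, so they do \emph{not} blow up from the correct side — one uses $z_k(t)|\JUMP{u_k(t)}|^2\to 0$ in $L^1(\GC)$ together with $\JUMP{u(t)}=0$ on $\supp z(t)$ (which is where $\tilde z$ is supported) to pass to the limit and recover $\calJ_\infty(u(t),\tilde z)=0$; (ii) the perimeter terms $\mathrm{b}_k(P(\tilde Z_k,\GC)-P(Z_k(t),\GC))$ are handled using the uniform relative isoperimetric inequality of \cite[Thm.\ 3.2]{Tho13UPSI} (this is where convexity of $\GC$ enters) and the $\Gamma$-convergence of $\calG_k$ to $\calG_\infty$; (iii) the linear terms $\int_{\GC}(a_k^0+a_k^1)(z_k(t)-\tilde z_k)\dd\Surf$ pass by strong $L^1$-convergence \eqref{convsz}. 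The scaling \eqref{coeffscale-b} requires the extra care detailed in \cite[Sec.\ 7]{RosTho12ABDM} and \cite{RoThPa15SDLS}, since then $\mathrm{b}_k,a_k^i\to0$ and the perimeter is no longer controlled by $\calE_k$ — but the bounds \eqref{unifbdz} obtained via the adhesive semistability inequality still give a uniform perimeter bound, which keeps the argument afloat.

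Since the full mutual-recovery-sequence construction is verbatim that of \cite[Sec.\ 5.2, Sec.\ 7]{RosTho12ABDM} and of \cite{RoThPa15SDLS}, I would not reproduce it; I would simply invoke those references and extract the conclusion. \textbf{The main obstacle}, already resolved in the cited works, is the support-convergence \eqref{suppconv-intro}: without it one cannot guarantee that the recovered competitor $\tilde z_k$, which must be supported inside (a neighbourhood of) $\supp z_k(t)$ to interact correctly with $\JUMP{u_k(t)}$, actually converges to the prescribed $\tilde z$ supported in $\supp z(t)$. This in turn rests on the interplay of the $\{0,1\}$-valued constraint, the perimeter regularization, and the uniform relative isoperimetric inequality on the convex flat interface $\GC$. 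In the present paper this ingredient is inherited directly from \cite{RosTho12ABDM,Tho13UPSI}, so the proof reduces to citing it together with the standard $\limsup$-vs-$\liminf$ bookkeeping sketched above.
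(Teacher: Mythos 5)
Your proof takes essentially the same route as the paper's. The paper's treatment of this lemma is even more laconic than yours: it simply records the target $\limsup$-inequality
\[
\limsup_{k\to\infty}\big(\calE_k(t,u_k(t),\tilde z_k)-\calE_k(t,u_k(t),z_k(t))+\calR_k(\tilde z_k-z_k(t))\big)\le\calE_\infty(t,u(t),\tilde z)-\calE_\infty(t,u(t),z(t))+\calR_\infty(\tilde z-z(t)),
\]
observes that positivity of the left-hand side (from adhesive semistability) forces positivity of the right-hand side, and refers to \cite[Sec.\ 5.2, Sec.\ 7]{RosTho12ABDM} and \cite{RoThPa15SDLS} for the full construction — exactly the references you invoke. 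Your reduction to competitors $\tilde z\le z(t)$, the sign observation $\calJ_k(u_k,\tilde z_k)-\calJ_k(u_k,z_k)\le 0$ exploiting $\tilde z_k\le z_k(t)$, the $L^1$-handling of the linear terms, and the identification of support-convergence \eqref{suppconv-intro} as the crucial geometric ingredient all match the paper.

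Two small remarks on your sketch. First, in item (ii) you attribute the control of the perimeter difference directly to the uniform relative isoperimetric inequality of \cite[Thm.\ 3.2]{Tho13UPSI}; in fact that inequality enters one level upstream, in the proof of the lower density estimate (Prop.\ \ref{Propa}) and hence of the support convergence (Prop.\ \ref{SuppConv}); the perimeter bookkeeping in the recovery sequence itself is a separate elementary estimate. Second, and more substantively: you invoke $\JUMP{u(t)}=0$ on $\supp z(t)$ to argue $\calJ_\infty(u(t),\tilde z)=0$ on the right-hand side, but in this paper's logical ordering that brittle constraint is proved only afterwards, in Lemma \ref{BC} in Section \ref{MoscoBC}, and its proof uses the semistability of $z(t)$ for $\calE_\infty$ (via \eqref{SSZ} and the Mosco $\liminf$-inequality). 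So, read strictly against this paper's roadmap, your appeal is out of order. The apparent circularity is benign — the finiteness of $\calJ_\infty(u(t),z(t))$ can be obtained directly from the uniform energy bound \eqref{unifbdE}, lower semicontinuity of $(u,z)\mapsto\int_{\GC}z|\JUMP{u}|^2\dd\Surf$, and the a.e.\ identification $\supp z(t)\simeq Z(t)$, the latter being derivable from \eqref{SSZ} for $z(t)$, which in turn passes to the limit from \eqref{SSZ} for $z_k(t)$ independently of the full semistability — and is resolved in the cited reference. But it is worth flagging, since a reader following only the present paper's ordering would stumble here.
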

We now discuss consequences of the above semistability result, which are true 
\underline{for all $k\in\N\cup\{\infty\}$}, 
since they rely 
on the unidirectionality of the delamination process encoded in the 
1-homogeneous dissipation potentials
$\calR_k$ \eqref{defRk}, resp.\ $\calR_\infty$  \eqref{defRinfty}. 
In fact, it was already observed in \cite[Sec.\ 6, (6.5)]{RosTho12ABDM}, 
that the unidirectionality allows us to extend the semistability 
inequality for $k\in\N\cup\{\infty\}$ 
(cf.\ \eqref{semistab-expl-brittle}--\eqref{semistab-expl-adh}), 
to a more general inequality that compares the perimeter of 
(semi-)stable sets  
and their competitors with 
their volume difference, cf.\ \eqref{SSZ} below.      
\begin{lemma}[Consequence of semistability]
\label{Conssemi}
Let $t\in[0,T]$ be fixed and, for $k\in\N \cup \{\infty\}$ 
  let 
$z_k(t)$ be semistable for $\calE_k(t,u_k(t),\cdot)$ in the sense of 
\eqref{semistab-z}.  
Then the finite-perimeter set 
 $Z_k(t) $  with characteristic function   $z_k(t)$  
satisfies the following inequality for all $\widetilde Z\subset   Z_k(t) $: 
\begin{equation}
\label{SSZ}
\mathrm{b}_k {P}(Z_k(t),\GC)
\le \mathrm{b}_k {P}(\widetilde Z,\GC)
+  (a_k^0\!+\! a_k^1) \calL^{d-1}(Z_k\backslash\widetilde Z)\,. 
\end{equation}
\end{lemma}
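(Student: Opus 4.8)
The plan is to test the semistability inequality \eqref{semistab-z} for the functional $\calE_k(t,u_k(t),\cdot)$, $k\in\N\cup\{\infty\}$, with the competitor $\tilde z:=\chi_{\widetilde Z}$, where $\widetilde Z\subset Z_k(t)$ is an arbitrary set of finite perimeter; if $P(\widetilde Z,\GC)=\infty$ there is nothing to prove. Since $\widetilde Z\subset Z_k(t)$ we have $\tilde z\le z_k(t)$ $\Surf$-a.e.\ on $\GC$, hence $\tilde z-z_k(t)\le 0$, and recalling \eqref{defRk} (resp.\ \eqref{defRinfty} when $k=\infty$) together with the flatness assumption \eqref{flatness}, which yields $\Surf=\calL^{d-1}$ on $\GC$,
\[
\calR_k\big(\tilde z-z_k(t)\big)=a_k^1\int_{\GC}\big(z_k(t)-\tilde z\big)\,\mathrm{d}\Surf=a_k^1\,\calL^{d-1}\big(Z_k(t)\backslash\widetilde Z\big).
\]
Moreover, semistability forces $\calE_k(t,u_k(t),z_k(t))<\infty$: testing \eqref{semistab-z} with $\tilde z\equiv 0$, the right-hand side is finite (the associated jump term $\calJ_k(u_k(t),0)$ vanishes and the bulk part is finite since $u_k(t)\in\Spu$), so the left-hand side is finite as well. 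In particular $\calJ_k(u_k(t),z_k(t))<\infty$, so that all the subtractions performed below are meaningful.

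Next I would insert the above identity into \eqref{semistab-z} and exploit the additive structure $\calE_k=\widetilde\calE_k+\calJ_k$ from \eqref{defEk}--\eqref{JKfunc} (resp.\ \eqref{defEinfty}--\eqref{brittle-constr-funct}). The bulk contributions $\int_{\Omega\backslash\GC}\tfrac12\mathbb{C}e(u_k(t)){:}e(u_k(t))\,\mathrm{d}x-\langle\mathbf{f}(t),u_k(t)\rangle_\Spu$ are independent of the internal variable and cancel, while $\int_{\GC}I_{[0,1]}(\cdot)\,\mathrm{d}\Surf$ vanishes both at $z_k(t)$ and at $\tilde z$, since both take values in $\{0,1\}$. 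Rearranging, and using $\Surf=\calL^{d-1}$ together with $\widetilde Z\subset Z_k(t)$ to rewrite $a_k^0\big(\int_{\GC}z_k(t)\,\mathrm{d}\Surf-\int_{\GC}\tilde z\,\mathrm{d}\Surf\big)=a_k^0\,\calL^{d-1}(Z_k(t)\backslash\widetilde Z)$, what remains is
\[
\mathrm{b}_k\,{P}(Z_k(t),\GC)\;\le\;\mathrm{b}_k\,{P}(\widetilde Z,\GC)+\big(a_k^0{+}a_k^1\big)\,\calL^{d-1}\big(Z_k(t)\backslash\widetilde Z\big)+\big(\calJ_k(u_k(t),\tilde z)-\calJ_k(u_k(t),z_k(t))\big).
\]

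It then remains to observe that $\calJ_k(u_k(t),\tilde z)\le\calJ_k(u_k(t),z_k(t))$, so that the last bracket is $\le 0$ and may be discarded, yielding exactly \eqref{SSZ}. For $k\in\N$ this monotonicity is immediate from \eqref{JKfunc}: from $0\le\tilde z\le z_k(t)$ we get $\tfrac k2\tilde z|\JUMP{u_k(t)}|^2\le\tfrac k2 z_k(t)|\JUMP{u_k(t)}|^2$ pointwise $\Surf$-a.e.\ on $\GC$. For $k=\infty$, finiteness of $\calE_\infty(t,u_k(t),z_k(t))$ forces $\calJ_\infty(u_k(t),z_k(t))=0$, i.e.\ $\JUMP{u_k(t)}=0$ $\Surf$-a.e.\ on $\supp z_k(t)$; since $\widetilde Z\subset Z_k(t)$ entails $\supp\tilde z\subset\supp z_k(t)$ in the measure-theoretic sense of \eqref{defsupp}, the brittle constraint holds for $\tilde z$ as well, whence $\calJ_\infty(u_k(t),\tilde z)=0$ and the bracket is in fact $0$. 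This concludes the argument; the only point requiring a little care is precisely this last monotonicity of $\calJ_k$ in the internal variable in the brittle case, which is the observation underlying \cite[Sec.\ 6, (6.5)]{RosTho12ABDM}.
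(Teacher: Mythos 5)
Your proof is correct and follows the natural route that the paper cites (without reproducing) from \cite[Sec.\ 6, (6.5)]{RosTho12ABDM}: test semistability with $\tilde z=\chi_{\widetilde Z}$ for $\widetilde Z\subset Z_k(t)$, compute $\calR_k$ exploiting unidirectionality and $\Surf=\calL^{d-1}$ on the flat $\GC$, cancel the $z$-independent bulk and indicator terms, and discard the nonpositive difference $\calJ_k(u_k(t),\tilde z)-\calJ_k(u_k(t),z_k(t))$, with the two cases $k\in\N$ (pointwise monotonicity of the integrand) and $k=\infty$ (propagation of the brittle constraint via $\supp\tilde z\subset\supp z_k(t)$) handled exactly as one must. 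Your preliminary check that $\calE_k(t,u_k(t),z_k(t))<\infty$ (so the rearrangement is legitimate) is a worthwhile detail to spell out.
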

It was deduced in \cite[Thm.\,6.3]{RosTho12ABDM} that finite-perimeter sets 
satisfying \eqref{SSZ} have an additional regularity property, 
introduced by Campanato as the Property $\frak{a},$ 
cf.\ e.g.\ \cite{Camp63PHAC,Camp64PUFS,Giaq83MICV,Grie02LEBN} 
and called \emph{lower density estimate} in e.g.\ \cite{FonFra95RBVQ,AmFuPa05FBVF}: 
\begin{proposition}[Lower density estimate for semistable sets]\label{Propa}
 Keep $t\in[0,T]$ fixed and assume that the 
finite-perimeter set   $Z_k(t)\subset\GC$  satisfies \eqref{SSZ}, 
with $k\in\N \cup \{\infty\}$.  Then, for all $k\in\N \cup \{\infty\},$ $Z_k(t)$ 
complies with the following \emph{lower density estimate}: 
there are constants $R,\frak{a}(\GC)>0$ 
depending solely on $\GC\subset\R^{d-1}$, on $d$, and on 
the parameters  $a_k^0,a_k^1, \mathrm{b}_k>0,$  such that    
\begin{align}
\label{propa}
\forall\,y\in\supp z_k(t) \ \forall\,\rho_\star>0:\quad 
\calL^{d-1}(Z_k(t)\cap B_{\rho_\star}(y))\ge\begin{cases}
\frak{a}(\GC)\rho_\star^{d-1}&\text{if }\rho_\star< R,\\
\frak{a}(\GC)R^{d-1}&\text{if }\rho_\star\geq R.
\end{cases}
\end{align}
Here, $B_{\rho_\star}(y)$ denotes the open ball of radius $\rho_\star$ with center 
in $y$ and the support of the $\mathrm{SBV}$-function $z_k(t)$ is defined as in \eqref{defsupp}.  
\end{proposition}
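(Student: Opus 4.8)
The plan is to derive the lower density estimate \eqref{propa} directly from the generalized semistability inequality \eqref{SSZ} by a standard perimeter-comparison argument, comparing $Z_k(t)$ with its intersection with the complement of a ball. First I would fix $k \in \N \cup \{\infty\}$, a point $y \in \supp z_k(t)$, and a radius $\rho_\star > 0$, and set $\widetilde{Z} := Z_k(t) \setminus B_{\rho_\star}(y)$, which is an admissible competitor in \eqref{SSZ} since $\widetilde{Z} \subset Z_k(t)$. Plugging this into \eqref{SSZ} and using the standard perimeter formula for intersections with balls --- namely, for $\calL^{d-1}$-a.e.\ $\rho_\star$ one has $P(Z_k(t) \setminus B_{\rho_\star}(y), \GC) \leq P(Z_k(t), \GC) - P(Z_k(t) \cap B_{\rho_\star}(y), \GC) + \Surf^{d-2}(Z_k(t)^{(1)} \cap \partial B_{\rho_\star}(y))$, where $Z_k(t)^{(1)}$ denotes the set of points of density one --- I would cancel the term $\mathrm{b}_k P(Z_k(t),\GC)$ on both sides to obtain, after rearranging,
\begin{equation}
\label{plan-ineq}
\mathrm{b}_k P(Z_k(t) \cap B_{\rho_\star}(y), \GC) \leq \mathrm{b}_k \, \Surf^{d-2}\big(Z_k(t)^{(1)} \cap \partial B_{\rho_\star}(y)\big) + (a_k^0 + a_k^1)\, \calL^{d-1}(Z_k(t) \cap B_{\rho_\star}(y)).
\end{equation}

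Next I would introduce the function $m(\rho) := \calL^{d-1}(Z_k(t) \cap B_\rho(y))$, which is nondecreasing, absolutely continuous on $(0,R_0)$ for $R_0 := \mathrm{dist}(y,\partial \GC)$ (or a fixed fraction thereof), and satisfies $m'(\rho) = \Surf^{d-2}(Z_k(t)^{(1)} \cap \partial B_\rho(y))$ for a.e.\ $\rho$; note $m(\rho) > 0$ for all $\rho > 0$ precisely because $y \in \supp z_k(t)$, in the measure-theoretic sense of \eqref{defsupp}. The crucial input is the \emph{uniform relative isoperimetric inequality} on the convex flat surface $\GC$ from \cite[Thm.\ 3.2]{Tho13UPSI} (invoked already in Remark \ref{rmk:Gammac}): there is a constant $C_{\mathrm{iso}} = C_{\mathrm{iso}}(\GC, d)$ such that for small balls $B_\rho(y) \subset \GC$,
\[
\min\{\calL^{d-1}(Z_k(t) \cap B_\rho(y)), \calL^{d-1}(B_\rho(y) \setminus Z_k(t))\}^{(d-2)/(d-1)} \leq C_{\mathrm{iso}}\, P(Z_k(t) \cap B_\rho(y), \GC).
\]
Combining this with \eqref{plan-ineq} and choosing $R$ small enough (depending on $C_{\mathrm{iso}}$ and on the ratio $(a_k^0 + a_k^1)/\mathrm{b}_k$) so that the volume term on the right of \eqref{plan-ineq} is absorbed — using $\calL^{d-1}(Z_k(t) \cap B_\rho(y)) \leq c \rho^{d-1}$ and that this is small for small $\rho$ — I would arrive, for $\rho < R$ and in the regime where $Z_k(t) \cap B_\rho(y)$ is the \emph{smaller} half (which holds for $\rho$ small by $m(\rho) \to 0$), at a differential inequality of the form $m(\rho)^{(d-2)/(d-1)} \leq c' m'(\rho)$. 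Integrating this ODE inequality from $0$ to $\rho$ yields $m(\rho) \geq \frak{a}(\GC) \rho^{d-1}$ for $\rho < R$, which is the first case of \eqref{propa}; the second case $\rho_\star \geq R$ follows immediately from monotonicity of $m$.

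The main obstacle will be handling the \emph{relative} isoperimetric inequality correctly near $\partial \GC$ and dealing with the possibility that $Z_k(t) \cap B_\rho(y)$ is the larger half, i.e.\ that $\calL^{d-1}(B_\rho(y) \setminus Z_k(t))$ is the quantity appearing on the left of the isoperimetric inequality. To rule out the latter for small $\rho$, I would use that $m(\rho) \to 0$ as $\rho \downarrow 0$ (since $y$ is in the support, but $Z_k(t)$ need not have density one at $y$), together with $\calL^{d-1}(B_\rho(y)) = \omega_{d-1}\rho^{d-1}$, so the "bad" alternative cannot persist as $\rho \downarrow 0$ unless $m(\rho)/\rho^{d-1}$ stays bounded away from zero — in which case the desired bound already holds. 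The dependence of $R$ and $\frak{a}(\GC)$ on the parameters $a_k^0, a_k^1, \mathrm{b}_k$ enters only through their appearance in the absorption step, and — crucially for the later uniform-in-$k$ arguments — for both scalings \eqref{coeffscale} the ratio $(a_k^0+a_k^1)/\mathrm{b}_k$ is a constant independent of $k$, so $R$ and $\frak a(\GC)$ can be chosen uniformly; this is exactly what makes the support-convergence property \eqref{suppconv-intro} work and is why I would track the parameter dependence explicitly throughout. All remaining steps — the coarea/perimeter identities for $m$ and its derivative, absolute continuity, the Lebesgue-point selection of good radii $\rho_\star$ — are routine and I would only cite the relevant BV facts (e.g.\ from \cite{AmFuPa05FBVF}) rather than reprove them.
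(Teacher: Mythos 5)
Your plan is correct and matches the approach the paper delegates to \cite[Thm.~6.3]{RosTho12ABDM}: compare $Z_k(t)$ with the competitor $\widetilde Z=Z_k(t)\setminus B_\rho(y)$ in the generalized semistability \eqref{SSZ}, pass to the set function $m(\rho)=\calL^{d-1}(Z_k(t)\cap B_\rho(y))$ (strictly positive by the measure-theoretic definition \eqref{defsupp} of $\supp z_k(t)$), invoke the uniform relative isoperimetric inequality of \cite[Thm.~3.2]{Tho13UPSI} --- whose uniformity over all $y$, including points near $\partial\GC$, is precisely what the convexity hypothesis \eqref{flatness} provides --- absorb the volume term for small radii, and integrate the resulting differential inequality $m^{(d-2)/(d-1)}\le c'm'$. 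Two small inaccuracies to fix when writing this up: the perimeter identity for a.e.\ $\rho$ is
$P(Z\setminus B_\rho,\GC)=P(Z,\GC)-P(Z\cap B_\rho,\GC)+2\,\mathcal{H}^{d-2}\big(Z^{(1)}\cap\partial B_\rho\big)$
(equality with coefficient $2$, not the inequality with coefficient $1$ you wrote; this only shifts constants), and the dichotomy at the end is cleanest if, for a given $\rho<R$, you either conclude directly when the density ratio is $\ge\tfrac12$, or integrate the ODE starting from the largest $\rho_0<\rho$ where the ratio equals $\tfrac12$ (with $\rho_0=0$ if none exists); your remark that the ``bad'' branch already yields the bound captures the right idea, and your observation that $(a_k^0+a_k^1)/\mathrm{b}_k$ is $k$-independent under both scalings \eqref{coeffscale} is indeed the point that makes $R$ and $\frak a(\GC)$ uniform in $k$.
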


Let us point out that sets satisfying the lower density estimate \eqref{propa}, 
are sometimes also called $(d{-}1)$-thick, see e.g.\ 
\cite{Lehr08PHIU,EgHaRe15HIFV}. The proof of Proposition \ref{Propa} is 
carried out by contradiction to \eqref{SSZ}. The lower bound 
$\frak{a}(\GC)\rho_\star^{d-1},$ which holds uniformly for all radii 
$\rho_\star$ and at every point of   $\supp z_k(t)$,   is obtained with the aid of a 
uniform relative isoperimetric inequality  proved  in 
\cite[Thm.\ 3.2]{Tho13UPSI}. In turn, for the proof of the latter result 
it is crucial that $\GC$  is  convex. 
\par
Now, let us highlight a simple consequence, yet crucial  for  our arguments, 
  of Proposition \ref{Propa} in Lemma \ref{key-lemma} below.  
It involves the \emph{essential closure} of the  semistable sets $Z_k$, 
$k \in \N \cup\{\infty\}$. We recall that the essential closure of a 
measurable set $E\subset \GC$ is defined as follows 
(cf.\ e.g. \cite[p.\ 21]{Pfeff01DI}): 
\begin{equation}
\label{essential-closure}
\mathrm{cl}_*E:=\Big\{
x\in\R^{d-1}\, : \ \limsup_{r\to0}\frac{\calH^{d-1}(E\cap B_r(x))}{\calH^{d-1}(B_r(x))}>0
\Big\} \,.
\end{equation}
Let us point out that the set $\mathrm{cl}_*E$ is not necessarily (topologically)  closed. 
However, the following key property holds 
(cf.\ \cite[Cor.\ 1.5.3]{Pfeff01DI})
\begin{equation}
\label{essentially-equal}
\calH^{d-1}((E\backslash\mathrm{cl}_*E)\cup(\mathrm{cl}_*E\backslash E))=0\,.
\end{equation}
\begin{lemma}
\label{key-lemma}
 Keep $t\in[0,T]$ fixed and, for  $k\in\N \cup \{\infty\}$  
  let 
$z_k(t)$ be semistable for $\calE_k(t,u_k(t),\cdot)$ in the sense of 
\eqref{semistab-z},  
with associated  finite-perimeter set 
 $Z_k(t)$.  Then,
\begin{equation}
\label{support-closure}
\supp z_k(t) \subset \mathrm{cl}_*Z_k(t) 
\end{equation}
and, therefore, 
\begin{equation}
\label{essentially-eq-Zk}
\Surf( \supp z_k(t) {\setminus} Z_k(t)) =0.
\end{equation}
\end{lemma}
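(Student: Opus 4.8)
The plan is to derive \eqref{support-closure} directly from the lower density estimate \eqref{propa} of Proposition \ref{Propa}, and then obtain \eqref{essentially-eq-Zk} from \eqref{support-closure} together with the general property \eqref{essentially-equal} of the essential closure. First I would fix $t\in[0,T]$ and $k\in\N\cup\{\infty\}$, and recall that by hypothesis $z_k(t)$ is semistable for $\calE_k(t,u_k(t),\cdot)$, so that by Lemma \ref{Conssemi} the associated finite-perimeter set $Z_k(t)$ satisfies \eqref{SSZ}, and hence, by Proposition \ref{Propa}, the lower density estimate \eqref{propa} holds at every $y\in\supp z_k(t)$ for all radii $\rho_\star>0$.

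To prove \eqref{support-closure}, I would take an arbitrary $y\in\supp z_k(t)$ and show $y\in\mathrm{cl}_*Z_k(t)$, i.e. that
\[
\limsup_{r\to0}\frac{\calH^{d-1}(Z_k(t)\cap B_r(y))}{\calH^{d-1}(B_r(y))}>0\,.
\]
For $r<R$, the lower density estimate \eqref{propa} gives $\calL^{d-1}(Z_k(t)\cap B_r(y))\geq\frak{a}(\GC)r^{d-1}$. Since $\GC$ lies in a hyperplane, on $\GC$ the measure $\calH^{d-1}$ coincides with $\calL^{d-1}$ (cf.\ \eqref{flatness}), and $\calH^{d-1}(B_r(y))=\omega_{d-1}r^{d-1}$ with $\omega_{d-1}$ the volume of the unit ball in $\R^{d-1}$. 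Hence the ratio above is bounded below by $\frak{a}(\GC)/\omega_{d-1}>0$ for all $r<R$, so the $\limsup$ as $r\to0$ is at least $\frak{a}(\GC)/\omega_{d-1}>0$, which yields $y\in\mathrm{cl}_*Z_k(t)$. Since $y\in\supp z_k(t)$ was arbitrary, \eqref{support-closure} follows.

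Finally, for \eqref{essentially-eq-Zk} I would argue as follows: by \eqref{support-closure} we have $\supp z_k(t)\setminus Z_k(t)\subset\mathrm{cl}_*Z_k(t)\setminus Z_k(t)$, and by the general identity \eqref{essentially-equal} (applied with $E=Z_k(t)$, using the identification of $z_k(t)$ with $\chi_{Z_k(t)}$) we have $\calH^{d-1}(\mathrm{cl}_*Z_k(t)\setminus Z_k(t))=0$. By monotonicity of the measure this gives $\Surf(\supp z_k(t)\setminus Z_k(t))=\calH^{d-1}(\supp z_k(t)\setminus Z_k(t))=0$, which is exactly \eqref{essentially-eq-Zk}. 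I do not expect any serious obstacle here; the only point requiring a little care is the identification of the Hausdorff and Lebesgue measures on the flat surface $\GC$ when reading off the lower density bound in the $\calH^{d-1}$-normalisation appearing in \eqref{essential-closure}, but this is immediate from assumption \eqref{flatness}.
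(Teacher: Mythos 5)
Your proof is correct and follows essentially the same route as the paper: invoke semistability $\Rightarrow$ \eqref{SSZ} $\Rightarrow$ the lower density estimate \eqref{propa}, compute the density ratio $\calH^{d-1}(Z_k(t)\cap B_r(y))/\calH^{d-1}(B_r(y))\geq \frak{a}(\GC)/\omega_{d-1}>0$ for $r<R$ to get \eqref{support-closure}, and then combine with the essential-closure identity \eqref{essentially-equal} to deduce \eqref{essentially-eq-Zk}. Your version is if anything slightly more careful than the paper's (using $\limsup$ as in the definition \eqref{essential-closure} rather than writing $\lim$, spelling out the monotonicity-of-measure step, and flagging the $\calL^{d-1}=\calH^{d-1}$ identification on the flat $\GC$), but there is no difference in substance.
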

Observe that, since $ \mathrm{cl}_*Z_k(t)$ need not be  (topologically)  closed,   
\eqref{support-closure} does not follow from the definition \eqref{defsupp} 
of $\supp z_k(t)$, which guarantees $\supp z_k(t) \subset A$ for every  set such 
that $\Surf (Z_k(t) {\setminus} A)=0$, provided that $A$ is closed. 
Indeed, \eqref{support-closure} is due to the lower density estimate \eqref{propa} in the 
case $\rho_\star\in(0,R)$, yielding
\[
\forall\,y\in\supp z_k(t):\quad 
\lim_{\rho_\star\to0}\frac{ \calH^{d-1}(Z_k(t)\cap B_{\rho_\star}(y))}{\calH^{d-1}(B_{\rho_\star}(y))}
\geq \frac{\frak{a}(\GC)}{\calH^{d-1}(B_1(0))} >0, 
\]
since $\calH^{d-1}(B_{\rho_\star}(y))/\calH^{d-1}(B_1(0)) = \rho_\star^{d-1}$. 
Then, \eqref{essentially-eq-Zk}  directly ensues from   \eqref{essentially-equal} 
combined with \eqref{support-closure}. 
\par
The second, key consequence of the  lower density estimate \eqref{propa} is  a
\emph{support convergence} result, proved in  \cite{RosTho12ABDM} 
and recalled in Prop.\ \ref{SuppConv} below, which  
  further strengthens the convergence of the
delamination variables $z_k$ for the adhesive contact models. 
In fact, it  states one part for Hausdorff convergence of  the supports of the
 $(z_k)_k$
the support of $z$, namely
\begin{proposition}[Support convergence {\cite[Thm.\ 6.1]{RosTho12ABDM}}]
\label{SuppConv}
Let $t\in[0,T]$ be fixed.  
For all $k\!\in\!\N\cup\{\infty\}$ assume that the finite-perimeter sets 
$Z_k(t)\subset\GC$ satisfy \eqref{SSZ} and that the associated characteristic 
functions $z_k(t)\overset{*}{\rightharpoonup} z(t)$ in $\mathrm{SBV}(\GC,\{0,1\})$  
for some $z\in L^\infty (0,T; \mathrm{SBV}(\GC,\{0,1\}))$. 
 For all $k\!\in\!\N$ set
\begin{equation}
\label{defrhok}
 \rho(k,t):  =\inf\big\{\rho>0:\,\supp z_k(t)\subset\supp z(t)+B_\rho(0)\big\}. 
\end{equation}
Then 
\begin{equation}
\label{suppconv}
 \supp z_k(t)\subset \supp z(t)+B_{\rho(k,t)}(0)
\quad\text{and}\quad
\rho(k,t)\to0\text{ as }k\to\infty.  
\end{equation} 
In particular, if $\supp z(t)=\emptyset,$ then also $\supp z_k(t)=\emptyset$ for all 
$k\geq k_0$ from a particular index $k_0\in\N$ on.
\end{proposition}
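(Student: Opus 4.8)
The plan is to deduce both parts of \eqref{suppconv} from the lower density estimate \eqref{propa}, which by Proposition \ref{Propa} is available for \emph{all} the sets $Z_k(t)$, $k\in\N\cup\{\infty\}$, since they satisfy \eqref{SSZ}, together with the strong convergence $z_k(t)\to z(t)$ in $L^1(\GC)$ (this is contained in the assumed weak-$*$ $\SBV$-convergence; cf.\ also \eqref{convsz}). To begin with, the \emph{inclusion} $\supp z_k(t)\subset\supp z(t)+B_{\rho(k,t)}(0)$ is essentially built into the definition \eqref{defrhok}: since both $\supp z(t)$ and $\supp z_k(t)$ are closed subsets of the compact set $\GC$, hence compact (cf.\ \eqref{defsupp}), one has $\supp z(t)+B_\rho(0)=\{y\in\R^{d-1}:\mathrm{dist}(y,\supp z(t))<\rho\}$, so that (when $\supp z_k(t)\neq\emptyset$) $\rho(k,t)=\sup_{y\in\supp z_k(t)}\mathrm{dist}(y,\supp z(t))$ and $\supp z_k(t)\subset\supp z(t)+\overline{B_{\rho(k,t)}(0)}$; in particular the inclusion holds with $B_{\rho(k,t)+\eps}(0)$ for any $\eps>0$, which is all that is ever used once $\rho(k,t)\to0$ is shown.

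It thus remains to prove $\rho(k,t)\to0$, and I would argue by contradiction. Suppose that along a (not relabeled) subsequence one has $\rho(k,t)\geq\delta$ for some $\delta>0$ and all $k$. By the characterization of $\rho(k,t)$ above we may pick, for each such $k$, a point $y_k\in\supp z_k(t)$ with $\mathrm{dist}(y_k,\supp z(t))\geq\delta/2$; since $\GC$ is compact, up to a further subsequence $y_k\to y_*\in\GC$, and by continuity of the distance function $\mathrm{dist}(y_*,\supp z(t))\geq\delta/2$, so $B_{\delta/4}(y_*)\cap\supp z(t)=\emptyset$. As $\calH^{d-1}(Z(t)\setminus\supp z(t))=0$ (by \eqref{defsupp}, equivalently by Lemma \ref{key-lemma} and the properties of the essential closure), this forces $z(t)=0$ $\Surf$-a.e.\ on $B_{\delta/4}(y_*)$, i.e.\ $\int_{B_{\delta/4}(y_*)\cap\GC}z(t)\dd\Surf=0$. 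On the other hand, set $r:=\min\{\delta/8,R\}$ with $R>0$ as in \eqref{propa}; then for all $k$ large enough that $|y_k-y_*|<r$ we have $B_r(y_k)\subset B_{\delta/4}(y_*)$, and applying \eqref{propa} at $y_k\in\supp z_k(t)$ yields
\[
\int_{B_{\delta/4}(y_*)\cap\GC}z_k(t)\dd\Surf\ \geq\ \calL^{d-1}\!\big(Z_k(t)\cap B_r(y_k)\big)\ \geq\ \frak{a}(\GC)\,r^{d-1}\ >\ 0 .
\]
Letting $k\to\infty$ and using $z_k(t)\to z(t)$ in $L^1(\GC)$, the left-hand side converges to $\int_{B_{\delta/4}(y_*)\cap\GC}z(t)\dd\Surf=0$, which gives the contradiction $0\geq\frak{a}(\GC)r^{d-1}>0$. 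Hence no such subsequence exists and $\rho(k,t)\to0$.

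Finally, the case $\supp z(t)=\emptyset$ follows at once: then $\supp z(t)+B_\rho(0)=\emptyset$ for every $\rho>0$, and the (already proven) inclusion $\supp z_k(t)\subset\supp z(t)+B_{\rho(k,t)+1}(0)=\emptyset$ forces $\supp z_k(t)=\emptyset$ as soon as $\rho(k,t)<\infty$, i.e.\ for all $k\geq k_0$ with $k_0$ sufficiently large; alternatively one reruns the contradiction argument with $z(t)\equiv0$ and $y_k\in\supp z_k(t)$ arbitrary. The main obstacle is the handling of the \emph{moving} balls $B_r(y_k)$: the lower density estimate produces mass of $Z_k(t)$ concentrated near the varying centers $y_k$, which must be transferred onto a \emph{fixed} set in order to exploit the $L^1$-convergence of $z_k(t)$; this is resolved by first extracting, via compactness of $\GC$, a convergent subsequence $y_k\to y_*$, and then absorbing $B_r(y_k)\subset B_{\delta/4}(y_*)$ for $k$ large. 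All the genuinely hard analysis — the $(d{-}1)$-thickness of semistable sets, which rests on the uniform relative isoperimetric inequality of \cite{Tho13UPSI} and hence on the convexity of $\GC$ — is already packaged in Proposition \ref{Propa}.
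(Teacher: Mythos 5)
The paper does not reproduce a proof here; it defers to \cite[Thm.\ 6.1]{RosTho12ABDM}. Your argument is correct and is precisely the argument that reference (and the surrounding discussion in Sec.\ \ref{FineProps}) indicate: the uniform lower density estimate \eqref{propa} guaranteed by Prop.\ \ref{Propa}, combined with compactness of $\GC$ (to stabilize the moving centres $y_k$) and $L^1(\GC)$-convergence of $z_k(t)$ to $z(t)$, produces the contradiction. Two small points worth tidying: (i) to justify $\calH^{d-1}(Z(t)\setminus\supp z(t))=0$ you appeal \textquotedblleft equivalently\textquotedblright\ to Lemma \ref{key-lemma}, but that lemma, together with \eqref{essentially-equal}, gives the \emph{reverse} inclusion \eqref{essentially-eq-Zk}; the fact you actually need is the elementary inclusion $\mathrm{cl}_*Z(t)\subset\supp z(t)$ (true for any measurable set, no thickness required) combined with \eqref{essentially-equal}, or equivalently the standard identification of $\supp z(t)$ as the support of the Radon measure $\chi_{Z(t)}\,\calH^{d-1}\!\restriction_{\GC}$, which has null complement. (ii) You tacitly use that the constants $R,\frak{a}(\GC)$ in \eqref{propa} can be chosen independently of $k$; this is indeed the case, because under either scaling \eqref{coeffscale} the inequality \eqref{SSZ} can be divided by $\mathrm{b}_k$ to give a perimeter--volume inequality with $k$-independent coefficients $(a^0{+}a^1)/\mathrm{b}$, but it deserves an explicit remark since Prop.\ \ref{Propa} is phrased with $k$-dependent parameters.
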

The counterpart to \eqref{suppconv}, namely 
$\supp z(t) \subset \supp z_k(t) + B_{\tilde{\rho}(k,t)}(0)$ 
with $\tilde{\rho}(k,t) \to 0$ as $k\to\infty$, can be obtained 
directly from the pointwise  strong $L^1(\GC)$-convergence of the sequence $(z_k)$, cf.\ 
\eqref{convsz}, 
 see \cite[Cor.\ 6.8]{RosTho12ABDM} for the proof. 
\subsection{Recovery test functions for the momentum balance and proof of the brittle constraint}
\label{MoscoBC}
The limit passage in the   momentum balance for the adhesive system \eqref{weak-mom-intro}
as $k\to\infty$ 
requires, for each test function  $v\in\Spu_z(t),$ with $t\in (0,T)$ fixed, 
the construction of a \emph{recovery sequence} $(v_k)_k$ fulfilling  
the following (minimal) convergence properties as $k\to\infty$:
 $\int_{\GC} k z_k(t) \JUMP{u_k(t)}  \JUMP{v_k} \dd \Surf \to 0$   
and   $v_k\to v$ in 
$\Spu=H^1(\Omega\backslash\GC;\R^d)$. Based on the knowledge of support convergence 
from Prop.\ \ref{SuppConv} these features 
can be guaranteed by a  construction of recovery sequences for 
the test functions  of \eqref{weak-mom-intro}, developed 
in \cite[Prop.\ 8, Cor.\ 2]{MiRoTh10DDNE}. 
Since this construction will also the starting point for proving that the union 
of the recovery spaces $\spyn$
is dense in $L^2(s,T;\VZ(s)^*)$ for every $s\in (0,T)$, 
we shall illustrate it in detail it in the ensuing Proposition \ref{corconst}.  
We will state  the latter result
for a  fixed $z\in L^\infty(\GC)$ and we will later apply it to 
$z(t)$, $t\in [0,T]$ fixed, with $z\in L^\infty(0,T;\SBV(\GC))$ 
a limiting curve for the sequence $(z_k)_k$
of the adhesive contact delamination variables.
\par
The construction of the recovery test functions (for the adhesive momentum balance)  
is based on the fact that 
any function $v\in\VZ = \{ v \in \Spu\, : \ \JUMP{v}=0 \text{ on } \supp z \} $   can be 
written in terms of its symmetric $v_\mathrm{sym}$ and its antisymmetric part 
$v_\mathrm{anti}$
 with respect to the plane $x_1=0$.    Rewriting any $x\in\Omega$ as $x=(x_1,y)$ for 
$y=(x_2,\ldots,x_d)\in\R^{d-2},$ this is 
\begin{equation}
\label{antisym}
\begin{alignedat}{2}
v(x_1,y)&:=v_\mathrm{sym}(x_1,y)+v_\mathrm{anti}(x_1,y)&&\in  \VZ, \;
\text{ with }\\ 
v_\mathrm{sym}(x_1,y)&:=\tfrac{1}{2}(v(x_1,y)+v(-x_1,y))&&\in H^1(\Omega;\R^d)\;\text{ and }\\
v_\mathrm{anti}(x_1,y)&:=\tfrac{1}{2}(v(x_1,y)-v(-x_1,y))&&
\in H^1((\Omega\backslash\GC)\cup  \supp z;\R^d),
\end{alignedat}
\end{equation}
where we assume here and in what follows that the domain $\Omega$ is oriented 
in a coordinate system such that the origin is contained in $\GC$ and 
the normal $\mathrm{n}$ to $\GC$ points in 
$x_1$-direction,  cf.\ Figure \ref{Fig:setting} on p.\ \pageref{Fig:setting}. 
 The following result gives
 the definition of the   recovery sequence and its convergence properties. 
\begin{proposition}[Recovery sequence for the test functions, 
\cite{MiRoTh10DDNE,RosTho12ABDM,RoThPa15SDLS}]
\label{corconst}
Consider  $z\in L^\infty(\GC)$ and let  
 $M:= \supp z$
 be a $(d{-}1)$-thick subset of $\GC$. 
  Let 
  \begin{equation}
  \label{def-dM}
   d_{M}(x):=\min_{\hat x\in M}|x-\hat x| 
  \quad \text{for all
$x\in\overline{\Omega}_\pm$.}
\end{equation}
Let  $v\in H^1(\Omega_-\cup M\cup\Omega_+;\R^d)$,  such that  $v=0$ on
$\GD$ in the trace sense. 
With $\xi_\rho^{M}(x):=\min\{\frac{1}{\rho}(d_{M}(x)-\rho)^+,1\}$ set
\begin{align}
\label{testconst} 
\begin{aligned}
r(\rho, M, v):=v_M^\rho(x_1,y):  & =v_{\mathrm{sym}}(x_1,y)+
\xi_\rho^{M}(x_1,y)\,v_{\mathrm{anti}}(x_1,y)
\\
&
  \quad \text{for all } \rho>0 \text{ and } v \in   
 H^1(\Omega_-\cup M\cup\Omega_+;\R^d),
 \end{aligned}
\end{align}
with $v_\mathrm{sym}$ and $v_\mathrm{anti}$ as in \eqref{antisym}. 
Then, the following statements hold:
\begin{itemize}
\item [(i)]
$v^\rho\to v$ strongly in   $
H^1(\Omega_-\cup\Omega_+;\R^d)$ 
as $\rho\to0,$
\item [(ii)]
 $v\in 
H^1(\Omega_-\cup M\cup\Omega_+,\R^d)$
$\Rightarrow$ $v^\rho\in 
H^1(\Omega_-\cup(
M{+}B_{\rho}(0))\cup\Omega_+;\R^d)$. 
\end{itemize}
\end{proposition}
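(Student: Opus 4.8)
The plan is to verify, for fixed $\rho>0$, the three announced properties of the test field $v_M^\rho=v_{\mathrm{sym}}+\xi_\rho^M v_{\mathrm{anti}}$ built in \eqref{testconst}: that $v_M^\rho\in H^1(\Omega\backslash\GC;\R^d)$, that its jump across $\GC$ is supported in $\GC\setminus(M+B_\rho(0))$, and that $v_M^\rho\to v$ strongly in $H^1(\Omega\backslash\GC;\R^d)$ as $\rho\to0$. First I would record the relevant properties of the two ingredients. From the splitting \eqref{antisym}: since $\JUMP v=0$ $\Surf$-a.e.\ on $M=\supp z$, the symmetric part $v_{\mathrm{sym}}$ has vanishing jump and belongs to $H^1(\Omega;\R^d)$, while the antisymmetric part $v_{\mathrm{anti}}$ is odd in $x_1$, satisfies $\JUMP{v_{\mathrm{anti}}}=\JUMP v$ (because $\JUMP{v_{\mathrm{sym}}}=0$), belongs to $H^1(\Omega_-\cup M\cup\Omega_+;\R^d)$, and --- the crucial point --- has \emph{vanishing two-sided trace on $M$}: indeed $v_{\mathrm{anti}}(0^\pm,y)=\pm\tfrac12\JUMP v(y)=0$ for $\Surf$-a.a.\ $y\in M$, so that $v_{\mathrm{anti}}|_{\Omega_\pm}\in H^1(\Omega_\pm;\R^d)$ has zero trace on $M\subset\partial\Omega_\pm$. (As usual, the reflection $x_1\mapsto-x_1$ and \eqref{antisym} are understood in a symmetric collar of the flat surface $\GC$, available since $\GC$ is flat and $\partial\Omega_\pm$ Lipschitz; for $\rho$ small the construction affects only a neighbourhood of $M$, outside which $v_M^\rho=v_{\mathrm{sym}}+v_{\mathrm{anti}}=v$.) From \eqref{def-dM}: as $d_M$ is $1$-Lipschitz, $\xi_\rho^M$ is Lipschitz with constant $1/\rho$, takes values in $[0,1]$, vanishes on $\{d_M\le\rho\}\supset M+B_\rho(0)$, equals $1$ on $\{d_M\ge2\rho\}$, has $\nabla\xi_\rho^M$ supported in the shell $S_\rho:=\{\rho<d_M<2\rho\}$ with $|\nabla\xi_\rho^M|=1/\rho$ a.e.\ there, and is continuous, hence has identical traces on $\GC$ from $\Omega_+$ and from $\Omega_-$.

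Next I would prove (ii). The field $v_M^\rho$ lies in $H^1(\Omega\backslash\GC;\R^d)$, being the sum of $v_{\mathrm{sym}}\in H^1(\Omega;\R^d)$ and the product $\xi_\rho^M v_{\mathrm{anti}}$ of a bounded Lipschitz function with an $H^1(\Omega\backslash\GC;\R^d)$ function. Because the trace of $\xi_\rho^M$ on $\GC$ is two-sided, $\JUMP{v_M^\rho}=\JUMP{v_{\mathrm{sym}}}+\xi_\rho^M|_{\GC}\JUMP{v_{\mathrm{anti}}}=\xi_\rho^M|_{\GC}\JUMP v$; this vanishes $\Surf$-a.e.\ on $M$ (where $\JUMP v=0$) and on $\big((M+B_\rho(0))\cap\GC\big)\setminus M$ (where $d_M<\rho$, hence $\xi_\rho^M=0$), so $\JUMP{v_M^\rho}=0$ $\Surf$-a.e.\ on $(M+B_\rho(0))\cap\GC$, i.e.\ $v_M^\rho\in H^1\big(\Omega_-\cup(M+B_\rho(0))\cup\Omega_+;\R^d\big)$. (Since moreover $\barGD\cap\barGC=\emptyset$ by \eqref{ass-domain2+}, for $\rho$ small one has $\xi_\rho^M\equiv1$ near $\GD$, where then $v_M^\rho=v_{\mathrm{sym}}+v_{\mathrm{anti}}=v=0$, so $v_M^\rho$ remains an admissible test field.)

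The core of the argument is (i). Writing $v_M^\rho-v=(\xi_\rho^M-1)v_{\mathrm{anti}}$ and differentiating, I would treat the three contributions separately. Since $|\xi_\rho^M-1|\le1$ and $\xi_\rho^M-1$ vanishes on $\{d_M\ge2\rho\}$,
\begin{equation*}
\big\|(\xi_\rho^M-1)v_{\mathrm{anti}}\big\|_{L^2}^2+\big\|(\xi_\rho^M-1)\nabla v_{\mathrm{anti}}\big\|_{L^2}^2\le\int_{\{d_M<2\rho\}}\!\!\big(|v_{\mathrm{anti}}|^2+|\nabla v_{\mathrm{anti}}|^2\big)\,\dd x,
\end{equation*}
and the right-hand side tends to $0$ as $\rho\to0$ by dominated convergence, since the open sets $\{d_M<2\rho\}$ decrease to the closed, $\calL^d$-null set $M\subset\GC$. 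For the last contribution, on $S_\rho$ one has $d_M<2\rho$, hence $1/\rho^2<4/d_M^2$, whence
\begin{equation*}
\big\|\nabla\xi_\rho^M\otimes v_{\mathrm{anti}}\big\|_{L^2}^2\le\frac1{\rho^2}\int_{S_\rho}|v_{\mathrm{anti}}|^2\,\dd x\le4\int_{\{d_M<2\rho\}}\frac{|v_{\mathrm{anti}}|^2}{d_M^2}\,\dd x.
\end{equation*}
The decisive ingredient is a Hardy-type inequality: since $M=\supp z$ is $(d{-}1)$-thick --- equivalently, enjoys the lower density estimate / Campanato property $\frak{a}$ of Prop.\ \ref{Propa} --- and $v_{\mathrm{anti}}|_{\Omega_\pm}$ vanishes on $M$, one has $|v_{\mathrm{anti}}|^2/d_M^2\in L^1(\Omega\backslash\GC)$, with $\int|v_{\mathrm{anti}}|^2/d_M^2\,\dd x\lesssim\|v_{\mathrm{anti}}\|_{H^1(\Omega\backslash\GC)}^2$; consequently the right-hand side above also tends to $0$ by dominated convergence. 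Adding up the three estimates yields $\|v_M^\rho-v\|_{H^1(\Omega\backslash\GC;\R^d)}\to0$, which is (i).

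I expect this Hardy estimate to be the main obstacle. Square-integrability of $v_{\mathrm{anti}}$ alone is hopeless for the delicate term $\rho^{-2}\!\int_{S_\rho}|v_{\mathrm{anti}}|^2$; one genuinely needs $v_{\mathrm{anti}}$ to decay near $M$ in an integrated sense, and the vanishing trace secures this only because $M$ is ``fat'', i.e.\ $(d{-}1)$-thick. This is precisely the property that, via Prop.\ \ref{Propa}, the perimeter regularization together with the unidirectional, $1$-homogeneous dissipation guarantee for the (semi)stable sets $\supp z_k(t)$ and $\supp z(t)$ --- which is why the hypothesis that $M=\supp z$ be $(d{-}1)$-thick is exactly the right one. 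The required Hardy inequality for functions vanishing on thick subsets of the boundary, which underlies the proof of Prop.\ \ref{corconst} in \cite{MiRoTh10DDNE,RosTho12ABDM}, belongs to a well-developed circle of ideas, cf.\ \cite{Lehr08PHIU,EgHaRe15HIFV}.
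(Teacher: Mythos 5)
Your argument is correct and follows exactly the route the paper indicates: the proof hinges on the vanishing trace of $v_{\mathrm{anti}}$ on the $(d{-}1)$-thick set $M$, the cut-off $\xi_\rho^M$ with gradient of order $1/\rho$ supported in the shell $\{\rho<d_M<2\rho\}$, and Hardy's inequality \eqref{Hardy} to handle the term $\rho^{-2}\int_{S_\rho}|v_{\mathrm{anti}}|^2$ via dominated convergence. The paper itself only sketches these ingredients and defers the details to \cite{MiRoTh10DDNE,RosTho12ABDM,RoThPa15SDLS}; your write-up faithfully fills them in without any gap.
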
 
Later on, we will apply Prop.\ \ref{corconst} to $z=z(t)$, $t \in [0,T]$ 
fixed and  $z\in L^\infty(0,T;\SBV(\GC))$ a limiting curve for the sequence $(z_k)_k$,
with  $ \rho= \rho(k,t)$ the sequence of radii for which support convergence holds. 
We thus obtain for every test function $v \in \Spu_z(t)$  a 
sequence $(v_k = v^{\rho(k,t)})_k$ which converges  to $v$ 
even strongly  in $\Spu=H^1(\Omega\backslash\GC;\R^d)$ and which in fact has $\JUMP{v_k}=0$ 
on $\supp z(t) + B_{\rho(k,t)}(0) \supset \supp z_k(t)$. 
\par
 While referring to \cite[Prop.\ 8, Cor.\ 2]{MiRoTh10DDNE}, 
\cite[Prop.\ 5.4]{RosTho12ABDM}, and \cite[Sec.\ 4.1]{RoThPa15SDLS} 
for more details and the proof of Prop.\ \ref{corconst}, 
let us briefly hint at the main underlying tools:
Observe  that   $v_\mathrm{anti}=0$ on $\supp z$. 
Hence, in view of \eqref{ass-domain1}, 
$v_\mathrm{anti}|_{\Omega_\pm}=:v_\mathrm{anti}^\pm\in H^1(\Omega_\pm;\R^d)$ 
satisfies homogeneous Dirichlet conditions on the closed set 
$M= \supp z \subset\GC,$ i.e.\ 
$v\in H^1_M(\Omega_\pm;\R^d)$. This observation is essential, because 
it enables us to apply a Hardy's inequality, stating the existence of a constant 
$C_M>0$ such that for all $v\in H^1_M(\Omega_\pm;\R^d)$: 
\begin{equation}
\label{Hardy}
\big\|v/d_M\big\|_{L^2(\Omega_\pm,\R^d)}\leq 
C_M\big\|\nabla v\big\|_{L^2(\Omega_\pm,\R^{d\times d})}\,,
\end{equation} 
 with $d_M$ from \eqref{def-dM}.  
Such type of Hardy's inequality is the crucial tool allowing us to verify the strong 
$H^1(\Omega_\pm;\R^d)$-convergence of the recovery sequence under construction. 
\par
It has to be stressed that, to our knowledge,  so far the above Hardy's 
inequality for closed sets $M$ of arbitrarily low regularity has been proved 
only in $L^p$-spaces with $p>d,$  see \cite[p.\ 190]{Lewi88UFS}. 
This is essentially the reason
 why, 
 in the works 
\cite{MiRoTh10DDNE,RosTho12ABDM},
Proposition \ref{corconst}
was proved in 
$W^{1,p}$ with $p>d$,  only,  in the setting of 
rate-independent 
delamination coupled to a viscous evolution of the bulk for materials with constitutive relations 
of $p$-growth with $p>d$.  
\par
Only recently, Hardy's inequality \eqref{Hardy}   has been obtained  
in \cite{EgHaRe15HIFV} under much weaker integrability assumptions on 
the displacements, with only slightly strengthened regularity 
assumptions on the closed set $M$. 
More precisely, the additional regularity imposed on $M$ 
in \cite[Thm.\ 3.1]{EgHaRe15HIFV}
for Hardy's inequality to hold, is the lower density estimate \eqref{propa}; 
exactly the fine regularity property deduced in Proposition \ref{Propa} 
for finite-perimeter sets being semistable in the sense of \eqref{SSZ}. Thus, due to these 
recent results, \cite[Thm.\ 3.1]{EgHaRe15HIFV} in combination 
with \cite[Thm.\ 6.1]{RosTho12ABDM}, we are now able to perform the limit passage 
from adhesive to brittle without an additional $W^{1,p}$-regularization, where $p>d,$ 
for the displacements (and the assumption $p>d$ becomes unnecessary also in \cite{RosTho12ABDM}). 
The adhesive-to-brittle limit  has been addressed in the recent  \cite{RoThPa15SDLS}, 
in the context of \emph{local solutions}
to
\emph{fully rate-independent} delamination, with material laws in the (static) 
momentum balance featuring the general growth exponent $p\in(1,\infty)$. 
\begin{remark}
\label{rmk:subtle}
\upshape  
Observe though, that for $p>d$ it is $\JUMP{v}\in \mathrm{C}^0(\barGC,\R^d)$ for 
any $v\in W^{1,p}(\Omega\backslash\GC,\R^d)$. 
Thus, if $z|\JUMP{v}|=0$ a.e.\ on $\GC$  
for a given function $z\in L^\infty(\GC),$ then in particular 
$\JUMP{v}\equiv0$ on $\supp z$.  
This conclusion is no longer valid for $p\leq d$ and therefore the above property is 
directly incorporated in the definition of $\Spu_z$ in \eqref{spu-z}. This is 
essential, because we will exploit the support convergence \eqref{suppconv} 
for the construction of the recovery sequence and, for this, the usage 
the \textit{closed} set $\supp z$ is important. 
\par
This fact  also motivates the definition of  the functional $\calJ_\infty = \calJ_\infty (u,z)$ 
from \eqref{brittle-constr-funct} in terms of the constraint 
$\JUMP{u}=0$ a.e.\ on $\supp z$, which, in this context, is stronger than just 
requiring $z \JUMP{u}=0$. 
\end{remark}
A first consequence  of Prop.\ \ref{corconst}, joint with  Lemma \ref{key-lemma}, 
is the \textsc{Mosco}-convergence of the functionals $\calJ_k(\cdot, z_k)$ from 
\eqref{JKfunc} to $\calJ_\infty(\cdot,z)$, with $(z_k)_k$ converging to $z$ 
weakly$^*$ in $\SBV(\GC;\{0,1\})$ and \underline{$z_k,\, z$ satisfying 
inequality \eqref{SSZ}}. In fact, \eqref{SSZ} guarantees the 
lower density estimate  \eqref{propa}, which in turn ensures the support 
convergence \eqref{suppconv} and thus Prop.\ \ref{corconst}, at the core of the 
proof of the $\limsup$-inequality. Estimate   \eqref{propa} also yields the 
validity of Lemma \ref{key-lemma}, at the basis of the proof of  the $\liminf$-inequality. 
 We will only detail the proof of the latter estimate, which  
  in turn will allow us to deduce  the argument for showing the brittle constraint. 
\begin{lemma}
\label{l:Mosco}
Let  $(z_k)_k\subset\SBV(\GC;\{0,1\})$ fulfill
$z_k\overset{*}{\rightharpoonup}z$ in $\SBV(\GC;\{0,1\})$, and suppose that 
$z_k$ for every $k\in \N$, and 
 $z$ (and the associated finite-perimeter sets $Z_k,\, Z$) comply with   \eqref{SSZ}. Then,
the functionals $\calJ_k(\cdot, z_k):\Spu\to\R$ Mosco-converge 
to the functional $\calJ_\infty(\cdot,z):\Spu\to\R\cup\{\infty\}$ w.r.t.\ the topology of 
$H^1(\Omega{\setminus}\GC;\R^d)$,  i.e.,  there holds 
\begin{subequations}
\label{Mosco-precise}
\begin{compactenum}
\item[-- \textbf{$\liminf$-inequality:}] for every $u  \in H^1(\Omega{\setminus}\GC;\R^d)$
 and $(u_k)_k \subset H^1(\Omega{\setminus}\GC;\R^d)$ there holds
\begin{equation}
\label{mosco-liminf} u_k \weakto u \ \text{ weakly in
$H^1(\Omega{\setminus}\GC;\R^d)$} \  \Rightarrow \ \liminf_{k\to
\infty} \calJ_k(u_k,z_k) \geq \calJ_\infty (u,{z}); 
\end{equation}
\item[-- \textbf{$\limsup$-inequality:}]  for every
$v \in H^1(\Omega{\setminus}\GC;\R^d)$ there is a sequence $(v_k)_k\subset H^1(\Omega{\setminus}\GC;\R^d)$ such that
\begin{equation}
\label{mosco-limsup} v_k \to v \ \text{ strongly in
$H^1(\Omega{\setminus}\GC;\R^d)$} \  \text{and} \
\limsup_{k\to\infty} \calJ_k(v_k,z_k) \leq \calJ_\infty(v,z). 
\end{equation}
\end{compactenum}
\end{subequations} 
\end{lemma}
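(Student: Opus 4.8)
The plan is to prove the two halves of the Mosco convergence \eqref{mosco-liminf}--\eqref{mosco-limsup} independently. The $\limsup$-part (construction of a recovery sequence) is essentially a repackaging of Proposition \ref{corconst} together with the support convergence of Proposition \ref{SuppConv}, so the only real work sits in the $\liminf$-part, which I would obtain from strong $L^2(\GC)$-convergence of the jump traces combined with the convergence of the sets $Z_k$ to $Z$ in measure and with Lemma \ref{key-lemma}. Throughout I use that, by \eqref{brittle-constr-funct}, $\calJ_\infty(\cdot,z)$ takes only the values $0$ and $+\infty$.

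For the $\limsup$-inequality, given $v\in H^1(\Omega\backslash\GC;\R^d)$ I would distinguish two cases. If $\calJ_\infty(v,z)=+\infty$ the constant sequence $v_k:=v$ does the job. If $\calJ_\infty(v,z)=0$, i.e.\ $\JUMP{v}=0$ $\Surf$-a.e.\ on $M:=\supp z$, then $v\in H^1(\Omegatwo\cup M\cup\Omegaone;\R^d)$; since $z$ satisfies \eqref{SSZ}, Proposition \ref{Propa} ensures that $M$ is $(d{-}1)$-thick, so Proposition \ref{corconst} applies (its construction $r(\rho,M,v)$ and the properties (i)--(ii) do not use the homogeneous Dirichlet datum, which is merely preserved if imposed). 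I would then take $v_k:=r(\rho(k),M,v)=v_M^{\rho(k)}$, with $\rho(k)$ the radii from the support convergence \eqref{suppconv}. Item (i) gives $v_k\to v$ strongly in $H^1(\Omega\backslash\GC;\R^d)$ because $\rho(k)\to0$, and item (ii) gives $\JUMP{v_k}=0$ $\Surf$-a.e.\ on $M+B_{\rho(k)}(0)\supset\supp z_k$ by \eqref{suppconv}; since $z_k$ vanishes $\Surf$-a.e.\ outside its support, this forces $z_k\JUMP{v_k}=0$ $\Surf$-a.e.\ on $\GC$, hence $\calJ_k(v_k,z_k)=\tfrac k2\int_{\GC}z_k|\JUMP{v_k}|^2\dd\Surf=0$ for every $k$, and $\limsup_k\calJ_k(v_k,z_k)=0=\calJ_\infty(v,z)$. (If $M=\emptyset$ one uses the last assertion of Proposition \ref{SuppConv} and again takes $v_k:=v$.)

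For the $\liminf$-inequality let $u_k\weakto u$ in $H^1(\Omega\backslash\GC;\R^d)$. I may assume $L:=\liminf_k\calJ_k(u_k,z_k)<\infty$ and pass to a subsequence attaining $L$, along which $\calJ_k(u_k,z_k)\leq L+1$ eventually. Continuity of $\JUMP{\cdot}:H^1(\Omega\backslash\GC;\R^d)\to H^{1/2}(\GC;\R^d)$ and the compact embedding $H^{1/2}(\GC)\Subset L^2(\GC)$ yield $\JUMP{u_k}\to\JUMP{u}$ strongly in $L^2(\GC;\R^d)$, so that $\{|\JUMP{u_k}|^2\}_k$ is uniformly integrable on $\GC$. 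By Lemma \ref{key-lemma} it suffices to prove $\JUMP{u}=0$ $\Surf$-a.e.\ on $Z$, where $z=\chi_Z$. Fixing an arbitrary measurable $A\subset Z$ I would split the integral $\int_A|\JUMP{u_k}|^2\dd\Surf$ over $A\cap Z_k$ and $A\backslash Z_k$: the first piece is at most $\tfrac2k\,\calJ_k(u_k,z_k)\leq \tfrac{2(L+1)}{k}\to0$, while the second tends to $0$ because $\Surf(A\backslash Z_k)\leq\Surf(Z\backslash Z_k)\to0$ (from $z_k\to z$ in $L^1(\GC)$, cf.\ \eqref{convsz}) and $\{|\JUMP{u_k}|^2\}_k$ is uniformly integrable. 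Passing to the limit via $\int_A|\JUMP{u_k}|^2\to\int_A|\JUMP{u}|^2$ gives $\int_A|\JUMP{u}|^2\dd\Surf=0$, so $\JUMP{u}=0$ $\Surf$-a.e.\ on $Z$ and $\calJ_\infty(u,z)=0\leq L$. The \textbf{main obstacle} is precisely that the set $Z_k$ on which $\calJ_k$ concentrates moves with $k$; this is neutralized by combining the convergence $\Surf(Z\backslash Z_k)\to0$ with the uniform integrability coming from strong $L^2(\GC)$-convergence of the traces, and by using Lemma \ref{key-lemma} to trade $\supp z$ for $Z$ up to $\Surf$-null sets.
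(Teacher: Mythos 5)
Your proof is correct, and the $\limsup$-part is the same recovery-sequence construction the paper relies on (combining Proposition~\ref{corconst} with the support convergence of Proposition~\ref{SuppConv}, exactly as in \cite[Prop.~5.4]{RosTho12ABDM} to which the paper defers). For the $\liminf$-part your route is slightly different in flavor from the paper's. The paper argues in one stroke: since $\int_{\GC} z_k |\JUMP{u_k}|^2\,\mathrm{d}\Surf = \tfrac{2}{k}\calJ_k(u_k,z_k)\to 0$ under the uniform bound, and the functional $(z,w)\mapsto\int_{\GC} z|w|^2\,\mathrm{d}\Surf$ is sequentially lower semicontinuous along $z_k\to z$ in $L^1(\GC)$ and $\JUMP{u_k}\rightharpoonup\JUMP{u}$ in $L^2(\GC;\R^d)$, one concludes $\int_{\GC} z|\JUMP{u}|^2\,\mathrm{d}\Surf=0$ and then trades $Z$ for $\supp z$ via Lemma~\ref{key-lemma}, exactly as you do. You instead upgrade the trace convergence to the \emph{strong} $L^2(\GC;\R^d)$ convergence furnished by the compact embedding $H^{1/2}(\GC)\Subset L^2(\GC)$, extract uniform integrability of $\{|\JUMP{u_k}|^2\}_k$, and run a hands-on split of $\int_Z|\JUMP{u_k}|^2\,\mathrm{d}\Surf$ over $Z\cap Z_k$ and $Z\setminus Z_k$, using $\Surf(Z\setminus Z_k)\to 0$ from the $L^1$-convergence of the characteristic functions. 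This is more elementary (no appeal to a joint lower-semicontinuity fact) and is a perfectly valid alternative; in fact, once you have strong $L^2$-convergence of the traces you could even shortcut the splitting entirely, since then $\int_{\GC}z_k|\JUMP{u_k}|^2\,\mathrm{d}\Surf\to\int_{\GC}z|\JUMP{u}|^2\,\mathrm{d}\Surf$ by generalized dominated convergence ($z_k\to z$ boundedly a.e.\ along a subsequence, $|\JUMP{u_k}|^2\to|\JUMP{u}|^2$ in $L^1(\GC)$), and the conclusion follows immediately. One cosmetic remark: citing \eqref{convsz} for $z_k\to z$ in $L^1(\GC)$ is a slight mismatch, since that display concerns the time-parametrized sequences of Theorem~\ref{thm:main}; in the present lemma this convergence follows directly from the hypothesis $z_k\overset{*}{\rightharpoonup}z$ in $\SBV(\GC;\{0,1\})$ via the compact embedding $\SBV(\GC;\{0,1\})\Subset L^1(\GC)$.
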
 
\begin{proof}
In order to prove \eqref{mosco-liminf}, we may confine the discussion  
to the case $\liminf_{k\to\infty} \calJ_k(u_k,z_k) <\infty$. 
Therefore, up to a subsequence we have $\sup_{k\in \N}  \calJ_k(u_k,z_k) \leq C$ and there holds
\[
\int_{\GC} \tfrac12 z \left|\JUMP{u}\right|^2 \dd \Surf \leq \liminf_{k\to\infty} \int_{\GC} \tfrac12 z_k \left|\JUMP{u_k}\right|^2 \dd \Surf
\leq \lim_{k\to\infty} \frac{C}{k}=0,
\]
where the first inequality follows from combining the 
strong convergence $z_k\to z$ in $L^1(\GC)$  
(due to $\SBV(\GC;\{0,1\}) \Subset L^1(\GC)$), 
with the weak convergence $\JUMP{u_k} \weakto \JUMP{u}$ in $L^2(\GC;\R^d)$ 
(due to $u_k \weakto u $ in $H^1(\Omega{\setminus}\GC;\R^d)$ 
via trace theorems and Sobolev embeddings). Hence 
\begin{equation}
z \JUMP{u} = 0 \text{ a.e.\ on } \GC \ \Leftrightarrow \  \JUMP{u} = 0  
\text{ a.e.\ on }  Z  \ \Leftrightarrow \  \JUMP{u} = 0  \text{ a.e.\ on }  \supp z 
\end{equation}
the last implication due to the fact that $\supp z$ and $Z$ coincide, 
up to a $\Surf$-negligible set, thanks to \eqref{essentially-equal} 
and \eqref{essentially-eq-Zk}.  Then,
$\calJ_\infty (u,{z}) =0 \leq \liminf_{k\to\infty} \calJ_k(u_k,z_k) $. 
\par
The proof of the $\limsup$-inequality follows from adapting 
that  for  \cite[Prop.\ 5.4]{RosTho12ABDM}. 
\end{proof} 
\par 
We are now in the position to conclude the brittle constraint for the limit pair $(u,z)$ as a consequence of the lower $\Gamma$-limit 
\eqref{mosco-liminf}. For the time-derivative, i.e., the pair $(\dot u,z),$ 
the brittle constraint will be obtained arguing on difference quotients. 
\begin{lemma}[Brittle constraint \eqref{propsuz}]
\label{BC}
The limit pair $(u,z)$ obtained by convergences \eqref{convs} 
satisfies the brittle constraint \eqref{propsuz}.  
\end{lemma}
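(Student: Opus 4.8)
The plan is to treat the two lines of \eqref{propsuz} separately. For the constraint on $u(t)$, I would appeal directly to the $\liminf$-inequality \eqref{mosco-liminf} of Lemma \ref{l:Mosco}: fixing $t\in[0,T]$, one has $u_k(t)\weakto u(t)$ in $\Spu$ by the second of \eqref{convsu} and $z_k(t)\weaksto z(t)$ in $\SBV(\GC;\{0,1\})$ by the second of \eqref{convsz-2}, while \eqref{SSZ} holds for each $z_k(t)$ by Lemma \ref{Conssemi} and for the brittle limit $z(t)$ by Lemma \ref{Conssemi} with $k=\infty$, since $z(t)$ is semistable for $\calE_\infty(t,u(t),\cdot)$ by the limit passage in the semistability condition established just above. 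Hence Lemma \ref{l:Mosco} gives $\calJ_\infty(u(t),z(t))\le\liminf_{k\to\infty}\calJ_k(u_k(t),z_k(t))$. The right-hand side is finite: rearranging the lower bound \eqref{calc-ek-bded} yields $\calJ_k(u_k(t),z_k(t))\le\calE_k(t,u_k(t),z_k(t))+C$, which is bounded uniformly in $k$ by the energy estimate \eqref{unifbdE}. Since $\calJ_\infty$ only takes the values $0$ and $+\infty$, this forces $\calJ_\infty(u(t),z(t))=0$, i.e.\ $\JUMP{u(t)}|_{\supp z(t)}=0$ $\Surf$-a.e.\ in $\GC$ for every $t\in[0,T]$, which is the first line of \eqref{propsuz}.

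For the constraint on $\dot u(t)$, the plan is the difference-quotient argument hinted at above. Since $u\in H^1(0,T;\Spu)$, at every Lebesgue point $t\in(0,T)$ of $\dot u$ the backward difference quotients $\tfrac1h\big(u(t)-u(t-h)\big)=\tfrac1h\int_{t-h}^t\dot u(s)\dd s$ converge to $\dot u(t)$ strongly in $\Spu$ as $h\downarrow 0$. Fixing such a $t$ and $h\in(0,t)$, I would use the monotonicity $z(t_2)\le z(t_1)$ for $t_1\le t_2$, which gives $Z(t)\subset Z(t-h)$ and hence $\supp z(t)\subset\supp z(t-h)$; applying the already established first line of \eqref{propsuz} at the times $t$ and $t-h$ then shows that both $\JUMP{u(t)}=0$ and $\JUMP{u(t-h)}=0$ $\Surf$-a.e.\ on $\supp z(t)$, so that by linearity of the jump $\tfrac1h\big(u(t)-u(t-h)\big)\in\VZ(t)$. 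Because $\VZ(t)$ is a closed subspace of $\Spu$ by Proposition \ref{PropsSpuM}, and the difference quotients converge to $\dot u(t)$ in $\Spu$, passing $h\downarrow 0$ yields $\dot u(t)\in\VZ(t)$, i.e.\ $\JUMP{\dot u(t)}|_{\supp z(t)}=0$ $\Surf$-a.e.\ in $\GC$. As the Lebesgue points of $\dot u$ form a set of full measure in $(0,T)$, this gives the second line of \eqref{propsuz}.

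I expect the first part to be essentially immediate once Lemma \ref{l:Mosco} is in place. The mildly delicate point in the second part is that the jump constraint defining $\VZ(t)$ must survive the strong limit in $\Spu$; this is guaranteed by the continuity of the trace operator $\JUMP{\cdot}\colon\Spu\to H^{1/2}(\GC;\R^d)$ and the closedness of $\VZ(t)$ recorded in Proposition \ref{PropsSpuM}. One should also note that the monotonicity of $t\mapsto\supp z(t)$, which encodes the unidirectionality built into $\calR_\infty$, is precisely what makes the \emph{backward} difference quotient the correct object: a forward quotient $\tfrac1h\big(u(t+h)-u(t)\big)$ only controls the jump on $\supp z(t+h)$, which may be strictly smaller than $\supp z(t)$, and would therefore not deliver the constraint on $\supp z(t)$.
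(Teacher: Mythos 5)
Your proof is correct and follows essentially the same route as the paper's: the first half is the Mosco $\liminf$-inequality argument verbatim, and the second half is the same backward-difference-quotient argument exploiting the monotonicity $\supp z(t)\subset\supp z(t-h)$. The only cosmetic difference is that the paper passes to the jump $v=\JUMP{u}\in H^1(0,T;L^2(\GC))$ and does an explicit $L^2(\GC)$-estimate with the characteristic function of $\supp z(t)$, whereas you argue directly in $\Spu$ and invoke the closedness of $\VZ(t)$ from Proposition \ref{PropsSpuM} — which is, if anything, the slightly cleaner formulation.
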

\begin{proof}
Let $t\in [0,T]$ be fixed. We may apply Lemma \ref{l:Mosco} to the sequence $(z_k(t))_k$ and to $z(t)$, which satisfy the semistability condition for the energies $\calE_k(t,u_k(t), \cdot)$ and $\calE_\infty (t,u(t), \cdot)$,  respectively, and thus inequality \eqref{SSZ}. Therefore, for every $t\in [0,T]$ the energies $\calJ_k(\cdot, z_k(t))$ Mosco-converge to $\calJ_\infty(\cdot, z(t))$ in $\Spu$.  Since $u_k(t) \weakto u(t)$ in $\Spu$ by \eqref{convsu}, the $\liminf$-inequality \eqref{mosco-liminf} ensures that 
$\calJ_\infty (u(t),z(t)) \leq \liminf_{k\to\infty}  \calJ_k(u_k(t), z_k(t)) \leq C$, 
where we have used that 
$ \calJ_k(u_k(t), z_k(t)) \leq \calE_k(t,u_k(t), z_k(t)) + \tilde{C} \leq C$ for  
constants $C$, $\tilde C$ uniform w.r.t.\ $t\in [0,T]$ and $k\in \N$ (cf.\ \eqref{unifbdE}). 
Hence $\calJ_\infty (u(t),z(t)) =0$ for every $t\in[0,T]$, whence the brittle 
constraint \eqref{propsuz} for $u$. 
\par 
In order to deduce that the brittle constraint is also satisfied by the time-derivative 
$\dot u$ given that $\big|\JUMP{u(t)}\big|^2\big|_{\supp z(t)}=0,$ 
we argue with the aid of difference quotients. 
In particular, it follows from the definition 
of the Bochner space $W^{1,p}(0,T;\mathbf{B}),$ with $\mathbf{B}$ a reflexive Banach space, 
cf.\ e.g.\ \cite[Def.\ A.1, p.\ 140]{Brez73}, that  for every $v\in W^{1,p}(0,T;\mathbf{B})$ there holds 
$\frac{v(t)-v(t-h)}{h}\to \dot v(t)$ strongly in $\mathbf{B}$ at every Lebesgue point $t$ of 
$\dot v$. Namely, for a.a.\ $t\in(0,T)$
\begin{equation}
\label{goodconv}
\lim_{0<h\to0}\Big\|\tfrac{v(t)-v(t-h)}{h}-\dot v(t)\Big\|_\mathbf{B}
= \lim_{0<h\to0}\Big\|\tfrac{1}{h}\int_{t-h}^t\dot v(s)\mathrm{d}s-\dot v(t)\Big\|_\mathbf{B}=0\,.
\end{equation}
For $v=\JUMP{u}\in H^1(0,T;L^2(\GC))$ such that $v(t,x)=0$ for a.a.\ $x\in\supp z(t),$  
for a.a.\ $t\in(0,T)$ we also have that 
$v(t-h,x)=0$ for a.a.\ $x\in\supp z(t),$ since 
 $\supp z(t) \subset \supp z(t-h)$.   The latter is due to the fact that $z(t)\leq z(t-h)$ a.e.\ in $\GC$ 
by the unidirectionality of $\calR_\infty$. Thus, in view of \eqref{goodconv}, denoting with 
$\calX_{\supp z(t)}$ the  characteristic  function of $\supp z(t),$ we obtain for a.a.\ $t\in(0,T)$ that 
\begin{eqnarray*}
\int_{\GC}\!\!\!\calX_{\supp z(t)}|\dot v(t)|^2\,\mathrm{d}\Surf
=
\int_{\GC}\!\!\!\calX_{\supp z(t)}
\big|\tfrac{v(t)-v(t-h)}{\bar h}-\dot v(t)\big|^2\,\mathrm{d}\Surf
\leq 
\int_{\GC}\!\!
\big|\tfrac{v(t)-v(t-h)}{\bar h}-\dot v(t)\big|^2\,\mathrm{d}\Surf
\to0\,. 
\end{eqnarray*}
Since the integrand on the left-hand side of the above inequality is positive, we conclude that, indeed, 
$\dot v(t)=\JUMP{\dot u(t)}=0$ a.e.\ on $\supp z(t)$ for a.a.\ $t\in(0,T)$.  
\end{proof}
%
\subsection{Recovery spaces for the momentum balance and their properties}
\label{RecSp}
%
Let us briefly resume the discussion, sketched in the Introduction, on 
the  difficulties attached to the limit passage as $k\to\infty$
in the momentum balance for the 
adhesive contact systems. We
recall its weak formulation, i.e. 
\begin{equation}
\label{momentum-k}
\langle\varrho \ddot{u}_k(t), v\rangle_\Spu
+ \int_{\Omega\backslash\GC} 
\left( \mathbb{D}e(\dot{u}_k(t)) \colon e(v) {+} \mathbb{C}e(u_k(t)):e(v) \right) \,\mathrm{d}x
+\int_{\GC} kz_k(t)\JUMP{u_k(t)}\JUMP{v}\,\mathrm{d}\Surf  = \langle\mathbf{f}(t),v\rangle_\Spu\,.
\end{equation} 
for every  $v\in \Spu$ and for a.a. $t\in (0,T)$. 
In the limit passage as $k\to\infty$, one has to face two problems: 
\begin{compactenum}
\item[(1)] the blow-up of the bounds on the adhesive contact term 
$ k z_k \JUMP{u_k}$  tested against $v\in\Spu$ as $k\to\infty$;
\item[(2)]
the consequent blow-up of the bounds (by comparison) on the inertial terms $(\ddot{u}_k)_k$.
\end{compactenum}
In Section \ref{MoscoBC} we  have illustrated the construction of the 
recovery sequence for the test functions of the brittle momentum balance. 
 In \cite{RosTho12ABDM}, such a  construction allowed us to overcome problem $(1)$ 
 in the \emph{quasistatic} (viscous) setting, 
where inertial terms in the momentum balance were neglected.  
\par
In what follows, we will exploit a \emph{refinement of this method} 
in order to tackle  problem $(2),$
by costructing a sequence of 
\emph{recovery spaces} for the space $\Spu_z(t)$ (cf.\  \eqref{spu-z}) 
of the  test functions for the momentum  balance  in the brittle limit. 
The definition of these recovery spaces and the proof of their properties relies on the support convergence $\supp z_k(t)\subset \supp z(t)+B_{\rho(k,t)}(0)$ for every $t\in[0,T]$, with $\rho(k,t)\to0$ as $k\to\infty$, 
of the semistable solutions $z_k$ to the adhesive systems
$\ingrsysk$ (cf.\ Prop.\ \ref{SuppConv})
 This  convergence is intended along the 
 very same sequence of indices $k$ 
such that convergences \eqref{convsz} hold. 
In particular, the extracted sequence  $(\rho(k,\cdot))_k$ of radii  
is independent of $t\in[0,T]$.  
Moreover, due to the temporally monotonically decreasing nature of (semistable) $z_k$ we also have 
\begin{equation}
\label{monoton-key-later}
\forall\,k\in\N\cup\{\infty\},\,\forall\,t>s\in[0,T]:\;\supp z_k(t)\subset\supp z_k(s)\,.
\end{equation}
But note that there is in general no monotonicity relation between $\rho(k,t)$ 
and $\rho(k,s),$ because 
$\supp z$ and $\supp z_k$ need not decrease with the same speed.    
\par
We now choose a \emph{nonincreasing} sequence $(\eps_n)_n$ with $\eps_n \downarrow  0$. 
Then, thanks to \eqref{monoton-key-later} for any $k\in\N\cup\{\infty\}$, we also have the 
following relation for every $s\in [0,T)$ and $t\in [s,T]$: 
\begin{equation}
\label{monotone-inclusion}
\text{If } \supp z_k(s)\subset z(s)+\overline{B_{\eps_n}(0)},
\text{ then also }\supp z_k(t)\subset z(s)+\overline{B_{\eps_n}(0)}\,. 
\end{equation}
This relation will be of great use later on, when deducing sufficient compactness results for 
the adhesive inertial terms on the intervals $[s,T]$. That is why,  
for the above chosen sequence 
$(\eps_n)_n$ with $\eps_n \downarrow  0,$ we now introduce 
the following \emph{recovery spaces} for all $n\in\N$  and 
for $t\in [0,T]$ and $s\in [0,T)$: 
\begin{equation}
\label{recsp}
\begin{split}
\Spu_z(\eps_n,t)&:=\{v\in H^1\big((\Omega\backslash\GC)
\cup(\supp z(t)\!+\!\overline{B_{\eps_n}(0)});\R^d\big)\, : \ v=0\text{ on }\GD\}
\\
&\, =  \{v\in \Spu\, : \ \JUMP{v}=0 \text{ a.e.\ on  } \GC\cap(\supp z(t)+\overline{B_{\eps_n}(0)})\},
\\
\spyn &:= L^2 (s,T;  \Spu_z(\eps_n,s) ), 
\\
 \spyns &:=
\Big\{(v_1,v_2,v_3)\in(\spyn)^*\times(\spyn)^*\times(\spyn)^*\, 
: \,\\
&\hspace*{0.7cm}
\text{ for }i=1,2:\,
\textstyle{\int_{ s+h}^T}\big\langle v_i(t)-v_i(t-h)-\textstyle{\int_{t-h}^t}   
 v_{i+1}(s)\,
\mathrm{d}s,\phi(t)
\big\rangle_\Spu\,\mathrm{d}t=0
\\
&\hspace*{0.7cm}
\qquad \qquad \qquad \text{ for all }
\phi\in\spyn,\,  h \in (0,T-s)  \},
\Big\}\,.                                                         
\end{split}
\end{equation}
Observe that the definition of the spaces $\spyns$ encompasses the information that 
$v_{i+1}$ is the time-derivative  of the function $v_i$ in $(\spyn)^*,$ for $i=1,2$. 
Indeed, choosing test functions $\phi=\eta\varphi$ with 
$\eta\in \mathrm{C}_0^\infty(s,T)$  such that  $\supp \eta \subset (s+h,T)$,
and 
 $\varphi\in\Spv_z(\eps_n,s),$ the 
fundamental lemma of the Calculus of Variations yields that 
$\big\langle v_i(t)-v_i(t-h)-\textstyle{\int_{t-h}^t}  
 v_{i+1}(s)\,
\mathrm{d}s, 
 \varphi  
\big\rangle_\Spu=0$ for a.a.\ $t\in(s+h,T)$ and for  $h\in(0,T-s)$. Hence, 
$v_i(t)=v_i(t-h)-\textstyle{\int_{t-h}^t}  
 v_{i+1}(s)\,
\mathrm{d}s$ in $\Spu_z(\eps_n,s)^*,$ which corresponds to the notion of the time-derivative 
in Bochner-spaces, 
cf.\ \cite[p.\ 140, Def.\ A.1]{Brez73}.  
\par 
In fact, the needed compactness of the adhesive inertial terms $(\ddot u_k)_k$ will be deduced in 
the spaces $\spyns,$ first for all $s\in[0,T)$ and for $n\in\N$ fixed. Next, we will 
prove the existence of a limit for the inertial terms $(\ddot{u}_k)_k$  in the spaces 
$L^2(s,T; \Spu_z(s)^*)$  for all $s\in[0,T)$. 
Our argument for this will be based on the properties of the recovery spaces 
summarized in Prop.\ \ref{Propsrecsp}. 
Most crucial is the  density result stated in Item 3..  
Ultimately, it will allow us to show that the limit inertial term $\ddot u(t)$  is 
 an element of \ $\Spu_z(t)^*$ for almost all  $t\in (0,T)$.  
This density result can be concluded from the  support convergence \eqref{suppconv} 
from Prop.\ \ref{SuppConv}, while 
the other properties follow from  Prop.\ \ref{PropsSpuM}.          
\begin{proposition}
\label{Propsrecsp}
For all $s,t\in[0,T],$ let the spaces   
$\Spu_z(t),\Spu_z(\eps_n,t), 
\spyn$, and $\spyns$ be 
as in  \eqref{spu-z} and \eqref{recsp}.   Then, 
\begin{compactenum}
\item  
for every $n\in\N$ and every $t\in[0,T]$ 
the space $\Spu_z(\eps_n,t)$ is a closed subspace of $\Spu_z(t)$ endowed with the 
norm $\|\cdot\|_\Spu$. For every $n\in\N$ and all $s\leq t$ 
the space $\Spu_z(\eps_n,s)$ is a closed subspace of $\Spu_z(\eps_n,t)$. 
Moreover,  since the sequence $(\eps_n)_n$ is monotonically decreasing, there holds 
\begin{equation}
\label{monot-vepsn}
\VZ(\eps_n,t) \subset \VZ(\eps_{n+1},t)  \quad \text{ for every $t\in [0,T]$ and $n\in \N$; }
\end{equation}
\item 
 for every $s\in (0,T],$ the space $\spyn$ 
is a closed subspace of 
 $L^2(s,T,\VZ(s))$. Hence, 
$\spyn$  endowed with the norm $\big(\int_0^T\|\cdot\|_\Spu^2\,\mathrm{d}t\big)^{1/2}$ is  a reflexive Banach space, and so is 
 $(\spyn)^* \cong L^2 (s,T; \VZ(\eps_n, s)^*)$; 
\item   
for every $s\in[0,T],$ the union $\cup_{n\in\N}\spyn$ is dense in $L^2(s,T;\VZ(s))$; 
\item  for every $s\in[0,T]$ and every  $n\in\N,$ the space $\spyns$  endowed with the norm 
$\|\cdot\|_{(\mathbf{Y}_n^{s.})^*}\times\|\cdot\|_{(\mathbf{Y}_n^{s.}))^*}\times\|\cdot\|_{(\mathbf{Y}_n^{s.})^*}$ 
is a  reflexive Banach space. 
\end{compactenum}
\end{proposition}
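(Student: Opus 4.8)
The plan is to verify the four items in order, reducing each to an application of either Proposition \ref{PropsSpuM} or the support-convergence result of Proposition \ref{SuppConv}, together with standard functional-analytic facts. For \textbf{Item 1}, I would first observe that $\Spu_z(\eps_n,t)$ coincides with the space $\Spu_M(t)$ from \eqref{VMt} for the choice $M(t):=\GC\cap(\supp z(t){+}\overline{B_{\eps_n}(0)})$, which is a \emph{closed} subset of $\GC$ (being the intersection of $\GC$ with a closed neighbourhood of a closed set). Hence by Proposition \ref{PropsSpuM} it is a closed, and therefore reflexive and separable, subspace of $\Spu$. The inclusion $\Spu_z(\eps_n,t)\subset\Spu_z(t)$ is immediate since $\supp z(t)\subset\supp z(t){+}\overline{B_{\eps_n}(0)}$, so $\JUMP v=0$ on the larger set forces $\JUMP v=0$ on the smaller one. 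The monotonicity \eqref{monot-vepsn} follows analogously from $\eps_{n+1}\le\eps_n$, which gives $\overline{B_{\eps_{n+1}}(0)}\subset\overline{B_{\eps_n}(0)}$, hence the larger index yields the weaker constraint and the larger space; and for $s\le t$ the inclusion $\Spu_z(\eps_n,s)\subset\Spu_z(\eps_n,t)$ follows from $\supp z(t)\subset\supp z(s)$ (unidirectionality of $z$, as in \eqref{monotonicity-Vz-later-used}), so that the constraint set for $s$ contains that for $t$.

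For \textbf{Item 2}, I would argue exactly as in the proof of Proposition \ref{PropsSpuM}: if $(v_m)_m\subset\spyn=L^2(s,T;\Spu_z(\eps_n,s))$ converges to $v$ in $L^2(s,T;\Spu)$, then along a subsequence $v_m(t)\to v(t)$ in $\Spu$ for a.a.\ $t$, whence $v(t)\in\Spu_z(\eps_n,s)$ by the closedness established in Item 1; thus $\spyn$ is a closed subspace of $L^2(s,T;\Spu_z(s))$, inheriting reflexivity and separability, and its dual is identified with $L^2(s,T;\Spu_z(\eps_n,s)^*)$ via the $\|\cdot\|_{L^2(s,T;\Spu_M^*)}$-type norm from \eqref{L2VMdual} (the monotonicity hypothesis \eqref{monotonicity-Mt} needed there is satisfied because $M(t)=\GC\cap(\supp z(t){+}\overline{B_{\eps_n}(0)})$ decreases in $t$ by unidirectionality of $z$). \textbf{Item 4} is then obtained by combining Item 2 with the (already justified) identification of $\spyns$ as a closed subspace of the triple product $(\spyn)^*\times(\spyn)^*\times(\spyn)^*$: the defining constraints (the difference-quotient/weak-derivative identities displayed in \eqref{recsp}) are, for each fixed $h$ and each fixed $\phi\in\spyn$, the vanishing of a continuous linear functional on the triple, so $\spyns$ is an intersection of closed subspaces, hence closed, and therefore a reflexive Banach space with the product norm.

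\textbf{Item 3} — the density of $\bigcup_{n\in\N}\spyn$ in $L^2(s,T;\Spu_z(s))$ — is the main obstacle and the only point that genuinely uses support convergence rather than soft arguments. The plan is as follows: fix $v\in L^2(s,T;\Spu_z(s))$, so that for a.a.\ $t\in(s,T)$ one has $v(t)\in\Spu_z(s)\subset\Spu_z(t)$, i.e.\ $\JUMP{v(t)}=0$ a.e.\ on $\supp z(s)$. I would apply the recovery-sequence construction of Proposition \ref{corconst} to $v(t)$ with $M:=\supp z(s)$ and radius parameter $\rho=\eps_n$, obtaining $v^{\eps_n}(t):=r(\eps_n,\supp z(s),v(t))\in H^1\big((\Omega\backslash\GC)\cup(\supp z(s){+}B_{\eps_n}(0));\R^d\big)\subset\Spu_z(\eps_n,s)$ by part (ii) of Proposition \ref{corconst}, and with $v^{\eps_n}(t)\to v(t)$ strongly in $\Spu$ as $n\to\infty$ by part (i). Here one must check measurability of $t\mapsto v^{\eps_n}(t)$ — this is routine since the construction \eqref{testconst} is an explicit (affine in $v$, with fixed multiplier $\xi^M_\rho$) operation — and then a uniform $\Spu$-bound $\|v^{\eps_n}(t)\|_\Spu\le C\|v(t)\|_\Spu$ (which follows from the Hardy inequality \eqref{Hardy} underlying Proposition \ref{corconst}, noting that $\supp z(s)$ is $(d{-}1)$-thick by Proposition \ref{Propa}), so that dominated convergence upgrades the pointwise strong convergence to convergence in $L^2(s,T;\Spu)$. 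This shows $v\in\overline{\bigcup_n\spyn}$, completing the proof. The delicate points to be careful about are: justifying the uniform-in-$n$ bound so as to invoke dominated convergence (one needs the Hardy constant for the \emph{fixed} set $\supp z(s)$, independent of $n$), and confirming that the construction indeed lands in $\Spu_z(\eps_n,s)$ and not merely in $\Spu_z(\eps_m,s)$ for some larger $m$ — i.e.\ that $\supp z(s){+}B_{\eps_n}(0)\subset\GC\cap(\supp z(s){+}\overline{B_{\eps_n}(0)})$ in the relevant (trace) sense, which is immediate.
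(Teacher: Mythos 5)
Your proposal follows essentially the same route as the paper's proof throughout: Item 1 is a direct consequence of Prop.~\ref{PropsSpuM} and elementary set inclusions, Item 2 reruns the closedness argument of Prop.~\ref{PropsSpuM}, Item 3 applies the recovery operator $r$ of Prop.~\ref{corconst} pointwise in time and concludes by dominated convergence, and Item 4 establishes closedness of $\spyns$ in the triple product. Your treatment of Item 3 is actually a touch more careful than the paper's (you invoke Hardy to produce the uniform-in-$n$ dominating function), but two details deserve attention.

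First, a small logical wrinkle in Item 3: to conclude $v^{\eps_n}(t)\in\Spu_z(\eps_n,s)$ you need $\JUMP{v^{\eps_n}(t)}=0$ on the \emph{closed} neighbourhood $\GC\cap(\supp z(s){+}\overline{B_{\eps_n}(0)})$, whereas Prop.~\ref{corconst}(ii) literally yields this only on the \emph{open} enlargement $\supp z(s){+}B_{\eps_n}(0)$. Your final parenthetical inclusion $\supp z(s){+}B_{\eps_n}(0)\subset\GC\cap(\supp z(s){+}\overline{B_{\eps_n}(0)})$ points in the wrong direction for this purpose; the correct justification is that the cutoff $\xi^{M}_{\rho}(x)=\min\{\rho^{-1}(d_M(x)-\rho)^+,1\}$ vanishes identically on $\{d_M\le\rho\}$, which equals $M{+}\overline{B_\rho(0)}$ for compact $M=\supp z(s)$, so the antisymmetric part is killed on the closed set.

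Second, and more substantively, in Item 4 you assert without verification that the defining identities of $\spyns$ are, for each fixed $h$ and $\phi$, kernels of continuous linear functionals on $(\spyn)^*\times(\spyn)^*\times(\spyn)^*$. This is true, but it is not free: for the terms $\int_{s+h}^T\langle v_i(t),\phi(t)\rangle_\Spu\,\mathrm{d}t$ and $\int_{s+h}^T\langle v_i(t-h),\phi(t)\rangle_\Spu\,\mathrm{d}t$ continuity is immediate (after shifting the test function), but the term $\int_{s+h}^T\big\langle\int_{t-h}^t v_{i+1}(\tau)\,\mathrm{d}\tau,\phi(t)\big\rangle_\Spu\,\mathrm{d}t$ needs a Fubini swap: it equals $\int_s^T\langle v_{i+1}(\tau),\psi_\phi(\tau)\rangle_\Spu\,\mathrm{d}\tau$ with $\psi_\phi(\tau):=\int_{\max(\tau,s+h)}^{\min(\tau+h,T)}\phi(t)\,\mathrm{d}t$, and one must check $\psi_\phi\in\spyn$ (which holds since $\|\psi_\phi(\tau)\|_\Spu\le h^{1/2}\|\phi\|_{L^2(s,T;\Spu)}$ and $\psi_\phi(\tau)\in\VZ(\eps_n,s)$ for a.a.\ $\tau$). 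The paper sidesteps spelling this out by running a sequential closedness argument and invoking dominated convergence exactly where this computation is hiding. Either route works, but the continuity of that last functional is the genuine content of Item 4 and should not be dismissed as immediate.
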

\begin{proof}
{\bf Ad 1.: } 
For each $s<t\in[0,T]$ it holds  $\supp z(t)\subset\supp z(s)\subset\GC$  and, thus, 
$\Spu(\eps_n,s)\subset\Spu(\eps_n,t)\subset\Spu$. Similarly 
$\supp z(t)\subset \supp z(t)+ \overline{B_{\eps_n}(0)} $   and, hence, $\Spu(\eps_n,t)\subset\Spu_z(t)$. 
It is also a standard matter to verify that $\VZ(\eps_n,t) \subset \VZ(\eps_{n+1},t) $, 
since $B_{\eps_{n+1}}(0) \subset B_{\eps_n}(0)$ for
$(\eps_n)_n$ decreasing. 
The closedness then follows by Prop.\ \ref{PropsSpuM} for each of the spaces. 
\par 
{\bf Ad 2.: } It can be straightforwardly verified that $\spyn$ is a subspace 
 of $L^2(s,T;\Spu_z(s))$; its  closedness follows from the very same argument as in the proof of Prop.\ \ref{PropsSpuM}. 
The representation formula for $(\spyn)^*$ is a standard fact in the 
theory of Bochner spaces,  cf., e.g., \cite{DieUhl77}. 
\par
{\bf Ad 3.: } In order to verify that  $\cup_{n\in\N}\spyn$ is dense in $L^2(s,T;\Spu_z(s))$,  we fix 
a function 
$v\in L^2(s,T;\Spu_z(s))$ and prove the existence of a sequence $(v_n)_n\subset \cup_{n\in\N}\spyn$ satisfying 
$v_n\to v$ in   $L^2(s,T;\Spu)$.   
For this, we first pick from the equivalence class $v$ a selection 
$\bar v$  
that is defined for every  $t\in[s,T]$. 
 For instance, we may choose   
\begin{equation}
\label{repv}
\bar v(t):=\left\{
\begin{array}{ll}
 \lim_{h\to0} \frac{1}{h}\int_{t-h}^t  v(r)  \,\mathrm{d}r&\text{if }t\text{ is a Lebesgue point  of $v$,}\\
0&\text{otw.,} 
\end{array}
\right. 
\end{equation} 
recalling the definition of  Lebesgue points:
\begin{equation*}
 t\in(s,T] \text{ is a Lebesgue point of $ v$, if }\;
\lim_{h\to0}\big\| v(t)-\tfrac{1}{h}\int_{t-h}^t  v(r)\,\mathrm{d}r\big\|_\Spu=0\,.
\end{equation*}   
Using this representative $\bar v$ and the recovery operator $r$ from \eqref{testconst},  for every $n\in\N$ we set  
\begin{equation}
\label{recfct}
v_n(t): = r(\eps_n, \supp z(s),\bar v(t)) \qquad \text{for all }  t\in[0,T]. 
\end{equation}
 By construction of the recovery operator, cf.\ \eqref{testconst}, 
we have for $\bar{v}(t) \in \Spu_z(s)$ that $v_n(t) \in \Spu(\eps_n, s)$ for all $t\in [s,T]$. Moreover, 
 $\|v_n(t)-\bar v(t)\|_{H^1(\Omega\backslash\GC,\R^d)}\to0$ as $n\to\infty$ 
and, in addition, the following estimate holds true: 
\begin{eqnarray*}
\|v_n(t)-\bar v(t)\|_{H^1(\Omega\backslash\GC,\R^d)}
\leq \|\bar v_\mathrm{anti}(t)(\xi_{\eps_n}^{\supp z(s)}-1)\|_{H^1(\Omega\backslash\GC,\R^d)}
\leq \|\bar v_\mathrm{anti}(t)\|_{H^1(\Omega\backslash\GC,\R^d)}\,. 
\end{eqnarray*}
The dominated convergence theorem thus allows us to conclude that $v_n \to \bar v $ 
in $L^2(s,T; H^1(\Omega\backslash\GC,\R^d))$, 
which concludes the proof. 
\par 
 {\bf Ad 4.: } 
We now show that $\spyns$ endowed with the norm 
$\|\cdot\|_{(\spyn)^*}
\times\|\cdot\|_{(\spyn)^*}\times\|\cdot\|_{(\spyn)^*}$ 
is a reflexive Banach space. For this, we argue that $\spyns$ 
is a closed subspace of the reflexive Banach space 
$(\spyn)^*\times(\spyn)^*\times(\spyn)^*$: Consider a sequence 
$(v^k_1,v^k_2,v^k_3)_k\subset\spyns$ 
such that $(v^k_1,v^k_2,v^k_3)\to(v_1,v_2,v_3)$ in 
$(\spyn)^*\times(\spyn)^*\times(\spyn)^*$,  which means 
for each $i\in \{1,2,3\}$   that 
\begin{equation}
\label{conv-doubt}
\sup_{\phi\in\spyn,\|\phi\|_\Spu=1} \left|
\int_s^T\langle v_i^k(t)-v_i(t),\phi(t)\rangle_\Spu \dd t  \right| 
\to0 \qquad \text{ as $k\to\infty$.}
\end{equation}
 This allows  us to pass  to the limit in the terms
$\int_{s+h}^T\langle v_i^k(t), \phi(t) \rangle_\Spu \dd t $ and
$\int_{s+h}^T\langle v_i^k(t-h), \phi(t) \rangle_\Spu \dd t $ for $i=1,2$ and all $\phi \in \spyn$. 
Moreover, for the integral term involving $v_{i+1}$ we observe that  for all 
$\phi \in \spyn,$ for almost all $t\in (s,T),$ we have that 
\begin{equation}
\label{doubt}
\int_{t-h}^t 
\langle v_{i+1}^k(\tau)-v_{i+1}(\tau), \phi(t)  \rangle_\Spu\,\mathrm{d}\tau\to0\,.
\end{equation} 
Indeed, for $\phi \in \spyn,$ we have by definition $\phi(t)\in \Spu(\eps_n,s)$ for a.a.\ $t\in(s,T)$.  
For all $\tau\in[s,T]$ we may thus set $\varphi(\tau):=\phi(t)$ and understand $\varphi\in\spyns$ 
as a function 
constant in time. 
Using $\varphi$ as a particular choice in \eqref{conv-doubt}, we conclude \eqref{doubt}. 
Since also 
\begin{equation*}
\begin{split}
\Big|\int_{t-h}^t
 \langle v_{i+1}^k(\tau)-v_{i+1}(\tau),\phi(t)\rangle_\Spu\,\mathrm{d}\tau \Big|
\leq \sup_{ \phi  \in \spyn} \int_s^T
\langle v_{i+1}^k(\tau)-v_{i+1}(\tau),\phi(t)\rangle_\Spu\,\mathrm{d}\tau 
\leq C\,,
\end{split}
\end{equation*}
the dominated convergence theorem implies that 
\begin{equation*}
 \int_{s+h}^T 
\int_{t-h}^t \langle v_{i+1}^k(\tau)-v_{i+1}(\tau),\phi(t)\rangle_\Spu\,\mathrm{d}\tau 
\,\mathrm{d}t\to0\,. 
\end{equation*} 
From this we ultimately conclude that $(v_1,v_2,v_3)\in\spyns$. 
\end{proof}
%
\subsection{Compactness for the inertial terms \& limit in the momentum balance}
\label{LimitMomBEnDiss}
%
With the first result of this section,
 Lemma \ref{Limmombal},  we pass from adhesive to brittle in the momentum balance.  
In fact, this limit passage  
will go hand in hand with establishing sufficient compactness 
for the inertial terms. These arguments rely on the recovery spaces $\spyn$ and $\spyns$ 
introduced in \eqref{recsp}, 
which are just small enough to prevent the blow-up of the functional derivatives of the 
adhesive contact term, 
but still large enough to carry the information on the support of the limit 
delamination variable. That is why, compactness and limit passage cannot be separated. 
More precisely,   
we shall  prove one by one  Items 1.-3.\ of Lemma \ref{Limmombal} below.     
\begin{lemma}[Compactness for the inertial terms \& limit passage in the momentum balance]
\label{Limmombal}
The following statements hold true: 
\begin{compactenum}
\item[{\bf 1.}] {\bf Compactness \& brittle momentum balance in $\VZ(s)^*$ 
for every $s\in[0,T)$ fixed: }\\
For every $s\in [0,T)$ there exists a (not relabeled) subsequence of $(u_k)_k$, 
possibly depending on $s$, and a function 
$\mu^s \in L^2(s,T; \VZ(s)^*)$, such that 
\begin{subequations}
\begin{align}
&
\label{enh-conv-triple}
(u_k,\dot u_k,\ddot u_k)\rightharpoonup (u,\dot u,\mu^s)\text{ in }
\spyns  \text{ as } k \to \infty \quad \text{for every } n \in\N, \text{ and }
\\
& 
\label{weak-distributional}
\begin{aligned}
&
\langle
\int_{t-h}^t \mu_s(\tau)\,\mathrm{d}\tau,  \mathsf{v} \rangle_\Spu=  \langle
\dot u(t)-\dot u(t-h), \mathsf{v} \rangle_\Spu=0
\quad\text{for all } \mathsf{v} \in \Spu_z(s),
\\
& \qquad \qquad 
\qquad  \text{ for a.a.\ }t\in(s+h,T), \text{ and  for all } h \in (0, T-s),
 \end{aligned}
\end{align}
whence
  \begin{equation}
  \label{added-derivative}
   \pairing{}{\Spu_z(s)}{\mu_s(t)}{\mathsf{v}} 
= \lim_{h\downarrow 0}  \pairing{}{\Spu_z(s)}{\frac{\dot{u}(t)-\dot{u}(t-h)}{h}}{\mathsf{v}}   
\qquad \text{for all } \mathsf{v} \in \Spu_z(s) \quad \foraa\, t \in (s,T).
   \end{equation}
Furthermore, the momentum balance holds with test functions in $\VZ(s)$, i.e., for a.a.\ $t \in (s,T)$:
\begin{equation}
\label{fundamental-lemma}
\langle\varrho \mu_s(t),\mathsf{v}\rangle_\Spu+\int_{\Omega\backslash\GC}
\big(\mathbb{C}e(u(t))+\mathbb{D}e(\dot u(t))\big):e(\mathsf{v})\,\mathrm{d}x = \langle \mathbf{f}(t), \mathsf{v} \rangle_\Spu
\quad\text{ for every } \mathsf{v}\in \VZ(s)\,.
\end{equation}
\end{subequations}
\item[{\bf 2.}] {\bf Compactness independent of $s\in[0,T)$: }\\
Let   $D\subset(0,T]$ be a dense and countable subset. There exist a (not relabeled) subsequence of 
$(u_k, \dot{u}_k, \ddot{u}_k)_k$ and  a function
\begin{subequations}
\label{diag2}
\begin{eqnarray}
\label{mu-zero}
&&\mu \in L^2(0,T; \VZ(0)^*) \cap \bigcap_{s\in D} L^2(s,T; \VZ(s)^*)  
\quad\text{ such that} \\
\label{via-diagonal}
&&(u_k, \dot{u}_k, \ddot{u}_k) \weakto (u,\dot u, \mu) \; \text{ in } \spyns \text{ for all } s 
\in D \cup\{0\} \text{ and every } n \in \N\,,\text{ and s.t.\ }\qquad\qquad\\
&&(u,\dot u,\mu)
\text{ comply with \eqref{weak-distributional}--\eqref{fundamental-lemma} for all }s\in D \cup\{0\}\,.
\end{eqnarray}
\end{subequations}
\item[{\bf 3.}] {\bf Brittle momentum balance in $\VZ(t)^*$ for a.a.\ $t\in(0,T)$: }\\
The function $\mu$ satisfies for almost all $t\in (0,T)$ 
\begin{subequations}
\label{finalbrittle}
\begin{eqnarray}
\label{target-subtle}
&&\mu(t) \in \VZ(t)^* 
\quad\text{and}\\
\label{entitles}
&&    \tfrac{\dot{u}(t+h)-\dot{u}(t)}{h} \weakto {\mu}(t) \text{ in } \VZ(t)^* \quad \text{as } h \downarrow 0,
  \end{eqnarray} 
hence $\mu(t) = \ddot{u}(t)$ in the sense of  \eqref{weak-ddot-u}. Moreover, 
the momentum balance \eqref{weak-momentum-brittle} holds, 
and $\text{for a.a.\ }t\in(h,T) \text{ for every $h\in (0,T)$}$ it is                         
  \begin{equation}
  \label{we-use-it-later}
  \langle\dot u(t)-\dot u(t-h), v \rangle_\Spu 
= 
\langle \int_{t-h}^t\ddot{u}(\tau)\,\mathrm{d}\tau,  \mathsf{v} \rangle_\Spu
\quad\text{for all } v  \in \Spu_z(t)\,.
\end{equation} 
\end{subequations}
 Moreover, $\lambda\in  L^2(0,T;\Spu^*)$ extracted in \eqref{convsddu}, cf.\ also 
\eqref{brittle-alternative}, satisfies relation \eqref{lambda-id}, i.e., 
\begin{equation}
\label{lambda-id2}
\lambda(t)=\ddot u(t)\quad\text{ in }\VZ(t)^*\;\text{ for a.a.\ }t\in(0,T)\,. 
\end{equation}
\end{compactenum}
\end{lemma}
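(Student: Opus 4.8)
The plan is to establish the three items of Lemma \ref{Limmombal} in order, the recurring mechanism being that, by the support convergence of Proposition \ref{SuppConv} combined with the time-monotonicity \eqref{monotone-inclusion}, the adhesive contact term in the weak momentum balance \eqref{momentum-k} is \emph{annihilated} by the recovery test functions living in the spaces $\Spu_z(\eps_n,s)$. Concretely, fixing $s\in[0,T)$: for each $n\in\N$ the support convergence provides $k_n\in\N$ with $\supp z_k(s)\subset\supp z(s)+\overline{B_{\eps_n}(0)}$ for $k\geq k_n$, whence, by \eqref{monotone-inclusion}, $\supp z_k(t)\subset\supp z(s)+\overline{B_{\eps_n}(0)}$ for all $t\in[s,T]$; consequently $\int_{\GC}kz_k(t)\JUMP{u_k(t)}\JUMP{v}\,\mathrm{d}\Surf=0$ for every $v\in\Spu_z(\eps_n,s)$, every $t\in(s,T)$ and every $k\geq k_n$. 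Testing \eqref{momentum-k} by such $v$ and comparing, the uniform bounds \eqref{unifbdu} yield $\|\varrho\ddot u_k\|_{(\spyn)^*}\leq C$ with $C$ independent of $k\geq k_n$, of $n$, and of $s$.

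For Item 1, I would first observe that for each fixed $k$ the triple $(u_k,\dot u_k,\ddot u_k)$ belongs to $\spyns$, since $u_k\in H^1(0,T;\Spu)$ and, by \eqref{kdepddu}, $\dot u_k\in H^1(0,T;\Spu^*)$, so that the difference-quotient identities defining $\spyns$ hold in $\Spu^*$ and a fortiori when tested by $\phi(t)\in\Spu_z(\eps_n,s)$. Then the above bound, the reflexivity of $\spyns$ (Proposition \ref{Propsrecsp}), and a diagonal extraction over $n\in\N$ provide a subsequence and a limit $(u,\dot u,\mu^s)$, the first two components being identified through \eqref{convsu} and $\mu^s$ arising as the common restriction of the weak limits of $(\ddot u_k)_k$ in $(\spyn)^*$; the bound being uniform in $n$, and $\bigcup_n\spyn$ being dense in $L^2(s,T;\VZ(s))$ by Proposition \ref{Propsrecsp}(3), $\mu^s$ extends to an element of $L^2(s,T;\VZ(s)^*)$. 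Passing to the limit in \eqref{momentum-k} tested by $\eta(t)\mathsf v$ with $\eta\in\mathrm{C}_0^\infty(s,T)$, $\mathsf v\in\Spu_z(\eps_n,s)$ (the adhesive term being absent), localizing via the fundamental lemma and using density gives \eqref{fundamental-lemma}; likewise, passing to the limit in the identities defining $\spyns$, then dividing by $h$ and invoking Lebesgue points, gives \eqref{weak-distributional} and \eqref{added-derivative}.

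Item 2 is then obtained by the routine diagonal procedure: enumerating $D$, iterating the extraction of Item 1 and taking the diagonal subsequence, one gets a single subsequence along which $(u_k,\dot u_k,\ddot u_k)\weakto(u,\dot u,\mu^s)$ in $\spyns$ for every $s\in D\cup\{0\}$ and $n\in\N$; for $s_1<s_2$ the limits $\mu^{s_1},\mu^{s_2}$ coincide on $(\max\{s_1,s_2\},T)$, being weak limits of the same sequence tested against functions supported there, so all the $\mu^s$ are restrictions of the single $\mu:=\mu^0\in L^2(0,T;\VZ(0)^*)$, which therefore lies in $L^2(s,T;\VZ(s)^*)$ for every $s\in D$ as well, giving \eqref{diag2}.

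Item 3 is the delicate step. The key point is that, by \eqref{fundamental-lemma}, for every $s\in D$ with $s<t$ the functional $\mu(t)=\mu^s(t)$ agrees on $\Spu_z(s)$ with $\mathsf v\mapsto\tfrac1\varrho\big(\langle\mathbf f(t),\mathsf v\rangle_\Spu-\int_{\Omega\backslash\GC}(\mathbb{C}e(u(t))+\mathbb{D}e(\dot u(t)))\colon e(\mathsf v)\,\mathrm{d}x\big)$, which is well-defined on all of $\Spu$ and defines a function $\bar\mu\in L^2(0,T;\Spu^*)$; hence $\mu(t)$ possesses the bounded extension $\bar\mu(t)|_{\VZ(t)}\in\VZ(t)^*$, and it remains to show this extension is the weak distributional second derivative \eqref{weak-ddot-u}. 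For this I would re-run the compactness of Item 1 \emph{with $s=t$}: using \eqref{monotone-inclusion} at time $t$, for each $n$ there is an index beyond which the adhesive term is killed by test functions in $\Spu_z(\eps_n,t)$, so the difference quotients $\tfrac1h(\dot u(t+h)-\dot u(t))$ converge weakly in $\VZ(t)^*$ to the corresponding limit $\mu^t(t)$ — a property of $u$ alone, hence compatible with the global subsequence of Item 2 — and integrating \eqref{fundamental-lemma} (for $s=t$) over $(t,t+h)$, dividing by $h$ and passing to the Lebesgue-point limit identifies $\mu^t(t)=\bar\mu(t)|_{\VZ(t)}$, which moreover restricts to $\mu(t)$ on $\VZ(0)\subset\Spu_z(s)$. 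This yields \eqref{target-subtle}--\eqref{we-use-it-later}, whence $\mu(t)=\ddot u(t)$ in the sense \eqref{weak-ddot-u} and the momentum balance \eqref{weak-momentum-brittle} (which is exactly the defining relation of $\bar\mu$ restricted to $\VZ(t)$); finally \eqref{lambda-id2} follows by comparing this relation with the limit equation \eqref{brittle-alternative} — obtained by passing to the limit in \eqref{momentum-k} tested against $\eta v$, $v\in\Spu$, using \eqref{convsddu} — and invoking $\partial_u\calJ_\infty(u(t),z(t))\subset\VZ(t)^\perp$, cf.\ \eqref{DuJinfty}. I expect the main obstacle to be the consistent bookkeeping in Item 3, namely transferring the $(t,T)$-information furnished by the recovery-space compactness into a pointwise-in-$t$ statement at $t$ via Lebesgue points, in a way compatible both with the single subsequence extracted in Item 2 and with the fine support properties of Lemma \ref{key-lemma}.
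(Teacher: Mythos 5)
Your treatment of Items 1 and 2 matches the paper's proof closely: the annihilation of the adhesive term via support convergence and time-monotonicity, the uniform $(\spyn)^*$-bound on $\ddot u_k$ obtained by comparison, the diagonal extraction over $n$ producing compatible limits $\mu^n_s$, the extension to $\mu_s\in L^2(s,T;\VZ(s)^*)$ by density of $\cup_n\spyn$, and the $k\to\infty$ passage in the integrated momentum balance followed by localization — all of this is the paper's argument. The subsequent diagonal over the countable dense set $D$, with the observation that $\mu^{s_1}$ and $\mu^{s_2}$ coincide on the overlap by the momentum balance, is likewise the paper's route.

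Item 3 is where your sketch develops a genuine gap, and it is exactly at the obstacle you flag at the end. The paper's proof of \eqref{target-subtle}--\eqref{we-use-it-later} rests on the density property recalled in \eqref{from-D-Lars} (i.e.\ \cite[Lemma 2.3]{DMLar11EWED}): for the monotonically increasing family $(\VZ(t))_{t\in[0,T]}$ of closed subspaces of the separable Hilbert space $\Spu$, there is an at most countable set $S\subset[0,T]$ such that $\VZ(t)=\overline{\cup_{s<t}\VZ(s)}$ for all $t\notin S$. This is what lets the paper define, for $t\notin S\cup N$, the extension $\tilde\mu(t)\in\VZ(t)^*$ in \eqref{subtle-3}--\eqref{temporary-def} as a \emph{limit of values already determined on the subspaces $\VZ(s_m)$}, and then transfer \eqref{weak-distributional} from test functions in $\cup_{s<t}\VZ(s)$ to test functions in the whole $\VZ(t)$, yielding \eqref{we-use-it-later} and, by adapting \cite[Lemma 2.2]{DMLar11EWED}, also \eqref{entitles}. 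You do not invoke this density fact; you correctly note that $\bar\mu(t)|_{\VZ(t)}$ is a bounded extension of $\mu(t)$, but uniqueness of that extension (hence the identity $\mu(t)=\bar\mu(t)|_{\VZ(t)}$ as an element of $\VZ(t)^*$, not merely as one extension among many) requires precisely that $\cup_{s\in D, s<t}\VZ(s)$ be dense in $\VZ(t)$, i.e.\ $t\notin S$.

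Your proposed substitute — re-running Item 1 at $s=t$ — produces only a.e.-in-$\tau$ information on $(t,T)$: the momentum balance and the difference-quotient relation \eqref{added-derivative} hold for a.a.\ $\tau\in(t,T)$, and both $\mu^t$ and $\dot u$ are Bochner classes, so nothing is intrinsically defined at the left endpoint $\tau=t$ without additional regularity. In particular, the step ``integrating \eqref{fundamental-lemma} for $s=t$ over $(t,t+h)$, dividing by $h$ and passing to the Lebesgue-point limit'' identifies $\lim_{h\downarrow0}\frac1h\int_t^{t+h}\mu^t(\tau)\,\mathrm{d}\tau$ with $\bar\mu(t)|_{\VZ(t)}$, but it does not yield that $\dot u(t+h)-\dot u(t)=\int_t^{t+h}\mu^t(\tau)\,\mathrm{d}\tau$ in $\VZ(t)^*$: the analogue of \eqref{weak-distributional} at $s=t$ holds for a.a.\ $\sigma\in(t+h,T)$, and setting $\sigma=t+h$ would require a pointwise representative for $\dot u$ in $\VZ(t)^*$, i.e.\ continuity that at this stage of the argument is not yet available ($\dot u\in\mathrm{C}^0_{\rm weak}([0,T];\Spw)$ is Corollary~\ref{WeakConti}, proved \emph{after} \eqref{better-H2}, which in turn uses \eqref{we-use-it-later}). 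Also note that the momentum balance tested by a fixed $v\in\VZ(t)$ cannot simply be integrated over $\tau\in(t-h,t)$, because $\VZ(\tau)\subset\VZ(t)$ for $\tau<t$ (not the reverse), so $v$ need not be admissible at times $\tau<t$; circumventing this is again what \eqref{temporary-def} accomplishes. So the missing ingredient is \eqref{from-D-Lars} together with the \cite{DMLar11EWED}-style argument it enables.

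Your concluding derivation of \eqref{lambda-id2} by comparing \eqref{weak-mom-brittle} with \eqref{brittle-alternative} on $\VZ(t)$ and using $\partial_u\calJ_\infty(u(t),z(t))\subset\VZ(t)^\perp$ is correct and is what the paper does.
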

\noindent
An inspection of the proof of Lemma \ref{Limmombal} will reveal that, 
in fact, \eqref{weak-distributional}\,\&\,\eqref{we-use-it-later} also hold 
with the forward differences $\dot{u}(t+h) - \dot{u}(t)$. 
\par
Before carrying out the  details, we briefly summarize the 
{\bf main ideas of the proof}: 
\begin{compactitem}
\item[{\bf Ad 1.}]{\bf Compactness \& limit balance in $\VZ(s)^*$ for every $s\in[0,T)$ fixed: }
Using the previously introduced recovery spaces $\spyn$ and $\spyns,$ 
for arbitrary fixed $s\in[0,T)$ (which is the starting point of the time-intervals $(s,T)$ 
taken into account in  $\spyn$ and $\spyns$), and $n\in\N,$ we extract a convergent ($s$-dependent) 
subsequence $(u_k,\dot u_k,\ddot u_k)_k\subset \spyns$ 
and a limit triple $(u,\dot u,\mu_n^s)\in\spyns$. Thanks to the definition \eqref{recsp} 
of $\spyns$ we are entitled to say that 
$\mu_n^s=\ddot u$ in $(\spyn)^*$. This allows us to pass to the limit in the momentum balance 
integrated over $(s,T)$ and to obtain a limit balance in $(\spyn)^*$. 
By a diagonal sequence argument over $n\in\N$ we can even extract a 
subsequence and a limit converging for all $n\in\N$ to find 
\eqref{enh-conv-triple} \& \eqref{weak-distributional}. 
Due to the density result Prop.\ \ref{Propsrecsp}, Item 3., we can then pass $n\to\infty$ to find \eqref{added-derivative} 
and the limit momentum to 
hold for a.a.\ $t\in(s,T)$ with test functions $v\in\VZ(s),$ i.e., \eqref{fundamental-lemma}. 
\item[{\bf Ad 2.}]{\bf Compactness independent of $s\in[0,T)$: } 
The subsequences 
and their limit extracted by a diagonal procedure over $n\in\N$ in Item 1, depend on $s\in[0,T)$. 
By arguing on the countable dense set $D\subset[0,T]$ 
we can essentially repeat the demonstration of Item 1 in a further diagonal 
procedure over the elements of 
$D\cup\{0\}$ to conclude statements \eqref{diag2}.    
\item[{\bf Ad 3.}]{\bf Brittle momentum balance in $\VZ(t)^*$ for a.a.\ $t\in(0,T)$: }In order to 
show that $\mu(t)\in\VZ(t)^*$ and to extend the brittle 
momentum balance to hold in $\VZ(t)^*$ we adapt the arguments of \cite[Lemma 2.2]{DMLar11EWED}. 
The basis for this is a further density result \cite[Lemma 2.3]{DMLar11EWED}, 
which in our setting guarantees  that 
\begin{equation}
\label{from-D-Lars}
\begin{split}
&\text{\emph{for the monotonically increasing sequence of closed linear subspaces 
$(\VZ(t))_{t\in[0,T]}$ of 
the}}\\
&\text{\emph{separable Hilbert space $\Spu$ there exists an at most countable set 
$S\subset[0,T]$ such that:}}\\
&\hspace*{4cm}
\VZ(t)=\overline{\cup_{s<t}\VZ(s)}\quad\text{for all }t\in[0,T]\backslash S\,. 
\end{split}
\end{equation}
In this way, we can approximate a test function $\phi\in\VZ(t)$ for any $t\in(0,T)$ out of a set of zero Lebesgue measure 
by a sequence $(\phi_m)_m\in\cup_m\VZ(s_m)$ with $s_m\nearrow t$ and $(s_m)_m\subset D$. 
Hence, \eqref{diag2} holds along $(s_m)_m$ and by approximation we may ultimately infer statements \eqref{finalbrittle}.  
 Finally, relation \eqref{lambda-id2} ensues by direct comparison of the brittle momentum balance 
\eqref{weak-mom-brittle} with \eqref{brittle-alternative}. 
\end{compactitem}
%
\subsubsection*{Proof of Lemma \ref{Limmombal}:}
%
{\bf Ad 1. Compactness \& brittle momentum balance in $\VZ(s)^*$ for every $s\in[0,T]$ fixed: }
Observe that, by the  very definition \eqref{recsp} of $\spyn$, 
  for every $v\in \spyn$ and almost all $t\in (s,T)$ there holds $\JUMP{v(t)} =0$ on $\supp(z(s)) + \overline{B_{\eps_n}(0)}$, and thus 
on $\supp(z(t)) + \overline{B_{\eps_n}(0)}$ 
since $\supp(z(t))  \subset  \supp(z(s))$ (cf.\ \eqref{monotonicity-Vz-later-used}).  Moreover, in dependence of $s\in[0,T]$ and 
$n\in\N$ we find, thanks to support convergence \eqref{suppconv} an index $k(s,n)$, such that for all $k\geq k(s,n)$ it is 
$\supp z_k(s) \subset \supp(z(s)) + B_{\eps_n}(0)$ and thus, by \eqref{monotone-inclusion}, also 
$\supp z_k(t) \subset \supp(z(s)) + B_{\eps_n}(0)$.   
All in all, we conclude that 
\begin{equation}
\label{null-duality-pairing}
\begin{split}
&\forall\,s\in[0,T),\forall\,n\in\N\;\exists\,k(s,n)\;\forall\,k\geq k(s,n):\;\\
&\hspace*{4cm}
\langle\partial_u\calJ_k(z_k(t),u_k(t)),v(t)\rangle_{\Spu}=0 \quad  
\text{for all }v\in \spyn,\,\foraa\, t \in (s,T).  
\end{split}
\end{equation}
Therefore, by comparison 
in the $k$-momentum balance, we can deduce the following uniform bounds 
for the inertial terms,  which are independent of $k\in\N$: 
\begin{equation} 
 \exists\,C>0\;\forall\,s\in[0,T) \forall\,n\in\N\;\exists\,k(s,n)\;\forall\,k\geq k(s,n):\;
\quad 
\|\ddot u_k\|_{(\spyn)^*}  
\leq C\,.
\end{equation} 
In addition, from the uniform bound \eqref{unifbdu}, 
i.e.\ $\|u_k\|_{H^1(0,T;\Spu)}\leq C,$ we also deduce that  
$\|u_k\|_{(\spyn)^*}\leq C$ as well as $\|\dot u_k\|_{(\spyn)^*}\leq C$, 
since $L^2(s,T;\Spu) \subset (\spyn)^*$ continuously.  
 In particular, we observe for all 
$n\in\N$ and all $v\in\spyn$ that
\begin{eqnarray*}
\int_h^T\Big|\Big\langle\tfrac{u_k(t)-u_k(t-h)}{h}-\dot u_k(t),v(t)\Big\rangle_\Spu\Big|\,\mathrm{d}t
&\leq&\int_h^T\Big\|\tfrac{u_k(t)-u_k(t-h)}{h}-\dot u_k(t)\Big\|_{\Spu^*}\|v(t)\|_\Spu\,\mathrm{d}t\\ 
&\leq & C\int_h^T\Big\|\tfrac{u_k(t)-u_k(t-h)}{h}-\dot u_k(t)\Big\|_{\Spu}\|v(t)\|_\Spu\,\mathrm{d}t\\
&\leq&\|v\|_{L^2(0,T;\Spu)}\left( \int_{h}^T\Big\|\tfrac{u_k(t)-u_k(t-h)}{h}-\dot u_k(t)\Big\|_\Spu^2
\,\mathrm{d}t \right)^{1/2} \to0
\end{eqnarray*}
as $h\to0$ since $u_k\in H^1(0,T;\Spu)$. Hence, $\dot u_k$ indeed is the partial time 
derivative of $u_k$ also in 
the space $(\spyn)^*$. In the same way, 
 taking into account that $u_k\in H^2(0,T;\Spu^*)$ and that 
$L^2(s,T;\Spu^*) \subset (\spyn)^*$ continuously, 
we argue that $\ddot u_k$ 
is the partial time derivative of 
$\dot u_k$ in $(\spyn)^*$. 
Due to these observations we deduce the following uniform bounds
\begin{equation}
\label{multibd}
 \exists\,C>0\;\forall\,s\in[0,T] \ \forall\,n\in\N\;\exists\,k(s,n)\;\forall\,k\geq k(s,n):\; 
 \quad \|u_k\|_{H^1(0,T;\Spu)}+
 \|u_k\|_{\spyns}\leq C\,, 
\end{equation}
where $\spyns$ is defined in \eqref{recsp}. 
\par
Again, keep $s\in[0,T]$ fixed. 
By \eqref{multibd} and by the reflexivity of the spaces granted by Prop.\ \ref{Propsrecsp}, 
for every $n\in\N$ we can extract a convergent subsequence. 
But to be more precise, here we extract this subsequence by a diagonal procedure: 
Starting with $n=1,$ from the corresponding bound \eqref{multibd}, we   
find a (not relabeled, $s$-dependent) subsequence such that  
\begin{equation}
\label{convddu}
u_k\rightharpoonup u^1\text{ in }H^1(0,T;\Spu),\;
(u_k,\dot u_k,\ddot u_k)\rightharpoonup (u^1,\dot u^1,\mu^1_s)\text{ in }
\widetilde{\mathbf{Y}}_1^{s}\,. 
\end{equation}
Observe that,  in view of convergence \eqref{convsu}, 
taking into account the continuous embedding $L^2(s,T;\Spu) \subset (\spyn)^*$, 
  we can identify the 
limits $u^1$  and $\dot u^1$,  
i.e.\ 
we have $u^1=u|_{(s,T)}$ and $\dot{u}^1=\dot{u}|_{(s,T)}$. 
Now, for $n=2$ the above subsequence 
satisfies 
the corresponding bound \eqref{multibd}, 
so that we can extract a further (not relabeled, $s$-dependent) subsequence satisfying 
\[
(u_k,\dot u_k,\ddot u_k)\rightharpoonup (u,\dot u,\mu^2_s)\text{ in }
\widetilde{\mathbf{Y}}_2^{s}\,. 
\]
Due to the monotonicity property $\VZ(\eps_1, s) \subset \VZ(\eps_2, s)$ 
 for $\eps_2<\eps_1$, 
it holds $L^2(s,T; \VZ(\eps_1, s) ) \subset L^2(s,T; \VZ(\eps_2, s))$. Hence 
we find that  the restriction of the element  $\mu^2_s \in L^2(s,T; \VZ(\eps_2, s))^*$ to 
$\mathbf{Y}_{1}^s= L^2(s,T; \VZ(\eps_1, s))$ coincides with $\mu^1_s$. 
Proceeding this way, we obtain a (not relabeled, $s$-dependent) sequence $(u_k)_k$ and 
a sequence of limits $(u,\dot u,\mu^n_s)_n$ such that, for every $n\in\N$: 
\begin{equation}
\label{convddun}
\begin{gathered}
u_k\rightharpoonup u\text{ in }H^1(0,T;\Spu),\;
(u_k,\dot u_k,\ddot u_k)\rightharpoonup (u,\dot u,\mu^n_s)\text{ in }
\widetilde{\mathbf{Y}}_n^{s}  \text{ as } k \to \infty  \text{ and }
\\
\mu^n_s|_{\mathbf{Y}_{n-1}^s} = \mu^{n-1}_s  \quad \text{for every } n \in \N\,.
  \end{gathered}
\end{equation}
For each $n\in\N,$ due to the weak convergence of the sequence in 
$\spyns,$ we 
also have that 
\begin{equation}
\label{propmun}
\int_{s+h}^T\langle \dot u(t)-\dot u(t-h)
-\int_{t-h}^t 
  \mu^n_s(\tau)\,\mathrm{d}\tau, \phi(t) \rangle_\Spu\,\mathrm{d}t=0 
\text{ for all }\phi\in 
\spyn
\text{ for all }h\in(0,T-s)\,. 
\end{equation}
Let us now 
extend the functions $(\mu_s^n)_n$ to an element $\mu_s \in L^2(s,T;\Spu_z(s)^*) $ by setting
\begin{equation}
\label{muext}
\langle \mu_s   ,\phi\rangle_{L^2(s,T;\Spu_z(s))}:=\left\{
\begin{array}{ll}
\langle  \mu_s^n,  \phi\rangle_{\spyn}&\text{if }\phi\in \spyn \text{ for some } n \in \N,\\
0&\text{if }\phi\in L^2(s,T;\Spu_z(s))\backslash \cup_{n\in \N} \spyn.
\end{array}
\right. 
\end{equation}
Observe that $\mu_s$ is well-defined. Indeed, suppose that $\phi \in \mathbf{Y}_{n_1}^s \cap  \mathbf{Y}_{n_2}^s $ 
for some $n_1<n_2 \in \N$. By  the monotonicity property \eqref{monot-vepsn}, 
there holds $\mathbf{Y}_{n_1}^s \subset  \mathbf{Y}_{n_2}^s$ and, thanks to \eqref{convddun}, 
$\langle  \mu_s^{n_2},  \phi\rangle_{\mathbf{Y}_{n_2}^s} = \langle  \mu_s^{n_1},  \phi\rangle_{\mathbf{Y}_{n_1}^s} $. 
Observe that 
$\|\mu_s\|_{L^2(s,T;\Spu_z(s)^*)} \leq \sup_{n\in \N}\|\mu^n_s\|_{(\spyn)^*}\leq C$, 
and \eqref{enh-conv-triple} follows from   \eqref{convddun}.  
\par
 Using the density of $\cup_{n} \spyn$ in $L^2(s,T;\Spu_z(s))$, we now show \eqref{weak-distributional}. 
For this, let $\phi\in L^2(s,T;\Spu_z(s))$. 
Using the construction of the recovery sequence 
\eqref{recfct} we find $\phi_n(\tau)= r(\eps_n,  \supp(z(s)),\phi)\in\mathbf{Y}_n^{s}$ 
with the property $\phi_n\to\phi$ strongly in $L^2(s,T;\Spu_z(s))$. 
Using weak-strong convergence arguments and the dominated convergence theorem we deduce that 
\begin{eqnarray*}
\int_{s+h}^T\langle\int_{t-h}^t  \mu_s^n(\tau)\mathrm{d}\tau,\phi_n(t)\rangle_\Spu\mathrm{d}t
\to \int_{s+h}^T\langle\int_{t-h}^t\mu_s(\tau)\mathrm{d}\tau,\phi(t)\rangle_\Spu\mathrm{d}t\,,
\end{eqnarray*}
essentially arguing along the lines of the proof of  Item 4 in  Prop.\ \ref{Propsrecsp}.   
Hence, from \eqref{propmun}, we deduce that 
\begin{equation}
\label{propmu-limit}
\int_{s+h}^T\langle \dot u(t)-\dot u(t-h)
-\int_{t-h}^t {\mu}_s(\tau)\,\mathrm{d}\tau, \phi(t) \rangle_\Spu\,\mathrm{d}t=0
\text{ for all }\phi\in  L^2(s,T;\Spu_z(s))\text{ and for all }h\in(0,T-s)\,. 
\end{equation} 
Choosing $\phi\in L^2(s,T;\Spu_z(s))$ such that $\phi(t,x)=\eta(t)\mathsf{v}(x)$ with 
$\eta\in C_0^\infty(s,T)$ and $\mathsf{v}  \in\Spu_z(s)$ the fundamental lemma of the 
Calculus of Variations yields  \eqref{weak-distributional}.  
From this, taking the limit as $h\to 0$,  we get  \eqref{added-derivative},     cf.\ \eqref{weak-ddot-u}.
   \par
In order to prove \eqref{fundamental-lemma}, 
  we shall pass to the limit as $k\to\infty$
in the $k$-momentum balance, with test functions 
$ v\in\mathbf{Y}_n^{s}$, for  $n \in \N$ \emph{fixed}. To this aim, 
 we test \eqref{momentum-k}   by $v\in \spyn$ and integrate over $(s,T)$. 
 Convergences  \eqref{enh-conv-triple} then allow us to pass to the limit 
$k\to\infty$  for $n\in \N$ fixed:  
\begin{eqnarray}
&&\int_s^T \big(\langle\varrho\ddot u_k,  v  \rangle_\Spu+\int_{\Omega\backslash\GC}
\big(\mathbb{C}e(u_k)+\mathbb{D}e(\dot u_k)\big):e(v) \,\mathrm{d}x \big)\,\mathrm{d}t=\int_s^T\langle  \mathbf{f}, v  \rangle_\Spu  \dd t
\nonumber
 \\
&&\hspace*{2cm}\downarrow
\nonumber
\\
&&
\label{added-13Apr16}
\int_s^T \big(\langle\varrho  \mu_s ,v\rangle_\Spu+\int_{\Omega\backslash\GC}
\big(\mathbb{C}e(u)+\mathbb{D}e(\dot u)\big):e(v)\,\mathrm{d}x\big)\,\mathrm{d}t
= \int_s^T\langle  \mathbf{f}, v  \rangle_\Spu  \dd t \quad \text{for all } 
v\in\spyn\,.\qquad
\end{eqnarray} 
\par
We now obtain the (integrated) brittle  momentum balance, first with test functions in 
$L^2(s,T;\Spu_z(s))$. Indeed, let 
  $v\in L^2(s,T;\VZ(s))$ be fixed, and let $(v_n)_n \subset \spyn$ be the corresponding 
recovery sequence  given in \eqref{recfct}. 
Taking into account that  $v_n\to v$ in $L^2(0,T;H^1(\Omega\backslash\GC,\R^d))$ as $n\to\infty$, 
we pass to the limit with $n$ in \eqref{added-13Apr16}  and finally obtain 
\[
\int_s^T\big(\langle\varrho \mu_s,v\rangle_\Spu+\int_{\Omega\backslash\GC}
\big(\mathbb{C}e(u)+\mathbb{D}e(\dot u)\big):e(v)\,\mathrm{d}x\big)\,\mathrm{d}t=\int_s^T \langle  \mathbf{f},v\rangle_\Spu \dd t 
\quad\text{ for every }v\in L^2(s,T;\VZ(s))\,.
\]
Again, choosing test functions $v$ of the form $v(t,x) = \eta(t) \mathsf{v}(x)$ with 
$\eta\in C_0^\infty(s,T)$ and $ \mathsf{v}  \in\Spu_z(s)$
we obtain \eqref{fundamental-lemma}.  This concludes the proof of Lemma \ref{Limmombal}, Item 1.
\par
\noindent
{\bf Ad 2.\ Compactness independent of $s\in[0,T]$: }
For the countable, dense set $D\subset[0,T],$ let us order the elements of $D$ 
in an increasing sequence $(s_n)_n$ with $s_n<s_{n+1}$ for all $n\in\N$. 
First of all, we apply the previously proven Item 1 of Lemma \ref{Limmombal} 
for $s=0$ and find a not relabeled 
subsequence $(u_k,\dot{u}_k, \ddot{u}_k)_k$,  and 
$\mu:= \mu_0 \in L^2(0,T; \VZ(0)^*)$ such that \eqref{enh-conv-triple}--\eqref{fundamental-lemma}  
hold for $\mu$ and $s=0$. We now apply it for $s=s_1$, and find a further subsequence, 
and $\mu_{s_1}\in  L^2(s_1,T; \VZ(s_1)^*)$, fulfilling \eqref{enh-conv-triple}--\eqref{fundamental-lemma}. 
Observe that, since $s_1>0$, there holds $\VZ(0) \subset \VZ(s_1)$. Therefore, from \eqref{fundamental-lemma}  
at $s=0$ and at $s=s_1$ we read that 
\[
\begin{aligned}
 \pairing{}{\VZ(s_1)}{ \varrho  \mu_{s_1}(t)}{\mathsf{v}}   = \langle \mathbf{f}(t), \mathsf{v} \rangle_\Spu  
& - \int_{\Omega\backslash\GC}
\big(\mathbb{C}e(u(t))+\mathbb{D}e(\dot u(t))\big):e(\mathsf{v})\,\mathrm{d}x
  =  \pairing{}{\VZ(0)}{\mu_0(t)}{\mathsf{v}}
 \\
 & 
\quad\text{ for every } \mathsf{v}\in \VZ(0) \quad \foraa\, t \in (s_1,T)\,,
\end{aligned}
\]
whence $\mu_{s_1} (t)= \mu_0(t)=\mu(t)$ in $\VZ(0)^*$ for almost all $t\in (s_1,T)$. 
With a diagonal procedure we conclude the statement of Lemma \ref{Limmombal}, Item 2. 
\par
\noindent
{\bf Ad 3.\ Brittle momentum balance in $\VZ(t)$ for a.a.\ $t\in(0,T)$: }
From Item 1 of Lemma \ref{Limmombal} it follows that   
for every $s\in D \cup \{0\}$ there exists a set $N_s$ with zero Lebesgue measure, such that for all $t\in [s,T]\setminus N_s$  there holds
\begin{equation}
\label{subtle-argum-1}
\mu(t) \in \VZ(s)^*,\,\tfrac{u(t+h)-u(t)}{h} \weakto \mu(t) \text{ in } 
\VZ(s)^* \text{ as $h\downarrow 0$}, \  \  \mu(t) \text{ fulfills \eqref{fundamental-lemma}  
in $\VZ(s)^*$.}
\end{equation}
In order to show 
\eqref{target-subtle}, we shall now adapt 
an argument from the proof of \cite[Lemma 2.2]{DMLar11EWED}. Indeed, set 
 $N:= \cup_{s\in D\cup\{0\}}N_s$, with $N_s$ the negligible set out of which \eqref{subtle-argum-1} holds. 
 Then, $N$ is also negligible and for every $t\in [0,T]\setminus N$  properties \eqref{subtle-argum-1}
hold at every $s\in D\cup\{0\}$ with $s<t$. 
Now, 
to the monotonically increasing family of  closed sets $(\VZ(t))_{t\in [0,T]}$
we apply \eqref{from-D-Lars}. 
Hence, let us fix $t\in [0,T]\setminus (S {\cup} N)$, and let us pick an increasing sequence $(s_m)_m \subset D$ with $s_m \uparrow t$.  Due to
\eqref{from-D-Lars}, 
for every $\phi \in \VZ(t)$ there exists 
 a sequence $(\phi_m)_m$, with $\phi_m \in \VZ(s_m)$ for every $m\in \N$, such that $\phi_m \to \phi$ in $\VZ(t)$. 
 Observe that, in particular,  $\mu(t)$ fulfills \eqref{fundamental-lemma}  
with the test functions $\phi_m$ for all $m\in \N$. Therefore, 
 \begin{equation}
 \label{subtle-3}
 \begin{aligned}
 \exists\, \lim_{m\to\infty}\pairing{}{\VZ(s_m)}{\varrho \mu(t)}{\phi_m}  & =  
\lim_{m\to\infty} \Big(  \langle \mathbf{f}(t), \phi_m \rangle_\Spu   - \int_{\Omega\backslash\GC}
\big(\mathbb{C}e(u(t))+\mathbb{D}e(\dot u(t))\big):e(\phi_m)\,\mathrm{d}x\Big)
\\
& 
 = \langle \mathbf{f}(t), \phi \rangle_\Spu   - \int_{\Omega\backslash\GC}
\big(\mathbb{C}e(u(t))+\mathbb{D}e(\dot u(t))\big):e(\phi)\,\mathrm{d}x\,.
 \end{aligned}
 \end{equation}
 Since $\phi \in \VZ(t)$ is arbitrary, the right-hand side of \eqref{subtle-3} defines an element in $\VZ(t)^*$, which, for the time being,  we denote by  $ \tilde{\mu}(t)$. Observe that, in fact
 \begin{equation}\label{temporary-def}
\pairing{}{\VZ(t)}{\varrho \tilde{\mu}(t)}{\phi} =  \lim_{m\to\infty}\pairing{}{\VZ(s_m)}{ \varrho \ddot{u} (t)}{\phi_m}   
\quad \text{for \emph{every}  $(\phi_m)_m \subset \cup_{s<t} \VZ(s)$ with $\phi_m\to\phi$ in $\VZ(t)$,} 
 \end{equation}
 whence
 \[ \pairing{}{\VZ(t)}{\varrho\tilde{\mu}(t) }{\phi}   =  
 \pairing{}{\VZ(s)}{ \varrho \mu(t)}{\phi}  \qquad \text{for all } \phi\in \VZ(s) \text{ and all } s<t,
 \]
 choosing the constant sequence $\phi_m \equiv \phi$ in \eqref{temporary-def}. 
  Repeating the very same argument as in the proof of 
  \cite[Lemma 2.2]{DMLar11EWED}, we may in fact check that, for $t\in [0,T]\setminus (S {\cup} N)$ fixed, 
  \eqref{entitles} holds, 
 which ultimately entitles us to 
denote  $\mu(t)$  by $\ddot{u}(t)$, cf.\ \eqref{weak-ddot-u}. 
   Clearly, \eqref{subtle-3} yields  that $\ddot{u}(t)$ satisfies the brittle momentum balance, with test functions in
$\VZ(t)$, for all $t\in [0,T]\setminus (S {\cup} N)$. This gives
 \eqref{weak-momentum-brittle}.  Furthermore, again using \eqref{temporary-def} we may extend 
 \eqref{weak-distributional}
 to test functions $\mathsf{v}$  in $\Spu_z(t)$, namely we conclude \eqref{we-use-it-later}    for every $h\in (0,T)$. 
   \par
A comparison argument in \eqref{weak-momentum-brittle}, taking into account that $u\in H^1(0,T;\Spu)$ and that 
$\mathbf{f} \in L^2(0,T;\Spu^*)$, 
shows that  the map 
$t \mapsto 
\sup_{v \in \VZ(t)} |\pairing{}{\VZ(t)}{\ddot{u} (t)}{v}| \doteq \| \mu(t)\|_{\VZ(t)^*}$ is in $L^2(0,T)$, whence  
$
\ddot{u}\in L^2(0,T;\VZ^*). 
$
Thus, $u\in H^2_{\#}(0,T;\VZ^*),$ cf.\ \eqref{Sobolev-VZ}. 
\par 
 To find relation \eqref{lambda-id2} at time $t\in(0,T)$ out of a negligible set, 
we test both \eqref{weak-mom-brittle} and \eqref{brittle-alternative} with arbitrary $v\in\VZ(t)$. 
Subtracting the two equations from each other yields $\langle \varrho  \ddot u(t)-\lambda(t),v\rangle_{\VZ(t)}=0$.  
This concludes the proof of Lemma \ref{Limmombal}, Item 3.
\hfill$\square$
\vspace*{1mm} 
\par
By exploiting the validity of the brittle momentum balance \eqref{weak-mom-brittle} and relation \eqref{entitles} 
we now deduce the  {\bf 3.\ additional regularity \eqref{better-H2}}  of the limit $\ddot u$.      
\begin{lemma}[Regularity \eqref{better-H2} of the limit $\ddot u$] 
\label{Lbetter-H2}
There holds 
\begin{equation}
\label{diff-quotients-strong}
\tfrac{\dot{u}(\cdot+h) - \dot{u}(\cdot)}{h} \to \ddot{u} \text{ strongly in } L^2(0,T;\VZ(0)^*),
\end{equation}
{\em therefore $u \in H^2(0,T; \VZ(0)^*)$.}
\end{lemma}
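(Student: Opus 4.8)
The assertion will be obtained as an essentially immediate corollary of Lemma \ref{Limmombal}, Item 3, by rewriting the (forward) difference quotients of $\dot u$ as averages of $\ddot u$ over shrinking intervals. The first step is to record that $\ddot u$ is a genuine element of $L^2(0,T;\VZ(0)^*)$: indeed, by Lemma \ref{Limmombal}, Item 3, we have $u\in H^2_{\#}(0,T;\VZ^*)$, while the monotonicity $\supp z(t)\subset\supp z(0)$ (cf.\ \eqref{monotonicity-Vz-later-used}) yields $\VZ(0)\subset\VZ(t)$ for all $t\in[0,T]$, hence $\VZ(t)^*\subset\VZ(0)^*$ with $\|\xi\|_{\VZ(0)^*}\leq\|\xi\|_{\VZ(t)^*}$ (cf.\ \eqref{agreed?}); thus $\int_0^T\|\ddot u(t)\|_{\VZ(0)^*}^2\,\mathrm{d}t\leq\int_0^T\|\ddot u(t)\|_{\VZ(t)^*}^2\,\mathrm{d}t<\infty$.

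The second step is the key identification. I would use relation \eqref{we-use-it-later} — in the forward-difference form pointed out in the remark following Lemma \ref{Limmombal} — tested against functions $v\in\VZ(0)$. Since $\VZ(0)\subset\Spu_z(t)$ for every $t$, for every $h\in(0,T)$ and almost every $t\in(0,T-h)$ this gives
\[
\langle\dot u(t+h)-\dot u(t),v\rangle_\Spu=\big\langle{\textstyle\int_t^{t+h}}\ddot u(\tau)\,\mathrm{d}\tau,\,v\big\rangle_\Spu\qquad\text{for all }v\in\VZ(0),
\]
where the Bochner integral on the right-hand side is well defined in $\VZ(0)^*$ because $\tau\mapsto\ddot u(\tau)$ is a strongly measurable $\VZ(0)^*$-valued map with $\tau\mapsto\|\ddot u(\tau)\|_{\VZ(0)^*}\in L^2(0,T)\subset L^1(0,T)$ by the first step. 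Consequently, for a.a.\ $t\in(0,T-h)$,
\[
\frac{\dot u(t+h)-\dot u(t)}{h}=\frac1h\int_t^{t+h}\ddot u(\tau)\,\mathrm{d}\tau\qquad\text{in }\VZ(0)^*.
\]

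The third step is then routine: invoking the standard averaging lemma in Bochner spaces, i.e.\ extending $\ddot u$ by zero outside $(0,T)$ and combining the continuity of translations in $L^2(\R;\VZ(0)^*)$ with Jensen's inequality,
\[
\Big\|\frac1h\int_{\cdot}^{\cdot+h}\ddot u(\tau)\,\mathrm{d}\tau-\ddot u\Big\|_{L^2(0,T;\VZ(0)^*)}\leq\sup_{0<\sigma<h}\|\ddot u(\cdot+\sigma)-\ddot u\|_{L^2(\R;\VZ(0)^*)}\longrightarrow 0\quad\text{as }h\downarrow 0.
\]
Since the average on the left agrees on $(0,T-h)$ with the difference quotient of $\dot u$, this yields \eqref{diff-quotients-strong}, and therefore $\dot u\in W^{1,2}(0,T;\VZ(0)^*)$ with $\ddot u$ as its strong time-derivative, i.e.\ $u\in H^2(0,T;\VZ(0)^*)$.

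The only delicate point — and hence the (mild) main obstacle — is the passage in the second step from relation \eqref{we-use-it-later}, which a priori lives in $\Spu^*$ tested against the $t$-dependent space $\Spu_z(t)$, to an honest identity in the \emph{fixed} dual space $\VZ(0)^*$ with a well-defined Bochner integral; this rests precisely on the inclusion $\VZ(0)\subset\Spu_z(t)$ for every $t$ and on $\ddot u\in L^2(0,T;\VZ(0)^*)$, both secured by the first step and by Lemma \ref{Limmombal}. Everything else is standard.
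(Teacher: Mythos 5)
Your proof is correct, and it takes a genuinely cleaner route than the paper's. The paper establishes that the difference quotients form a Cauchy sequence in $L^2(0,T;\VZ(0)^*)$ by first applying \eqref{we-use-it-later} (as you do) and then, as a second step, substituting the momentum balance \eqref{weak-momentum-brittle} to re-express $\tfrac1{h_n}\int_t^{t+h_n}\ddot u\,d\tau$ in terms of $\mathbf f$ and of the viscous and elastic stresses, exploiting $\mathbf f\in\mathrm{C}^1([0,T];\Spu^*)$ and $u\in H^1(0,T;\Spu)$; it then extracts a limit $w\in L^2(0,T;\VZ(0)^*)$ and identifies $w=\ddot u$ a.e.\ via \eqref{entitles}. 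You skip both the detour through the momentum balance and the identification step: once you observe that $\ddot u$ is itself an element of $L^2(0,T;\VZ(0)^*)$ (as you note, this is essentially built into the definition \eqref{L2VZ-dual} of $L^2(0,T;\VZ^*)$ as a subspace of $L^2(0,T;\Spu_z(0)^*)$, so the explicit monotonicity estimate you give is more than is needed), the forward version of \eqref{we-use-it-later} tested against $v\in\VZ(0)\subset\VZ(t)$ identifies the difference quotient with the Steklov average of $\ddot u$ in $\VZ(0)^*$, and the standard averaging lemma does the rest. This is more economical and makes it transparent that the only structural input is the difference-quotient identity \eqref{we-use-it-later} together with the integrability of $\ddot u$, not the specific form of the limit PDE. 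The one point worth flagging explicitly, which affects both proofs equally, is that $\tfrac{\dot u(\cdot+h)-\dot u(\cdot)}h$ is only defined on $(0,T-h)$, so the convergence statement should be read on $(0,T-\delta)$ for each $\delta>0$ (or with a suitable zero-extension convention); this is harmless for the conclusion $u\in H^2(0,T;\VZ(0)^*)$, which you correctly infer from the difference-quotient convergence together with $\ddot u\in L^2(0,T;\VZ(0)^*)$.
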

\begin{proof}
In order to show \eqref{diff-quotients-strong}, we will check that 
for every sequence $(h_n)_n$ with $h_n \to 0$
\begin{equation}
\label{Cauchy-sequence}
\left(\tfrac{\dot{u}(\cdot+h_n) - \dot{u}(\cdot)}{h_n} \right)_{n\in\N} \text{ is a Cauchy sequence in } L^2(0,T;\VZ(0)^*). 
\end{equation}
To this aim, we observe that for almost all $t\in (0,T)$ and all $v\in \VZ(0)$
\[
\begin{aligned}
\pairing{}{\VZ(0)}{\tfrac{\dot{u}(\cdot+h_n) - \dot{u}(\cdot)}{h_n}}{v}   
& \stackrel{(1)}{=} \tfrac1{h_n} \pairing{}{\VZ(0)}{\int_t^{t+h_n}\ddot{u}(\tau) \dd \tau}{v} 
\\
& 
\begin{aligned}
 \stackrel{(2)}{=}  \tfrac1{\varrho h_n} \Big(  & 
\langle \int_t^{t+h_n} \mathbf{f}(\tau), v \rangle_\Spu    
-   \int_t^{t+h_n}\int_{\Omega\backslash\GC}
\mathbb{C}e(u(\tau)) :e(v)\,\mathrm{d}x \dd \tau \\
 &  - \int_{\Omega\backslash\GC}
  \mathbb{D}e(u(t+h_n))  :e(v)\,\mathrm{d}x +  \int_{\Omega\backslash\GC}
  \mathbb{D}e(u(t))  :e(v)\,\mathrm{d}x \Big),
  \end{aligned}
  \end{aligned}
\]
where (1) follows from \eqref{we-use-it-later} with test functions $v$ in $\VZ(0)\subset \VZ(t)$, 
and (2) ensues from the momentum balance \eqref{weak-momentum-brittle}. 
Taking into account that $\mathbf{f} \in \mathrm{C}^1([0,T];\Spu^*) $ and that $u \in H^1(0,T;\Spu)$, 
from the above identity we conclude \eqref{Cauchy-sequence}. 
Therefore, there exists $w\in L^2(0,T;\VZ(0)^*)$ such that 
$\frac{\dot{u}(\cdot+h_n) - \dot{u}(\cdot)}{h_n} \to w$ in $L^2(0,T;\VZ(0)^*)$, whence, 
up to a subsequence, 
$\frac{\dot{u}(t+h_n) - \dot{u}(t)}{h_n} \to w(t)$ in $\VZ(0)^*$ for almost all $t\in (0,T)$. 
Taking into account 
\eqref{entitles}, we ultimately conclude that $w(t) = \ddot {u}(t)$ in $\VZ(0)^*$, 
and \eqref{diff-quotients-strong} ensues. 
\end{proof}
Since regularity \eqref{better-H2}, i.e., $u\in H^2(0,T;\VZ(0)^*),$ holds true 
and since the spaces $\VZ(0)\subset\Spw\subset\VZ(0)^*$ form a Gelfand triple, 
we in fact have that 
$\dot u \in L^\infty (0,T;\Spw) \cap \mathrm{C}^0([0,T];\VZ(0)^*)$.  
Therefore, we are now in the position to deduce  
the  {\bf 4.\ weak temporal continuity of $\dot u$} as a corollary of \eqref{better-H2}.    
\begin{corollary}[Weak continuity of $\dot u$, Theorem \ref{thm:main}, Item 4]
\label{WeakConti}
We have 
\begin{eqnarray}
\label{propu1}
&&\dot u(t)\in\Spw
  \text{ and } \|\dot{u}(t)\|_\Spw\leq C \quad \text{for every } t \in [0,T]\,, 
\\
\label{propu2}
&&t\mapsto\dot u(t)\text{ is weakly continuous from }[0,T]\text{ to }\Spw.
\end{eqnarray}
\end{corollary}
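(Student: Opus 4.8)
The plan is to combine the two pieces of information already established: the enhanced regularity $u\in H^2(0,T;\VZ(0)^*)$ from Lemma~\ref{Lbetter-H2}, and the a priori bound $\dot u\in L^\infty(0,T;\Spw)$ from \eqref{reg-u-britt;e}, exploiting that $(\VZ(0),\Spw,\VZ(0)^*)$ is a Gelfand triple. It is essential here that we work with the \emph{time-independent} pivot space $\VZ(0)^*$ rather than with the moving spaces $\VZ(t)^*$, which is precisely why the regularity \eqref{better-H2} was needed: it will let us identify weak limits in $\Spw$ across all times $t$ through a single continuity statement.

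First I would record that $u\in H^2(0,T;\VZ(0)^*)$ yields $\dot u\in H^1(0,T;\VZ(0)^*)\hookrightarrow \mathrm{C}^0([0,T];\VZ(0)^*)$, so that, after modifying $\dot u$ on a Lebesgue-negligible set, the map $t\mapsto\dot u(t)$ is continuous from $[0,T]$ into $\VZ(0)^*$. At the same time, $\dot u\in L^\infty(0,T;\Spw)$ provides a full-measure set $G\subset(0,T)$ and a constant $C>0$ with $\dot u(t)\in\Spw$ and $\|\dot u(t)\|_\Spw\leq C$ for all $t\in G$. To obtain \eqref{propu1}, I would fix an arbitrary $t\in[0,T]$, choose $(t_n)_n\subset G$ with $t_n\to t$, and use the reflexivity of the Hilbert space $\Spw$ together with the uniform bound to extract a subsequence with $\dot u(t_n)\weakto\eta$ in $\Spw$; by the continuous embedding $\Spw\hookrightarrow\VZ(0)^*$ this convergence also holds in $\VZ(0)^*$. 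Since $\dot u(t_n)\to\dot u(t)$ in $\VZ(0)^*$ by the continuity noted above, necessarily $\eta=\dot u(t)$, whence $\dot u(t)\in\Spw$, and weak lower semicontinuity of $\|\cdot\|_\Spw$ gives $\|\dot u(t)\|_\Spw\leq\liminf_{n}\|\dot u(t_n)\|_\Spw\leq C$.

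For the weak continuity \eqref{propu2}, given $t_n\to t$ in $[0,T]$, the sequence $(\dot u(t_n))_n$ is bounded in $\Spw$ by what we just proved; hence every subsequence admits a further subsequence converging weakly in $\Spw$, and—arguing once more via the $\VZ(0)^*$-continuity of $\dot u$—the limit is forced to be $\dot u(t)$. A standard subsequence argument then upgrades this to weak convergence of the full sequence, i.e.\ $\dot u\in\mathrm{C}^0_{\mathrm{weak}}([0,T];\Spw)$, which is \eqref{propu2}.

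I do not anticipate a genuine obstacle: this is essentially the classical interpolation-type lemma asserting that an $L^\infty$-in-time function valued in a reflexive space, which is moreover continuous into a larger space, is weakly continuous into the smaller one. The only point demanding care is the identification of the weak $\Spw$-limit with the $\VZ(0)^*$-continuous representative of $\dot u$, and this hinges entirely on having the \emph{same} pivot space $\VZ(0)^*$ at one's disposal for all $t\in[0,T]$; thus the substantive work was already carried out in establishing \eqref{better-H2} in Lemma~\ref{Lbetter-H2}.
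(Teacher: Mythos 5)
Your argument coincides with the paper's proof: both use the a~priori bound $\dot u\in L^\infty(0,T;\Spw)$ together with the $\VZ(0)^*$-continuity coming from $u\in H^2(0,T;\VZ(0)^*)$ (Lemma~\ref{Lbetter-H2}), and identify weak $\Spw$-limits along sequences $t_n\to t$ through the common pivot space $\VZ(0)^*$. You spell out the subsequence argument and the role of reflexivity a bit more explicitly than the paper does, but the underlying idea and the dependence on \eqref{better-H2} are exactly the same.
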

\begin{proof}
Given $t\in[0,T],$ 
using that $\dot u \in L^\infty (0,T;\Spw)$ we find that 
there exists a sequence $(t_n)_n\subset[0,T]$ 
with $t_n\to t,$ such that $\|\dot u(t_n)\|_\Spw\leq C$. Since $\dot u(t_n)\to\dot u(t)$ 
 in $\Spu_z(0)^*$ we conclude from the continuous embedding 
$\Spw\hookrightarrow\Spu_z(0)^*$ that also $\dot u(t)\in\Spw$ with 
$\|\dot u(t)\|_\Spw\leq C$ and $\dot u(t_n)\rightharpoonup \dot u(t)$ in $\Spw$. 
This proves \eqref{propu1}. The same argument with arbitrary $t,(t_n)_n\subset[0,T]$ 
such that $t_n\to t$ yields \eqref{propu2}.
\end{proof}
Observe that, by \eqref{initial-data-conv} and \eqref{convsu}, the limit displacement $u$ 
satisfies the initial condition $u(0)=u_0$ in $\Spu$.  
But it remains to verify that $\dot u(0)=u_1$ in $\Spw$. For this, we will prove the pointwise-in-time 
weak $\Spw$-convergence of $\dot u_k(t)$ to $\dot u(t),$ cf.\ \eqref{ptw-weak-dotu-convergence} below. 
In fact, in view of convergences \eqref{convs}, this convergence result is also the missing 
piece allowing us in Sec.\ \ref{LEnbal} to pass to 
the limit in the energy balance as an inequality via lower semicontinuity arguments.    
\begin{lemma}[Pointwise-in-time weak $L^2$-convergence \& initial condition $\dot u(0)=u_1$]
\label{Linit}
Along the same sequence 
as in Lemma \ref{Limmombal}, Item 1, it holds
\begin{equation}
\label{ptw-weak-dotu-convergence}
\dot{u}_k(t) \weakto \dot{u}(t) \quad \text{in } \Spw \quad \text{for every } t \in [0,T],
\end{equation}
{therefore $\dot{u}(0) = u_1$ thanks to \eqref{initial-data-conv}.}
\end{lemma}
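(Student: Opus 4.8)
The plan is to prove \eqref{ptw-weak-dotu-convergence} by showing that, for each fixed $t_0\in[0,T]$, the bounded sequence $(\dot u_k(t_0))_k\subset\Spw$ has $\dot u(t_0)$ as its \emph{only} weak limit point in $\Spw$. First I would record, arguing exactly as in the proof of Corollary \ref{WeakConti} but with $\Spu^*$ in place of $\VZ(0)^*$: since $\dot u_k\in L^\infty(0,T;\Spw)\cap H^1(0,T;\Spu^*)\subset\mathrm{C}^0([0,T];\Spu^*)$ one has $\dot u_k(t)\in\Spw$ with $\|\dot u_k(t)\|_\Spw\le\|\dot u_k\|_{L^\infty(0,T;\Spw)}\le C$ for \emph{every} $t\in[0,T]$, uniformly in $k$, by \eqref{unifbdu}. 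Hence along any subsequence $\dot u_k(t_0)\weakto\eta$ in $\Spw$ for some $\eta\in\Spw$. The key point is that it is enough to identify $\eta$ by testing against $v\in H^1_\mathrm{D}(\Omega;\R^d)$: such $v$ are dense in $\Spw=L^2(\Omega;\R^d)$ (because $\Omega$ is a Lipschitz domain), and at the same time $\JUMP{v}=0$ $\Surf$-a.e.\ on $\GC$, so that $\langle\partial_u\calJ_k(u_k(t),z_k(t)),v\rangle_\Spu=\int_{\GC}kz_k(t)\JUMP{u_k(t)}\JUMP{v}\,\mathrm{d}\Surf=0$ for \emph{all} $k\in\N$ and a.a.\ $t\in(0,T)$; no support-convergence threshold is needed here.

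For such $v$, combining $\langle\varrho\ddot u_k(\tau),v\rangle_\Spu=\langle\varrho\ddot u_k(\tau)+\partial_u\calJ_k(u_k(\tau),z_k(\tau)),v\rangle_\Spu$ with the uniform bound \eqref{unifbdcombi} on $\varrho\ddot u_k+\partial_u\calJ_k(u_k,z_k)$ in $L^2(0,T;\Spu^*)$, and using that $\dot u_k(t)-\dot u_k(s)=\int_s^t\ddot u_k(\tau)\,\mathrm{d}\tau$ in $\Spu^*$, I would obtain the uniform-in-$k$ equicontinuity estimate
\[
|\langle\dot u_k(t)-\dot u_k(s),v\rangle_\Spu|
=\tfrac1\varrho\Big|\int_s^t\langle\varrho\ddot u_k(\tau)+\partial_u\calJ_k(u_k(\tau),z_k(\tau)),v\rangle_\Spu\,\mathrm{d}\tau\Big|
\le\tfrac{C}{\varrho}\|v\|_\Spu\,|t-s|^{1/2},\qquad s,t\in[0,T].
\]
Then, for $h>0$ small (with the obvious modification when $t_0=T$), I would write
\[
(\dot u_k(t_0),v)_\Spw=\tfrac1h\int_{t_0}^{t_0+h}\langle\dot u_k(t),v\rangle_\Spu\,\mathrm{d}t-\tfrac1h\int_{t_0}^{t_0+h}\langle\dot u_k(t)-\dot u_k(t_0),v\rangle_\Spu\,\mathrm{d}t
\]
and pass to the limit $k\to\infty$: the first term converges to $\tfrac1h\int_{t_0}^{t_0+h}\langle\dot u(t),v\rangle_\Spu\,\mathrm{d}t$ since $\dot u_k\weakto\dot u$ in $L^2(0,T;\Spu)$ by \eqref{convsu}, while the second is bounded by $\tfrac{C}{\varrho}\|v\|_\Spu h^{1/2}$ uniformly in $k$ by the estimate above; this gives $|(\eta,v)_\Spw-\tfrac1h\int_{t_0}^{t_0+h}\langle\dot u(t),v\rangle_\Spu\,\mathrm{d}t|\le\tfrac{C}{\varrho}\|v\|_\Spu h^{1/2}$. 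Letting $h\downarrow0$ and invoking the continuity of $t\mapsto\langle\dot u(t),v\rangle_{\VZ(0)}$ on $[0,T]$ — which holds because $\dot u\in\mathrm{C}^0([0,T];\VZ(0)^*)$ by the additional regularity \eqref{better-H2} (Lemma \ref{Lbetter-H2}) and $v\in H^1_\mathrm{D}(\Omega;\R^d)\subset\VZ(0)$ — together with $\langle\dot u(t),v\rangle_{\VZ(0)}=(\dot u(t),v)_\Spw$ (Corollary \ref{WeakConti}), I conclude $(\eta,v)_\Spw=(\dot u(t_0),v)_\Spw$. By density this forces $\eta=\dot u(t_0)$, and since a bounded sequence in the Hilbert space $\Spw$ with a unique weak limit point converges weakly, \eqref{ptw-weak-dotu-convergence} follows; specializing $t_0=0$ and recalling $\dot u_k(0)=u_1^k\to u_1$ strongly in $\Spw$ by \eqref{initial-data-conv}, uniqueness of the weak limit yields $\dot u(0)=u_1$.

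I expect the only delicate point to be reconciling two pieces of information that do not combine directly: the sequence $(\dot u_k)_k$ is equicontinuous in time only in the \emph{weak} (dual-pairing) sense against the fixed test functions $v$ — its $k$-dependence in any norm of $\dot u_k$ itself cannot be removed, cf.\ \eqref{kdepddu} — whereas the pointwise bound $\|\dot u_k(t)\|_\Spw\le C$ lives in the strong $\Spw$-topology. These are bridged by the Lebesgue-point-free averaging identity displayed above, and it is precisely at the step $h\downarrow0$ that the brittle-side temporal regularity $\dot u\in\mathrm{C}^0([0,T];\VZ(0)^*)$ enters essentially; without it one would only recover the convergence for a.e.\ $t_0$, hence not at $t_0=0$, which is what is ultimately needed for the initial condition.
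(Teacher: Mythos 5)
Your proof is correct, and it takes a genuinely different route from the paper. The paper establishes \eqref{ptw-weak-dotu-convergence} via an Aubin--Lions/Simon compactness argument: from the convergences in $\widetilde{\mathbf{Y}}_n^{0}$ it deduces $(\dot u_k)_k$ bounded in $L^\infty(0,T;\Spw)$ and $(\ddot u_k)_k$ bounded in $L^2(0,T;\VZ(\eps_n,0)^*)$, and since $\Spw\Subset\VZ(\eps_n,0)^*$ compactly and densely, \cite[Cor.\ 5]{simon86} yields $\dot u_k\to\dot u$ strongly in $\mathrm{C}^0([0,T];\VZ(\eps_n,0)^*)$, which uniquely identifies the pointwise weak $\Spw$-limits. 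Your argument replaces this by a direct averaging/equicontinuity computation. Its key simplification is the choice of the test space $H^1_\mathrm{D}(\Omega;\R^d)$: this is dense in $\Spw$, independent of $n$, and has vanishing jump across $\GC$, so the adhesive term $\partial_u\calJ_k$ drops out \emph{for every} $k$ with no support-convergence threshold $k(s,n)$ and no recovery-space machinery. The uniform bound \eqref{unifbdcombi} then gives the $|t-s|^{1/2}$ dual-pairing equicontinuity, and the continuity $\dot u\in\mathrm{C}^0([0,T];\VZ(0)^*)$ from Lemma \ref{Lbetter-H2} takes care of the $h\downarrow0$ step; both are already available in the paper's ordering. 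What your approach buys is elementariness — no compact embedding, no Aubin--Lions. What the paper's approach buys is that the Aubin--Lions step delivers the strong $\mathrm{C}^0$-convergence in $\VZ(\eps_n,0)^*$ as a by-product, whereas you need to import the corresponding temporal continuity of $\dot u$ from the preceding lemma. Your closing remark correctly isolates the delicate point: the $k$-dependent blow-up \eqref{kdepddu} means one cannot get equicontinuity of $\dot u_k$ itself in any fixed norm, only the dual-pairing equicontinuity against the fixed jump-free $v$, and it is exactly the regularity $\dot u\in\mathrm{C}^0([0,T];\VZ(0)^*)$ that upgrades a.e.-in-$t$ identification to identification at \emph{every} $t$, in particular $t_0=0$.
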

  \begin{proof}
  It follows from convergence \eqref{via-diagonal}, for $s=0$, and from the previously obtained estimates,  that 
  the sequence $(\dot{u}_k)_k$ is bounded in $L^2(0,T;\Spu) \cap L^\infty (0,T;\Spw) \cap H^1 (0,T;\VZ(\eps_n,0)^*)$ for every $n\in \N$. 
  Since for each $n$ the space $\VZ(\eps_n,0)$ is densely and compactly embedded in $\Spw$, 
we have that $\Spw\subset \VZ(\eps_n,0)^*$  densely, and compactly. 
By a Aubin-Lions compactness argument (cf.\ e.g.\ \cite[Cor.\ 5, p.\ 86]{simon86}),
  we conclude 
  \begin{equation}
  \label{Aubin-Lions}
  \dot{u}_k \to \dot{u} \quad \text{in } L^p(0,T;\Spw) \cap \mathrm{C}^0 ([0,T];\VZ(\eps_n,0)^*)
  \end{equation}
  for some fixed $n\in \N$. Ultimately, we infer convergence \eqref{ptw-weak-dotu-convergence}: 
indeed, for every  $t\in [0,T]$, every subsequence of the sequence $(\dot{u}_k(t))_k$, 
bounded in $\Spw$, admits a  further subsequence weakly converging in $\Spw$ to some limit $v_t$.  
In view of \eqref{Aubin-Lions}, we  have that $v_t = \dot{u}(t)$: since the limit does not depend 
on the extracted subsequence,  convergence \eqref{ptw-weak-dotu-convergence} holds. 
\end{proof}
%
\subsection{Limit passage in the energy balance \& Proof of Thm.\ \ref{thm:main}, Items 5.-8.}
\label{LEnbal}
%
We  deduce the energy balance \eqref{eneqlim-identity} for the brittle limit system.  
First, in Lemma \ref{LEDEbrittle}, we will obtain the inequality $\leq$, \emph{at all $t\in [0,T]$}, by suitable 
lower semicontinuity arguments, cf.\  \eqref{eneqlim-inequality-leq} below. 
\begin{lemma}[Upper energy-dissipation estimate via lower semicontinuity]
\label{LEDEbrittle}    
We have                  
\begin{equation}
\label{eneqlim-inequality-leq} 
 \begin{aligned}
\tfrac{1}{2}\|\dot u(t)\|_{\Spw}^2   & 
+ \int_0^t  2\calV(\dot u(s)) 
\,\mathrm{d} s + \Var_{\calR_\infty}(z, [0,t])+
 \calE_\infty(t,u(t),z(t)) \
 \\
& \leq \tfrac{1}{2} \|\dot u(0)\|_{\Spw}^2
+  \calE_\infty(0,u(0),z(0)) + \int_0^t \partial_t\calE_\infty(s,u(s),z(s))\,\mathrm{d}s
\quad \text{ for all } t \in [0,T]\,.
 \end{aligned}
\end{equation} 
\end{lemma}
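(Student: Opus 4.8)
The idea is a standard passage to the limit by lower semicontinuity in the energy--dissipation balance \eqref{eneq} for the adhesive systems, written on the interval $[0,t]$ for a fixed $t\in[0,T]$. Recall that by Theorem \ref{th:exist-mech} each pair $(u_k,z_k)$ satisfies
\[
\begin{aligned}
\tfrac{1}{2}\|\dot u_k(t)\|_{\Spw}^2 & + \int_0^t 2\calV(\dot u_k(s))\,\mathrm{d}s + \Var_{\calR_k}(z_k,[0,t]) + \calE_k(t,u_k(t),z_k(t)) \\
& = \tfrac{1}{2}\|\dot u_k(0)\|_{\Spw}^2 + \calE_k(0,u_k(0),z_k(0)) + \int_0^t \partial_t\calE_k(s,u_k(s),z_k(s))\,\mathrm{d}s\,.
\end{aligned}
\]
I would take the $\liminf_{k\to\infty}$ on the left-hand side and the $\lim_{k\to\infty}$ on the right-hand side, treating the terms one by one, and exploiting the convergences collected in \eqref{convs} together with the pointwise-in-time weak $\Spw$-convergence \eqref{ptw-weak-dotu-convergence} from Lemma \ref{Linit}.

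\textbf{Right-hand side (the limit).} The first term on the right is $\tfrac12\|\dot u_k(0)\|_\Spw^2 = \tfrac12\|u_1^k\|_\Spw^2 \to \tfrac12\|u_1\|_\Spw^2 = \tfrac12\|\dot u(0)\|_\Spw^2$ by the strong convergence of the initial data \eqref{initial-data-conv} and Lemma \ref{Linit}. The second term $\calE_k(0,u_0^k,z_0^k) \to \calE_\infty(0,u_0,z_0)$ is precisely the well-preparedness assumed in \eqref{initial-data-conv}. For the power integral, I would use that $\partial_t\calE_k(s,u_k(s),z_k(s)) = -\langle\dot{\mathbf f}(s),u_k(s)\rangle_\Spu$ and likewise $\partial_t\calE_\infty(s,u(s),z(s)) = -\langle\dot{\mathbf f}(s),u(s)\rangle_\Spu$; since $u_k\rightharpoonup u$ in $H^1(0,T;\Spu)$ (in particular in $L^2(0,T;\Spu)$) and $\dot{\mathbf f}\in L^2(0,T;\Spu^*)$ by \eqref{assRegf}, we get $\int_0^t\partial_t\calE_k(s,u_k(s),z_k(s))\,\mathrm ds \to \int_0^t\partial_t\calE_\infty(s,u(s),z(s))\,\mathrm ds$ by weak-strong convergence.

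\textbf{Left-hand side (lower semicontinuity).} By \eqref{ptw-weak-dotu-convergence}, $\dot u_k(t)\rightharpoonup \dot u(t)$ in $\Spw$, so $\tfrac12\|\dot u(t)\|_\Spw^2 \le \liminf_k \tfrac12\|\dot u_k(t)\|_\Spw^2$ by weak lower semicontinuity of the norm. The viscous term $\int_0^t 2\calV(\dot u_k(s))\,\mathrm ds$ is a nonnegative convex quadratic functional of $\dot u_k$, which is weakly lower semicontinuous in $L^2(0,T;\Spu)$; since $\dot u_k\rightharpoonup\dot u$ there, $\int_0^t 2\calV(\dot u(s))\,\mathrm ds \le \liminf_k\int_0^t 2\calV(\dot u_k(s))\,\mathrm ds$. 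For the dissipated energy I would first bound $\Var_{\calR_\infty}(z,[0,t])$ from below by a fixed finite partition $0=r_0<\dots<r_N=t$, i.e. $\sum_{j}\calR_\infty(z(r_j)-z(r_{j-1}))$, and then use the pointwise convergences $z_k(r_j)\to z(r_j)$ in $\Spz=L^1(\GC)$ from \eqref{convsz}, together with the lower semicontinuity of $\calR_\infty$ w.r.t. strong $L^1(\GC)$-convergence and the inequality $\calR_k \ge \calR_\infty$ where the coefficients coincide (or, under scaling \eqref{coeffscale-b}, the elementary fact $a_k^1 \ge a^1_\infty = 0$ makes this trivial but requires a separate estimate of $\Var$; more carefully, one uses $R_k(\dot z) \ge R_\infty(\dot z)$ for the case \eqref{coeffscale-a}, and in case \eqref{coeffscale-b} both sides vanish). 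Passing first $k\to\infty$ on the partition sum and then taking the supremum over partitions yields $\Var_{\calR_\infty}(z,[0,t]) \le \liminf_k \Var_{\calR_k}(z_k,[0,t])$. Finally, for the energy $\calE_k(t,u_k(t),z_k(t))$ I would split it, following the definitions \eqref{defEk} and \eqref{defEinfty}, into (i) the elastic bulk term $\int_{\Omega\setminus\GC}\tfrac12\mathbb C e(u_k(t)):e(u_k(t))\,\mathrm dx$, which is weakly lsc on $\Spu$ and passes to the limit via $u_k(t)\rightharpoonup u(t)$ in $\Spu$ from \eqref{convsu}; (ii) the loading term $-\langle\mathbf f(t),u_k(t)\rangle_\Spu \to -\langle\mathbf f(t),u(t)\rangle_\Spu$ by weak-strong convergence; (iii) the surface terms $\int_{\GC}(I_{[0,1]}(z_k(t)) - a_k^0 z_k(t))\,\mathrm d\Surf + \mathrm b_k P(Z_k(t),\GC)$, for which I would use the strong $L^q(\GC)$-convergence $z_k(t)\to z(t)$ from \eqref{convsz} together with weak$^*$-lsc of the perimeter in $\SBV$ from \eqref{convsz-2} (and, under scaling \eqref{coeffscale-b}, the terms with $a_k^0,\mathrm b_k\to0$ simply drop out, matching $\calE_\infty$ with $a_\infty^0=\mathrm b_\infty=0$); (iv) the adhesive/brittle constraint term: here $\calJ_k(u_k(t),z_k(t)) = \int_{\GC}\tfrac k2 z_k(t)|\JUMP{u_k(t)}|^2\,\mathrm d\Surf$, and by the $\liminf$-inequality \eqref{mosco-liminf} of Lemma \ref{l:Mosco} (applicable since $z_k(t),z(t)$ are semistable, hence satisfy \eqref{SSZ}, and $u_k(t)\rightharpoonup u(t)$ in $\Spu$) we get $\calJ_\infty(u(t),z(t)) \le \liminf_k \calJ_k(u_k(t),z_k(t))$. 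Summing up the four contributions — and using superadditivity of $\liminf$ over the finitely many nonnegative or bounded terms — gives $\calE_\infty(t,u(t),z(t)) \le \liminf_k \calE_k(t,u_k(t),z_k(t))$.

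\textbf{Conclusion and main obstacle.} Combining the four lower-semicontinuity estimates on the left, one for each summand, with the three limit identities on the right, and using that $\liminf(a_k+b_k+c_k+d_k) \ge \liminf a_k + \liminf b_k + \liminf c_k + \liminf d_k$ (valid since all four sequences are bounded below, indeed the first three nonnegative and the fourth bounded by the uniform energy bound \eqref{unifbdE}), we obtain \eqref{eneqlim-inequality-leq}. I expect the only genuinely delicate point to be the joint lower semicontinuity handling of the constraint term $\calJ_k$ together with the perimeter and $a^0_k$-surface terms under the two admissible scalings \eqref{coeffscale}: in case \eqref{coeffscale-b} one must carefully match the vanishing coefficients against the definition of $\calE_\infty$ with $a^0_\infty=a^1_\infty=\mathrm b_\infty=0$, and verify that no surface energy or dissipation term is lost in the limit. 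Everything else is a routine application of weak lower semicontinuity of convex nonnegative functionals and weak-strong convergence, together with the Mosco $\liminf$-inequality already established in Lemma \ref{l:Mosco}.
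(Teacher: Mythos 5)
Your proposal is correct and follows essentially the same route as the paper's proof: both take $\liminf$ in the adhesive energy--dissipation balance \eqref{eneq}, pass to the limit on the right-hand side via the well-preparedness \eqref{initial-data-conv} of the initial data and the convergences \eqref{convs}, and apply lower semicontinuity term by term on the left, with the Mosco $\liminf$-inequality \eqref{mosco-liminf} handling $\calJ_k$ and the pointwise weak $\Spw$-convergence \eqref{ptw-weak-dotu-convergence} handling the kinetic term. The paper's argument is simply a condensed statement of the same steps, and the only (immaterial) imprecisions in your write-up are the remark that ``both sides vanish'' in case \eqref{coeffscale-b} --- what actually vanishes is $\Var_{\calR_\infty}$, while $\liminf_k\Var_{\calR_k}\geq 0$ suffices --- and the justification that the fourth summand is bounded below, which should appeal to the $k$-uniform lower bound from \eqref{calc-ek-bded}/\eqref{assEbdd} rather than the upper bound \eqref{unifbdE}.
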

\begin{proof}
Inequality \eqref{eneqlim-inequality-leq}  follows by passing to the limit as $k\to\infty$
in the energy-dissipation inequality for the adhesive system. On the left-hand side, we  exploit convergences \eqref{convs}, 
which give $\int_0^t  \calV(\dot u(s)) \dd s \leq \liminf_{k\to\infty} \int_0^t  \calV(\dot{u}_k(s)) \dd s$ and 
$\Var_{\calR_\infty}(z, [0,t]) 
\leq \liminf_{k\to\infty} \Var_{\calR_k}(z_k, [0,t])$. Furthermore, 
 the pointwise convergences for $u$ and $z$ in 
\eqref{convs} give $ \calE_\infty(t,u(t),z(t))  \leq \liminf_{k\to\infty}  \calE_k(t,u_k(t),z_k(t)) $ via \eqref{mosco-liminf}, and the limit passage 
in the term $\frac{\varrho}{2}\|\dot{u}_k(t)\|_{\Spw}^2$ is guaranteed by \eqref{ptw-weak-dotu-convergence}. On the right-hand side, we use
the convergence for the initial data \eqref{initial-data-conv} and again \eqref{convs}, 
which allows us to pass to the limit in $\int_0^t \partial_t\calE_k(s,u_k(s),z_k(s))\,\mathrm{d}s$. 
\end{proof}
\par                                                          
The energy-dissipation inequality opposite to \eqref{eneqlim-inequality-leq} will be proved in 
Lemma \ref{UEDEbrittle} ahead. For this we will have to test the brittle 
momentum balance \eqref{weak-mom-brittle} by $\dot u$. Note that this is admissible since also $\dot u$ satisfies the 
brittle constraint \eqref{propsuz}. Also observe that the quadratic bulk term and the external loading term comply 
with a chain rule. The missing piece is thus a chain-rule inequality  involving the 
kinetic term $\varrho \ddot{u}$ and the Gelfand triple $(\Spu_z(t), \Spw, \Spu_z(t)^*)$, established now in 
Lemma \ref{ChainruleLemma} (cf.\ \eqref{gelfand-1}  and \eqref{gelfand-2}  ahead). 
For the proof of Lemma \ref{ChainruleLemma}, Item 1, we adapt the arguments from 
\cite[Lemma 3.5]{DMLar11EWED}.
Lemma \ref{ChainruleLemma}, Item 2, can then be concluded  following the lines of 
\cite[Lemma 3.6]{DMLar11EWED}, exploiting the weak continuity of $\dot u$ proved in 
Lemma \ref{WeakConti}. 
\begin{lemma}[Chain rule for the inertial term]
\label{ChainruleLemma}
Let $u\in H^2(0,T;\VZ(0)^*)$ comply with the regularity 
properties \eqref{reg-u-britt;e} \& \eqref{propu2}, 
and with the brittle momentum balance for given 
$z\in \mathrm{BV}(0,T;L^1(\GC))\cap \mathrm{B}([0,T];\SBV(\GC;\{0,1\})),$ 
semistable as in \eqref{semistab-z} for all $t\in[0,T]$. Then,  the following statements hold true: 
\begin{compactenum}
\item
for all $ s,t \in (0,T]$ such that $s$ and $t$ are Lebesgue points for $\| \dot{u}(\cdot)\|_{\Spw}^2$ there holds:  
\begin{equation}
\label{gelfand-1}
\tfrac12 \| \dot{u}(t) \|_{\Spw}^2 - \tfrac12 \| \dot{u}(s) \|_{\Spw}^2  
=\int_s^t \pairing{}{\Spu_z(\tau)}{\varrho\ddot{u}(\tau)}{\dot{u}(\tau)} \dd \tau \,,
\end{equation}
\item 
$\dot u$ fulfills  the integral chain-rule inequality 
\begin{equation}
\label{gelfand-2}
\tfrac12 \| \dot{u}(t) \|_{\Spw}^2 - \tfrac12 \| u_1 \|_{\Spw}^2  
\geq\int_0^t \pairing{}{\Spu_z(\tau)}{{\varrho}\ddot{u}(\tau)}{\dot{u}(\tau)} \dd \tau 
\end{equation} 
holds true for every   Lebesgue point $t\in (0,T]$ of $\| \dot{u}(\cdot)\|_{\Spw}^2$. 
\end{compactenum}
\end{lemma}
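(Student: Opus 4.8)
\textbf{Proof plan for Lemma \ref{ChainruleLemma}.}
The proof of Item~1 is a regularization argument in the spirit of \cite[Lemma 3.5]{DMLar11EWED}, designed to cope with the fact that the Gelfand triple $(\Spu_z(\tau),\Spw,\Spu_z(\tau)^*)$ is genuinely time-dependent. First I would introduce a mollification in time: for $\delta>0$ let $u_\delta:=\varrho_\delta * u$ denote the convolution of $u$ (suitably extended outside $[0,T]$) with a standard mollifier. Then $u_\delta$ is smooth in time with values in $\Spu$, and $\dot u_\delta \to \dot u$ in $L^2(0,T;\Spw)$, $u_\delta \to u$ in $H^1(0,T;\Spu)$, while $\ddot{u}_\delta \to \ddot u$ in $L^2(0,T;\VZ(0)^*)$ by the enhanced regularity \eqref{better-H2}. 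The crucial point — exactly the subtlety handled in \cite{DMLar11EWED} — is that, for $\tau$ in a fixed subinterval $[s,t]$, the mollified $\ddot{u}_\delta(\tau)$ need \emph{not} lie in $\Spu_z(\tau)^*$, because $\ddot u(\sigma)$ for $\sigma<\tau$ lives only in the smaller dual $\Spu_z(\sigma)^*\supsetneq \Spu_z(\tau)^*$; but by the monotonicity \eqref{agreed?} and \eqref{monotonicity-Vz-later-used}, $\ddot u(\sigma) \in \Spu_z(\sigma)^* \subset \Spu_z(s)^*$ for all $\sigma \geq s$ near $[s,t]$, so $\ddot{u}_\delta(\tau) \in \Spu_z(s)^*$ for $\tau \in [s,t]$ and $\delta$ small, and likewise $\dot u_\delta(\tau) \in \Spu_z(s) \supset$ nothing — wait, rather $\dot u_\delta(\tau) \in \Spu$ and $\JUMP{\dot u_\delta(\tau)}=0$ on $\supp z(s)$ by the brittle constraint \eqref{propsuz} on $\dot u$ together with the monotonicity of supports. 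Hence on $[s,t]$ everything takes place in the \emph{fixed} Gelfand triple $(\Spu_z(s),\Spw,\Spu_z(s)^*)$, for which the classical chain rule $\tfrac12\tfrac{d}{d\tau}\|\dot u_\delta(\tau)\|_\Spw^2 = \pairing{}{\Spu_z(s)}{\varrho\ddot{u}_\delta(\tau)}{\dot u_\delta(\tau)}$ is valid; integrating over $[s,t]$ and letting $\delta\downarrow 0$ (using the convergences above, plus the fact that $s,t$ are Lebesgue points of $\|\dot u(\cdot)\|_\Spw^2$ so that $\|\dot u_\delta(s)\|_\Spw^2 \to \|\dot u(s)\|_\Spw^2$ and similarly at $t$) yields \eqref{gelfand-1}, after noting $\pairing{}{\Spu_z(s)}{\varrho\ddot u(\tau)}{\dot u(\tau)} = \pairing{}{\Spu_z(\tau)}{\varrho\ddot u(\tau)}{\dot u(\tau)}$ by \eqref{agreed?}.

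For Item~2, I would follow \cite[Lemma 3.6]{DMLar11EWED}. The inequality \eqref{gelfand-2} differs from \eqref{gelfand-1} (with $s=0$) only in that $\|\dot u(0)\|_\Spw$ is replaced by $\|u_1\|_\Spw$ and an inequality appears; this is forced because $0$ need not be a Lebesgue point and $\dot u(0)$ is attained only weakly. Concretely, take a sequence $s_j\downarrow 0$ of Lebesgue points of $\|\dot u(\cdot)\|_\Spw^2$; apply \eqref{gelfand-1} on $[s_j,t]$ to get
\begin{equation*}
\tfrac12\|\dot u(t)\|_\Spw^2 - \tfrac12\|\dot u(s_j)\|_\Spw^2 = \int_{s_j}^t \pairing{}{\Spu_z(\tau)}{\varrho\ddot u(\tau)}{\dot u(\tau)}\dd\tau\,.
\end{equation*}
As $j\to\infty$, the integral converges to $\int_0^t \pairing{}{\Spu_z(\tau)}{\varrho\ddot u(\tau)}{\dot u(\tau)}\dd\tau$ (the integrand is in $L^1(0,T)$ since $\ddot u\in L^2(0,T;\VZ^*)$ and $\dot u\in L^2(0,T;\Spu)$). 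For the boundary term, by the weak continuity of $\dot u$ from $[0,T]$ to $\Spw$ proved in Corollary~\ref{WeakConti} (property \eqref{propu2}) together with the enhanced attainment of the Cauchy datum \eqref{enhanced-Cauchy-dotu}, which forces $\dot u(s_j)\weakto u_1$ in $\Spw$ along a suitable subsequence, weak lower semicontinuity of the norm gives $\liminf_{j\to\infty}\|\dot u(s_j)\|_\Spw^2 \geq \|u_1\|_\Spw^2$, hence $\limsup_j \bigl(-\tfrac12\|\dot u(s_j)\|_\Spw^2\bigr) \leq -\tfrac12\|u_1\|_\Spw^2$. Passing to the $\limsup$ in the displayed identity yields \eqref{gelfand-2}.

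The main obstacle is the first one: ensuring that the mollified trajectory genuinely lives in one \emph{fixed} pivot space on each subinterval, so that the elementary Hilbert-space chain rule applies, and that no boundary contributions are lost when $\delta\downarrow 0$. This is precisely where the monotonicity of the spaces $\Spu_z(t)$ in $t$ (equivalently, the unidirectionality of the delamination, encoded in $\calR_\infty$) and the brittle constraint on $\dot u$ are indispensable; the analytic estimates themselves are routine convolution and Gelfand-triple computations. A secondary, milder point is the justification of $\dot u(s_j)\weakto u_1$: one uses \eqref{enhanced-Cauchy-dotu} to see that $\frac1h\int_0^h \|\dot u(t)-u_1\|_\Spw^2\dd t\to 0$, whence there is a sequence of Lebesgue points $s_j\downarrow 0$ with $\dot u(s_j)\to u_1$ strongly in $\Spw$, which makes the boundary term converge as an equality and in fact upgrades \eqref{gelfand-2} to an identity on $(0,t]$ — but the inequality form stated suffices for the subsequent energy-balance argument and is all that is claimed here.
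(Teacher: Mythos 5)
The mollification strategy for Item~1 has a genuine gap at its central domain claim. You assert that $\dot u_\delta(\tau)\in\Spu_z(s)$ for $\tau\in[s,t]$, i.e.\ that $\JUMP{\dot u_\delta(\tau)}=0$ a.e.\ on $\supp z(s)$, ``by the brittle constraint on $\dot u$ together with the monotonicity of supports.'' That implication is false. The mollification averages $\dot u(\sigma)$ over $|\sigma-\tau|<\delta$; once $\tau>s$, some of these $\sigma$ exceed $s$, and for those the constraint \eqref{propsuz} only yields $\JUMP{\dot u(\sigma)}=0$ on $\supp z(\sigma)$. Since the supports \emph{decrease} in time, $\supp z(\sigma)\subsetneq\supp z(s)$ for $\sigma>s$, so $\dot u(\sigma)$ is unconstrained on $\supp z(s)\setminus\supp z(\sigma)$; equivalently, $\dot u(\sigma)\in\Spu_z(\sigma)\supsetneq\Spu_z(s)$ gives no membership in $\Spu_z(s)$. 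Hence $\dot u_\delta(\tau)\notin\Spu_z(s)$ in general, the pairing $\langle\varrho\ddot u_\delta(\tau),\dot u_\delta(\tau)\rangle_{\Spu_z(s)}$ is not defined, and the ``fixed Gelfand triple on $[s,t]$'' picture collapses. The same obstruction defeats one-sided mollifiers as well: future values throw $\dot u$ out of $\Spu_z(s)$, while past values throw $\ddot u$ out of $\Spu_z(s)^*$. The paper bypasses this by working directly with \emph{backward} difference quotients of $\dot u$ rather than convolutions: the elementary identity $\tfrac1h\bigl(\|\dot u(\tau)\|^2_\Spw-\|\dot u(\tau-h)\|^2_\Spw\bigr)=\langle\varrho\tfrac{\dot u(\tau)-\dot u(\tau-h)}{h},\,\dot u(\tau)+\dot u(\tau-h)\rangle_{\Spu_z(\tau)}$ is licit because $\dot u(\tau),\dot u(\tau-h)\in\Spu_z(\tau)$ (using $\Spu_z(\tau-h)\subset\Spu_z(\tau)$) while the difference quotient lies in $\Spw\subset\Spu_z(\tau)^*$; one then integrates over $(s,t)$ and sends $h\downarrow0$. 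The delicate step there is not routine: passing to the limit in the right-hand side uses a generalized dominated convergence theorem whose $L^1$-convergent majorant for $\|\varrho\tfrac{\dot u(\tau)-\dot u(\tau-h)}{h}\|_{\Spu_z(\tau)^*}$ is extracted \emph{from the momentum balance itself} via \eqref{we-use-it-later}, not from abstract convolution estimates.

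A secondary, presentational issue in Item~2: you invoke the enhanced attainment \eqref{enhanced-Cauchy-dotu} both to justify $\dot u(s_j)\weakto u_1$ and to ``upgrade'' \eqref{gelfand-2} to an identity. But \eqref{enhanced-Cauchy-dotu} is proved later (Lemma \ref{EnhancedCauchy}) from the bulk energy-dissipation balance, which itself rests on the present chain rule, so this is circular. It is also unnecessary: the weak continuity \eqref{propu2} (Corollary \ref{WeakConti}) combined with the already-established initial condition $\dot u(0)=u_1$ of Lemma \ref{Linit} gives $\dot u(s_j)\weakto u_1$ in $\Spw$ directly, and weak lower semicontinuity of $\|\cdot\|_\Spw$ then yields precisely the one-sided estimate \eqref{gelfand-2}, which is all the lemma claims.
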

\begin{proof}
{\bf Ad 1.:} 
In order to prove \eqref{gelfand-1} we adapt the argument from the 
proof of \cite[Lemma 3.5]{DMLar11EWED}. 
A straightforward calculation shows that 
\begin{equation}
\label{straightforward}
\begin{aligned}
\tfrac1h \left(\|\dot{u}(t) \|_{\Spw}^2 - \|\dot{u}(t-h) \|_{\Spw}^2 \right)   =
\pairing{}{\Spu_z(t)}{{\varrho}\tfrac{\dot{u}(t) - \dot{u}(t-h)}{h}}{\dot{u}(t) + \dot{u}(t-h)}
\end{aligned}
\end{equation}
for all $h\in (0,T)$ and for almost all $t\in (h,T)$. 
Integrating \eqref{straightforward}
on the interval $(s,t)$ yields for every $h\in (0,s)$
\begin{equation}
\label{here-we-take-the-lim}
\tfrac1{2h} \int_{t-h}^t \|\dot{u}(\tau) \|_{\Spw}^2 \dd \tau  -\tfrac1{2h} \int_{s-h}^s \|\dot{u}(\tau) \|_{\Spw}^2 \dd \tau 
=\tfrac12 \int_s^t  
\pairing{}{\Spu_z(\tau)}{{\varrho}\tfrac{\dot{u}(\tau) - \dot{u}(\tau-h)}{h}}{\dot{u}(\tau) + \dot{u}(\tau-h)} \dd \tau\,.
\end{equation}
Now, since $\dot u \in L^2(0,T;\Spu)$, there holds
\begin{equation}
\label{hwtl-1}
\dot{u}(\tau) + \dot{u}(\tau-h) \to 2 \dot{u}(\tau) 
\quad \text{in } \Spu\quad\text{for a.a.\ }\tau\in(0,T)
\end{equation}
along a sequence $h\downarrow 0$. Combining this with the weak convergence
$\frac{\dot{u}(\tau) - \dot{u}(\tau-h)}{h}\weakto \ddot{u}(\tau)$ in $\Spu_z(\tau)^*$ for almost all $\tau\in (0,T)$,
we infer the pointwise convergence
\begin{equation}
\label{ptwconv}
 a_h(\tau):=
\pairing{}{\Spu_z(\tau)}{{\varrho}\tfrac{\dot{u}(\tau) - \dot{u}(\tau-h)}{h}}{\dot{u}(\tau) 
+ \dot{u}(\tau-h)}\to  2  \pairing{}{\VZ(\tau)}{\varrho\ddot{u}(\tau)}{\dot{u}(\tau)}   \qquad \foraa\, \tau\in (0,T). 
\end{equation}
  In order to pass to the limit in the integral on the right-hand side of \eqref{here-we-take-the-lim}, 
we shall apply a variant of the dominated convergence theorem, cf.\ e.g.\ 
\cite[Thm.\ 5.3, p.\ 261]{Els02}. For this, we further observe that 
\begin{equation*}
\begin{split}
|a_h(\tau)|&\leq \|\varrho \tfrac{\dot{u}(\tau) - \dot{u}(\tau-h)}{h}  \|_{\Spu_z(\tau)^*}
\Big(\| \dot{u}(\tau) \|_{\Spu_z(\tau)}+\| \dot{u}(\tau-h) \|_{\Spu_z(\tau)}\Big)
\\
&=\|  \varrho   \tfrac{\dot{u}(\tau) - \dot{u}(\tau-h)}{h}  \|_{\Spu_z(\tau)^*}
\Big(\| \dot{u}(\tau) \|_{\Spu}+\| \dot{u}(\tau-h) \|_{\Spu}\Big)
=:M_h(\tau)\,.
\end{split}
\end{equation*}
We now introduce the short-hands 
$l_h(\tau):=\|\varrho \tfrac{\dot{u}(\tau) - \dot{u}(\tau-h)}{h}  \|_{\Spu_z(\tau)^*}$ and 
$m_h(\tau):=\big(\| \dot{u}(\tau) \|_{\Spu}+\| \dot{u}(\tau-h) \|_{\Spu}\big),$ so that 
$M_h(\tau)=l_h(\tau)m_h(\tau)$. 
Since $\dot u\in L^2(0,T;\Spu)$ we have 
\begin{equation}
\label{convmh}
m_h\to2\|\dot u\|_\Spu\quad\text{in }L^2(0,T)\,, 
\end{equation}
while, thanks to \eqref{we-use-it-later}, we find for $l_h$ that 
\[
  \begin{aligned}
  l_h(\tau)= 
\| \tfrac1h \int_{\tau-h}^{\tau} g(s) \dd s \|_{\Spu_z(\tau)^*}
&= \| \tfrac1h \int_{\tau-h}^{\tau} g(s) \dd s \|_{\Spu^*}
  \text{ with }    g : (0,T) \to \Spu^*
 \text{ given by }  
 \\
 & 
 \pairing{}{\Spu}{g(s)}{v} = 
\langle \mathbf{f}(s), v \rangle_\Spu - \int_{\Omega{\setminus}\GC}\left( \mathbb{D} e(\dot{u}(s))\colon e(v) + \mathbb{C} e(u(s))\colon e(v) \right) \dd x\,.
\end{aligned}
\]
Now, since $g\in L^2(0,T;\Spu^*)$,  
the sequence of functions $\tau \mapsto \tfrac1h \int_{\tau-h}^{\tau} g(s) \dd s$ 
converges to $g$ in $L^2(0,T; \Spu^*)$. Therefore, 
$(l_h)_h$  converges to $\|g(\cdot) \|_{\Spu^*}$ in $L^2(0,T)$. 
Together with \eqref{convmh} we infer that $(M_h)_h\subset L^1(0,T)$ and 
$M_h\to2\|g(\cdot)\|_{\Spu^*}\|\dot u(\cdot)\|_\Spu$ in $L^1(0,T)$. Now, 
the dominated convergence theorem, c.f.\ e.g.\ 
\cite[Thm.\ 5.3, p.\ 261]{Els02}, allows us to conclude that $\int_s^t a_h(\tau)\,\mathrm{d}\tau
\to\int_s^t2\langle \varrho  \ddot u(\tau),\dot u(\tau)\rangle_{\Spu_z(\tau)}\,\mathrm{d}\tau,$ and thus 
 the convergence of  the right-hand side of \eqref{here-we-take-the-lim}. 
Additionally, the integrals on the left-hand side of \eqref{here-we-take-the-lim}  
converge to the left-hand side of \eqref{gelfand-1} since $s$ and $t$ are 
Lebesgue points for $\| \dot{u}(\cdot)\|_{\Spw}^2$. Thus, we conclude \eqref{gelfand-1}.                                        
\par 
{\bf Ad 2.: }
The proof can be adapted from 
\cite[Lemma 3.6]{DMLar11EWED}. More precisely, we choose a sequence of Lebesgue 
points $(t_j)_{j\in\N}$ of 
$\|\dot u(\cdot)\|^2_\Spw$
such that $t_j\searrow0$. 
By Lemma \ref{WeakConti}, $\dot u:[0,T]\to\Spw$ is weakly  continuous, i.e., we have 
$\dot u(t_j)\rightharpoonup \dot{u}(0)$ in $\Spw$.  
Moreover, for all Lebesgue points $t_j$ and $t$ the chain rule \eqref{gelfand-1} holds true. 
Hence, we find: 
\begin{eqnarray*}
&&\int_0^t\langle\varrho\ddot u(\tau),\dot u(\tau)\rangle_{\VZ(\tau)}\,\mathrm{d}\tau
=\liminf_{j\to\infty} 
\int_{t_j}^t\langle\varrho\ddot u(\tau),\dot u(\tau)\rangle_{\VZ(\tau)}\,\mathrm{d}\tau
\leq \limsup_{j\to\infty}\tfrac{1}{2}
\big(\|\dot u(t)\|_\Spw^2-\|\dot u(t_j)\|_\Spw^2\big)\\
&\leq& \tfrac{1}{2}\|\dot u(t)\|_\Spw^2
-\liminf_{j\to\infty}\tfrac{1}{2}\|\dot u(t_j)\|_\Spw^2
\leq \tfrac{1}{2}\|\dot u(t)\|_\Spw^2-\tfrac{1}{2}\|\dot u(0)\|_\Spw^2
=\tfrac{1}{2}\|\dot u(t)\|_\Spw^2-\tfrac{1}{2}\| u_1 \|_\Spw^2\,.
\end{eqnarray*}
Above, the last inequality follows from the lower semicontinuity of $\|\cdot\|^2_\Spw$ 
with respect to weak 
convergence in $L^2(\Omega)$ and the last equality is due to the initial condition 
$\dot u(0)=u_1$ verified in Lemma  \ref{Linit}.  
\end{proof}
Thanks to the previously established chain rule \eqref{gelfand-2} we are now in the position to prove 
the energy-dissipation inequality opposite to \eqref{eneqlim-inequality-leq}, 
i.e.\ \eqref{eneqlim} below. 
As already mentioned, for this we will test the brittle 
momentum balance \eqref{weak-mom-brittle} by $\dot u$, apply  chain rules 
separately to each of the energy terms, 
and combine the obtained relation with the brittle semistability condition,  
arguing as in the proof of the energy-dissipation balance \eqref{eneq} for the adhesive system. 
\begin{lemma}[Lower energy-dissipation estimate, 
balance \eqref{eneqlim-identity}]
\label{UEDEbrittle}
The limit pair $(u,z)$ extracted by convergences 
\eqref{convs} satisfies the following lower energy-dissipation estimate 
\begin{equation}
\label{eneqlim} 
 \begin{aligned}
\tfrac{1}{2}\|\dot u(t)\|_{\Spw}^2  & 
+ \int_s^t  2\calV(\dot u(\tau)) 
\,\mathrm{d} \tau + \Var_{\calR_\infty}(z, [s,t])+
 \calE_\infty(t,u(t),z(t))\\ 
&   \geq \tfrac{1}{2} \|\dot u(s)\|_{\Spw}^2
+  \calE_\infty(s,u(s),z(s)) + \int_s^t \partial_t\calE_\infty(\tau,u(\tau),z(\tau))\,\mathrm{d}\tau\,
\\
& 
 \qquad \text{for all } s, t \in [0,T]\backslash \mathsf{L}  \text{ with } s<t  
\text{ and for } s=0, 
 \end{aligned}
\end{equation} 
 where  $\mathsf{L}$ denotes the set  of Lebesgue points of  $\|\dot u(\cdot)\|_{\Spw}^2$. 
Hence, the energy-dissipation balance \eqref{eneqlim-identity} holds true 
 as well as the bulk energy-dissipation balance \eqref{eneqlim-identity-bulk} 
and the surface energy-dissipation balance \eqref{eneqlim-identity-surf}.  
\end{lemma}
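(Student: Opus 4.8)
The plan is to obtain the lower energy-dissipation estimate \eqref{eneqlim} by testing the brittle momentum balance \eqref{weak-momentum-brittle} with $\dot u(\tau)$, which is admissible for almost all $\tau\in(0,T)$ since the second relation in \eqref{propsuz} guarantees $\dot u(\tau)\in\VZ(\tau)$. First I would integrate the resulting identity over an interval $(s,t)$ with $s,t\in[0,T]\setminus\mathsf{L}$, $s<t$ (and also $s=0$, recalling $\dot u(0)=u_1$ from Lemma \ref{Linit}). On the left-hand side, the inertial term $\int_s^t\pairing{}{\Spu_z(\tau)}{\varrho\ddot u(\tau)}{\dot u(\tau)}\,\mathrm{d}\tau$ is handled by the chain-rule equality \eqref{gelfand-1} of Lemma \ref{ChainruleLemma}, Item 1 (for $s>0$) and by the chain-rule inequality \eqref{gelfand-2}, Item 2 (for $s=0$); the viscous term gives $\int_s^t 2\calV(\dot u(\tau))\,\mathrm{d}\tau$ by \eqref{defV}; the elastic bulk term $\int_s^t\int_{\Omega\backslash\GC}\mathbb{C}e(u(\tau)):e(\dot u(\tau))\,\mathrm{d}x\,\mathrm{d}\tau$ satisfies the chain rule \eqref{chain-ruleC} (now for the limit $u$, using $u\in H^1(0,T;\Spu)$); and the loading term $\int_s^t\langle\mathbf{f}(\tau),\dot u(\tau)\rangle_\Spu\,\mathrm{d}\tau$ is integrated by parts using $\mathbf{f}\in\mathrm{C}^1([0,T];\Spu^*)$, producing the term $\partial_t\calE_\infty$.

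Next I would incorporate the surface energy terms of $\calE_\infty$, namely $\mathrm{b}_\infty P(Z(\cdot),\GC)+\int_{\GC}a_\infty^0 z(\cdot)\,\mathrm{d}\Surf$, together with $\Var_{\calR_\infty}(z,[s,t])$. Since $\dot u$ does not test the flow rule for $z$, these must be treated via the semistability condition \eqref{semistab-z} for the brittle system, exactly as in the proof of the adhesive energy equality \eqref{eneq}: one uses a Riemann-sum argument over a fine partition of $[s,t]$ applied to the semistability inequality, together with the fact that the functional $\calJ_\infty$ contributes nothing along the evolution (it equals $0$ by the brittle constraint \eqref{propsuz}, for both $u(t)$ and $u(s)$), to deduce a chain-rule-type inequality that bounds the variation of the surface terms from below by $-\Var_{\calR_\infty}(z,[s,t])$. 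Assembling all the pieces yields precisely \eqref{eneqlim}. Combined with the upper estimate \eqref{eneqlim-inequality-leq} from Lemma \ref{LEDEbrittle} (written on $[0,t]$, and subtracted on $[0,s]$ and $[0,t]$ for the general interval), this gives the energy-dissipation balance \eqref{eneqlim-identity} for a.a.\ $s<t$ and for $s=0$.

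It then remains to split \eqref{eneqlim-identity} into the separate bulk balance \eqref{eneqlim-identity-bulk} and surface balance \eqref{eneqlim-identity-surf}. The bulk balance \eqref{eneqlim-identity-bulk} is obtained directly from testing \eqref{weak-momentum-brittle} by $\dot u$ and applying the chain rules for the inertial, elastic, and loading terms as above — crucially, this is now an \emph{identity}, not merely an inequality, because at Lebesgue points the chain rule \eqref{gelfand-1} for the kinetic term holds with equality. The surface balance \eqref{eneqlim-identity-surf} then follows by subtracting \eqref{eneqlim-identity-bulk} from the full balance \eqref{eneqlim-identity}. I expect the main obstacle to be the rigorous justification of the chain-rule inequality for the surface energy terms via the semistability condition, in particular verifying that the Riemann-sum partition argument carries through uniformly despite the fact that $z$ may jump at countably many times and that the coefficients $a_k^0,a_k^1,\mathrm{b}_k$ may be scaled as in \eqref{coeffscale-b}; this requires care in handling $\Var_{\calR_\infty}$ and in ensuring the brittle constraint is preserved along the partition, but it follows the template already established for \eqref{eneq} in the adhesive case and in \cite[Sec.\ 8]{RosRou10TARA}, \cite[(4.69)--(4.75)]{Roub10TRIP}.
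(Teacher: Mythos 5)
Your plan follows the paper's backbone (test the brittle momentum balance \eqref{weak-momentum-brittle} by $\dot u$, apply chain rules to each term, combine with semistability and with the upper estimate \eqref{eneqlim-inequality-leq} from Lemma \ref{LEDEbrittle}), but it diverges at two points where the paper's execution is simpler or more robust.

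First, for the surface-energy terms you propose a Riemann-sum argument over a partition of $[s,t]$ applied to semistability, mimicking the adhesive proof of \eqref{eneq} and \cite[(4.69)--(4.75)]{Roub10TRIP}. In the brittle case this is unnecessary: since $\calJ_\infty$ vanishes along the evolution (as you yourself note), and since $\supp z(t)\subset\supp z(s)$ by unidirectionality implies $\calJ_\infty(u(s),z(t))=0$ as well, the semistability inequality at time $s$ tested \emph{once} with the single competitor $\tilde z=z(t)$ already reduces to the needed inequality \eqref{semiz}, namely $\calR_\infty(z(t)-z(s)) + \mathrm{b}_\infty P(Z(t),\GC) - a_\infty^0\int_{\GC}z(t)\,\mathrm{d}\Surf \geq \mathrm{b}_\infty P(Z(s),\GC) - a_\infty^0\int_{\GC}z(s)\,\mathrm{d}\Surf$, while monotonicity of $z$ gives $\Var_{\calR_\infty}(z,[s,t]) = \calR_\infty(z(t)-z(s))$ directly. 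The Riemann-sum machinery was needed in the adhesive proof only because $\calJ_k$ genuinely couples $u$ and $z$; that coupling disappears in the brittle limit, so the single test does the job and the uniformity issues you worry about do not arise.

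Second, for the split into \eqref{eneqlim-identity-bulk} and \eqref{eneqlim-identity-surf}, you propose deriving the bulk balance directly from the $\dot u$-test and the equality version \eqref{gelfand-1} of the kinetic chain rule. That works when $s>0$ is a Lebesgue point of $\|\dot u(\cdot)\|_\Spw^2$, but at $s=0$ Lemma \ref{ChainruleLemma} only supplies the one-sided estimate \eqref{gelfand-2}, so your argument yields only an inequality there. The paper sidesteps this by observing that the already-established full balance \eqref{eneqlim-identity} rewrites as $A+B=0$, where $A$ is the left-hand side of \eqref{momuint} and $B$ that of \eqref{semiz}; since $A\geq 0$ and $B\geq 0$, both vanish. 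This yields both separate balances simultaneously, including at $s=0$, and as a by-product also upgrades \eqref{gelfand-2} to an equality at Lebesgue points. Your proposal would need to be supplemented by this (or an equivalent) argument to cover $s=0$.
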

\begin{proof}
  We test the momentum balance \eqref{weak-momentum-brittle} of the brittle limit system by $\dot u,$ 
which is admissible according 
to \eqref{propsuz}. We argue  as in the proof of the $k$-energy balance \eqref{eneq}, i.e.\
using integration by parts on the loading term, and   exploiting  the
analogues of \eqref{visc-damping} and \eqref{chain-ruleC} (since $u\in H^1(0,T;\Spu)$), 
for the  viscous  and the  bulk energy  terms, respectively.
For the   
 inertial term,  we use \eqref{gelfand-1}   and \eqref{gelfand-2}. 
Thus 
 we find for almost all $s,t\in (0,T)$ with $s<t$  and for $s=0$ 
\begin{equation}
\label{momuint}
\begin{split}
&\tfrac{1}{2}\|\dot{u}(t)\|_\Spw^2-\tfrac{1}{2}\|\dot{u}(s)\|_\Spw^2
+\langle-\mathbf{f}(t),u(t)\rangle_\Spu-\langle-\mathbf{f}(s),u(s)\rangle_\Spu
-\int_s^t\langle-\dot{\mathbf{f}}(\tau), u(\tau)  \rangle_\Spu\,\mathrm{d}\tau\\
&\qquad+\int_{\Omega\backslash\GC}\tfrac{1}{2}\big(
\mathbb{C}e(u(t)):e(u(t))-\mathbb{C}e(u(s)):e(u(s))\big)\,\mathrm{d}x
+\int_s^t\calV(e(\dot u(\tau)))\,\mathrm{d}\tau \geq 0
\,.   
\end{split}
\end{equation} 
\par
We further note that the semistability inequality for the brittle limit at time  $t_0=s$, 
tested with $\tilde z=z(t)$ for arbitrary $t\in[0,T]$ 
reduces to 
\begin{equation}
\label{semiz}
\calR_\infty(z(t)-z(s)) +  \mathrm{b}_\infty P(Z(t),\GC)
-a_\infty^0\int_{\GC}z(t)\,\mathrm{d}\Surf  - \mathrm{b}_\infty P(Z(s),\GC) 
+ a_\infty^0\int_{\GC}z(s)\,\mathrm{d}\Surf \geq 0\,.
\end{equation} 
Summing up \eqref{momuint} and \eqref{semiz} results 
in   \eqref{eneqlim} valid in Lebesgue points $s,t$ of 
$\|u\|_\Spw^2$. 
\par 
Then, finally, the energy-dissipation balance \eqref{eneqlim-identity} 
follows from combining \eqref{eneqlim-inequality-leq} with \eqref{eneqlim}. 
\par 
Observe that   \eqref{eneqlim-identity}  rewrites as  $A+B=0$, where $A$ and $B$ stand for 
the left-hand sides of inequalities \eqref{momuint} \& \eqref{semiz},
which in turn state $A\geq 0$ and $B\geq 0$. Therefore, as a by-product we 
have that $A=0=B$.  
In particular, from $A=0$ it is immediate to conclude that 
the chain-rule inequality  \eqref{gelfand-2}  also
holds as an \emph{equality} at  the Lebesgue points of $\|\dot u(\cdot)\|_{\Spw}^2$.  
\end{proof}
%
Thanks to the previously proved energy-dissipation balance \eqref{eneqlim-identity} for the brittle system, 
also exploiting the assumed convergence of the initial data \eqref{initial-data-conv} and 
the immediate convergences \eqref{convs}, we can now deduce the enhanced convergences \eqref{enh-convs}. 
\begin{lemma}[Enhanced convergences \eqref{enh-convs}]
\label{EnhancedConvs}
The enhanced convergences \eqref{enh-convs} hold true. Therefore, $(u,z)$ 
comply with the upper energy-dissipation estimate \eqref{eneqlim-inequality-leq} 
on the interval $(s,t)$, for every $t\in(0,T]$ and almost all $s\in (0,t)$. 
\end{lemma}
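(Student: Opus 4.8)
Looking at Lemma \ref{EnhancedConvs}, I need to prove the enhanced convergences \eqref{enh-convs} and the consequence that the upper energy-dissipation estimate holds on subintervals $(s,t)$.

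Let me think about the structure. The key tool is the energy-dissipation balance \eqref{eneqlim-identity} already proved for the brittle limit on $(0,t)$, combined with the energy equality \eqref{eneq} for the adhesive systems. The standard "liminf/limsup squeeze" argument applies.

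The plan is as follows. The starting point is the energy equality \eqref{eneq} for the adhesive system on $[0,t]$, namely
\[
\tfrac{1}{2}\|\dot u_k(t)\|_{\Spw}^2
+ \int_0^t 2\calV(\dot u_k(s))\,\mathrm{d}s + \Var_{\calR_k}(z_k,[0,t])+\calE_k(t,u_k(t),z_k(t))
= \tfrac{1}{2}\|\dot u_k(0)\|_{\Spw}^2 + \calE_k(0,u_k(0),z_k(0)) + \int_0^t \partial_t\calE_k(s,u_k(s),z_k(s))\,\mathrm{d}s.
\]
On the right-hand side, the convergence of initial data \eqref{initial-data-conv} gives $\tfrac12\|\dot u_k(0)\|_\Spw^2\to\tfrac12\|u_1\|_\Spw^2$ and $\calE_k(0,u_0^k,z_0^k)\to\calE_\infty(0,u_0,z_0)$, while convergences \eqref{convs} allow passing to the limit in $\int_0^t\partial_t\calE_k(s,u_k,z_k)\,\mathrm{d}s = \int_0^t\langle-\dot{\mathbf f}(s),u_k(s)\rangle_\Spu\,\mathrm{d}s$ by weak-strong convergence in $L^2(0,T;\Spu)$. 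Hence the right-hand side converges to the right-hand side of \eqref{eneqlim-identity} (with $s=0$). On the left-hand side, each of the four terms is weakly/$\Gamma$-lower semicontinuous: $\liminf$ of $\tfrac12\|\dot u_k(t)\|_\Spw^2$ is $\geq\tfrac12\|\dot u(t)\|_\Spw^2$ by \eqref{ptw-weak-dotu-convergence}, $\liminf$ of the viscous integral is $\geq\int_0^t2\calV(\dot u)$ by weak lower semicontinuity (convexity of $\calV$), $\liminf$ of $\Var_{\calR_k}(z_k,[0,t])$ is $\geq\Var_{\calR_\infty}(z,[0,t])$, and $\liminf$ of $\calE_k(t,u_k(t),z_k(t))$ is $\geq\calE_\infty(t,u(t),z(t))$ via the Mosco $\liminf$-inequality \eqref{mosco-liminf} together with weak lsc of the remaining (bulk plus surface) terms. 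Thus the limit of the constant right-hand side is $\geq$ the $\liminf$ of each left-hand term summed; but by \eqref{eneqlim-identity} the sum of the limit left-hand quantities equals the limit right-hand side. This forces each individual $\liminf$ to be an actual limit \emph{and} to equal the corresponding brittle quantity — this is the elementary lemma that if $a_k\to a$, $b_k^{(i)}$ has $\liminf b_k^{(i)}\geq b^{(i)}$, $\sum_i b^{(i)} = a$, and $\sum_i b_k^{(i)} = a_k$, then $b_k^{(i)}\to b^{(i)}$ for every $i$. This yields \eqref{enh-convs}, in particular $\tfrac12\|\dot u_k(t)\|_\Spw^2\to\tfrac12\|\dot u(t)\|_\Spw^2$, which combined with the weak convergence \eqref{ptw-weak-dotu-convergence} in the Hilbert space $\Spw$ upgrades to the strong convergence \eqref{enh-convs-1} $\dot u_k(t)\to\dot u(t)$ in $\Spw$.

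For the final assertion — the upper estimate on $(s,t)$ for a.a.\ $s$ — I would argue as follows: having established \eqref{enh-convs} at (almost every, or every) $t$, I repeat the lower-semicontinuity passage to the limit in the adhesive energy equality \eqref{eneq} written on the interval $[s,t]$ rather than $[0,t]$, using now the enhanced convergences \eqref{enh-convs-1} and the energy convergence \eqref{enh-convs-4} \emph{at the time $s$} to handle the initial slice $\tfrac12\|\dot u_k(s)\|_\Spw^2 + \calE_k(s,u_k(s),z_k(s))$ on the right-hand side (which now converge, not merely satisfy a liminf bound), while lower semicontinuity on the left-hand side produces the inequality \eqref{eneqlim-inequality-leq} on $(s,t)$. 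This works precisely because \eqref{enh-convs} has already been secured. I should note that \eqref{enh-convs} holds for a.a.\ $t$ (those $t$ where the various limits can be taken; the pointwise-in-time convergences \eqref{convsu}--\eqref{convsz-2} actually hold for \emph{all} $t$, but $\int_0^t\calV(\dot u_k)$ and $\Var_{\calR_k}(z_k,[0,t])$ may only converge for a.a.\ $t$ — more precisely, at continuity points of the monotone limit functions), so the estimate on $(s,t)$ holds for a.a.\ $s<t$.

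The main obstacle — actually quite mild here — is the bookkeeping of which left-hand terms in the adhesive energy equality match which brittle terms, in particular making sure the Mosco $\liminf$-inequality \eqref{mosco-liminf} for $\calJ_k\to\calJ_\infty$ is invoked only for the $\calJ$-part while the bulk elastic energy and the perimeter/loading terms are handled by ordinary weak lower semicontinuity, so that the splitting $\calE_k = \widetilde\calE_k + \calJ_k$ cleanly decomposes the $\liminf$. One must also be slightly careful that the "squeeze" forces $\calE_k(t,u_k(t),z_k(t))\to\calE_\infty(t,u(t),z(t))$ as a whole, and then separately that $\calJ_k(u_k(t),z_k(t))\to 0$ and $\widetilde\calE_k(t,u_k(t),z_k(t))\to\widetilde\calE_\infty(t,u(t),z(t))$: the latter two follow because each is individually a $\liminf$-bounded-below quantity summing to the total. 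A second minor point is justifying the strong $\Spw$-convergence at $t$ for \emph{almost every} $t$ only, which suffices for the stated conclusion.
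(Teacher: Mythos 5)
Your proof is correct and follows essentially the same route as the paper's: pass to the limit in the adhesive energy equality \eqref{eneq} on $[0,t]$, use the convergence of the initial data \eqref{initial-data-conv} and of the power term for the right-hand side, invoke the individual $\liminf$ bounds (weak lower semicontinuity and the Mosco $\liminf$-inequality) for the left-hand side, and close the squeeze via the already-established brittle energy balance \eqref{eneqlim-identity} (available at a.a.\ $t$ for $s=0$); then obtain the estimate on $(s,t)$ by rerunning the lower-semicontinuity passage, now with the enhanced convergences \eqref{enh-convs-1} and \eqref{enh-convs-4} at $s$ taking care of the right-hand initial slice. Your additional care about the decomposition $\calE_k=\widetilde\calE_k+\calJ_k$ and about upgrading weak to strong $\Spw$-convergence via norm convergence is exactly the "standard argument" the paper leaves implicit.
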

\begin{proof} 
 The previously proved convergences  as well as  \eqref{initial-data-conv} yield 
\begin{equation*}
\begin{aligned}
&
\tfrac{1}{2}\|\dot u(t)\|_{\Spw}^2 
+ \int_0^t  2\calV(\dot u(s))   
\,\mathrm{d} s + \Var_{\calR_\infty}(z, [0,t])+
 \calE_\infty(t,u(t),z(t))\\
 & 
\leq \liminf_{k\to\infty}\tfrac{1}{2}\|\dot{u}_k(t)\|_{\Spw}^2 
+  \liminf_{k\to\infty} \int_0^t  2\calV(\dot{u}_k(s))   
\,\mathrm{d} s + \liminf_{k\to\infty} \Var_{\calR_k}(z_k, [0,t])+
\liminf_{k\to\infty} \calE_k(t,u_k(t),z_k(t)) 
 \\
 & 
 \leq \limsup_{k\to\infty} \left( \tfrac{1}{2}\|\dot{u}_k(t)\|_{\Spw}^2  
+ \int_0^t  2\calV(\dot{u}_k(s))   
\,\mathrm{d} s +  \Var_{\calR_k}(z_k, [0,t])+
 \calE_k(t,u_k(t),z_k(t)) 
 \right)
\end{aligned}
\end{equation*}
\begin{equation*}
\begin{aligned}
 &  = \lim_{k\to\infty}  \tfrac{1}{2} \|u_1^k\|_{\Spw}^2  + \lim_{k\to\infty}  \calE_k (0,u_0,z_0)
 + \lim_{k\to\infty} 
 \int_0^t \partial_t\calE_k(s,u_k(s),z_k(s))\,\mathrm{d}s
 \\
&  =\tfrac{1}{2} \|u_1\|_{\Spw}^2
+  \calE_\infty(0,u(0),z(0)) + \int_0^t \partial_t\calE_\infty(s,u(s),z(s))\,\mathrm{d}s
\\
& = \tfrac{1}{2}\|\dot u(t)\|_{\Spw}^2   
+ \int_0^t  2\calV(\dot u(s))   
\,\mathrm{d} s + \Var_{\calR_\infty}(z, [0,t])+
 \calE_\infty(t,u(t),z(t))
\end{aligned}
\end{equation*}
where the last equality follows from  the energy equality \eqref{eneqlim-identity}, 
at almost all $t\in (0,T)$. 
Hence, 
all inequalities turn out to hold as equalities, and convergences \eqref{enh-convs} ensue 
from  a standard argument.
\par
To obtain \eqref{eneqlim-inequality-leq}, as in the proof of 
Lemma \ref{LEDEbrittle} we  pass to the limit as $k\to\infty$  in the upper 
energy-dissipation inequality for the adhesive system: we can now do so on the interval $(s,t)$ 
for every $t\in (0,T]$ and for  $s\in (0,t)$, out of a negligible set, 
such that convergences \eqref{enh-convs-1} \& \eqref{enh-convs-4} hold as $s$. 
This concludes the proof. 
\end{proof}
Next, we deduce the enhanced validity of the initial condition stated in 
Theorem \ref{thm:main}, Item 7.
\begin{lemma}[Enhanced initial condition \eqref{enhanced-Cauchy-dotu}] 
\label{EnhancedCauchy}
Let $u\in \mathrm{C}^0_{\mathrm{weak}}([0,T];\Spw) \cap H^2(0,T;\VZ(0)^*) $ comply with the regularity 
properties \eqref{reg-u-britt;e}, 
 with the brittle momentum balance for given 
$z\in  \mathrm{B}(0,T;\SBV(\GC;\{0,1\})) $ $  \cap \mathrm{BV}(0,T;L^1(\GC)),$ 
semistable as in \eqref{semistab-z} for all $t\in[0,T],$ and 
with the bulk energy balance \eqref{eneqlim-identity-bulk} for the brittle system. 
Then, \eqref{enhanced-Cauchy-dotu} holds true, and in particular, along any sequence 
of Lebesgue points $(t_j)_j$ of $\|\dot u(\cdot)\|_\Spw$ with $t_k\to0$ it holds 
$\dot u(t_j)\to u_1$ strongly in $\Spw$. 
\end{lemma}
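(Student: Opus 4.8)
The plan is to extract the strong $\Spw$-convergence of $\dot u$ near $t=0$ from the bulk energy-dissipation balance \eqref{eneqlim-identity-bulk}, which by Lemma \ref{UEDEbrittle} holds with $s=0$ and for almost every $t$, combined with the weak temporal continuity $\dot u\in\mathrm{C}^0_{\mathrm{weak}}([0,T];\Spw)$ (Corollary \ref{WeakConti}) and the Cauchy datum $\dot u(0)=u_1$ in $\Spw$ (Lemma \ref{Linit}).

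First I would record a few preliminary facts. Since $u\in H^1(0,T;\Spu)\hookrightarrow\mathrm{C}^0([0,T];\Spu)$ with $u(0)=u_0$, the maps $t\mapsto\int_{\Omega\backslash\GC}\tfrac12\bbC e(u(t)){:}e(u(t))\dd x$ and $t\mapsto\langle\mathbf{f}(t),u(t)\rangle_\Spu$ (recall $\mathbf{f}\in\mathrm{C}^1([0,T];\Spu^*)$) are continuous at $t=0$ with values $\int_{\Omega\backslash\GC}\tfrac12\bbC e(u_0){:}e(u_0)\dd x$ and $\langle\mathbf{f}(0),u_0\rangle_\Spu$. Moreover, by absolute continuity of the Lebesgue integral (the integrands lying in $L^1(0,T)$), $\int_0^t 2\calV(\dot u(\tau))\dd\tau\to0$ and $\int_0^t\langle\dot{\mathbf{f}}(\tau),u(\tau)\rangle_\Spu\dd\tau\to0$ as $t\downarrow0$. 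Finally, $\dot u(t)\weakto u_1$ in $\Spw$ as $t\downarrow0$ by weak continuity together with $\dot u(0)=u_1$.

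For the pointwise statement I would insert $s=0$, $t=t_j$ into \eqref{eneqlim-identity-bulk} (using $u(0)=u_0$, $\dot u(0)=u_1$) and let $j\to\infty$: the right-hand side is a fixed constant, while on the left-hand side the elastic term converges by strong continuity of $e(u(\cdot))$, the loading term $\langle\mathbf{f}(t_j),u(t_j)\rangle_\Spu$ converges, and both $\int_0^{t_j}\langle\dot{\mathbf{f}},u\rangle_\Spu\dd\tau$ and $\int_0^{t_j}2\calV(\dot u)\dd\tau$ vanish. Hence $\tfrac12\|\dot u(t_j)\|_\Spw^2=\tfrac12\|u_1\|_\Spw^2+o(1)$, i.e.\ $\|\dot u(t_j)\|_\Spw\to\|u_1\|_\Spw$; since $\Spw$ is Hilbert and $\dot u(t_j)\weakto u_1$, convergence of the norms upgrades this to $\dot u(t_j)\to u_1$ strongly in $\Spw$. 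For \eqref{enhanced-Cauchy-dotu} I would average the identity \eqref{eneqlim-identity-bulk} (again with $s=0$) over $t\in(0,h)$: each averaged term converges as $h\downarrow0$ — the elastic and loading terms by continuity at $0$, the iterated integrals $\tfrac1h\int_0^h\!\big(\int_0^t\!\cdots\big)\dd t\to0$ by continuity of the (vanishing at $0$) inner primitives — so that $\tfrac1h\int_0^h\|\dot u(t)\|_\Spw^2\dd t\to\|u_1\|_\Spw^2$. On the other hand $\tfrac1h\int_0^h\dot u(t)\dd t=\tfrac{u(h)-u_0}{h}\to\dot u(0)=u_1$ strongly in $\VZ(0)^*$ (using $u\in H^2(0,T;\VZ(0)^*)$, hence $\dot u\in\mathrm{C}^0([0,T];\VZ(0)^*)$) and is bounded in $\Spw$ by $\|\dot u\|_{L^\infty(0,T;\Spw)}$, so it converges to $u_1$ weakly in $\Spw$; therefore $\inner{\Spw}{\tfrac1h\int_0^h\dot u(t)\dd t}{u_1}\to\|u_1\|_\Spw^2$. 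Expanding $\|\dot u(t)-u_1\|_\Spw^2=\|\dot u(t)\|_\Spw^2-2\inner{\Spw}{\dot u(t)}{u_1}+\|u_1\|_\Spw^2$ and averaging then yields $\tfrac1h\int_0^h\|\dot u(t)-u_1\|_\Spw^2\dd t\to\|u_1\|_\Spw^2-2\|u_1\|_\Spw^2+\|u_1\|_\Spw^2=0$.

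The main obstacle is conceptual rather than computational: the point $t=0$ need not be a Lebesgue point of $\dot u$, so the strong convergence $\dot u(t)\to u_1$ in $\Spw$ cannot be read off from Lebesgue differentiation and must be obtained from the energy balance, which is precisely what provides the quantitative control $\|\dot u(t)\|_\Spw\to\|u_1\|_\Spw$ needed to turn the (cheap) weak convergence into strong convergence; once this is in place, the remaining steps are routine.
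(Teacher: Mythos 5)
Your proof is correct and takes essentially the same route as the paper's: exploit the weak continuity $\dot u\in\mathrm{C}^0_{\mathrm{weak}}([0,T];\Spw)$ and $\dot u(0)=u_1$ to get $\dot u(t_j)\weakto u_1$, then use the bulk energy balance \eqref{eneqlim-identity-bulk} on $(0,t_j)$ to control $\|\dot u(t_j)\|_{\Spw}^2$ and upgrade weak to strong convergence in the Hilbert space $\Spw$. A minor cosmetic difference: the paper drops the nonnegative dissipation term $\int_0^{t_j}2\calV(\dot u)\dd\tau$ and only proves $\limsup_j\|\dot u(t_j)\|_{\Spw}^2\le\|u_1\|_{\Spw}^2$, whereas you keep it and observe it vanishes, giving norm convergence directly; both suffice. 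For the averaged statement \eqref{enhanced-Cauchy-dotu} the paper is terse (``Hence $\ldots$ ensue''), implicitly relying on the a.e.\ strong convergence $\dot u(t)\to u_1$ together with the uniform bound $\dot u\in L^\infty(0,T;\Spw)$ to pass to the time averages. Your alternative — averaging the balance over $(0,h)$, identifying $\tfrac1h\int_0^h\dot u\dd t=\tfrac{u(h)-u_0}h$ with a difference quotient converging to $u_1$ weakly in $\Spw$ (strongly in $\VZ(0)^*$ plus the $L^\infty(\Spw)$-bound), and expanding $\|\dot u(t)-u_1\|_\Spw^2$ — is a clean, self-contained derivation that makes the implicit step explicit. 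Both arguments hinge on the same point, which you diagnose correctly: $t=0$ need not be a Lebesgue point, so Lebesgue differentiation alone is not enough, and the energy balance supplies the missing quantitative control.
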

\begin{proof}
We adapt the arguments of \cite[p.\ 10]{DMLar11EWED}: 
Thanks to \eqref{propu2} we have $\dot u(t_j)\rightharpoonup u_1$ in $\Spw$. Thus, in order to 
verify that $\dot u(t_j)\to u_1$ strongly in $\Spw,$ it is sufficient to show that 
\begin{equation}
\label{convtoshow}
\limsup_{j\to\infty}\|\dot u(t_j)\|_\Spw^2\leq \|u_1\|_\Spw^2\,. 
\end{equation}
From the bulk energy-dissipation balance 
\eqref{eneqlim-identity-bulk} we deduce
\begin{eqnarray*}
\tfrac{1}{2}\|\dot u(t_j)\|_\Spw^2&\leq&\tfrac{1}{2} \|  u_1\|_\Spw^2+
\int_{\Omega\backslash\GC}\tfrac{1}{2}\mathbb{C}e(u_0):e(u_0)
-\tfrac{1}{2}\mathbb{C}e(u(t_j)):e(u(t_j))\,\mathrm{d}x\\
&&\quad
+\langle\mathbf{f}(t_j),u(t_j)\rangle_\Spu -\langle\mathbf{f}(0),u_0\rangle_\Spu 
- \int_0^{t_j}\pairing{}{\Spu}{\dot{\mathbf{f}}(\tau)}{u(\tau)} \,\mathrm{d}\tau  
\\
&&\to\tfrac{1}{2}\| u_1\|_\Spw^2\quad\text{as }t_j\to0\,.
\end{eqnarray*}
Here, the convergence of the terms on the right-hand side is 
due to the regularity property $u\in H^1(0,T;\Spu)$, which ensures that $u(t_j)\to u_0$ 
strongly in $H^1(\Omega\backslash\GC;\R^d)$, and to assumption \eqref{assdata} on $\mathbf{f}$. 
Hence, \eqref{convtoshow}, 
and thus the enhanced initial condition \eqref{enhanced-Cauchy-dotu}, ensue.  
\end{proof}
\par 
Thanks to the above proved enhanced validity of the initial condition we are now 
in the position to conclude the uniqueness result stated in 
Theorem \ref{thm:main},  Item 8. 
\begin{lemma}[Uniqueness of the displacements  for a given $z \in L^\infty (0,T;\mathrm{SBV}(\GC;\{0,1\}))$]
\label{Unique} 
The uniqueness of the displacements 
holds true in the sense of Theorem \ref{thm:main},  Item 8.    
\end{lemma}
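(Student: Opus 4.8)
The plan is to adapt the uniqueness argument carried out for the adhesive contact system in the proof of Theorem \ref{th:exist-mech}, now carefully handling the time-dependent test-function spaces $\VZ(t)$ and the delicate chain rule for the inertial term from Lemma \ref{ChainruleLemma}. Set $w := u - \tilde u$. Since both $u$ and $\tilde u$ are driven by the \emph{same} semistable curve $z$, they comply with the brittle constraint \eqref{propsuz}; hence $\JUMP{w(t)}=0$ on $\supp z(t)$ for every $t\in[0,T]$ and $\JUMP{\dot w(t)}=0$ on $\supp z(t)$ for a.a.\ $t$, so that $w(t)\in\VZ(t)$ for every $t$ and $\dot w(t)\in\VZ(t)$ for a.a.\ $t$. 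Subtracting the two brittle momentum balances \eqref{weak-momentum-brittle}, which share the same loading $\mathbf f$, shows that $w$ solves the brittle momentum balance with vanishing right-hand side, i.e.\ for a.a.\ $t$ and all $v\in\VZ(t)$,
\[
\pairing{}{\VZ(t)}{\varrho\ddot w(t)}{v}+\int_{\Omega\backslash\GC}\big(\mathbb{D}e(\dot w(t))\colon e(v)+\mathbb{C}e(w(t))\colon e(v)\big)\dd x=0,
\]
together with the Cauchy conditions $w(0)=0$, $\dot w(0)=0$. Note that $w$ inherits from $u$ and $\tilde u$ all the regularity and structural properties invoked below (in particular $w\in H^1(0,T;\Spu)\cap H^2(0,T;\VZ(0)^*)$, the identity \eqref{we-use-it-later}, weak continuity of $\dot w$ into $\Spw$, and hence the chain rule of Lemma \ref{ChainruleLemma}, Item 1), these being obtained for $u$ and $\tilde u$ separately exactly as in Lemmata \ref{Lbetter-H2} and \ref{WeakConti}.

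Next I would test the homogeneous momentum balance with $v=\dot w(t)\in\VZ(t)$ and integrate over an interval $(s,t)\subset(0,T)$ with $s$ and $t$ Lebesgue points of $\|\dot w(\cdot)\|_{\Spw}^2$. Using the chain rule \eqref{gelfand-1} for the inertial term, the standard chain rule \eqref{chain-ruleC} for the quadratic bulk term (admissible since $w\in H^1(0,T;\Spu)$), and $\int_s^t\int_{\Omega\backslash\GC}\mathbb{D}e(\dot w)\colon e(\dot w)\dd x\dd\tau=\int_s^t 2\calV(\dot w)\dd\tau\geq0$, one arrives at the energy identity
\[
\tfrac12\|\dot w(t)\|_{\Spw}^2+\int_s^t 2\calV(\dot w)\dd\tau+\tfrac12\int_{\Omega\backslash\GC}\mathbb{C}e(w(t))\colon e(w(t))\dd x=\tfrac12\|\dot w(s)\|_{\Spw}^2+\tfrac12\int_{\Omega\backslash\GC}\mathbb{C}e(w(s))\colon e(w(s))\dd x.
\]
Then I would let $s\downarrow0$ along Lebesgue points. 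The bulk term at $s$ tends to $0$ since $w\in\mathrm{C}^0([0,T];\Spu)$ with $w(0)=0$; the crucial point is that $\|\dot w(s)\|_{\Spw}^2\to0$, which follows from the enhanced initial condition \eqref{enhanced-Cauchy-dotu} applied to $u$ and to $\tilde u$. Indeed, both $u$ and $\tilde u$ satisfy the bulk energy-dissipation balance \eqref{eneqlim-identity-bulk} at $s=0$ — combining the energy-dissipation inequality \eqref{enineq}, which is part of the definition of a \semi energetic solution, with the lower estimate of Lemma \ref{UEDEbrittle} — so that Lemma \ref{EnhancedCauchy} yields $\dot u(s),\dot{\tilde u}(s)\to u_1$ strongly in $\Spw$ along Lebesgue points $s\downarrow0$, whence $\dot w(s)\to0$ in $\Spw$. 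Passing to the limit gives
\[
\tfrac12\|\dot w(t)\|_{\Spw}^2+\int_0^t 2\calV(\dot w)\dd\tau+\tfrac12\int_{\Omega\backslash\GC}\mathbb{C}e(w(t))\colon e(w(t))\dd x=0\qquad\text{for a.a.\ }t\in(0,T).
\]
Since all three summands are nonnegative, each vanishes; by the positive definiteness \eqref{assCD} of $\mathbb{C}$ and Korn's inequality $\|w(t)\|_{\Spu}=0$ for a.a.\ $t$, and by continuity of $w$ into $\Spu$ we conclude $w\equiv0$, i.e.\ $\tilde u=u$.

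The main obstacle is precisely the passage to the initial time $s=0$ in the energy identity for $w$. Unlike in the adhesive case, where the fixed Gelfand triple $(\Spu,\Spw,\Spu^*)$ supplies a chain rule for the inertial term directly at $t=0$, here Lemma \ref{ChainruleLemma} only delivers the clean identity \eqref{gelfand-1} at Lebesgue points of $\|\dot w(\cdot)\|_{\Spw}^2$ — its Item 2 gives merely a one-sided estimate at $s=0$, in the direction opposite to the one needed. Hence one genuinely has to approach $0$ through Lebesgue points and produce the strong convergence $\dot w(s)\to0$ in $\Spw$, which rests on the full force of the enhanced initial condition and, through it, on the validity of the bulk energy-dissipation balance up to $s=0$ for each of $u$ and $\tilde u$; verifying these properties for arbitrary \semi energetic solutions is where the argument leans most heavily on the preceding lemmas. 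The remaining ingredients — the bulk chain rule, the sign of the viscous term, and the coercivity conclusion — are routine.
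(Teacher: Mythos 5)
Your proposal is correct and mirrors the paper's own proof: both form $w=u-\tilde u$, test the homogeneous brittle momentum balance with $\dot w$, invoke the chain rule \eqref{gelfand-1} at Lebesgue points, and crucially recover the chain-rule equality down to $s=0$ by letting $s\downarrow 0$ through Lebesgue points while using the enhanced initial condition of Lemma~\ref{EnhancedCauchy} (applied separately to $u$ and $\tilde u$) to obtain $\dot w(s)\to 0$ strongly in $\Spw$. Your diagnosis that \eqref{gelfand-2} alone is insufficient and that the bulk energy-dissipation balance for each of $u,\tilde u$ must be established first (by combining the defining inequality \eqref{enineq} with Lemma~\ref{UEDEbrittle}) is exactly the hidden hinge of the argument, which the paper uses tacitly; you simply make that dependency chain explicit.
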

\begin{proof}
Suppose that $(u,z)$ and $(\tilde u,z)$ both are \semi energetic solutions to the 
 evolutionary  brittle system 
$\ingrsysinf$ and that they both satisfy the brittle 
momentum balance \eqref{weak-mom-brittle} with the same initial data $u_0$ and $u_1$. 
Then,    
$w:=u-\tilde u$  fulfills   \eqref{weak-mom-brittle} for almost all $t\in (0,T)$,  with $\mathbf{f}=0$ and 
 $w(0)=\dot w(0)=0$.  For $t\in (0,T)$ fixed we test  \eqref{weak-mom-brittle}   $v(t)=\dot w(t),$  
which is admissible since it satisfies 
$\JUMP{\dot w(t)}=0$ a.e.\ on $\supp z(t)$. 
To treat the quadratic bulk terms and  the external loading term resulting from this 
test we have suitable 
chain rules at our disposal, cf.\ also \eqref{momuint}. 
It remains to verify a chain rule for the inertial term  
$\langle\ddot w(t),\dot w(t)\rangle_\Spw$.
\par For this, we use the information that both 
$u$ and $v$ satisfy the bulk energy-dissipation balance \eqref{eneqlim-identity-bulk}, 
and hence, the enhanced initial condition in the sense of the above Lemma \ref{EnhancedCauchy}. 
Thus, picking a sequence $(t_j)_j,$ which are Lebesgue points for both functions 
$\|\dot u(\cdot)\|_\Spw$ 
and $\|\dot v(\cdot)\|_\Spw,$ and which satisfies $t_j\to0$ as $j\to\infty,$ we conclude 
by Lemma \ref{EnhancedCauchy} that also  
$\dot w(t_j)\to(u_1-v_1)=0$ strongly in $\Spw$. 
Moreover, observe that the chain rule equality \eqref{gelfand-1} holds true 
also for $w$ in all Lebesgue points $s,t$ of $\|\dot{w}(\cdot)\|_\Spw^2,$ 
since $w$ solves the momentum balance.  
Thus, choosing $s=t_j$ in \eqref{gelfand-1} 
and letting $j\to\infty,$ the previously deduced strong convergence 
$\dot w(t_j)\to(u_1-v_1)=0$ in $\Spw$ 
now yields the chain rule with initial datum for $w,$ namely 
$\tfrac{1}{2}\|\dot w(t)\|_\Spw^2-\tfrac{1}{2}\|\dot w(0)\|_\Spw^2
=\int_0^t\langle\varrho\ddot w(\tau),\dot w(\tau)\rangle_{\VZ(\tau)}\,\mathrm{d}\tau$.     
Hence, by exploiting the chain rule for each of the terms in \eqref{eq-u},  
we readily obtain for almost all $t\in (0,T)$ 
\begin{equation*}
\tfrac{1}{2}\|\dot w(t)\|_{\Spw}^2  
+ \int_0^t  2\calV(\dot w(s))  
\,\mathrm{d} s +
 \int_{\Omega\backslash\GC}\tfrac{1}{2}\mathbb{C}(e(w(t)):e(w(t))\,\mathrm{d}x
=0\,.
\end{equation*}
This implies that each of the positive terms on the left-hand side has to be zero separately, 
which shows that  
$w\equiv 0$ and $\dot{w} \equiv 0$ a.e.\ in  $[0,T]$. 
But then, $w \equiv 0$ \emph{everywhere} in $[0,T]$. 
\end{proof}

\bigskip

\baselineskip=11pt {\small\noindent{\it Acknowledgements}: 
  R.R.\  has been partially supported by a MIUR-PRIN'10-11 grant
  for the project ``Calculus of Variations''.
  M.T.\ has been partially supported by the Deutsche
Forschungsgemeinschaft in the Priority Program 1748 "Reliable
simulation techniques in solid mechanics. Development of non-
standard discretization methods, mechanical and mathematical
analysis" within the project "Finite element approximation of functions of bounded variation and application to
models of damage, fracture, and plasticity".
   R.R.\ and M.T.\  are also grateful for the
  support from the  Gruppo Nazionale per  l'Analisi Matematica, la
  Probabilit\`a  e le loro Applicazioni (GNAMPA)
of the Istituto Nazionale di Alta Matematica (INdAM).
 This research was carried out during several
visits of R.R.\ at the Weierstrass Institute, supported by the European
Research Council through the ERC Advanced Grant
\emph{Analysis of Multiscale Systems Driven by Functionals (267802)},
and of M.T.\ at the University of Brescia; the kind hospitality at both institutions
is gratefully acknowledged.
}

\baselineskip=12pt

{\small 
\bibliographystyle{alpha}
\bibliography{marita_lit-v2,marita_ricky_lit}
}

\end{document}